\tikzstyle{lien}=[->,>=stealth,rounded corners=5pt,thick]
\tikzset{individu/.style={draw,thick,fill=#1!25},
	individu/.default={green}}
\newcommand{\diffns}{\mathrm{d}}
\newcommand\redsout{\bgroup\markoverwith{\textcolor{red}{\rule[0.5ex]{2pt}{0.4pt}}}\ULon}
\newcommand{\E}{\mathbb{E}}
\newcommand{\N}{\mathbb{N}}
\newcommand{\Q}{\mathbb{Q}}
\newcommand{\R}{\mathbb{R}}
\newcommand{\Pb}{\mathbb{P}}
\def\={{\;\mathop{=}\limits^{\text{(law)}}\;}}
\newtheorem{theorem}{Theorem}[section]
\newtheorem{prop}[theorem]{Proposition}
\newtheorem{lemma}[theorem]{Lemma}
\newtheorem{defi}[theorem]{Definition}
\newtheorem{corollary}[theorem]{Corollary}
\newtheorem{remark}[theorem]{Remark}
\newtheorem{example}[theorem]{Example}
\numberwithin{equation}{section}
\title[\texttt{Smoothness of solutions of HSPDEs with $L^{\infty}$ vector field and additive noise}]
{Smoothness of solutions of  hyperbolic stochastic partial differential equations with $L^{\infty}$-vector fields}
\author[A.-M. Bogso]{Antoine-Marie Bogso}
\address{University of Yaounde I,\\
	Faculty of Sciences, Department of Mathematics,\\
	P.O. Box 812, Yaounde, Cameroon\\
	AIMS Ghana, P.O. Box LGDTD 20046, Summerhill Estates, Eat Legon Hills, Santoe, Acrra}
\email{antoine.bogso@facsciences-uy1.cm}
\author[M. Dieye]{Moustapha Dieye}
\address{\'Ecole polytechnique de Thi\`es, D\'epartement tronc commun, BP A10, Thi\`es, S\'en\'egal}
\email{mdieye@ept.sn}
\author[O. Menoukeu-Pamen]{Olivier Menoukeu-Pamen}
\address{Institute for Financial and Actuarial Mathematics (IFAM), \\
	Department of Mathematical Sciences, University of Liverpool, \\
	Liverpool L69 7ZL, UK\\
	AIMS Ghana, P.O. Box LGDTD 20046, Summerhill Estates, East Legon Hills, Santoe, Acrra}
\email{menoukeu@liverpool.ac.uk}
\author[F. Proske]{Frank Proske}
\address{University of Oslo \\
	CMA, Department of Mathematical Sciences, University of Oslo \\
	Moltke Moes Vei 35, Po Box 1053, Blindern, 0316 Oslo, Norway}
\email{proske@math.uio.no}
\thanks{The project on which this publication is based has been carried out with funding provided by the Alexander von Humboldt Foundation, under the programme financed by the German Federal Ministry of Education and Research entitled German Research Chair No 01DG15010.}
\subjclass{Primary 60H07, 	60H50, 	60H17; Secondary 	60H15 }
\keywords{Brownian sheet, SDEs on the plane, Wave equations, Malliavin calculus}
	\date{} 
\begin{document}
		
		\begin{abstract}
			In this paper we are interested in a quasi-linear hyperbolic stochastic differential equation (HSPDE) when the vector field is merely bounded and measurable. Although the deterministic counterpart of such equation may be ill-posed (in the sense that uniqueness or even existence might not be valid), we show for the first time that the corresponding HSPDE has a unique (Malliavin differentiable) strong solution. Our approach for proving this result rests on: 1) tools from Malliavin calculus
			and 2) variational techniques introduced in Davie \cite{Da07} non trivially extended to the case of
			SDEs in the plane by using an algorithm for the selection of certain rectangles. As a by product, we also obtain the Sobolev differentiability of the solution with respect to its initial value. The results derived here constitute a significant improvement of those in the current literature on SDEs on the plane and can be regarded as an analogous equivalent of the pioneering works by Zvonkin \cite{Zvon74} and Veretennikov \cite{Ver79} in the case of one-parameter SDEs with singular drift.
		\end{abstract}

		\maketitle
		\tableofcontents
		\section{Introduction}
		\label{intro}
		In this article we are interested in studying solutions $X_{s,t},0\leq s,t\leq T
		$ to the following hyperbolic stochastic
		partial differential equation (HSPDE)%
		\begin{equation} 
			\left\{ 
			\begin{array}{ll}
				\dfrac{\partial ^{2}X(s,t)}{\partial s\partial t}=b(s,t,X(s,t))+\dfrac{%
					\partial ^{2}W_{s,t}}{\partial s\partial t}, & (s,t)\in \mathcal{T}^2  \\ &\\
				X(s,0)=X(0,t)=x & 
			\end{array}
			\right. ,  \label{HyperbolicPDE}
		\end{equation}%
		where  $\mathcal{T}=[0,T]$, $T>0,$ $W_{s,t},0\leq s,t\leq T$ is a Wiener sheet (see Section \ref{defcon} for a
		definition),  $b:\mathcal{T}^2 \times \mathbb{%
			R}^{d}\longrightarrow \mathbb{R}^{d}$ is a Borel measurable function and $\frac{\partial ^{2}W_{s,t}}{\partial s\partial t}$ denotes the Gaussian white noise in $s$ and $t$.

		This equation can also be regarded as the following stochastic differential equation (SDE) in the plane%
		\begin{equation} 
			X_{s,t}=x+\int_{0}^{s}%
			\int_{0}^{t}b(r_{1},r_{2},X_{r_{1},r_{2}})\diffns r_{1}\diffns r_{2}+W_{s,t},(s,t)\in 
			\mathcal{T}^2,\,x\in \mathbb{R}^{d},  \label{SDEWienerSheet}
		\end{equation}%

		More precisely, we aim at constructing  a unique (global) strong solution $%
		X_{\cdot ,\cdot }$ to the SDE \eqref{SDEWienerSheet}, when the vector field $b$ is
		merely bounded and measurable. By a strong solution we mean that the
		solution $X_{\cdot ,\cdot }$ is a progressively measurable functional of the
		driving noise, that is the Wiener sheet $W_{\cdot ,\cdot }$ (see Definition \ref
		{DefBSheet}). To the best of our knowledge and surprisingly existence and uniqueness of a strong solution to \eqref{HyperbolicPDE} for bounded drift is still open even though its analogue in the one parameter case has been solved years ago (see \cite{Zvon74, Ver79}).
		
		We mention that the SDE \eqref{SDEWienerSheet} has been investigated by
		several authors in the literature. For example Cairoli \cite{Ca72}
		and Yeh \cite{Ye81} analysed strong existence and pathwise uniqueness of
		solutions to SDEs in the plane with multiplicative noise%
		\begin{align}
			X_{s,t}=x&+\int_{0}^{s}%
			\int_{0}^{t}b(r_{1},r_{2},X_{r_{1},r_{2}})dr_{1}dr_{2}\notag\\&+\int_{0}^{s}%
			\int_{0}^{t}\sigma (r_{1},r_{2},X_{r_{1},r_{2}})dW_{r_{1},r_{2}},(s,t)\in 
			\mathcal{T}^2,x\in \mathbb{R}^{d}  \label{MultiplicativeSDE}
		\end{align}%
		for Lipschitz continuous vector fields $b$ and $\sigma $ of linear growth;
		see also \cite{Ye87} in the case of strong solutions with a deterministic
		boundary process and \cite{Ye85} in the case of weak solutions, when $b$
		is continuous, satisfying a growth condition. Further, we refer to Nualart,
		Sanz \cite{NuaSan85}, where the authors studied smoothness of solutions
		to \eqref{MultiplicativeSDE} in the sense of Malliavin differentiability for
		sufficiently regular vector fields.
		
		Let us observe that when $d=1$ and $b,\sigma :\mathbb{R}\longrightarrow \mathbb{R}$,
		a formal $\frac{\pi }{4}$ rotation (see for example Walsh \cite{Wa84} and Farr\'{e},
		Nualart \cite{FaNu93}) can be used to transform the corresponding
		version of the HSPDE for \eqref{MultiplicativeSDE} with
		multiplicative noise term $\sigma (X(s,x))\frac{\partial ^{2}W_{s,x}}{%
			\partial s\partial x}$ into a non-linear random wave equation of the form%
		\begin{equation}
			\frac{\partial ^{2}X(t,x)}{\partial t^{2}}-\frac{\partial ^{2}X(t,x)}{%
				\partial x^{2}}=\sigma (X(t,x))\frac{\partial ^{2}\widetilde W_{t,x}}{\partial
				t\partial x}+b(X(t,x)\text{,}  \label{WaveEquation}
		\end{equation}
		where $\widetilde W$ is a new Wiener sheet. 
		There is a rich mathematical literature on the solutions to (deterministic/random) wave equations and their continuous dependence on initial data with respect to some metrics. For example, Bressan and Chen \cite{BrCh17} constructed a distance functional which makes Lipschitz continuous the flow of conservative solutions to a nonlinear (deterministic) wave equation. We refer to \cite{BrCh17b,BCZ15} for further properties of conservative solutions for the aforementioned nonlinear wave equation. Using techniques from Malliavin calculus, Carmona, Nualart \cite%
		{CaNu88} were able to show for $b,\sigma \in C_{b}(\mathbb{R})$ that
		the stochastic wave equation \eqref{WaveEquation} has a unique (weak)
		solution $X_{\cdot ,\cdot }$ on finite intervals under a Dirichlet boundary
		condition. See also Quer-Sardanyons, Tindel \cite{QuTi07} in
		the fractional Brownian sheet case.
		
		Moreover, authors in Nualart, Tindel \cite{NuTi97} establish an
		existence and uniqueness result for strong solutions to \eqref{SDEWienerSheet} under growth and montonicity assumptions on $b$ (when $d=1$) by using a
		comparison theorem. See also Erraoui, Nualart, Ouknine \cite%
		{ENO03} in the case of fractional Brownian motion with
		parameters $H_{1},H_{2}\leq \frac{1}{2}$.
		
		Finally, let us also mention some recent results on path-by-path uniqueness
		of solutions to the SDE \eqref{SDEWienerSheet} with respect to
		non-Lipschitz continuous vector fields $b$. Here the concept of path-by-path
		uniquess, which is a much stronger concept than that of strong uniqueness, is
		to be understood in the sense of Davie \cite{Da07} and means that there
		exists a measurable set $\Omega ^{\ast }$ with probability mass $1$ such
		that for all $\omega \in \Omega ^{\ast }$ the SDE \eqref{SDEWienerSheet}
		has a unique deterministic solution in the space of continuous functions $C(\mathcal{T}^2;\mathbb{R}^{d})$. The first result in this direction
		for SDEs \ref{MultiplicativeSDE} with discontinuous coefficients $b$ was
		obtained by Bogso, Dieye and Menoukeu-Pamen \cite{BDM21a},
		where the authors assume that $b$ is of spatial linear growth and
		componentwise non-decreasing. The proof of their results is based on a
		local-time-space representation, the law of iterated logarithm for Wiener
		sheets and arguments in Davie \cite{Da07} generalized to the case of SDEs
		in the plane. See also the paper of Bogso, Menoukeu-Pamen \cite%
		{BMP22}, which in addition deals with the study of Malliavin
		smoothness of solutions to \eqref{SDEWienerSheet} assuming that the drift $b$ is of spatial linear growth and  the difference of two componenwise non-decreasing functions. In this context we
		also point out the recent article of Bechthold, Harang, Rana \cite%
		{BHR22} in the case of regularizing noise sheets as e.g. the
		fractional Brownian sheet. The authors in \cite{BHR22}
		establish path-by-path uniqueness for SDEs in the plane with additive noise
		and distributional vector fields $b$ in the Besov space $B_{p,q}^{\alpha }(%
		\mathbb{R}^{d})$. In this case, the drift term is given by a type of
		non-linear Young integral, which does not coincide with the Lebesgue
		integral, in general. Their approach is based on a multiparameter sewing
		Lemma, local time and techniques (e.g. averaging operator) in Catellier,
		Gubinelli \cite{CG16} generalised to the case of SDEs in the
		plane. Let us indicate that their results cannot be applied to the case of
		discontinuous vector fields in the Lebesgue integral setting and the case of
		a Wiener sheet in higher dimensions.

		The objective of this paper is two-fold: first we construct a unique (Malliavin
		differentiable) strong solution to the SDE \eqref{SDEWienerSheet}, when $%
		b\in L^{\infty }(\mathcal{T}^2\times \mathbb{R}^{d};\mathbb{R}^{d})$. Second, we prove that the solution is locally Sobolev differentiable with
		respect to the initial condition. The proof of these results is based on a
		compactness criterion for square integrable functionals of the Wiener sheet
		from Malliavin calculus and is inspired by variational techniques in Davie 
		\cite{Da07}. We comment on here that due to the nature of integration on the plane, it is not possible to directly apply classical integration by parts techniques as in \cite[Proposition 2.2]{Da07} to obtain certain expressions in terms of iterated integrals on a simplex. In order to overcome this difficulty, we devise an algorithm for the selection of specific rectangles with respect to the Wiener sheet, which are used in connection with an integration by parts argument (see for example Section \ref{secbasest}). 
		
		These results shed light on the regularisation effect of Gaussian white noise on ill-posed singular (noiseless) HPDEs, which do not admit existence, uniqueness or regularity of solutions, in general.
		We shall also allude here to several other results beyond the setting of stochastic HPDEs, which are based on regularisation by noise techniques and which have drawn attention. Gy\"ongy and Pardoux \cite{GP93a} (resp. \cite{GP93b}) proved well-posedness of a class of quasi-linear parabolic partial differential equations for space-dimension $d=1$ with additive space-time Gaussian white noise and measurable vector fields satisfying a local boundedness (resp. integrability) condition. Further, Beck, Flandoli, Gubinelli and Maurelli \cite{BFGM14} established Sobolev regularity of solutions to linear stochastic transport and continuity equations with drifts in critical $L^p$ spaces. We also point out the work of Butkovsky and Mytnik \cite{BM16}, who studied regularizing effects of Gaussian white noise on heat equations with non-Lipschitz vector fields in the sense of path-by-path uniqueness. See also \cite{DFPR14}, \cite{Wre17}, \cite{KR05}, \cite{MMNPZ13}, \cite{Pri18} or \cite{SvPr14} in the case of regularizing Markovian noise, that is (infinite dimensional) Wiener or L\'evy noise. The reader is also referred to the nice survey article by Gess \cite{Gess}.

		Regarding non-Markovian noise Catellier and Gubinelli \cite{CG16} as already mentioned above studied the regularisation by noise problem with respect to additive perturbations by fractional Brownian paths (see also Galeati and Gubinelli \cite{GG21}). Amine, Banos, Proske \cite{ABP17a} and Amine, Mansouri, Proske \cite{AMP20} investigated the regularisation by noise problem for ODEs (and transport/ continuity equations) perturbed by processes related to the fractional Brownian motion and obtained for bounded and measurable vector fields unique strong and path-by-path unique solutions, which are infinitely often differentiable with respect to the initial condition. See also Harang and Perkowski \cite{HP21} in the case of distributional vector fields $b$ in the Besov space $B_{p,q}^{\alpha }(\mathbb{R}^{d})$, where the drift term is given by a non-linear Young type of integral. In this context, we also refer to Kremp and Perkowski \cite{KP20a}, where the authors analyzed multidimensional SDEs with distributional drift driven by a symmetric $\alpha$-stable L\'evy processes for $\alpha\in (1,2]$. 
		
		Our results constitute a significant improvement to those in previous works on SDEs on the plane and can be seen as an analogous counterpart of those obtained by Zvonkin \cite{Zvon74}, Veretennikov \cite{Ver79}, Menoukeu-Pamen et al \cite{MMNPZ13} and Mohammed et al \cite{NilssenProske} (see also \cite{DFPR14,FlanGubiPrio10}) and thus close the gap on existence uniqueness and smoothness of solutions to multidimensional SDEs on the plane governed by merely bounded and measurable vector fields. 
		\newline
		
		Our paper is organized as follows: In Section \ref{defcon0} we introduce the basic
		mathematical framework for this article and present the main result
		(Theorem \ref{Mainresult}). In Section \ref{secbasest} we derive a central
		estimate (Proposition \ref{prop:DavieVarSheet}) for the proof of the main result
		based on the above mentioned algorithm. Finally, using tools from Malliavin
		calculus we prove the main result in Section \ref{mallcalgaus}. Section \ref{proofprere} is devoted to the proofs of preliminary results.  
		
		\section{Definitions and main results}\label{defcon0}
		\subsection{Weak and strong solutions to SDEs in the plane }\label{defcon}
		In this subsection we recall some basic definitions and concepts for solutions to SDEs driven by Wiener process in the plane that can be found in \cite{NuYe89,Ye81}. We start with the definitions of filtered probability space and $d$-dimensional Brownian sheet. We endow  $\mathcal{T}^2$   with the partial ordering $(s_1,t_1)\preceq (s_2,t_2)$ if and only if $s_1\leq s_2$ and $t_1\leq t_2$; $(s_1,t_1)\prec (s_2,t_2)$ if and only if $s_1< s_2$ and $t_1<t_2$.
		\begin{defi}
			We call a filtered probability space any probability space $(\Omega,\mathcal{F},\Pb)$ with a family   $(\mathcal{F}_{s,t},(s,t)\in \mathcal{T}^2)$ of sub-$\sigma$-algebras of $\mathcal{F}$ such that
			\begin{enumerate}
				\item $\mathcal{F}_{0,0}$ contains all null sets in $(\Omega,\mathcal{F},\Pb)$,
				\item $\{\mathcal{F}_{s,t},(s,t)\in \mathcal{T}^2\}$ is nondecreasing in the sense that  $\mathcal{F}_{s_1,t_1}\subset\mathcal{F}_{s_2,t_2}$ when $(s_1,t_1)\preceq(s_2,t_2)$,
				\item $(\mathcal{F}_{s,t},(s,t)\in \mathcal{T}^2)$ is a right-continuous system in the sense that
				$$
				\mathcal{F}_{s_1,t_1}=\bigcap\limits_{\{(s_2,t_2):\,(s_1,t_1)\prec(s_2,t_2)\}}\mathcal{F}_{s_2,t_2}.
				$$
			\end{enumerate}
		\end{defi}
		
		\begin{defi}\label{DefBSheet}
			We call a one-dimensional $(\mathcal{F}_{s,t})$-Brownian sheet on a filtered probability space $(\Omega,\mathcal{F},(\mathcal{F}_{s,t},(s,t)\in \mathcal{T}^2),\Pb)$ any real valued two-parameter stochastic process $W^{(0)}=(W^{(0)}_{s,t},(s,t)\in \mathcal{T}^2)$ satisfying the following conditions:
			\begin{enumerate}
				\item $W^{(0)}$ is $(\mathcal{F}_{s,t})$-adapted, i.e. $W^{(0)}_{s,t}$ is $\mathcal{F}_{s,t}$-measurable for every $(s,t)\in \mathcal{T}^2$.
				\item Every sample function $(s,t)\longmapsto W^{(0)}_{s,t}(\omega)$ of $W^{(0)}$ is continuous on $\mathcal{T}^2$.
				\item For every finite rectangle of the type $\Pi=]s_1,s_2]\times]t_1,t_2]\subset \mathcal{T}^2$, the random variable $$W^{(0)}(\Pi):=W^{(0)}_{s_2,t_2}-W^{(0)}_{s_1,t_2}-W^{(0)}_{s_2,t_1}+W^{(0)}_{s_1,t_1}$$
				is centered, Gaussian with variance $(s_2-s_1)(t_2-t_1)$ and independent of \textcolor{blue}{$\mathcal{F}_{s_1,T}\vee\mathcal{F}_{T,t_1}$.}
			\end{enumerate} 
			We call a $d$-dimensional Brownian sheet any $\R^d$-valued two-parameter process $W=(W^{(1)},\ldots,W^{(d)})$ such that $W^{(i)}$, $i=1,\ldots,d$ are independent one-dimensional Brownian sheets.
		\end{defi}

		In the following, we discuss the notions of weak and strong solutions to the SDE \eqref{SDEWienerSheet} (see for example \cite[Section 2]{Ye81}).  We start with the definition of a weak solution.  
		\begin{defi}\label{DefWeakSol}
			A weak solution to the SDE \eqref{SDEWienerSheet} is a system $(\Omega,\mathcal{F},(\mathcal{F}_{s,t}),W,X=(X_{s,t}),\Pb)$ such that 
			\begin{enumerate}
				\item $(\Omega,\mathcal{F},(\mathcal{F}_{s,t},(s,t)\in \mathcal{T}^2),\Pb)$ is a filtered probability space,
				\item $W=(W_{s,t},(s,t)\in \mathcal{T}^2)$ is a $d$-dimensional $(\mathcal{F}_{s,t})$-Brownian sheet with $\partial W=0$, where for a random field $Y=\{Y_{s,t},\,\,(s,t)\in\mathcal{T}^2\}$, the boundary $\partial Y$ is defined as $\partial Y:=Y_{\cdot,0}+Y_{0,\cdot}-Y_{0,0}$
				\item $X$ is $(\mathcal{F}_{s,t})$-adapted, has continuous sample paths and, $\Pb$-a.s.,
				\begin{align*}
					&	X_{s,t}-X_{s,0}-X_{0,t}+X_{0,0}\\
					&=\int_0^t\int_0^sb(s_1,t_1,X_{s_1,t_1})\mathrm{d}s_1\mathrm{d}t_1+\int_0^t\int_0^sa(s_1,t_1,X_{s_1,t_1})\,\mathrm{d}W_{s_1,t_1},\quad\forall\,(s,t)\in \mathcal{T}^2.
				\end{align*}
				
			\end{enumerate}
		\end{defi}

		We now turn to the notion of strong solution. Let $\mathcal{B}(\mathcal{V})$ (respectively $\mathcal{B}(\partial \mathcal{V})$) be the $\sigma$-algebra of Borel sets in the space $\mathcal{V}$ (respectively the boundary $\partial \mathcal{V}$ of $\mathcal{V}$) of all continuous $\R^d$-valued functions on $\mathcal{T}^2$ (respectively $\partial \mathcal{T}^2$) with respect to the metric topology of uniform convergence on compact subsets of $\mathcal{T}^2$. The subsequent definitions are borrowed from \cite{NuYe89}.
		\begin{defi}
			Let $\overline{\mathcal{B}(\mathcal{V})}$ be the completion of $\mathcal{B}(\mathcal{V})$ with respect to the Wiener measure $m_\mathcal{V}$ on $(\mathcal{V},\mathcal{B}(\mathcal{V}))$ concentrated on $\mathcal{V}_0$. For every $(s,t)\in \mathcal{T}^2$, we denote by $\mathcal{B}_{s,t}(\mathcal{V})$ the $\sigma$-algebra generated by the cylinder sets of the type $\{w\in \mathcal{V};\,w(s_1,t_1)\in E\}$ for some $(s_1,t_1)\preceq(s,t)$ and $E\in\mathcal{B}(R^d)$ and by $\overline{\mathcal{B}_{s,t}(\mathcal{V})}$ the $\sigma$-algebra generated by $\mathcal{B}_{s,t}(\mathcal{V})$ and all the null sets in $(\mathcal{V},\overline{\mathcal{B}(\mathcal{V})},m_\mathcal{V})$. Let $\overline{\mathcal{B}(\partial \mathcal{V}\times \mathcal{V})}^{\lambda\times m_\mathcal{V}}$ be the completion of $\mathcal{B}(\partial \mathcal{V}\times \mathcal{V})$ with respect to the product measure $\lambda\times m_\mathcal{V}$ for any probability measure $\lambda$ on $\partial \mathcal{V}$.
		\end{defi}
		\begin{defi}\label{DefiClassTransf}
			Let $\mathbf{T}(\partial \mathcal{V}\times \mathcal{V})$ be the class of transformations $F$ of $\partial \mathcal{V}\times \mathcal{V}$ into $\mathcal{V}$ which satisfies the condition that for every probability measure $\lambda$ on $(\partial \mathcal{V},\mathcal{B}(\partial \mathcal{V}))$, there exists a transformation $F_{\lambda}$ of $\partial \mathcal{V}\times \mathcal{V}$ into $\mathcal{V}$ such that
			\begin{enumerate}
				\item $F_{\lambda}$ is $\overline{\mathcal{B}(\partial \mathcal{V}\times \mathcal{V})}^{\lambda\times m_\mathcal{V}}/\mathcal{B}(\mathcal{V})$ measurable,
				\item For every $x\in\partial \mathcal{V}$, $F_{\lambda}[x,\cdot]$ is $\overline{\mathcal{B}_{s,t}(\mathcal{V})}/\mathcal{B}_{s,t}(\mathcal{V})$ measurable, for every $(s,t)\in \mathcal{T}^2$,
				\item There exists a null set $N_{\lambda}$ in $(\partial \mathcal{V},\mathcal{B}(\partial \mathcal{V}),\lambda)$ such that $F[x,w]=F_{\lambda}[x,w]$ for almost all $w$ in $(\mathcal{V},\overline{\mathcal{B}(\mathcal{V})},m_\mathcal{V})$ and all $x\in \partial \mathcal{V}\setminus N_{\lambda}$.
			\end{enumerate}
		\end{defi}

		Here, by a strong solution to the SDE \eqref{SDEWienerSheet} we mean a solution process, which is an adapted measurable functional of the driving
		noise given by the Wiener sheet. More precisely, the definition of such a
		solution concept is as follows:

		\begin{defi}
			Let $(X,W)$ be a weak solution to the SDE \eqref{SDEWienerSheet} on a filtered probability space $(\Omega,\mathcal{F},\{\mathcal{F}_{s,t},(s,t)\in \mathcal{T}^2\},\Pb)$ and let $\lambda$ be the probability distribution of $\partial X$. We call $(X,W)$ a strong solution to \eqref{SDEWienerSheet} if there exists a transformation $F_{\lambda}$ of $\partial \mathcal{V}\times \mathcal{V}$ into $\mathcal{V}$ satisfying Conditions 1 and 2 of Definition \ref{DefiClassTransf} such that
			\begin{align*}
				X=F_{\lambda}[\partial X,W]\,\text{ }\Pb\text{-a.s. on }\Omega.
			\end{align*}
		\end{defi}
		
		Here is a well known concept of uniqueness associated to strong solutions of \eqref{SDEWienerSheet} provided such solutions exist.
		\begin{defi}
			We say that the SDE \eqref{SDEWienerSheet} has a unique strong solution if there exists $F\in\mathbf{T}(\partial \mathcal{V}\times \mathcal{V})$ such that,
			\begin{enumerate}
				\item if $(\Omega,\mathcal{F},(\mathcal{F}_{s,t},(s,t)\in \mathcal{T}^2),\Pb)$ is a filtered probability space on which an $\R^d$-valued $(\mathcal{F}_{s,t},(s,t)\in \mathcal{T}^2)$-Brownian sheet $W$ with $\partial W=0$ exists, then for every continuous $(\mathcal{F}_{s,t},(s,t)\in \mathcal{T}^2)$-adapted boundary process $Z$ on  $(\Omega,\mathcal{F},(\mathcal{F}_{s,t},(s,t)\in \mathcal{T}^2),\Pb)$ whose probability distribution is denoted by $\lambda$, $(X,W)$ with $X=F(Z,W)$ is a weak solution of \eqref{SDEWienerSheet} with $\partial X=Z$ $\Pb$-a.s. on $\Omega$.
				\item if $(X,W)$ is a weak solution of \eqref{SDEWienerSheet} on a filtered probability space $(\Omega,\mathcal{F},(\mathcal{F}_{s,t},(s,t)\in \mathcal{T}^2),\Pb)$ and the probability distribution of $\partial X$ is denoted by $\lambda$, then $X=F_{\lambda}[\partial X,W]$ $\Pb$-a.s. on $\Omega$.
			\end{enumerate}
		\end{defi}

		\begin{remark}
			\bigskip Just as in the one-parameter case, we also observe that a strong
			solution is a weak solution (see Definition \ref{WeakSolution} in the
			Appendix). Conversely, a weak solution is not necessarily an adapted
			measurable functional of the Wiener sheet, and hence not a strong solution. 
		\end{remark}
		
		\bigskip 
		
		There are two classical notions of uniqueness associated to weak solutions (see e.g. \cite[Definitions 1.2 and 1.7]{NuYe89}). 
		\begin{defi}
			We say that the solution to the SDE \eqref{SDEWienerSheet} is unique in the sense of probability distribution if whenever $(X,W)$ and $(X^{\prime},W^{\prime})$ are two solutions of \eqref{SDEWienerSheet} on two possibly different  filtered probability spaces and $\partial X=x=\partial X^{\prime}$ for some $x\in\partial \mathcal{V}$, then $X$ and $X^{\prime}$ have the same probability distribution on $(\mathcal{V},\mathcal{B}(\mathcal{V}))$.
		\end{defi}
		\begin{defi}
			We say that the pathwise uniqueness of solutions to the SDE \eqref{SDEWienerSheet} holds if whenever $(X,W)$ and $(X^{\prime},W)$ with the same $W$ are two solutions to \eqref{SDEWienerSheet} on the same probability space and $\partial X=\partial X^{\prime}$, then $X=X^{\prime}$ for $\Pb$-a.e. $\omega\in\Omega$.
		\end{defi}

		\subsection{Preliminary on Malliavin calculus}

		In this subsection we recall some known facts on Malliavin calculus for two-parameter Brownian motion. We refer the reader to \cite{NuaSan85}.  We denote by $(z_1,z_2]$ the rectangle $\{z\in \mathcal{T}^2: z_1\prec z\preceq z_2\}$. We set $R_z=[0,z]$, and $z_1\otimes z_2=(s_1,t_2)$ if $z_1=(s_1,t_1)$ and $z_2=(s_2,t_2)$. The increment of a function $f:\mathbb{R}_+\rightarrow \mathbb{R}$ on the rectangle $(z_1,z_2]$ is given by $f((z_1,z_2])=f(z_1)-f(z_1\otimes z_2)-f(z_2\otimes z_1)+f(z_2)$. We denote by $\lambda_d$ the Lebesgue measure in $ \mathbb{R}^d$ for $d\geq 1$.
		
		Let $C_{0,A}(\mathcal{T}^2,\mathbb{R}^d)$ be the space of all continues function that vanish on the axes,$\Pb$ be the two-parameter Wiener measure and $\mathcal{F}$ be the completion of the Borel $\sigma$-algebra of $\Omega$ with respect to $\mathbb{P}$. The probability space $(\Omega,\mathcal{F},\Pb)$ is the canonical space associated with the $d$-dimensional two-parameter Wiener process, that is $\Omega=C_{0,A}(\mathcal{T}^2,\mathbb{R}^d)$ and $\mathcal{F},\Pb$ are given as above. We denote by $\mathbb{F}=\{\mathcal{F}_z,\,\,z\in \mathcal{T}^2\}$ the family of increasing $\sigma$-field generated by the function $\{\omega(r), \, \omega\in \Omega, r\preceq z\}$ and the $\Pb$-null sets. Let us remark that $\mathbb{F}$ satisfies the usual conditions in \cite{CW75}. We now introduce a set similar to the Cameron-Martin space in the one parameter case.
		\begin{align}
			H:=&\Big\{h\in\Omega; \,\text{there exists } \dot{h}^i\in L^2(\mathcal{T}^2), i=1\ldots,d, \text{ such that }\notag\\
			& h^i(z)=\int_{R_z}\dot{h}^i(r)\mathrm{d}r, \text{ for any } z\in\mathcal{T}^2 \text{ and for any }i\Big\}.
		\end{align}
		
		The set $H$ is a Hilbert space with the inner product
		$$
		\langle h_1,h_2\rangle_H=\int_{\mathcal{T}^2}\underset{i=1}{\overset{d}\sum}\dot{h}_1^i(r)\dot{h}_2^i(r)\mathrm{d}r.
		$$
		A Wiener functional is any measurable function defined on $(\Omega,\mathcal{F},\Pb)$. A Wiener functional $F: \Omega\rightarrow \mathbb{R}$ is smooth if there exist some $n\geq 1$ and an infinitely differentiable function $f$ on $\mathbb{R}^n$ such that
		\begin{enumerate}
			\item $f$ and all its derivatives are at most of polynomial growth, 
			\item $F(\omega)=f(\omega(z_1),\ldots,\omega(z_n))$ for some $z_1,\ldots, z_n \in\mathcal{T}^2$.
		\end{enumerate}
		Every smooth functional F is Fr\'echet-differentiable, and the derivative of $F$ in the direction of any vector $h\in H$ is given by
		\begin{align}
			DF(h)=&\underset{j=1}{\overset{d}\sum}\underset{i=1}{\overset{n}\sum}\frac{\partial f}{\partial x_i^j}(\omega(z_1),\ldots,\omega(z_n))h^j(z_i)\notag\\
			=&\int_{\mathcal{T}^2}\underset{j=1}{\overset{d}\sum}\underset{i=1}{\overset{n}\sum}\frac{\partial f}{\partial x_i^j}(\omega(z_1),\ldots,\omega(z_n)) 1_{R_{z_i}}(r)\dot{h}^j(r)\mathrm{d}r.
		\end{align}
		We denote by $\mathcal{D}_{1,2}$ the closed hull of family of smooth functionals with respect to the following norm:
		\begin{equation}
			\left\Vert F\right\Vert _{1,2}^{2}:=\left\Vert F\right\Vert _{L^{2}(P
				)}^{2}+\E[\left\Vert DF\right\Vert _{HS}^2],  \label{normmal1}
		\end{equation}
		where $\|\cdot\|$ is the Hilbert-Schmidt norm defined by
		$$
		\left\Vert DF\right\Vert _{HS}=\Big(\int_{\mathcal{T}^2}\underset{j=1}{\overset{d}\sum}\Big\{\underset{i=1}{\overset{n}\sum}\frac{\partial f}{\partial x_i^j}(\omega(z_1),\ldots,\omega(z_n)) 1_{R_{z_i}(r)}\Big\}^2\mathrm{d}r\Big)^{\frac{1}{2}}.
		$$

		Let $W=\{W(z);\,z\in\mathcal{T}^2\}$ be a Brownian sheet on the above probability space $(\Omega,\mathcal{F}, \Pb)$. Again for each $z\in \mathcal{T}^2$, $\mathcal{F}_z$ is the $\sigma$-algebra generated by the random variables $\{W(\xi);\xi\preceq z\}$ completed with respect to  $\Pb$. We say that a stochastic process $X=\{X(z),z\in \mathcal{T}^2\}$ is adapted if $X(z)$ is $\mathcal{F}_z$-measurable for all $z\in \mathcal{T}^2$. For $h\in L^2(\mathcal{T}^2)$, we define
		$$
		W(h)=\int_{\mathcal{T}^2}h(z)\dot W(\mathrm{d}z).
		$$
		We recall that   $\dot{W}=(\dot{W}^{(1)},\ldots,\dot{W}^{(d)})$ is the mean-zero Gaussian process indexed by the Borel field $\mathcal{B}(\mathcal{T}^2)$ on $\mathcal{T}^2$ with covariance functions
		$$
		\E\Big[\dot{W}^{(i)}(A)\dot{W}^{(j)}(B)\Big]=\delta_{i,j}\text{Leb}(A\cap B),\quad\forall\,A,B\in\mathcal{B}(\mathcal{T}^2).
		$$ 
		where $\text{Leb}(\cdot)$ denotes the Lebesgue measure on $\mathcal{T}^2$ and  $\delta_{i,j}=1$ if $i=j$ and $\delta_{i,j}=0$ otherwise. 
		
		Let $C^\infty_b(\mathbb{R}^n)$ be the space of infinitely differentiable functions with bounded derivatives of all orders. Denote by $\mathcal{S}$ the space of smooth random variables, that is the space of functions $F$ such that there exist an integer $n$ and $f\in C^\infty_b(\mathbb{R}^n)$ such that 
		$$
		F=f(W(h_1),\ldots,W(h_n)).
		$$
		For a smooth $F$ given above, its derivative is defined as a random field on $\mathcal{T}^2$ and given by
		\begin{align}
			D_zF=&\underset{i=1}{\overset{d}\sum}\frac{\partial f}{\partial x_i}(W(h_1),\ldots,W(h_n))h_i(z),
		\end{align}
		and we denote by $\mathcal{D}^{1,2}$ the closure of $\mathcal{S}$ with respect to the seminorm
		\begin{equation}
			\left\Vert F\right\Vert _{1,2}^{2}:=\left\Vert F\right\Vert _{L^{2}(P
				)}^{2}+\E[\left\Vert D_zF\right\Vert_{L^2(\mathcal{T}^2)}^2].  \label{normmal2}
		\end{equation}

		\subsection{\protect\bigskip Main results}\label{secmainres}
		
		In this subsection we present the main result of our paper on the (global)
		existence and uniqueness of strong solutions to the SDE \eqref{SDEWienerSheet}, whose proof will be given in the following sections step by step.
		
		We are coming to our main result:
		
		\begin{theorem}
			\label{Mainresult}Let $b\in L^{\infty }(\mathcal{T}^2\times \mathbb{%
				R}^{d};\mathbb{R}^{d})$. Then for all initial values $x\in \mathbb{R}^{d}$
			there exists a global unique strong solution $X_{\cdot,\cdot }^{x}$ to the SDE \eqref{SDEWienerSheet}. Moreover for all $(s,t)\in\mathcal{T}^2$ the solution $%
			X_{s,t}^{x}$ is Malliavin differentiable and 
			\begin{equation*}
				(x\longmapsto X_{s,t}^{x})\in W_{loc}^{1,2}(\mathbb{R}^{d})\text{.}
			\end{equation*}
		\end{theorem}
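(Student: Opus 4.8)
The plan is to run a Zvonkin--Veretennikov-type programme adapted to the plane. First I would regularise the drift: choose $b_n\in C_b^{\infty}(\mathcal{T}\times\R^d;\R^d)$ with $\sup_n\|b_n\|_{\infty}\le\|b\|_{\infty}$, $b_n\to b$ in $L^p_{loc}(\mathcal{T}\times\R^d)$ for all $p<\infty$ and Lebesgue-almost everywhere. For each $n$ the SDE \eqref{SDEWienerSheet} with drift $b_n$ has a unique strong solution $X^{n,x}=(X^{n,x}_{s,t})$ by the classical Lipschitz theory of Cairoli and Yeh \cite{Ca72,Ye81}, and by the Girsanov theorem for the Brownian sheet applied to the bounded shift $b_n(\cdot,X^{n,x})$ the law of $X^{n,x}_{s,t}$ is absolutely continuous with respect to that of $x+W_{s,t}$, with a density that is bounded on compacts uniformly in $n$ (the exponential density $\exp\bigl(\int_{\mathcal{T}}b_n(\cdot,X^{n,x})\,\mathrm{d}W-\tfrac12\int_{\mathcal{T}}|b_n(\cdot,X^{n,x})|^2\,\mathrm{d}z\bigr)$ has $L^q(\Omega)$-norm bounded uniformly in $n$, for every $q$).

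The core of the argument, and what I expect to be the main obstacle, is to make the subsequent estimates uniform in $n$. Since $b_n$ is smooth, both the first Malliavin derivative $D_{\zeta}X^{n,x}_{s,t}$ and the spatial Jacobian $Y^{n,x}_{s,t}:=\partial_x X^{n,x}_{s,t}$ satisfy linear hyperbolic integral equations on the sub-rectangle $[\zeta,(s,t)]$, respectively $[0,(s,t)]$, with ``coefficient'' $\partial_x b_n(r,X^{n,x}_r)$; as $\|\partial_x b_n\|_{\infty}\to\infty$, a pathwise Gronwall argument over $\mathcal{T}$ is useless. Instead I would iterate these equations and control the arising integrals $\int_{z_1}^{z_2}\partial_x b_n(r,X^{n,x}_r)\,\phi_r\,\mathrm{d}r$ not pathwise but in $L^{2m}(\Omega)$: Proposition~\ref{prop:DavieVarSheet}, obtained through the rectangle-selection algorithm and an integration by parts against the sheet, gives bounds of the type $\E\left[\left|\int_{z_1}^{z_2}\left(b_n(r,X^{n,x}_r+v)-b_n(r,X^{n,x}_r)\right)\mathrm{d}r\right|^{2m}\right]\le C|v|^{2m}|z_2-z_1|^{\gamma m}$ uniformly in $n$, hence, after dividing by $v$ and passing to the limit, a uniform $L^{2m}(\Omega)$-control of the ``averaged coefficient'' acting on adapted fields. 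Feeding this into the iterated linear equations and closing a Gronwall-type estimate in the norm $\sup_{(s,t)}\|\cdot\|_{L^{2m}(\Omega)}$ over a fine grid of rectangles produces $\sup_n\|X^{n,x}_{s,t}\|_{1,2}<\infty$ and $\sup_n\E[|Y^{n,x}_{s,t}|^{2m}]<\infty$, locally uniformly in $x$; the delicate point is propagating the planar variational estimate through the linearised equation while respecting the two-parameter causal structure of the iterated kernels.

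With these uniform bounds at hand, a Malliavin-calculus compactness criterion for square-integrable Wiener-sheet functionals (in the spirit of \cite{MMNPZ13}) shows that $\{X^{n,x}_{s,t}\}_n$ is relatively compact in $L^2(\Omega)$ for each $(s,t)$, so along a subsequence $X^{n,x}_{s,t}\to X^{x}_{s,t}$ in $L^2(\Omega)$ with the limit path-continuous and, as an $L^2$-limit of $\mathcal{F}_{s,t}$-measurable variables, adapted to the filtration generated by $W$. Using the uniform density bound and the splitting $b_n(r,X^{n,x}_r)-b(r,X^{x}_r)=(b_n-b)(r,X^{n,x}_r)+(b(r,X^{n,x}_r)-b(r,X^{x}_r))$ --- the first term handled by $b_n\to b$ in $L^p_{loc}$ together with the density bound, the second by $L^2(\Omega)$-convergence of $X^{n,x}$ and approximation of $b$ by continuous functions --- one passes to the limit in the drift, so $X^{x}$ is a strong solution of \eqref{SDEWienerSheet}; closedness of $D$ then yields $X^{x}_{s,t}\in\mathcal{D}^{1,2}$, and weak compactness of $\{x\mapsto X^{n,x}_{s,t}\}$ in $W^{1,2}_{loc}(\R^d;L^2(\Omega))$ (from the uniform bound on $Y^{n,x}$) yields $(x\mapsto X^{x}_{s,t})\in W^{1,2}_{loc}(\R^d)$.

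It remains to establish pathwise uniqueness and thereby complete the strong well-posedness. For two weak solutions driven by the same sheet with equal boundary data, a Girsanov change of measure removing the drift reduces the claim $\E[\sup_{(s,t)}|X_{s,t}-X'_{s,t}|^{2m}]=0$ to the variational estimate of Proposition~\ref{prop:DavieVarSheet} evaluated along the two solutions, exactly as in Davie \cite{Da07} but with the rectangle-selection algorithm replacing the simplex integration by parts used there. Pathwise uniqueness together with the weak existence established above gives, via the Yamada--Watanabe principle for SDEs in the plane \cite{Ye81,NuYe89}, a measurable transformation $F_{\lambda}$ with $X^{x}=F_{\lambda}[\partial X^{x},W]$ and moreover forces the whole sequence $(X^{n,x})_n$ to converge, so that no subsequence is lost. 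Collecting these steps proves all the assertions of Theorem~\ref{Mainresult}.
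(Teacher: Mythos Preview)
Your outline follows the paper's broad scheme (regularise, uniform Malliavin bounds, compactness, identify the limit), but two of the central steps are not set up the way they need to be, and your uniqueness argument differs from the paper's.

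\textbf{Compactness requires more than a $\mathcal{D}^{1,2}$ bound.} The criterion used (Theorem~\ref{CompactnessCriterion}) needs \emph{two} uniform estimates: the $L^2$ bound on $D_{\cdot}X^{n,x}_{s,t}$ \emph{and} a fractional H\"older estimate
\[
\sup_n\E\bigl[\|D_{r,u}X^{n,x}_{s,t}-D_{\bar r,\bar u}X^{n,x}_{s,t}\|^2\bigr]\le C(|r-\bar r|+|u-\bar u|)^{\alpha}.
\]
A uniform $\|\cdot\|_{1,2}$ bound alone gives only weak compactness, not strong $L^2$ compactness. The H\"older bound is the harder half of Lemma~\ref{lemmainres1} and its proof drives much of the technical work (the decomposition of $D_{r,u}X-D_{\bar r,\bar u}X$ into four series over the simplices $\nabla_k,\widetilde\nabla_k,\Delta_{k+n},\widetilde\Delta_{k+n}$, Lemmas~\ref{lemma:Shuffle}--\ref{lemme:Shuffle2}, and Corollary~\ref{corol:DavieVarSheet}). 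Your proposal does not mention this estimate.

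\textbf{How Proposition~\ref{prop:DavieVarSheet} is actually used.} The route you describe---bounding $\E\bigl[|\int(b_n(r,X_r+v)-b_n(r,X_r))\,\mathrm{d}r|^{2m}\bigr]$, dividing by $v$, and ``closing a Gr\"onwall-type estimate''---is not how the bounds are obtained, and I do not see how to make that closure rigorous: at each step of the iteration the integrand $b'_n(r,X_r)$ is coupled to an adapted field, and Proposition~\ref{prop:DavieVarSheet} says nothing about such products directly. The paper instead (i) writes $D_{u,v}X^{n,x}_{s,t}$ as the explicit Picard series in products $\prod_i b'_n(s_i,t_i,X_{s_i,t_i})$; (ii) applies Girsanov (Theorem~\ref{Girsanov}) to replace $X$ by $x+W$ at the cost of a Dol\'eans--Dade factor, removed by Cauchy--Schwarz; (iii) raises to the fourth power, decomposes via the shuffle lemmas, and only then applies Proposition~\ref{prop:DavieVarSheet} to $\E\bigl[\prod_i \partial_{x_{l_i}}b^{(l_{i-1})}(s_i,t_i,W_{s_i,t_i})\bigr]$, where it is stated. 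The Girsanov step is essential and is absent from your sketch.

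\textbf{Uniqueness and limit identification.} You propose proving pathwise uniqueness by a Davie-type variational estimate and then invoking Yamada--Watanabe. The paper takes a shorter route: weak uniqueness is immediate from Girsanov (Proposition~\ref{UniqueWeakSolution}); then Lemma~\ref{Weak} shows $\varphi(X^{n,x}_{s,t})\to\E[\varphi(X^x_{s,t})\mid\mathcal{F}_{s,t}]$ weakly, and combining this with the strong $L^2$ limit yields the ``transformation property'' $\varphi(\E[X^x_{s,t}\mid\mathcal{F}_{s,t}])=\E[\varphi(X^x_{s,t})\mid\mathcal{F}_{s,t}]$, forcing $X^x_{s,t}$ to be $\mathcal{F}_{s,t}$-measurable. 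Strong uniqueness then follows from Girsanov together with the $\mathcal{S}$-transform, without a separate pathwise-uniqueness argument. Your approach could in principle be made to work, but it would require establishing a genuine two-parameter Davie uniqueness estimate for $b\in L^\infty$, which is not proved in the paper and is a substantial project in itself.
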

		\begin{remark}
			\bigskip In a forthcoming paper (see \cite{BMPP23}), it is even shown (under
			the conditions of Theorem \ref{Mainresult}) that%
			\begin{equation*}
				(x\longmapsto X_{s,t}^{x})\in W_{loc}^{2,2}(\mathbb{R}^{d})\text{ }a.e.
			\end{equation*}
		\end{remark}
		
		The idea for the proof of the existence and uniqueness result in Theorem \ref%
		{Mainresult} goes back to \cite{MBP10} and \cite{MMNPZ13} in the
		one-parameter case. The proof is based on a compactness criterion for square
		integrable functionals of the Wiener sheet from Malliavin calculus (see
		Theorem \ref{CompactnessCriterion} in the Appendix \ref{Appen}) and
		consists of the following steps:  
		
		\textbf{\underline{Step 1}:} Let $b\in L^{\infty }(\mathcal{T}^2\times \mathbb{R}%
		^{d};\mathbb{R}^{d})$ and $b_{n}:\mathcal{T}^2
		\times \mathbb{R}^{d}\longrightarrow \mathbb{R}^{d},n\geq 1$ be a sequence
		of compactly supported smooth vector fields such that%
		\begin{equation*}
			b_{n}(s,t,x)\underset{n\rightarrow \infty }{\longrightarrow }b(s,t,x)\text{ }%
			(s,t,x)\text{-a.e.}
		\end{equation*}%
		and 
		\begin{equation*}
			\sup_{n\geq 1}\left\Vert b_{n}\right\Vert _{\infty }<\infty \text{.}
		\end{equation*}%
		Further, let $X_{\cdot,\cdot }^{x,n},n\geq 1$ be the global unique strong
		solutions to the SDE (\ref{SDEWienerSheet}) associated with the vector
		fields $b_{n},n\geq 1$. By applying the Malliavin derivative $D_{\cdot,\cdot }$ in
		the direction of the Wiener sheet $W_{\cdot,\cdot }$ (see Section \ref{mallcalgaus}), we obtain
		for $0\leq u\leq s,0\leq v\leq t$ the linear equation%
		\begin{equation*}
			D_{u,v}X_{s,t}^{x,n}=\mathcal{I}_{d\times
				d}+\int_{u}^{s}\int_{v}^{t}b'_{n}(r,l,X_{r,l}^{x,n})D_{u,v}X_{r,l}^{x,n}\mathrm{d}l\mathrm{d}r\text{,}
		\end{equation*}%
		where $\mathcal{I}_{d\times d}\in \mathbb{R}^{d\times d}$ is the unit matrix and $%
		b'_{n}$ the spatial Fr\'{e}chet derivative of $b_{n}$. Using
		Picard iteration, we get the following representation of the Malliavin
		derivative:%
		\begin{equation*}
			D_{u,v}X_{s,t}^{x,n}=\mathcal{I}_{d\times d}+\sum_{n\geq
				1}\int_{
				\begin{subarray}{}
					u<r_n<\cdots<r_1<s\\
					v<l_n<\cdots<l_1<t
			\end{subarray} }b'_{n}(r_{n},l_{n},X_{r_{n},l_{n}}^{x,n})\cdots b'_{n}(r_{1},l_{1},X_{r_{1},l_{1}}^{x,n})\mathrm{d}l_{n}\mathrm{d}r_{n}\ldots \mathrm{d}l_{1}\mathrm{d}r_{1}
		\end{equation*}%
		in $L^{1}(\Omega \times \left[ u,T\right] \times \left[ v,T\right] )$ $(u,v)
		$-a.e.
		
		\textbf{\underline{Step 2}:} 
		Using the latter representation of the Malliavin derivative combined
		with Girsanov's theorem for the Wiener sheet (see Theorem \ref{Girsanov} in
		the Appendix \ref{Appen}) and an integration by parts argument, 
		we establish the estimate given in Proposition \ref%
		{prop:DavieVarSheet}. As mentioned earlier, such a bound was first obtained by Davie in \cite{Da07} in the one dimensional case and further generalised in \cite{MMNPZ13} to the $d$-dimensional case. Due to the properties of integral on the plane, one cannot simply invoke the above results to prove the estimate in Proposition \ref%
		{prop:DavieVarSheet}. To overcome this inherent difficulty, we use an algorithm for the selection of certain rectangles (see for example Section \ref{algo1}) in the plane along with Lemma \ref{lemmainres1} to verify the bounds \eqref{(i)} and \eqref%
		{(ii)} for $D_{u,v}X_{s,t}^{x,n}$ with respect to the compactness criterion
		in Theorem \ref{CompactnessCriterion}. So it follows that there exists a
		subsequence $n_{k}$ (depending on $s,t$) such that $X_{s,t}^{x,n_{k}}$
		converges to a random variable $X_{s,t}^{x}$ in $L^{2}(\Omega ;\mathbb{R}%
		^{d})$.
		
		\textbf{\underline{Step 3}:} By using the existence of a unique weak solution (see Proposition \ref%
		{UniqueWeakSolution} in the Appendix), we can finally argue that $%
		X_{s,t}^{x,n}$ converges to $X_{s,t}^{x}$ in $L^{2}(\Omega ;\mathbb{R}^{d})$
		for all $s,t$ and that $X_{\cdot,\cdot }^{x}$ is indeed the unique strong solution
		to the SDE \eqref{SDEWienerSheet}.

		\section{Basic estimates and preliminary results}\label{secbasest}
		In this section we establish the basic estimates and auxiliary results which are used in \textbf{Step 2} of the proof of Theorem \ref{Mainresult} for the derivation of certain bounds of the Malliavin derivative of the approximating solutions with respect to the compactness criterion
		in Theorem \ref{CompactnessCriterion} (see Appendix \ref{Appen}).  

		\subsection{An estimation for finite-dimensional distributions of the real valued Brownian sheet} 
		The main result of this section is Proposition \ref{prop:DavieVarSheet} which provides an estimate of the distribution of finite-dimensional projections of the Brownian sheet. It can be regarded as a non trivial generalization of the bound in \cite[Proposition 2.2]{Da07} with respect the Wiener process.
		
		\begin{prop}\label{prop:DavieVarSheet}
			Let $W=(W^{(1)},\ldots,W^{(d)})$ be a $\R^d$-valued Brownian Sheet starting from the origin and $b=(b^{(1)},\,\ldots,\,b^{(d)})$ be a compactly supported continuously differentiable $\R^d$-valued function  on $\mathcal{T}^2\times\R^d$.  Then there exists universal positive constant $C_0$ such that, for any $n\in\N$, any $l_0,l_1,\ldots,l_n\in\{1,\ldots,d\}$, any finite-dimensional marginal $(W_{s^{}_1,t^{}_1},\ldots,W_{s^{}_n,t^{}_n})$, $n\geq2$ of $W$ satisfying $s_i\neq s_j$ and $t_i\neq t_j$, for each $(i,j)$ with $i\neq j$, one has 
			\begin{align}\label{EqDavieVarSheetdD}
				\Big|\E\Big[\prod\limits_{i=1}^n\frac{\partial b^{(l_{i-1})}}{\partial x_{l_i}}(s^{}_i,t^{}_i,W_{s^{}_i,t^{}_i})\Big]\Big|\leq C_0^n\Vert b\Vert_{\infty}^n\prod\limits_{i=1}^n(s_{\gamma(i)}-s_{\gamma(i-1)})^{-1/2}(t_{\theta(i)}-t_{\theta(i-1)})^{-1/2},
			\end{align}
			where $s_{\gamma(0)}=0=t_{\theta(0)}$ and  $(\gamma,\theta)$ is a pair of permutations on $\{1,\cdots,n\}$ such that $s^{}_{\gamma(1)}<\ldots<s^{}_{\gamma(n)}$ and $t^{}_{\theta(1)}<\ldots<t^{}_{\theta(n)}$. 
		\end{prop}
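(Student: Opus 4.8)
\medskip

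\noindent\textbf{Overall strategy.} The plan is to mimic Davie's one‑parameter argument (\cite[Proposition 2.2]{Da07}, generalized in \cite{MMNPZ13}), but adapt it to the two‑parameter setting where one cannot pass directly to an iterated integral over a simplex. First I would use the joint Gaussian density of the marginal $(W_{s_1,t_1},\dots,W_{s_n,t_n})$. After relabelling via the permutations $\gamma,\theta$ (so that the $s$‑coordinates and $t$‑coordinates are each increasing), the key observation is that the Brownian sheet has a \emph{conditional independence / Markov‑type} structure along each coordinate axis: conditioning on the variables with smaller indices, the law of $W_{s_{\gamma(k)},t_{\theta(k)}}$ is Gaussian with a variance controlled by the \emph{increments} $(s_{\gamma(k)}-s_{\gamma(k-1)})$ and $(t_{\theta(k)}-t_{\theta(k-1)})$. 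The strategy is to write $\E[\prod_i \partial_{x_{l_i}} b^{(l_{i-1})}(s_i,t_i,W_{s_i,t_i})]$ as an integral against this density, integrate by parts to move each derivative $\partial/\partial x_{l_i}$ onto the Gaussian kernel, and bound the resulting kernel‑derivatives by the product of inverse square roots of the increments — picking up a factor $\|b\|_\infty$ for each of the $n$ factors of $b$ that survive after integration by parts.

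\medskip

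\noindent\textbf{Key steps in order.}
\begin{enumerate}
\item Relabel the points using $\gamma,\theta$ so that the first coordinates are $s_{\gamma(1)}<\dots<s_{\gamma(n)}$ and the second are $t_{\theta(1)}<\dots<t_{\theta(n)}$; note $\gamma\ne\theta$ in general, and this mismatch is precisely the obstruction that forces the rectangle‑selection algorithm announced in Section \ref{algo1}. Write the expectation as a $dn$‑dimensional Gaussian integral over $(w_1,\dots,w_n)\in(\R^d)^n$ against the density $p(w_1,\dots,w_n)$ of the marginal.
\item Establish the factorization/recursion for $p$. Using Definition \ref{DefBSheet}(3), express the covariance structure of the sheet and, via the chain rule / Cholesky‑type decomposition along the two increasing orders, write $p$ as a product over $k$ of one‑step Gaussian transition kernels whose covariance at level $k$ has both eigenvalue‑scale controlled by $(s_{\gamma(k)}-s_{\gamma(k-1)})(t_{\theta(k)}-t_{\theta(k-1)})$. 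This is where the rectangle‑selection algorithm enters: one has to choose, at each stage, a rectangle built from consecutive $s$‑ and $t$‑increments on which the relevant increment of $W$ is independent of the previously revealed $\sigma$‑algebra.
\item Integrate by parts in the $w_i$ variables. Each factor $\frac{\partial b^{(l_{i-1})}}{\partial x_{l_i}}(s_i,t_i,w_i)$ is integrated by parts so that the derivative falls on the Gaussian density; this replaces $\partial_{x_{l_i}} b^{(l_{i-1})}$ by $b^{(l_{i-1})}$ times a logarithmic‑derivative factor of $p$. Bound $|b^{(l_{i-1})}|\le\|b\|_\infty$ for each $i$, yielding the $\|b\|_\infty^n$.
\item Estimate the Gaussian kernel derivatives. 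Using the standard bound $\int |\partial_w \text{(Gaussian of variance }\sigma^2)|\,dw \lesssim \sigma^{-1}$ and the product structure from Step 2, each factor contributes $C\,(s_{\gamma(i)}-s_{\gamma(i-1)})^{-1/2}(t_{\theta(i)}-t_{\theta(i-1)})^{-1/2}$; collect a universal constant $C_0$ per factor, giving $C_0^n$.
\item Combine Steps 3–4 and use $s_{\gamma(0)}=t_{\theta(0)}=0$ to obtain \eqref{EqDavieVarSheetdD}; track that the constant is indeed universal (independent of $n$, the $l_i$, and the configuration of points).
\end{enumerate}

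\medskip

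\noindent\textbf{Main obstacle.} The hard part is Step 2, namely handling the fact that on the plane the two orderings $\gamma$ and $\theta$ of the $n$ points need not coincide, so there is no single ``time'' along which the sheet is a Markov process and no direct reduction to a simplex as in \cite{Da07}. Controlling the conditional law at each stage requires the combinatorial algorithm for selecting the right rectangles (so that each newly revealed increment of $W$ is genuinely independent of the past $\sigma$‑field, per Definition \ref{DefBSheet}(3)) and a careful bookkeeping that each $s$‑gap $(s_{\gamma(i)}-s_{\gamma(i-1)})$ and each $t$‑gap $(t_{\theta(i)}-t_{\theta(i-1)})$ is used exactly once across the product. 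The integration‑by‑parts and Gaussian‑tail estimates in Steps 3–4 are then essentially the one‑parameter computation applied coordinate‑by‑coordinate, and I expect them to be routine once the right decomposition of $p$ from Step 2 is in place.
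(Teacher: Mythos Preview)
Your high–level outline is on target, and you correctly flag Step~2 as the crux. But the mechanism you describe for Step~2 — factorizing the joint density $p$ as ``a product over $k$ of one-step Gaussian transition kernels'' obtained by a Cholesky-type decomposition along the two orders — is not what actually works, and as stated it has a gap. Because $\gamma\ne\theta$ in general, the $n$ points are not totally ordered for $\preceq$, so there is no single index $k$ along which you can condition sequentially and get a clean product of $n$ transition kernels with the required variances. The paper does \emph{not} condition sequentially: it writes $W_{s_i,t_{\sigma(i)}}=\sum_{k\le i}\sum_{\ell\le\sigma(i)}Z_{k,\ell}$ using the full $n\times n$ grid of independent rectangular increments $Z_{k,\ell}$ and integrates in $\R^{dn^2}$ against the product density $\prod_{k,\ell}E_{k,\ell}(z_{k,\ell})$.

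The key idea you are missing is the \emph{substitution-variable} trick. When you try to integrate by parts in $z_{i,\sigma^{-1}(i)}$ in row $i$, this variable may appear in several of the $b$-factors (precisely when $(i,\sigma^{-1}(i))$ lies in the span of some later grid point). The paper first performs a linear change of variables using a neighbouring variable $z_{i,\tau_i}$ in the same row (the ``substitution variable'') to absorb $z_{i,\sigma^{-1}(i)}$ out of every other factor; only then does integration by parts hit a single factor $b$ and a single Gaussian kernel. Each such step produces two terms (one with $B_{i,\gamma_i}$, one with $B_{i,\tau_i}$), so the expectation splits into at most $2^n$ summands, each an integral with exactly $n$ \emph{non-overlapping} derivative-of-Gaussian factors $B_{i,j}$. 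The non-overlapping property is precisely what guarantees that each $s$-gap $(s_i-s_{i-1})$ and each $t$-gap $(t_j-t_{j-1})$ is used exactly once when you apply the bound $\int_{\R^d}|B_{i,j}|\le C(s_i-s_{i-1})^{-1/2}(t_j-t_{j-1})^{-1/2}$. Your Step~4 bookkeeping (``each gap used exactly once'') is the right target, but it does not follow from a transition-kernel factorization; it follows from this combinatorial construction of non-overlapping rectangles, which also requires checking (Lemmas~\ref{SubstitutionVariable}--\ref{Shift}) that the needed left/right shifts of integration and substitution variables are always available.
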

		
		\begin{proof}
			See Section \ref{proofprere}.
		\end{proof}

		\begin{corollary}\label{corol:DavieVarSheet}
			Let W be an $\R^d$-valued Brownian Sheet starting from the origin and $b:\,\mathcal{T}^2\times\R^d\to\R^d$ be compactly supported continuously differentiable function. There is a positive constant $C_1$ independent of $b$ such that for any $n\in\N$, any $l_0,l_1,\ldots,l_n\in\{1,\ldots,d\}$, any $0\leq\bar{r}< r<s\leq T$ and any $0\leq\bar{u}<u<t\leq T$,  
			one has
			\begin{align} 
				&\notag	\int_{\begin{subarray}{c}
						r<s_{\gamma(n)}<\cdots<s_{\gamma(1)}<s\\
						u<t_{\theta(n)}<\cdots<t_{\theta(1)}<t	
					\end{subarray}
				}\,\Big|\E\Big[\prod\limits_{i=1}^n \frac{\partial b^{(l_{i-1})}}{\partial x_{l_i}}\left(s_i,t_{i},W_{s_i,t_{i}}\right)\Big]\Big| \mathrm{d}s_1\mathrm{d}t_1\ldots \mathrm{d}s_n\mathrm{d}t_n\\
				\leq& \frac{C_1^n\Vert b\Vert^n_{\infty}(s-r)^{n/2}(t-u)^{n/2}}{ \Gamma\left(\frac{n+1}{2}\right)^2},\label{eq:MMNPZmD}
			\end{align}
			\begin{align} 
				&\int_{\begin{subarray}{c}
						\bar{r}<s_{\sigma(k+n)}<\cdots<s_{\sigma(k+1)}<r<s_{\sigma(k)}<\cdots<s_{\sigma(1)}<s\\
						\bar{u}<t_{\pi(k+n)}<\cdots<t_{\pi(k+1)}<t_{\pi(k)}<\cdots<t_{\pi(1)}<t	
					\end{subarray}
				}\,\Big|\E\Big[\prod\limits_{i=1}^{k+n} \frac{\partial b^{(l_{i-1})}}{\partial x_{l_i}}\left(s_i,t_{i},W_{s_i,t_{i}}\right)\Big]\Big|\mathrm{d}s_1\mathrm{d}t_1\ldots \mathrm{d}s_{k+n}\mathrm{d}t_{k+n}\nonumber \\
				&	\leq \frac{C_1^{k+n}\Vert b\Vert^{k+n}_{\infty}(r-\bar{r})^{n/2}(s-r)^{k/2}(t-\bar{u})^{(k+n)/2}}{ \Gamma\left(\frac{n+1}{2}\right)\Gamma\left(\frac{k+1}{2}\right)\Gamma\left(\frac{k+n+1}{2}\right)},\label{eq:MMNPZmD2}
			\end{align}
			and
			\begin{align} 
				&	\int_{\begin{subarray}{c}
						\bar{r}<s_{\sigma(k+n)}<\cdots<s_{\sigma(k+1)}<s_{\sigma(k)}<\cdots<s_{\sigma(1)}<s\\
						\bar{u}<t_{\pi(k+n)}<\cdots<t_{\pi(k+1)}<u<t_{\pi(k)}<\cdots<t_{\pi(1)}<t	
					\end{subarray}
				}\,\Big|\E\Big[\prod\limits_{i=1}^{k+n} \frac{\partial b^{(l_{i-1})}}{\partial x_{l_i}}\left(s_i,t_{i},W_{s_i,t_{i}}\right)\Big]\Big|\mathrm{d}s_1\mathrm{d}t_1\ldots \mathrm{d}s_{k+n}\mathrm{d}t_{k+n}\notag \\
				&	\leq \frac{C_1^{k+n}\Vert b\Vert^{k+n}_{\infty}(s-\bar{r})^{(k+n)/2}(u-\bar{u})^{n/2}(t-u)^{k/2}}{ \Gamma\left(\frac{k+n+1}{2}\right)\Gamma\left(\frac{n+1}{2}\right)\Gamma\left(\frac{k+1}{2}\right)},\label{eq:MMNPZmD3}
			\end{align}
			where $\Gamma$ is the usual Gamma function, $(\gamma,\theta)$ is a couple of permutations on $\{1,\ldots,n\}$ and $(\sigma,\pi)$ is a  couple of permutations on $\{1,\ldots,k+n\}$.  
		\end{corollary}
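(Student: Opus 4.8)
The plan is to deduce the three integral bounds from the pointwise estimate \eqref{EqDavieVarSheetdD} of Proposition \ref{prop:DavieVarSheet} essentially by integrating the right-hand side over the relevant ordered simplices in $\mathcal{T}^n$ (respectively $\mathcal{T}^{k+n}$), after a change of variables that decouples the $s$-coordinates from the $t$-coordinates. First I would observe that in each of the three integrals the domain of integration is a product of an ordered simplex in the $s$-variables and an ordered simplex in the $t$-variables; consequently, after relabelling the coordinates according to the permutations $(\gamma,\theta)$ (resp. $(\sigma,\pi)$) so that $s_1<s_2<\cdots$ and $t_1<t_2<\cdots$, the bound \eqref{EqDavieVarSheetdD} takes the clean product form $C_0^n\Vert b\Vert_\infty^n\prod_{i=1}^n (s_i-s_{i-1})^{-1/2}(t_i-t_{i-1})^{-1/2}$ with $s_0=0=t_0$ replaced by the appropriate left endpoint of the integration range. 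Crucially, since the integrand (after taking absolute values) is symmetric under this relabelling and the three integral domains are themselves invariant in the appropriate sense, no combinatorial factor is lost: it suffices to bound the integral over a single canonically ordered cell.

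Next I would carry out the $s$-integration and the $t$-integration separately, since the bound factorizes. For \eqref{eq:MMNPZmD}, the $s$-part is $\int_{r<s_n<\cdots<s_1<s}\prod_{i=1}^n(s_i-s_{i+1})^{-1/2}\,\mathrm{d}s_1\cdots\mathrm{d}s_n$ (with $s_{n+1}=r$), which is a standard iterated fractional integral: substituting $\tau_i=s_i-r$ and using the Beta-integral identity $\int_0^{a}(a-\tau)^{\alpha-1}\tau^{\beta-1}\mathrm{d}\tau=a^{\alpha+\beta-1}B(\alpha,\beta)$ recursively yields a value of the form $(s-r)^{n/2}\Gamma(1/2)^n/\Gamma(n/2+1)$, and similarly for the $t$-part with $(t-u)^{n/2}$. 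Collecting the Gamma factors and absorbing $\Gamma(1/2)^{2n}=\pi^n$ and the constant $C_0^n$ into a new constant $C_1^n$ gives \eqref{eq:MMNPZmD}; note $\Gamma(n/2+1)$ and $\Gamma((n+1)/2)$ differ only by a factor controlled by an exponential in $n$, which can again be absorbed into $C_1$. The estimates \eqref{eq:MMNPZmD2} and \eqref{eq:MMNPZmD3} are handled in the same way, except that now the simplex in each variable is split at an intermediate point ($r$ in the $s$-variable and $t_{\pi(k)}$-threshold, resp. $u$, in the $t$-variable), so the iterated Beta-integral computation must be done in two stages — first integrating the $n$ "outer" variables over $(\bar r, r)$ (resp. $(\bar u,\cdot)$), then the $k$ "inner" ones over $(r,s)$ — producing the three Gamma functions $\Gamma((n+1)/2)$, $\Gamma((k+1)/2)$, $\Gamma((k+n+1)/2)$ in the denominator and the corresponding powers of the sub-interval lengths in the numerator.

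The main obstacle I anticipate is bookkeeping rather than conceptual: one must check carefully that the ordering constraints coming from the permutations $(\sigma,\pi)$ in \eqref{eq:MMNPZmD2}–\eqref{eq:MMNPZmD3} are genuinely compatible with splitting the integral as a product of an $s$-simplex integral and a $t$-simplex integral — i.e. that the coupling "$s_{\sigma(k+1)}<r<s_{\sigma(k)}$" together with "$t_{\pi(k+1)}<t<t_{\pi(k)}$" does not force a joint constraint linking a given $s_i$ to the same-indexed $t_i$. In fact it does not, because in \eqref{EqDavieVarSheetdD} the two permutations $\gamma$ and $\theta$ are independent, so the worst-case bound is obtained by taking the supremum of the right-hand side over all orderings, which is exactly the product of the two separately-maximized simplex integrals; this is the point where the plane case genuinely differs from the one-parameter argument of \cite{Da07} and one has to invoke the full strength of Proposition \ref{prop:DavieVarSheet}. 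A secondary technical point is to make the dependence of $C_1$ on $d$ (through $C_0$) and its independence of $b$, $n$, $k$ explicit, which follows since all the Beta-integral constants are universal and $\Vert b\Vert_\infty$ has been pulled out.
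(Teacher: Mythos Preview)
Your proposal is correct and follows essentially the same route as the paper: apply Proposition~\ref{prop:DavieVarSheet}, factorize the resulting bound into an $s$-part and a $t$-part, and evaluate each over the relevant ordered simplex via iterated Beta/Dirichlet integrals (the paper phrases this as the multivariate Beta--Gamma identity, citing \cite[Lemma 4.3]{Re14}). The one detail you leave implicit is how, in \eqref{eq:MMNPZmD2}--\eqref{eq:MMNPZmD3}, the ``straddling'' increment $(s_{\sigma(k)}-s_{\sigma(k+1)})^{-1/2}$ is decoupled across the break at $r$ (resp.\ $u$); the paper handles this with the elementary bound $(s_{\sigma(k)}-s_{\sigma(k+1)})^{-1/2}\le (r-\bar r)^{1/2}(r-s_{\sigma(k+1)})^{-1/2}(s_{\sigma(k)}-r)^{-1/2}$, after which the two sub-simplices genuinely separate --- you should make this step explicit when you write out the argument.
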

		
		\begin{proof}
			See Section \ref{proofprere}.
		\end{proof}
		We also need the following auxiliary results whose proofs are deferred to Section \ref{proofprere}.
		
		\begin{lemma}\label{lemma:Shuffle}
			Let $k,\,m\in\N$, $T>0$, $\N_k=\{1,\ldots,k\}$, $\mathcal{T}=[0,T]$, $0\leq r<s\leq T$, $0\leq u<t\leq T$ and let $\mathcal{P}_k$  denote the set of permutations on $\N_k$.  
			Consider the set
			\begin{align*}
				\nabla^{(k)}_{r, s					}=\{(s_1,\ldots,s_k)\in\mathcal{T}^{k}:\,r<s_k<\ldots<s_1<s\}.
			\end{align*}
			Then, for any $f\in L^1(\mathcal{T})$ and $m\in\N$,
			\begin{align*}
				&\Big(\int_{
					\nabla^{(k)}_{ r, s			}}\int_{\nabla^{(k)}_{ u, t			}
				}\,\prod\limits_{i=1}^kf(s_i,t_{i})\,\mathrm{d}s_1\ldots\mathrm{d}s_k \mathrm{d}t_1\ldots\mathrm{d}t_k\Big)^m\\=&\sum\limits_{\sigma,\gamma\in\widehat{\mathcal{P}}_{km}}\int_{\nabla^{(mk,\sigma)}_{
						r, s
				}}\int_{\nabla^{(mk,\gamma)}_{
						u, t
				}}\,\prod\limits_{i=1}^{mk}f(s_i,t_{i})\,\mathrm{d}s_1\ldots\mathrm{d}s_{mk} \mathrm{d}t_1\ldots\mathrm{d}t_{mk}.
			\end{align*}
			where
			\begin{align*}
				\widehat{\mathcal{P}}_{km} =\Big\{\sigma\in\mathcal{P}^{}_{km}:\,\sigma(1+ik)<\ldots<\sigma((1+i)k),\text{ for all }i\in\{0,1,\ldots,m-1\}\Big\}
			\end{align*}
			and
			\begin{align*}
				\nabla^{(mk,\sigma)}_{
					r, s
				}=\{(s_1,\ldots,s_{mk})\in \mathcal{T}^{mk}:\,r<s_{\sigma^{-1}(mk)}<\ldots<s_{\sigma^{-1}(1)}<s\}.
			\end{align*}
		\end{lemma}
		\begin{lemma}\label{lemme:Shuffle2}
			For $k,\,\ell,\,m\in\N$, define the sets
			\begin{align*}
				\N^{\ast}_{mk}:=\{\xi_{i,j}:\,i=1,\ldots,m,\,j=1,\ldots,k\} 
			\end{align*}
			and
			\begin{align*}
				\N^{\ast\ast}_{m\ell}:=\{\zeta_{i,j}:\,i=1,\ldots,m,\,j=1,\ldots,\ell\},
			\end{align*}
			where $\xi_{i,j}=j+(i-1)(k+\ell)$ and $\zeta_{i,j}=k+j+(i-1)(k+\ell)$. Let $\mathcal{P}^{\ast}_{mk}$ (respectively $\mathcal{P}^{\ast\ast}_{m\ell}$) denote the set of permutations on $\N^{\ast}_{mk}$ (respectively $\N^{\ast\ast}_{m\ell}$) and define $\widehat{\mathcal{P}}^{\ast}_{mk}$ and $\widehat{\mathcal{P}}^{\ast\ast}_{m\ell}$ as
			\begin{align*}
				\widehat{\mathcal{P}}^{\ast}_{mk}=\{\pi\in\mathcal{P}^{\ast}_{mk}:\,\pi(\xi_{i,k})<\ldots<\pi(\xi_{i,1})\text{ for all }i=1,\ldots,m\}
			\end{align*}
			and
			\begin{align*}
				\widehat{\mathcal{P}}^{\ast\ast}_{m\ell}=\{\rho\in\mathcal{P}^{\ast}_{m\ell}:\,\rho(\zeta_{i,\ell})<\ldots<\rho(\zeta_{i,1})\text{ for all }i=1,\ldots,m\}.
			\end{align*}
			For $0\leq\bar r<r<s\leq T$,
			we consider the following subset of $\nabla_{\bar r,s}^{(k+\ell)}$:
			\begin{align*}
				\Delta^{(k+\ell)}_{\bar r,r,s}=&\{(s_1\ldots,s_{k+\ell}):\,\bar r<s_{k+\ell}<\ldots<s_{k+1}<r<s_{k}<\ldots<s_{1}<s\}.
			\end{align*}
			Then for any $0\leq \bar u<t\leq T$ and $f\in L^1(\mathcal{T}\times\mathcal{T})$,
			\begin{align*}
				&	\Big(\int_{
					\Delta_{\bar r,r,s}^{(k+\ell)}
				}\int_{\nabla_{\bar u,t}^{(k+\ell)}}\prod\limits_{j=1}^{k+\ell}f(s_j,t_{j})\,\mathrm{d}t_1\ldots\mathrm{d}t_{k+\ell}\mathrm{d}s_1\ldots\mathrm{d}s_{k+\ell}\Big)^m\\&=\sum\limits_{
					\begin{subarray}{c}
						(\pi,\rho)\in\widehat{\mathcal{P}}^{\ast}_{mk}\times\widehat{\mathcal{P}}^{\ast\ast}_{m\ell}\\\sigma\in\widehat{\mathcal{P}}_{m(k+\ell)}
					\end{subarray}
				}  \int_{\Delta_{\bar r,r,s}^{(m,k,\ell,\pi,\rho)}}\int_{\nabla_{\bar u,t}^{(m(k+\ell),\sigma)}}\prod\limits_{j=1}^{m(k+\ell)}f(s_j,t_{j})\,\mathrm{d}t_1\ldots\mathrm{d}t_{m(k+\ell)}\mathrm{d}s_1\ldots\mathrm{d}s_{m(k+\ell)},
			\end{align*}
			where  
			\begin{align*}
				&\Delta_{\bar r,r,s}^{(m,k,\ell,\pi,\rho)}\\ :=&\Big\{(s_1,\ldots,s_{m(k+\ell)}):\,\bar r<s_{\rho^{-1}(\zeta_{m,\ell})}<\ldots<s_{\rho^{-1}(\zeta_{m,1})}<s_{\rho^{-1}(\zeta_{m-1,\ell})}<\ldots< s_{\rho^{-1}(\zeta_{1,1})}\text{ }\\&\qquad<r<s_{\pi^{-1}(\xi_{m,k})}<\ldots<s_{\pi^{-1}(\xi_{m,1})}<s_{\pi^{-1}(\xi_{m-1,k})}<\ldots<s_{\pi^{-1}(\xi_{1,1})}<s\Big\}.
			\end{align*}
		\end{lemma}

		In the next subsection, we explain for the convenience of the reader the main idea of the proof of Proposition \ref{prop:DavieVarSheet}.
		\subsection{ An algorithm for the selection of certain rectangles in the plane for $d=1$}\label{algo1}
		The idea introduced in this subsection is based on an algorithm for the selection of certain rectangles in the plane. The selection of these rectangles (denoted by $z_{i,j}$, see below) is essential to prevent the occurrence of ``exploding" terms in the context of an integration by parts argument. We now aim to explain or illustrate this approach. We assume that $d=1$ and recall that we want to derive an estimate of the form%
		\begin{equation}
			\Big|\mathbb{E}\Big[ \prod\limits_{i=1}^{n}f_{i}^{\prime
			}(W_{s_{i},t_{\sigma (i)}})\Big] \Big|\leq
			C^{n}\prod\limits_{i=1}^{n}\left\Vert f_{i}\right\Vert _{L^{\infty }(%
				\mathbb{R}^{d})}\prod\limits_{i=1}^{n}((s_{i}-s_{i-1})(t_{i}-t_{i-1})^{-1/2}%
			\text{,}  \label{Estimate}
		\end{equation}%
		where $C$ is a constant, $\sigma
		\in S_{n}$ is a permutation on $\left\{ 1,\ldots,n\right\} $, $f_{i}^{\prime }$ are the derivatives of compactly supported smooth
		functions $f_{i}:\mathbb{R}^{d}\longrightarrow \mathbb{R},i=1,\ldots,n$ , $%
		0<s_{1}<\ldots<s_{n}$, $0<t_{1}<\ldots<t_{n}$. 

	Consider a rectangle and let us have a look at the following grid points of this rectangle 
			\begin{equation*}
				\mathcal{R}_{\tau _{n,\sigma }}:=\left\{ \left( s_{i},t_{\sigma (i)}\right)
				:i=1,\ldots,n\right\} \cup \left\{ (0,0)\right\} \text{.}
			\end{equation*}%
			Suppose for example that $n=3, \sigma(1)=2, \sigma(2)=1$ and $\sigma(3)=3$ then the following figure is a visualisation of the rectangle with grid points 
			$$
			\mathcal{R}_{\tau _{n,\sigma }}:=\left\{ (0,0);\left( s_{1},t_{\sigma (1)}\right);\left( s_{2},t_{\sigma (2)}\right);\left( s_{3},t_{\sigma (3)}\right)\right\} \text{.}
			$$

		\begin{center}

			\tikzset{every picture/.style={line width=0.75pt}} 
			
			\begin{tikzpicture}[x=0.30pt,y=0.30pt,yscale=-1,xscale=1]
				
				\draw  [draw opacity=0] (223.46,48) -- (427.46,48) -- (427.46,252) -- (223.46,252) -- cycle ; \draw   (291.46,48) -- (291.46,252)(359.46,48) -- (359.46,252) ; \draw   (223.46,116) -- (427.46,116)(223.46,184) -- (427.46,184) ; \draw   (223.46,48) -- (427.46,48) -- (427.46,252) -- (223.46,252) -- cycle ;
				\draw  [color={rgb, 255:red, 0; green, 0; blue, 0 }  ,draw opacity=1 ][fill={rgb, 255:red, 0; green, 0; blue, 0 }  ,fill opacity=1 ] (288.79,184) .. controls (288.79,182.53) and (289.99,181.33) .. (291.46,181.33) .. controls (292.93,181.33) and (294.13,182.53) .. (294.13,184) .. controls (294.13,185.47) and (292.93,186.67) .. (291.46,186.67) .. controls (289.99,186.67) and (288.79,185.47) .. (288.79,184) -- cycle ;
				\draw  [color={rgb, 255:red, 0; green, 0; blue, 0 }  ,draw opacity=1 ][fill={rgb, 255:red, 0; green, 0; blue, 0 }  ,fill opacity=1 ] (356.79,116) .. controls (356.79,114.53) and (357.99,113.33) .. (359.46,113.33) .. controls (360.93,113.33) and (362.13,114.53) .. (362.13,116) .. controls (362.13,117.47) and (360.93,118.67) .. (359.46,118.67) .. controls (357.99,118.67) and (356.79,117.47) .. (356.79,116) -- cycle ;
				\draw  [color={rgb, 255:red, 0; green, 0; blue, 0 }  ,draw opacity=1 ][fill={rgb, 255:red, 0; green, 0; blue, 0 }  ,fill opacity=1 ] (424.79,249.33) .. controls (424.79,247.86) and (425.99,246.67) .. (427.46,246.67) .. controls (428.93,246.67) and (430.13,247.86) .. (430.13,249.33) .. controls (430.13,250.81) and (428.93,252) .. (427.46,252) .. controls (425.99,252) and (424.79,250.81) .. (424.79,249.33) -- cycle ;
				\draw    (484.33,47) -- (361.21,114.37) ;
				\draw [shift={(359.46,115.33)}, rotate = 331.31] [color={rgb, 255:red, 0; green, 0; blue, 0 }  ][line width=0.75]    (10.93,-3.29) .. controls (6.95,-1.4) and (3.31,-0.3) .. (0,0) .. controls (3.31,0.3) and (6.95,1.4) .. (10.93,3.29)   ;
				\draw    (524.33,194) -- (431.85,248.32) ;
				\draw [shift={(430.13,249.33)}, rotate = 329.57] [color={rgb, 255:red, 0; green, 0; blue, 0 }  ][line width=0.75]    (10.93,-3.29) .. controls (6.95,-1.4) and (3.31,-0.3) .. (0,0) .. controls (3.31,0.3) and (6.95,1.4) .. (10.93,3.29)   ;
				\draw    (253.33,274) -- (290.03,185.85) ;
				\draw [shift={(290.79,184)}, rotate = 112.6] [color={rgb, 255:red, 0; green, 0; blue, 0 }  ][line width=0.75]    (10.93,-3.29) .. controls (6.95,-1.4) and (3.31,-0.3) .. (0,0) .. controls (3.31,0.3) and (6.95,1.4) .. (10.93,3.29)   ;
				
				\draw (169,11) node [anchor=north west][inner sep=0.75pt]    {$( 0,0)$};
				\draw (282,23) node [anchor=north west][inner sep=0.75pt]    {$s_{1}$};
				\draw (190,169) node [anchor=north west][inner sep=0.75pt]    {$t_{2}$};
				\draw (416,24) node [anchor=north west][inner sep=0.75pt]    {$s_{3}$};
				\draw (350,23) node [anchor=north west][inner sep=0.75pt]    {$s_{2}$};
				\draw (190,236) node [anchor=north west][inner sep=0.75pt]    {$t_{3}$};
				\draw (192,101) node [anchor=north west][inner sep=0.75pt]    {$t_{1}$};
				\draw (482,23) node [anchor=north west][inner sep=0.75pt]    {$( s_{2} ,t_{\sigma ( 2)})$};
				\draw (507,171) node [anchor=north west][inner sep=0.75pt]    {$( s_{3} ,t_{\sigma ( 3)})$};
				\draw (229,272) node [anchor=north west][inner sep=0.75pt]    {$( s_{1} ,t_{\sigma ( 1)})$};

			\end{tikzpicture}
			
		\end{center}
		
		For a given grid point $\left( s_{i},t_{\sigma (i)}\right) \in \mathcal{R}%
		_{\tau _{n,\sigma }}\setminus \left\{ (0,0)\right\} $, we define by 
		\begin{equation*}
			span(\left( s_{i},t_{\sigma (i)}\right) )=\left\{
			z_{l,k}:k=1,\ldots,i,l=1,\ldots,\sigma (i)\right\} \text{ }
		\end{equation*}%
		the rectangle ``spanned" by the point $\left( s_{i},t_{\sigma (i)}\right) $,
		where each ``variable" $z_{l,k}$ \textsl{stands for} the rectangle with corners $\left\{
		(s_{k},t_{l}),(s_{k},t_{l-1}),(s_{k-1},t_{l}),(s_{k-1},t_{l-1})\right\} $.
		Here $s_{0}=t_{0}:=0$. For example, suppose $n=3, \sigma (1)=2,\sigma (2)=1$ and $\sigma (3)=3$. The ``spanned" rectangles $\text{Span}(s_i,t_{\sigma(i)}), i=1,2,3$ can also be visualised as follows
		\begin{center}

			\tikzset{every picture/.style={line width=0.75pt}} 
			
			\begin{tikzpicture}[x=0.38pt,y=0.38pt,yscale=-1,xscale=1]
				
				\draw  [draw opacity=0] (439.33,36) -- (592.33,36) -- (592.33,189) -- (439.33,189) -- cycle ; \draw   (490.33,36) -- (490.33,189)(541.33,36) -- (541.33,189) ; \draw   (439.33,87) -- (592.33,87)(439.33,138) -- (592.33,138) ; \draw   (439.33,36) -- (592.33,36) -- (592.33,189) -- (439.33,189) -- cycle ;
				\draw  [color={rgb, 255:red, 0; green, 0; blue, 0 }  ,draw opacity=1 ][fill={rgb, 255:red, 0; green, 0; blue, 0 }  ,fill opacity=1 ] (92.33,177) .. controls (92.33,175.53) and (93.53,174.33) .. (95,174.33) .. controls (96.47,174.33) and (97.67,175.53) .. (97.67,177) .. controls (97.67,178.47) and (96.47,179.67) .. (95,179.67) .. controls (93.53,179.67) and (92.33,178.47) .. (92.33,177) -- cycle ;
				\draw  [color={rgb, 255:red, 0; green, 0; blue, 0 }  ,draw opacity=1 ][fill={rgb, 255:red, 0; green, 0; blue, 0 }  ,fill opacity=1 ] (306.33,110.67) .. controls (306.33,109.19) and (307.53,108) .. (309,108) .. controls (310.47,108) and (311.67,109.19) .. (311.67,110.67) .. controls (311.67,112.14) and (310.47,113.33) .. (309,113.33) .. controls (307.53,113.33) and (306.33,112.14) .. (306.33,110.67) -- cycle ;
				\draw  [color={rgb, 255:red, 0; green, 0; blue, 0 }  ,draw opacity=1 ][fill={rgb, 255:red, 0; green, 0; blue, 0 }  ,fill opacity=1 ] (589.33,188) .. controls (589.33,186.53) and (590.53,185.33) .. (592,185.33) .. controls (593.47,185.33) and (594.67,186.53) .. (594.67,188) .. controls (594.67,189.47) and (593.47,190.67) .. (592,190.67) .. controls (590.53,190.67) and (589.33,189.47) .. (589.33,188) -- cycle ;
				\draw    (308.33,152) -- (308.97,112.67) ;
				\draw [shift={(309,110.67)}, rotate = 90.92] [color={rgb, 255:red, 0; green, 0; blue, 0 }  ][line width=0.75]    (10.93,-3.29) .. controls (6.95,-1.4) and (3.31,-0.3) .. (0,0) .. controls (3.31,0.3) and (6.95,1.4) .. (10.93,3.29)   ;
				\draw    (593.33,234) -- (592.06,192.67) ;
				\draw [shift={(592,190.67)}, rotate = 88.24] [color={rgb, 255:red, 0; green, 0; blue, 0 }  ][line width=0.75]    (10.93,-3.29) .. controls (6.95,-1.4) and (3.31,-0.3) .. (0,0) .. controls (3.31,0.3) and (6.95,1.4) .. (10.93,3.29)   ;
				\draw  [draw opacity=0] (33.33,53) -- (95.33,53) -- (95.33,177) -- (33.33,177) -- cycle ; \draw    ; \draw   (33.33,115) -- (95.33,115) ; \draw   (33.33,53) -- (95.33,53) -- (95.33,177) -- (33.33,177) -- cycle ;
				\draw  [draw opacity=0] (210,60) -- (310,60) -- (310,110) -- (210,110) -- cycle ; \draw   (260,60) -- (260,110) ; \draw    ; \draw   (210,60) -- (310,60) -- (310,110) -- (210,110) -- cycle ;
				\draw [color={rgb, 255:red, 74; green, 144; blue, 226 }  ,draw opacity=1 ]   (62.33,264) -- (62.33,182) ;
				\draw [shift={(62.33,180)}, rotate = 90] [color={rgb, 255:red, 74; green, 144; blue, 226 }  ,draw opacity=1 ][line width=0.75]    (10.93,-3.29) .. controls (6.95,-1.4) and (3.31,-0.3) .. (0,0) .. controls (3.31,0.3) and (6.95,1.4) .. (10.93,3.29)   ;
				\draw    (121.33,204) -- (98.98,178.5) ;
				\draw [shift={(97.67,177)}, rotate = 48.76] [color={rgb, 255:red, 0; green, 0; blue, 0 }  ][line width=0.75]    (10.93,-3.29) .. controls (6.95,-1.4) and (3.31,-0.3) .. (0,0) .. controls (3.31,0.3) and (6.95,1.4) .. (10.93,3.29)   ;
				\draw [color={rgb, 255:red, 74; green, 144; blue, 226 }  ,draw opacity=1 ]   (521.33,271) -- (521.33,193) ;
				\draw [shift={(521.33,191)}, rotate = 90] [color={rgb, 255:red, 74; green, 144; blue, 226 }  ,draw opacity=1 ][line width=0.75]    (10.93,-3.29) .. controls (6.95,-1.4) and (3.31,-0.3) .. (0,0) .. controls (3.31,0.3) and (6.95,1.4) .. (10.93,3.29)   ;
				\draw [color={rgb, 255:red, 74; green, 144; blue, 226 }  ,draw opacity=1 ][fill={rgb, 255:red, 74; green, 144; blue, 226 }  ,fill opacity=1 ]   (262.33,262) -- (258.06,117) ;
				\draw [shift={(258,115)}, rotate = 88.31] [color={rgb, 255:red, 74; green, 144; blue, 226 }  ,draw opacity=1 ][line width=0.75]    (15.3,-4.61) .. controls (9.73,-1.96) and (4.63,-0.42) .. (0,0) .. controls (4.63,0.42) and (9.73,1.96) .. (15.3,4.61)   ;
				
				\draw (397,8) node [anchor=north west][inner sep=0.75pt]    {$( 0,0)$};
				\draw (481,11) node [anchor=north west][inner sep=0.75pt]    {$s_{1}$};
				\draw (413,123) node [anchor=north west][inner sep=0.75pt]    {$t_{2}$};
				\draw (584,12) node [anchor=north west][inner sep=0.75pt]    {$s_{3}$};
				\draw (533,11) node [anchor=north west][inner sep=0.75pt]    {$s_{2}$};
				\draw (415,174) node [anchor=north west][inner sep=0.75pt]    {$t_{3}$};
				\draw (417,70) node [anchor=north west][inner sep=0.75pt]    {$t_{1}$};
				\draw (266,155) node [anchor=north west][inner sep=0.75pt]    {$( s_{2} ,t_{\sigma ( 2)})$};
				\draw (563,230) node [anchor=north west][inner sep=0.75pt]    {$( s_{3} ,t_{\sigma ( 3)})$};
				\draw (91,201) node [anchor=north west][inner sep=0.75pt]    {$( s_{1} ,t_{\sigma ( 1)})$};
				\draw (471,265) node [anchor=north west][inner sep=0.75pt]  [font=\large]  {$\textcolor[rgb]{0.29,0.56,0.89}{span(( s_{3} ,t_{\sigma ( 3)}))}$};
				\draw (7,262) node [anchor=north west][inner sep=0.75pt]  [font=\large]  {$\textcolor[rgb]{0.29,0.56,0.89}{span}\textcolor[rgb]{0.29,0.56,0.89}{((}\textcolor[rgb]{0.29,0.56,0.89}{s}\textcolor[rgb]{0.29,0.56,0.89}{_{1}}\textcolor[rgb]{0.29,0.56,0.89}{,t}\textcolor[rgb]{0.29,0.56,0.89}{_{\sigma ( 1)}}\textcolor[rgb]{0.29,0.56,0.89}{))}$};
				\draw (-2,20) node [anchor=north west][inner sep=0.75pt]    {$( 0,0)$};
				\draw (6,159) node [anchor=north west][inner sep=0.75pt]    {$t_{2}$};
				\draw (7,98) node [anchor=north west][inner sep=0.75pt]    {$t_{1}$};
				\draw (87,29) node [anchor=north west][inner sep=0.75pt]    {$s_{1}$};
				\draw (48,65) node [anchor=north west][inner sep=0.75pt]  [font=\normalsize]  {$z_{11}$};
				\draw (45,123) node [anchor=north west][inner sep=0.75pt]  [font=\normalsize]  {$z_{21}$};
				\draw (552,148) node [anchor=north west][inner sep=0.75pt]  [font=\normalsize]  {$z_{33}$};
				\draw (501,149) node [anchor=north west][inner sep=0.75pt]  [font=\normalsize]  {$z_{32}$};
				\draw (450,151) node [anchor=north west][inner sep=0.75pt]  [font=\normalsize]  {$z_{31}$};
				\draw (551,99) node [anchor=north west][inner sep=0.75pt]  [font=\normalsize]  {$z_{23}$};
				\draw (501,97) node [anchor=north west][inner sep=0.75pt]  [font=\normalsize]  {$z_{22}$};
				\draw (446,97) node [anchor=north west][inner sep=0.75pt]  [font=\normalsize]  {$z_{21}$};
				\draw (552,48) node [anchor=north west][inner sep=0.75pt]  [font=\normalsize]  {$z_{13}$};
				\draw (499,48) node [anchor=north west][inner sep=0.75pt]  [font=\normalsize]  {$z_{12}$};
				\draw (448,47) node [anchor=north west][inner sep=0.75pt]  [font=\normalsize]  {$z_{11}$};
				\draw (162,30) node [anchor=north west][inner sep=0.75pt]    {$( 0,0)$};
				\draw (299,37) node [anchor=north west][inner sep=0.75pt]    {$s_{2}$};
				\draw (250,36) node [anchor=north west][inner sep=0.75pt]    {$s_{1}$};
				\draw (183,98) node [anchor=north west][inner sep=0.75pt]    {$t_{1}$};
				\draw (264,66) node [anchor=north west][inner sep=0.75pt]  [font=\normalsize]  {$z_{12}$};
				\draw (218,68) node [anchor=north west][inner sep=0.75pt]  [font=\normalsize]  {$z_{11}$};
				\draw (232,263) node [anchor=north west][inner sep=0.75pt]  [font=\large]  {$\textcolor[rgb]{0.29,0.56,0.89}{span}\textcolor[rgb]{0.29,0.56,0.89}{((}\textcolor[rgb]{0.29,0.56,0.89}{s}\textcolor[rgb]{0.29,0.56,0.89}{_{2}}\textcolor[rgb]{0.29,0.56,0.89}{,t}\textcolor[rgb]{0.29,0.56,0.89}{_{\sigma ( 2)}}\textcolor[rgb]{0.29,0.56,0.89}{))}$};

			\end{tikzpicture}

		\end{center}


		Further for a given point $\left( s_{\sigma ^{-1}(i)},t_{i}\right) $,
		denote by $\mathcal{L}^{n,i}$ the set of all $j>i$ such that $z_{i,\sigma
			^{-1}(i)}\in span\big(\big( s_{_{\sigma ^{-1}(j)}},t_{j}\big) \big)$. So for example, if $n=3, \sigma(1)=2, \sigma(2)=1,\sigma(3)=3, i=1$, then $(s_{\sigma^{-1}(1)},t_1)=(s_2,t_1)$ and $z_{1,\sigma^{-1}(1)}=z_{1,2}$ (which stands for the rectangle with right lower corner point  $(s_{\sigma^{-1}(1)},t_1)$). In this case $\text{span}((s_{\sigma^{-1}(2)},t_2))=\text{span}((s_1,t_2))$ i.e.,
		
		\begin{center}

			\tikzset{every picture/.style={line width=0.75pt}} 
			
			\begin{tikzpicture}[x=0.45pt,y=0.45pt,yscale=-1,xscale=1]
				
				\draw  [color={rgb, 255:red, 0; green, 0; blue, 0 }  ,draw opacity=1 ][fill={rgb, 255:red, 0; green, 0; blue, 0 }  ,fill opacity=1 ] (329.33,177) .. controls (329.33,175.53) and (330.53,174.33) .. (332,174.33) .. controls (333.47,174.33) and (334.67,175.53) .. (334.67,177) .. controls (334.67,178.47) and (333.47,179.67) .. (332,179.67) .. controls (330.53,179.67) and (329.33,178.47) .. (329.33,177) -- cycle ;
				\draw  [draw opacity=0] (270.33,53) -- (332.33,53) -- (332.33,177) -- (270.33,177) -- cycle ; \draw    ; \draw   (270.33,115) -- (332.33,115) ; \draw   (270.33,53) -- (332.33,53) -- (332.33,177) -- (270.33,177) -- cycle ;
				\draw    (358.33,204) -- (335.98,178.5) ;
				\draw [shift={(334.67,177)}, rotate = 48.76] [color={rgb, 255:red, 0; green, 0; blue, 0 }  ][line width=0.75]    (10.93,-3.29) .. controls (6.95,-1.4) and (3.31,-0.3) .. (0,0) .. controls (3.31,0.3) and (6.95,1.4) .. (10.93,3.29)   ;
				
				\draw (301,201) node [anchor=north west][inner sep=0.75pt]  [font=\small]  {$( s_{1} ,t_{2}) =( s_{1} ,t_{\sigma ( 1)})$};
				\draw (235,20) node [anchor=north west][inner sep=0.75pt]    {$( 0,0)$};
				\draw (243,159) node [anchor=north west][inner sep=0.75pt]    {$t_{2}$};
				\draw (244,98) node [anchor=north west][inner sep=0.75pt]    {$t_{1}$};
				\draw (324,29) node [anchor=north west][inner sep=0.75pt]    {$s_{1}$};
				\draw (285,65) node [anchor=north west][inner sep=0.75pt]  [font=\normalsize]  {$z_{11}$};
				\draw (282,123) node [anchor=north west][inner sep=0.75pt]  [font=\normalsize]  {$z_{21}$};

			\end{tikzpicture}
		\end{center}
		doesn't contain the ``rectangle" $z_{1,\sigma^{-1}(1)}=z_{1,2}$. However, $\text{span}((s_{\sigma^{-1}(3)},t_3))=\text{span}((s_3,t_3))$, i.e.,
		\begin{center}

			\tikzset{every picture/.style={line width=0.75pt}} 
			
			\begin{tikzpicture}[x=0.40pt,y=0.40pt,yscale=-1,xscale=1]
				
				\draw  [draw opacity=0] (225,63) -- (378,63) -- (378,216) -- (225,216) -- cycle ; \draw   (276,63) -- (276,216)(327,63) -- (327,216) ; \draw   (225,114) -- (378,114)(225,165) -- (378,165) ; \draw   (225,63) -- (378,63) -- (378,216) -- (225,216) -- cycle ;
				\draw  [color={rgb, 255:red, 0; green, 0; blue, 0 }  ,draw opacity=1 ][fill={rgb, 255:red, 0; green, 0; blue, 0 }  ,fill opacity=1 ] (375.33,216) .. controls (375.33,214.53) and (376.53,213.33) .. (378,213.33) .. controls (379.47,213.33) and (380.67,214.53) .. (380.67,216) .. controls (380.67,217.47) and (379.47,218.67) .. (378,218.67) .. controls (376.53,218.67) and (375.33,217.47) .. (375.33,216) -- cycle ;
				\draw    (413.33,217) -- (381.33,216.06) ;
				\draw [shift={(379.33,216)}, rotate = 1.68] [color={rgb, 255:red, 0; green, 0; blue, 0 }  ][line width=0.75]    (10.93,-3.29) .. controls (6.95,-1.4) and (3.31,-0.3) .. (0,0) .. controls (3.31,0.3) and (6.95,1.4) .. (10.93,3.29)   ;
				
				\draw (183,36) node [anchor=north west][inner sep=0.75pt]    {$( 0,0)$};
				\draw (267,39) node [anchor=north west][inner sep=0.75pt]    {$s_{1}$};
				\draw (199,151) node [anchor=north west][inner sep=0.75pt]    {$t_{2}$};
				\draw (370,40) node [anchor=north west][inner sep=0.75pt]    {$s_{3}$};
				\draw (319,39) node [anchor=north west][inner sep=0.75pt]    {$s_{2}$};
				\draw (201,202) node [anchor=north west][inner sep=0.75pt]    {$t_{3}$};
				\draw (203,98) node [anchor=north west][inner sep=0.75pt]    {$t_{1}$};
				\draw (415,207) node [anchor=north west][inner sep=0.75pt]  [font=\small]  {$( s_{3} ,t_{3}) =( s_{3} ,t_{\sigma ( 3)})$};
				\draw (338,176) node [anchor=north west][inner sep=0.75pt]  [font=\normalsize]  {$z_{33}$};
				\draw (287,177) node [anchor=north west][inner sep=0.75pt]  [font=\normalsize]  {$z_{32}$};
				\draw (236,179) node [anchor=north west][inner sep=0.75pt]  [font=\normalsize]  {$z_{31}$};
				\draw (337,127) node [anchor=north west][inner sep=0.75pt]  [font=\normalsize]  {$z_{23}$};
				\draw (287,125) node [anchor=north west][inner sep=0.75pt]  [font=\normalsize]  {$z_{22}$};
				\draw (232,125) node [anchor=north west][inner sep=0.75pt]  [font=\normalsize]  {$z_{21}$};
				\draw (338,76) node [anchor=north west][inner sep=0.75pt]  [font=\normalsize]  {$z_{13}$};
				\draw (285,76) node [anchor=north west][inner sep=0.75pt]  [font=\normalsize]  {$z_{12}$};
				\draw (234,75) node [anchor=north west][inner sep=0.75pt]  [font=\normalsize]  {$z_{11}$};

			\end{tikzpicture}
		\end{center}
		contains $z_{1,\sigma^{-1}(1)}=z_{1,2}$. Hence $\mathcal{L}^{n,i}=\mathcal{L}^{3,1}=\{3\}$. Further, we say
		for $l\geq 1$ that $z_{i,\sigma ^{-1}(i)+l}$ is a \emph{substitution variable%
		} for $z_{i,\sigma ^{-1}(i)}$, if $z_{i,\sigma ^{-1}(i)+l}\in span\big(\big(
		s_{_{\sigma ^{-1}(j)}},t_{j}\big)\big)$ for all $j\in \mathcal{L}^{n,i}$.
		Obviously, if $\mathcal{L}^{n,i}\neq \emptyset $, then $z_{i,\sigma
			^{-1}(i)+1}\in span\big(\big( s_{_{\sigma ^{-1}(j)}},t_{j}\big)\big)$ for all $%
		j\in \mathcal{L}^{n,i}$. So $z_{i,\sigma ^{-1}(i)+1}$ is a substitution
		variable for $z_{i,\sigma ^{-1}(i)}$. So using the previous example for $i=1$ with $\mathcal{L}^{3,1}=\{3\},$ we have only one substitution variable of
		$$
		z_{i,\sigma^{-1}(i)}=z_{1,2} \text{ i.e., } z_{i,\sigma^{-1}(i)+1}=z_{1,3}
		$$
		\begin{center}

			\tikzset{every picture/.style={line width=0.75pt}} 
			
			\begin{tikzpicture}[x=0.42pt,y=0.42pt,yscale=-1,xscale=1]
				
				\draw  [draw opacity=0] (225,64) -- (378,64) -- (378,217) -- (225,217) -- cycle ; \draw   (276,64) -- (276,217)(327,64) -- (327,217) ; \draw   (225,115) -- (378,115)(225,166) -- (378,166) ; \draw   (225,64) -- (378,64) -- (378,217) -- (225,217) -- cycle ;
				\draw  [color={rgb, 255:red, 0; green, 0; blue, 0 }  ,draw opacity=1 ][fill={rgb, 255:red, 0; green, 0; blue, 0 }  ,fill opacity=1 ] (375.33,217) .. controls (375.33,215.53) and (376.53,214.33) .. (378,214.33) .. controls (379.47,214.33) and (380.67,215.53) .. (380.67,217) .. controls (380.67,218.47) and (379.47,219.67) .. (378,219.67) .. controls (376.53,219.67) and (375.33,218.47) .. (375.33,217) -- cycle ;
				\draw    (413.33,218) -- (381.33,217.06) ;
				\draw [shift={(379.33,217)}, rotate = 1.68] [color={rgb, 255:red, 0; green, 0; blue, 0 }  ][line width=0.75]    (10.93,-3.29) .. controls (6.95,-1.4) and (3.31,-0.3) .. (0,0) .. controls (3.31,0.3) and (6.95,1.4) .. (10.93,3.29)   ;
				\draw [color={rgb, 255:red, 208; green, 2; blue, 27 }  ,draw opacity=1 ]   (226,85) -- (250.33,65) ;
				\draw [color={rgb, 255:red, 208; green, 2; blue, 27 }  ,draw opacity=1 ]   (225,115) -- (276,64) ;
				\draw [color={rgb, 255:red, 208; green, 2; blue, 27 }  ,draw opacity=1 ]   (237.33,114) -- (275.33,81) ;
				\draw [color={rgb, 255:red, 208; green, 2; blue, 27 }  ,draw opacity=1 ]   (290.33,113) -- (327.33,77) ;
				\draw [color={rgb, 255:red, 208; green, 2; blue, 27 }  ,draw opacity=1 ]   (301.33,114) -- (326.33,91) ;
				\draw [color={rgb, 255:red, 208; green, 2; blue, 27 }  ,draw opacity=1 ]   (276,115) -- (327,64) ;
				\draw [color={rgb, 255:red, 208; green, 2; blue, 27 }  ,draw opacity=1 ]   (226.33,100) -- (266.33,64) ;
				\draw [color={rgb, 255:red, 208; green, 2; blue, 27 }  ,draw opacity=1 ]   (276.33,94) -- (310.33,65) ;
				\draw [color={rgb, 255:red, 208; green, 2; blue, 27 }  ,draw opacity=1 ]   (252,114) -- (276.33,94) ;
				\draw [color={rgb, 255:red, 208; green, 2; blue, 27 }  ,draw opacity=1 ]   (275,83) -- (298.33,64) ;
				\draw [color={rgb, 255:red, 74; green, 144; blue, 226 }  ,draw opacity=1 ]   (328,97) -- (349.33,116) ;
				\draw [color={rgb, 255:red, 74; green, 144; blue, 226 }  ,draw opacity=1 ]   (326.33,83) -- (362.33,114) ;
				\draw [color={rgb, 255:red, 74; green, 144; blue, 226 }  ,draw opacity=1 ]   (342,65) -- (378.33,100) ;
				\draw [color={rgb, 255:red, 74; green, 144; blue, 226 }  ,draw opacity=1 ]   (327,64) -- (378,115) ;
				\draw [color={rgb, 255:red, 74; green, 144; blue, 226 }  ,draw opacity=1 ]   (357,63) -- (378.33,82) ;
				\draw [color={rgb, 255:red, 208; green, 2; blue, 27 }  ,draw opacity=1 ]   (57,119) -- (249.37,79.4) ;
				\draw [shift={(251.33,79)}, rotate = 168.37] [color={rgb, 255:red, 208; green, 2; blue, 27 }  ,draw opacity=1 ][line width=0.75]    (10.93,-3.29) .. controls (6.95,-1.4) and (3.31,-0.3) .. (0,0) .. controls (3.31,0.3) and (6.95,1.4) .. (10.93,3.29)   ;
				\draw    (307.33,271) -- (306.37,223) ;
				\draw [shift={(306.33,221)}, rotate = 88.85] [color={rgb, 255:red, 0; green, 0; blue, 0 }  ][line width=0.75]    (10.93,-3.29) .. controls (6.95,-1.4) and (3.31,-0.3) .. (0,0) .. controls (3.31,0.3) and (6.95,1.4) .. (10.93,3.29)   ;
				\draw [color={rgb, 255:red, 74; green, 144; blue, 226 }  ,draw opacity=1 ]   (467.33,92) -- (362.33,92) ;
				\draw [shift={(360.33,92)}, rotate = 360] [color={rgb, 255:red, 74; green, 144; blue, 226 }  ,draw opacity=1 ][line width=0.75]    (10.93,-3.29) .. controls (6.95,-1.4) and (3.31,-0.3) .. (0,0) .. controls (3.31,0.3) and (6.95,1.4) .. (10.93,3.29)   ;
				
				\draw (183,37) node [anchor=north west][inner sep=0.75pt]    {$( 0,0)$};
				\draw (267,40) node [anchor=north west][inner sep=0.75pt]    {$s_{1}$};
				\draw (199,152) node [anchor=north west][inner sep=0.75pt]    {$t_{2}$};
				\draw (370,41) node [anchor=north west][inner sep=0.75pt]    {$s_{3}$};
				\draw (319,40) node [anchor=north west][inner sep=0.75pt]    {$s_{2}$};
				\draw (201,203) node [anchor=north west][inner sep=0.75pt]    {$t_{3}$};
				\draw (203,99) node [anchor=north west][inner sep=0.75pt]    {$t_{1}$};
				\draw (414,208) node [anchor=north west][inner sep=0.75pt]  [font=\small]  {$( s_{3} ,t_{3}) =( s_{3} ,t_{\sigma ( 3)})$};
				\draw (338,177) node [anchor=north west][inner sep=0.75pt]  [font=\normalsize]  {$z_{33}$};
				\draw (287,178) node [anchor=north west][inner sep=0.75pt]  [font=\normalsize]  {$z_{32}$};
				\draw (236,180) node [anchor=north west][inner sep=0.75pt]  [font=\normalsize]  {$z_{31}$};
				\draw (337,128) node [anchor=north west][inner sep=0.75pt]  [font=\normalsize]  {$z_{23}$};
				\draw (287,126) node [anchor=north west][inner sep=0.75pt]  [font=\normalsize]  {$z_{22}$};
				\draw (232,126) node [anchor=north west][inner sep=0.75pt]  [font=\normalsize]  {$z_{21}$};
				\draw (338,77) node [anchor=north west][inner sep=0.75pt]  [font=\normalsize]  {$z_{13}$};
				\draw (285,77) node [anchor=north west][inner sep=0.75pt]  [font=\normalsize]  {$z_{12}$};
				\draw (238.17,83) node [anchor=north west][inner sep=0.75pt]  [font=\normalsize]  {$z_{11}$};
				\draw (11,115) node [anchor=north west][inner sep=0.75pt]  [font=\large]  {$\textcolor[rgb]{0.82,0.01,0.11}{span}\textcolor[rgb]{0.82,0.01,0.11}{(}\textcolor[rgb]{0.82,0.01,0.11}{s}\textcolor[rgb]{0.82,0.01,0.11}{_{2}}\textcolor[rgb]{0.82,0.01,0.11}{,t}\textcolor[rgb]{0.82,0.01,0.11}{_{1}}\textcolor[rgb]{0.82,0.01,0.11}{)}$};
				\draw (256,270) node [anchor=north west][inner sep=0.75pt]  [font=\large,color={rgb, 255:red, 245; green, 166; blue, 35 }  ,opacity=1 ]  {$\textcolor[rgb]{0.96,0.65,0.14}{span( s_{3} ,t_{\sigma ( 3)})}$};
				\draw (472,71) node [anchor=north west][inner sep=0.75pt]   [align=left] {Substitution variable of \ };
				\draw (523,97) node [anchor=north west][inner sep=0.75pt]    {$z_{i,\sigma ^{-1}( i)} =z_{1,2}$};

			\end{tikzpicture}

		\end{center}
		
		\text{ }\\

		In what follows, we also call the
		pairs $\mathcal{O}_{i}=(z_{i,\sigma ^{-1}(i)},z_{i,\sigma
			^{-1}(i)+1}),i=1,\ldots,n$ \emph{orientation points}. Here by convention, we
		set $\mathcal{O}_{i}=z_{i,\sigma ^{-1}(i)}$, if $z_{i,\sigma ^{-1}(i)+1}$ is
		not a substitution variable. Hence, if $\sigma (1)=2,\sigma (2)=1,\sigma
		(3)=3$ in the previous example, the orientation points are given by

		\begin{center}

			\tikzset{every picture/.style={line width=0.75pt}} 
			
			\begin{tikzpicture}[x=0.42pt,y=0.42pt,yscale=-1,xscale=1]
				
				\draw  [draw opacity=0] (206.33,71) -- (359.33,71) -- (359.33,224) -- (206.33,224) -- cycle ; \draw   (257.33,71) -- (257.33,224)(308.33,71) -- (308.33,224) ; \draw   (206.33,122) -- (359.33,122)(206.33,173) -- (359.33,173) ; \draw   (206.33,71) -- (359.33,71) -- (359.33,224) -- (206.33,224) -- cycle ;
				\draw  [color={rgb, 255:red, 0; green, 0; blue, 0 }  ,draw opacity=1 ][fill={rgb, 255:red, 0; green, 0; blue, 0 }  ,fill opacity=1 ] (356.33,223) .. controls (356.33,221.53) and (357.53,220.33) .. (359,220.33) .. controls (360.47,220.33) and (361.67,221.53) .. (361.67,223) .. controls (361.67,224.47) and (360.47,225.67) .. (359,225.67) .. controls (357.53,225.67) and (356.33,224.47) .. (356.33,223) -- cycle ;
				\draw   (265,87) -- (304.33,87) -- (304.33,110) -- (265,110) -- cycle ;
				\draw   (263,135) -- (303.33,135) -- (303.33,159) -- (263,159) -- cycle ;
				\draw   (315,86) -- (357.33,86) -- (357.33,108) -- (315,108) -- cycle ;
				\draw   (313,189) -- (357.33,189) -- (357.33,212) -- (313,212) -- cycle ;
				\draw   (212,136) -- (254.33,136) -- (254.33,161) -- (212,161) -- cycle ;
				\draw  [draw opacity=0] (287.12,86.11) .. controls (286.39,85.44) and (286,84.74) .. (286,84) .. controls (286,80.13) and (296.82,77) .. (310.17,77) .. controls (323.51,77) and (334.33,80.13) .. (334.33,84) .. controls (334.33,85.05) and (333.54,86.05) .. (332.11,86.94) -- (310.17,84) -- cycle ; \draw  [color={rgb, 255:red, 208; green, 2; blue, 27 }  ,draw opacity=1 ] (287.12,86.11) .. controls (286.39,85.44) and (286,84.74) .. (286,84) .. controls (286,80.13) and (296.82,77) .. (310.17,77) .. controls (323.51,77) and (334.33,80.13) .. (334.33,84) .. controls (334.33,85.05) and (333.54,86.05) .. (332.11,86.94) ;  
				\draw  [draw opacity=0] (225.12,135.11) .. controls (224.39,134.44) and (224,133.74) .. (224,133) .. controls (224,129.13) and (234.82,126) .. (248.17,126) .. controls (261.51,126) and (272.33,129.13) .. (272.33,133) .. controls (272.33,134.05) and (271.54,135.05) .. (270.11,135.94) -- (248.17,133) -- cycle ; \draw  [color={rgb, 255:red, 208; green, 2; blue, 27 }  ,draw opacity=1 ] (225.12,135.11) .. controls (224.39,134.44) and (224,133.74) .. (224,133) .. controls (224,129.13) and (234.82,126) .. (248.17,126) .. controls (261.51,126) and (272.33,129.13) .. (272.33,133) .. controls (272.33,134.05) and (271.54,135.05) .. (270.11,135.94) ;  
				\draw [color={rgb, 255:red, 208; green, 2; blue, 27 }  ,draw opacity=1 ]   (502.33,70) .. controls (476.59,53.17) and (366.56,48.1) .. (315.85,77.11) ;
				\draw [shift={(314.33,78)}, rotate = 329.04] [color={rgb, 255:red, 208; green, 2; blue, 27 }  ,draw opacity=1 ][line width=0.75]    (10.93,-3.29) .. controls (6.95,-1.4) and (3.31,-0.3) .. (0,0) .. controls (3.31,0.3) and (6.95,1.4) .. (10.93,3.29)   ;
				\draw [color={rgb, 255:red, 208; green, 2; blue, 27 }  ,draw opacity=1 ]   (502.33,70) .. controls (505.3,97.72) and (310.29,106.82) .. (266.61,128.34) ;
				\draw [shift={(265.33,129)}, rotate = 331.78] [color={rgb, 255:red, 208; green, 2; blue, 27 }  ,draw opacity=1 ][line width=0.75]    (10.93,-3.29) .. controls (6.95,-1.4) and (3.31,-0.3) .. (0,0) .. controls (3.31,0.3) and (6.95,1.4) .. (10.93,3.29)   ;
				\draw [color={rgb, 255:red, 208; green, 2; blue, 27 }  ,draw opacity=1 ]   (502.33,70) .. controls (506.27,91.67) and (356.92,167.67) .. (328.23,187.15) ;
				\draw [shift={(327,188)}, rotate = 324.61] [color={rgb, 255:red, 208; green, 2; blue, 27 }  ,draw opacity=1 ][line width=0.75]    (10.93,-3.29) .. controls (6.95,-1.4) and (3.31,-0.3) .. (0,0) .. controls (3.31,0.3) and (6.95,1.4) .. (10.93,3.29)   ;
				
				\draw (164,43) node [anchor=north west][inner sep=0.75pt]    {$( 0,0)$};
				\draw (248,46) node [anchor=north west][inner sep=0.75pt]    {$s_{1}$};
				\draw (180,158) node [anchor=north west][inner sep=0.75pt]    {$t_{2}$};
				\draw (351,47) node [anchor=north west][inner sep=0.75pt]    {$s_{3}$};
				\draw (300,46) node [anchor=north west][inner sep=0.75pt]    {$s_{2}$};
				\draw (182,209) node [anchor=north west][inner sep=0.75pt]    {$t_{3}$};
				\draw (184,105) node [anchor=north west][inner sep=0.75pt]    {$t_{1}$};
				\draw (318,188) node [anchor=north west][inner sep=0.75pt]  [font=\normalsize]  {$z_{33}$};
				\draw (268,135) node [anchor=north west][inner sep=0.75pt]  [font=\normalsize]  {$z_{22}$};
				\draw (220,136) node [anchor=north west][inner sep=0.75pt]  [font=\normalsize]  {$z_{21}$};
				\draw (319,86) node [anchor=north west][inner sep=0.75pt]  [font=\normalsize]  {$z_{13}$};
				\draw (270,87) node [anchor=north west][inner sep=0.75pt]  [font=\normalsize]  {$z_{12}$};
				\draw (503,60) node [anchor=north west][inner sep=0.75pt]   [align=left] {orientation points};

			\end{tikzpicture}
		\end{center}
		Using the latter notation, let us first illustrate for the previous example,
		how we can obtain an estimate of the type (\ref{Estimate}). For this
		purpose, denote by $E_{i,j}$ the heat kernel given by 
		\begin{equation*}
			E_{i,j}(z)=\frac{1}{\sqrt{2\pi (s_{i}-s_{i-1})(t_{j}-t_{j-1})}}\exp (-\frac{%
				z^{2}}{2(s_{i}-s_{i-1})(t_{j}-t_{j-1})})
		\end{equation*}%
		and its derivative by $B_{i,j}$. So 
		\begin{align*}
			&\mathbb{E}\Big[ \prod\limits_{i=1}^{3}f_{i}^{\prime
			}(W_{s_{i},t_{\sigma (i)}})\Big] \\
			=&\int_{\mathbb{R}^{3\times 3}}(\prod\limits_{i=1}^{3}f_{i}^{\prime
			}(\sum_{l=1}^{\sigma
				(i)}\sum_{k=1}^{i}z_{l,k}))\prod%
			\limits_{l,k=1}^{3}E_{k,l}(z_{l,k})\diffns z_{1,1}\ldots \diffns z_{3,3} \\
			=&\int_{\mathbb{R}^{3\times 3}}f_{\sigma ^{-1}(1)}^{\prime
			}(\sum_{l=1}^{1}\sum_{k=1}^{\sigma ^{-1}(1)}z_{l,k})f_{\sigma
				^{-1}(2)}^{\prime }(\sum_{l=1}^{2}\sum_{k=1}^{\sigma
				^{-1}(2)}z_{l,k})f_{\sigma ^{-1}(3)}^{\prime
			}(\sum_{l=1}^{3}\sum_{k=1}^{\sigma ^{-1}(3)}z_{l,k})\\&\qquad\times \prod\limits_{l,k=1}^{3}E_{k,l}(z_{l,k})\diffns z_{1,1}\ldots \diffns z_{3,3}.
		\end{align*}
		
		We start with the first orientation point $(z_{1,2},z_{1,3})$. Then
		we can use the substitution variable $z_{1,3}$ to eliminate the variable $z_{1,2}$ in the other factors (here it is only $f_{\sigma
			^{-1}(3)}^{\prime }(\sum_{l=1}^{3}\sum_{k=1}^{\sigma ^{-1}(3)}z_{l,k})$).
		So%
		\begin{align*}
			&\mathbb{E}\Big[ \prod\limits_{i=1}^{3}f_{i}^{\prime
			}(W_{s_{i},t_{\sigma (i)}})\Big]  \\
			=&\int_{\mathbb{R}^{3\times 3}}f_{\sigma ^{-1}(1)}^{\prime
			}(\sum_{(l,k)\in \left\{ 1\right\} \times \left\{ 1,\ldots,\sigma
				^{-1}(1)\right\} }z_{l,k})f_{\sigma ^{-1}(2)}^{\prime }(\sum_{\substack{ %
					(l,k)\in \left\{ 1,2\right\} \times \left\{ 1,\ldots,\sigma ^{-1}(2)\right\} 
					\\ (l,k)\neq (1,\sigma ^{-1}(1))}}z_{l,k})\\&\qquad\times f_{\sigma ^{-1}(3)}^{\prime
			}(\sum_{\substack{ (l,k)\in \left\{ 1,2,3\right\} \times \left\{
					1,\ldots,\sigma ^{-1}(3)\right\}  \\ (l,k)\neq (1,\sigma ^{-1}(1))}}%
			z_{l,k}) 
			E_{2,1}(z_{1,2})E_{3,1}(z_{1,3}-z_{1,2})\\&\qquad\times\prod\limits_{\substack{ l,k=1  %
					(l,k)\neq (1,2),(1,3)}}^{3}E_{k,l}(z_{l,k})\diffns z_{1,1}\ldots \diffns z_{3,3}.
		\end{align*}%
		Then we can apply integration by parts with respect to the \emph{integration
			by parts variable }$z_{1,\sigma ^{-1}(1)}=z_{1,2}$ and get that%
		\begin{eqnarray*}
			\mathbb{E}\Big[ \prod\limits_{i=1}^{3}f_{i}^{\prime
			}(W_{s_{i},t_{\sigma (i)}})\Big] =-(\mathcal{I}_{1,1}+\mathcal{I}_{1,2})\text{,}
		\end{eqnarray*}%
		where%
		\begin{align*}
			\mathcal{I}_{1,1} =&\int_{\mathbb{R}^{3\times 3}}f_{\sigma
				^{-1}(1)}(\sum_{(l,k)\in \left\{ 1\right\} \times \left\{ 1,\ldots,\sigma
				^{-1}(1)\right\} }z_{l,k})f_{\sigma ^{-1}(2)}^{\prime }(\sum_{\substack{ %
					(l,k)\in \left\{ 1,2\right\} \times \left\{ 1,\ldots,\sigma ^{-1}(2)\right\} 
					\\ (l,k)\neq (1,\sigma ^{-1}(1))}}z_{l,k})\\&\times f_{\sigma ^{-1}(3)}^{\prime
			}(\sum_{\substack{ (l,k)\in \left\{ 1,2,3\right\} \times \left\{
					1,\ldots,\sigma ^{-1}(3)\right\}  \\ (l,k)\neq (1,\sigma ^{-1}(1))}}%
			z_{l,k})  \\
			&\times B_{2,1}(z_{1,2})E_{3,1}(z_{1,3}-z_{1,2})\prod\limits_{\substack{ l,k=1 \\ %
					(l,k)\neq (1,2),(1,3)}}^{3}E_{k,l}(z_{l,k})\diffns z_{1,1}\ldots \diffns z_{3,3}
		\end{align*}%
		and 
		\begin{align*}
			\mathcal{I}_{1,2} =&\int_{\mathbb{R}^{3\times 3}}f_{\sigma
				^{-1}(1)}(\sum_{(l,k)\in \left\{ 1\right\} \times \left\{ 1,\ldots,\sigma
				^{-1}(1)\right\} }z_{l,k})\times f_{\sigma ^{-1}(2)}^{\prime}(\sum_{\substack{ (l,k)\in \left\{
					1,2\right\} \times \left\{ 1,\ldots,\sigma ^{-1}(2)\right\}  \\ (l,k)\neq
					(1,\sigma ^{-1}(1))}}z_{l,k}))\\&\times f_{\sigma ^{-1}(3)}^{\prime}(\sum
			_{\substack{ (l,k)\in \left\{ 1,2,3\right\} \times \left\{ 1,\ldots,\sigma
					^{-1}(3)\right\}  \\ (l,k)\neq (1,\sigma ^{-1}(1))}}z_{l,k})  (-E_{2,1}(z_{1,2})B_{3,1}(z_{1,3}-z_{1,2}))\\
			&\times  \prod\limits_{\substack{ l,k=1 \\ (l,k)\neq (1,2),(1,3)}}%
			^{3}E_{k,l}(z_{l,k})\diffns z_{1,1}\ldots \diffns z_{3,3}.
		\end{align*}%
		Then, we go to the next orientation point $(z_{2,1},z_{2,2})$ and consider
		e.g. the term $\mathcal{I}_{1,1}$. We then choose a new substitution
		variable \emph{next to the right} (in the same row) outside the column of $%
		z_{1,2}$, that is $z_{2,3}$. Then $z_{2,1}$ is our next integration by parts
		variable and $z_{2,3}$ our substitution variable for $z_{2,1}$. Hence, we
		get that%
		\begin{equation*}
			\mathcal{I}_{1,1}=-(\mathcal{I}_{1,1,1}+\mathcal{I}_{1,1,2})
		\end{equation*}%
		where%
		\begin{align*}
			\mathcal{I}_{1,1,1} =&\int_{\mathbb{R}^{3\times 3}}(f_{\sigma
				^{-1}(1)}(\sum_{(l,k)\in \left\{ 1\right\} \times \left\{ 1,\ldots,\sigma
				^{-1}(1)\right\} }z_{l,k}))f_{\sigma ^{-1}(2)}(\sum_{\substack{ (l,k)\in
					\left\{ 1,2\right\} \times \left\{ 1,\ldots,\sigma ^{-1}(2)\right\}  \\ %
					(l,k)\neq (1,\sigma ^{-1}(1))}}z_{l,k}) \\
			&\times f_{\sigma ^{-1}(3)}^{\prime}(\sum_{\substack{ (l,k)\in \left\{
					1,2,3\right\} \times \left\{ 1,\ldots,\sigma ^{-1}(3)\right\}  \\ (l,k)\neq
					(1,\sigma ^{-1}(1)),(2,\sigma ^{-1}(2))}}z_{l,k})(B_{1,2}(z_{2,1})E_{3,2}(z_{2,3}-z_{2,1}))B_{2,1}(z_{1,2})  \\
			&\times E_{3,1}(z_{1,3}-z_{1,2})\times 
			\prod\limits_{\substack{ l,k=1 \\ (l,k)\neq (1,2),(1,3),(2,1),(2,3)}}%
			^{3}E_{k,l}(z_{l,k})\diffns z_{1,1}\ldots \diffns z_{3,3}
		\end{align*}%
		and 
		\begin{eqnarray*}
			\mathcal{I}_{1,1,2} &=&\int_{\mathbb{R}^{3\times 3}}(f_{\sigma
				^{-1}(1)}(\sum_{(l,k)\in \left\{ 1\right\} \times \left\{ 1,\dots,\sigma
				^{-1}(1)\right\} }z_{l,k}))f_{\sigma ^{-1}(2)}(\sum_{\substack{ (l,k)\in
					\left\{ 1,2\right\} \times \left\{ 1,\dots,\sigma ^{-1}(2)\right\}  \\ %
					(l,k)\neq (1,\sigma ^{-1}(1))}}z_{l,k})  \\
			&&\times f_{\sigma ^{-1}(3)}^{\prime}(\sum_{\substack{ (l,k)\in \left\{
					1,2,3\right\} \times \left\{ 1,\ldots,\sigma ^{-1}(3)\right\}  \\ (l,k)\neq
					(1,\sigma ^{-1}(1)),(2,\sigma ^{-1}(2))}}z_{l,k})(-E_{1,2}(z_{2,1})B_{3,2}(z_{2,3}-z_{2,1}))  \\
			&&\times B_{2,1}(z_{1,2})E_{3,1}(z_{1,3}-z_{1,2}) 
			\prod\limits_{\substack{ l,k=1 \\ (l,k)\neq (1,2),(1,3),(2,1),(2,3)}}%
			^{3}E_{k,l}(z_{l,k})\diffns z_{1,1}\dots \diffns z_{3,3}.
		\end{eqnarray*}%
		Let us now move from the orientation point $(z_{2,1},z_{2,2})$ to the next
		one, that is $z_{3,3}$, which does not have a substitution variable.
		Consider e.g. the term $\mathcal{I}_{1,1,2}$. Then we proceed as in the
		previous step: We observe here that $z_{3,3}$ is in the same column as the
		previous substitution variable $z_{2,3}$ (which corresponds to the factor $%
		B_{3,2}(z_{2,3}-z_{2,1})$ in $\mathcal{I}_{1,1,2}$). However, in this case
		we select a new integration by parts variable (instead of a new substitution
		variable as in the previous step), that is a variable \emph{next to the left}
		(in the same row), but outside of the columns of $z_{1,2}$ (which
		corresponds to $B_{1,2}$ in $\mathcal{I}_{1,1}$) and $z_{2,3}$
		(corresponding to $B_{3,2}$ in $\mathcal{I}_{1,1,2}$). Hence, the new
		integration by parts variable in the last step (without a substitution
		variable) is $z_{3,1}$. So we obtain that 
		\begin{eqnarray*}
			\mathcal{I}_{1,1,2} &=&-\int_{\mathbb{R}^{3\times 3}}f_{\sigma
				^{-1}(1)}(\sum_{(l,k)\in \left\{ 1\right\} \times \left\{ 1,\ldots,\sigma
				^{-1}(1)\right\} }z_{l,k})f_{\sigma ^{-1}(2)}(\sum_{\substack{ (l,k)\in
					\left\{ 1,2\right\} \times \left\{ 1,\ldots,\sigma ^{-1}(2)\right\}  \\ %
					(l,k)\neq (1,\sigma ^{-1}(1))}}z_{l,k})  \\
			&&\times f_{\sigma ^{-1}(3)}(\sum_{\substack{ (l,k)\in \left\{ 1,2,3\right\} \times
					\left\{ 1,\ldots,\sigma ^{-1}(3)\right\}  \\ (l,k)\neq (1,\sigma
					^{-1}(1)),(2,\sigma ^{-1}(2))}}z_{l,k})B_{1,3}(z_{3,1})(-E_{1,2}(z_{2,1})B_{3,2}(z_{2,3}-z_{2,1}))  \\
			&&\times B_{2,1}(z_{1,2})E_{3,1}(z_{1,3}-z_{1,2}) \prod\limits_{_{\substack{ l,k=1 \\ (l,k)\neq
						(1,2),(1,3),(2,1),(2,3),(3,1)}}}^{3}E_{k,l}(z_{l,k})\diffns z_{1,1}\ldots \diffns z_{3,3}.
		\end{eqnarray*}%
		So the ``algorithm", which leads to the (final) term $\mathcal{I}_{1,1,2}$,
		can be visualized as 
		\begin{equation*}
			\frame{$%
				\begin{array}{ccc}
					z_{1,1} & \frame{$B_{2,1}$} & \frame{$z_{1,3}$} \\ 
					\frame{$z_{2,1}$} & \frame{$z_{2,2}$}\longrightarrow  & B_{3,2} \\ 
					B_{1,3} & \longleftarrow  & \longleftarrow \frame{$z_{3,3}$}%
				\end{array}%
				$}.
		\end{equation*}%
		So $B_{2,1},B_{3,2}$ and $B_{1,3}$ in the diagram correspond to the factors
		in the integral $\mathcal{I}_{1,1,2}$ and we notice that these factors are
		not overlapping in the sense that they correspond to (open) rectangles which
		are disjoint.
		
		So we see that%
		\begin{equation*}
			\mathbb{E}\Big[ \prod\limits_{i=1}^{3}f_{i}^{\prime
			}(W_{s_{i},t_{\sigma (i)}})\Big] 
		\end{equation*}%
		can be written as a sum (of at most $2^{n}$) summands, which are given by
		integrals of products of factors, where exactly three non-overlapping
		factors appear.  
		
	So far, we have been discussing the case $n=3$ in \eqref{Estimate} for our algorithm, which selects certain rectangles in the plane. This specific case already encapsulates all the fundamental ideas needed to extend the algorithm to the general case for any $n$. Motivated by this example, the general case can be described as follows:
		
		\textbf{Step 1:} We start in the first row with an orientation point $\mathcal{O}_{1}$. There are two possibilities:
		\begin{enumerate}
			\item	If $%
			\mathcal{O}_{1}=z_{1,\sigma ^{-1}(1)}$, then we choose as integration by
			parts variable which leads to a factor $B_{\sigma ^{-1}(1),1}$ ``at this
			position". 
			\item If $\mathcal{O}_{1}=(z_{1,\sigma ^{-1}(1)},z_{1,\sigma
				^{-1}(1)+1})$, then $z_{1,\sigma ^{-1}(1)}$ is the integration by parts
			variable and $z_{1,\sigma ^{-1}(1)+1}$ the substitution variable for $%
			z_{1,\sigma ^{-1}(1)}$. Then $z_{1,\sigma ^{-1}(1)}$ gives rise to the
			factor $B_{\sigma ^{-1}(1),1}$ and $z_{1,\sigma ^{-1}(1)+1}$ to the factor $%
			B_{\sigma ^{-1}(1)+1,1}$. We can then move from $B_{\sigma ^{-1}(1),1}$ or $%
			B_{\sigma ^{-1}(1)+1,1}$ in the first row to the next orientation point $%
			\mathcal{O}_{2}$ in the second row. 
		\end{enumerate}
		
		\textbf{Step 2:} In this step we have 5 possibilities:
		\begin{enumerate}
			\item If we start in $B_{\sigma ^{-1}(1),1}$
			and if $\mathcal{O}_{2}=z_{2,\sigma ^{-1}(2)}$, then $z_{2,\sigma ^{-1}(2)}$
			is not in the column of $B_{\sigma ^{-1}(1),1}$ and we can select $%
			z_{2,\sigma ^{-1}(2)}$ as an integration by parts variable. 
			\item If we begin the
			path with $B_{\sigma ^{-1}(1)+1,1}$ and if $\mathcal{O}_{2}=z_{2,\sigma
				^{-1}(2)}$, then we distinguish two cases: 
			\begin{itemize}
				\item \textsl{Case 1:} $z_{2,\sigma
					^{-1}(2)}$ is not in the column of $B_{\sigma ^{-1}(1)+1,1}$. In this case,
				we choose $z_{2,\sigma ^{-1}(2)}$ as an integration by parts variable (which
				corresponds to the next factor $B_{\sigma ^{-1}(2),2}$).
				
				\item \textsl{Case 2:}	 $%
				z_{2,\sigma ^{-1}(2)}$ belongs to the column of $B_{\sigma ^{-1}(1)+1,1}$.
				Then, we select the closest variable $z_{2,\sigma ^{-1}(2)-k}$ to $%
				z_{2,\sigma ^{-1}(2)}$ on the left, which does not belong to the previous
				column (here $k=1$). In this way, we obtain the factor $B_{\sigma
					^{-1}(2)-k,2}$. 			
			\end{itemize} 
			\item Let us now start in $B_{\sigma ^{-1}(1),1}$ in connection
			with $\mathcal{O}_{2}=(z_{2,\sigma ^{-1}(2)},z_{2,\sigma ^{-1}(2)+1})$.
			Then, $z_{2,\sigma ^{-1}(2)}$ is not in the column of $B_{\sigma ^{-1}(1),1}$
			and we can choose this variable as an integration by parts variable, which
			gives rise to $B_{\sigma ^{-1}(2),2}$.
			\begin{itemize}
				\item \textsl{Case 1: } If $z_{2,\sigma ^{-1}(2)+1}$ is not
				in the column of $B_{\sigma ^{-1}(1),1}$, then we select $z_{2,\sigma
					^{-1}(2)+1}$ as a substitution variable, which gives rise to the factor $%
				B_{\sigma ^{-1}(2)+1,2}$ (in the path $(B_{\sigma ^{-1}(1),1},B_{\sigma
					^{-1}(2)+1,2},\ldots)$).
				\item \textsl{Case 2: } However, if $z_{2,\sigma ^{-1}(2)+1}$ is in the column
				of $B_{\sigma ^{-1}(1),1}$, then we take the closest variable $z_{2,\sigma
					^{-1}(2)+1+k}$ to $z_{2,\sigma ^{-1}(2)+1}$ on the right, which is not in
				the column of $B_{\sigma ^{-1}(1),1}$, as the new substitution variable ($%
				k\geq 1$). The latter leads to the factor $B_{\sigma ^{-1}(2)+1+k,2}$. In
				this case, we get a path $(B_{\sigma ^{-1}(1),1},B_{\sigma
					^{-1}(2)+1+k,2},\ldots)$.
			\end{itemize}
			\item Then we have to look at the case $B_{\sigma
				^{-1}(1)+1,1}$ in connection with $\mathcal{O}_{2}=\mathcal{(}z_{2,\sigma
				^{-1}(2)},z_{2,\sigma ^{-1}(2)+1})$. Here, 
			\begin{itemize}
				\item \textsl{Case 1: }if $z_{2,\sigma ^{-1}(2)}$ is not
				in the same column as $B_{\sigma ^{-1}(1)+1,1}$, then we choose this
				variable as integration by parts variable, which leads to the path $%
				(B_{\sigma ^{-1}(1)+1,1},B_{\sigma ^{-1}(2),2},\ldots)$.
				\item \textsl{Case 2: }On the other hand, if $%
				z_{2,\sigma ^{-1}(2)}$ belongs to the same column as $B_{\sigma ^{-1}(1)+1,1}
				$, then we make a shift to the left and choose the variable $z_{2,\sigma
					^{-1}(2)-k}$ closest to $z_{2,\sigma ^{-1}(2)}$, which does not belong to
				the column of $B_{\sigma ^{-1}(1)+1,1}$. This leads to path of the form $%
				(B_{\sigma ^{-1}(1)+1,1},B_{\sigma ^{-1}(2)-k,2},\ldots)$.
			\end{itemize}
			\item The remaining case
			with respect to the second row leads either to the path $(B_{\sigma
				^{-1}(1)+1,1},B_{\sigma ^{-1}(2)+1,2},\ldots)$ or $(B_{\sigma
				^{-1}(1)+1,1},B_{\sigma ^{-1}(2)+1+k,2},\ldots)$.
		\end{enumerate}
		
		Then we continue in this way
		until the $(i-1)-$th step. 
		
		\textbf{Step i:} Here we consider two possibilities:
		\begin{enumerate}
			\item	If we depart here from the specific path of
			non-overlapping factors $(B_{l_{1},1},\ldots,B_{l_{i-1},i-1})$ in the tree, then
			we can move to the next orientation point $\mathcal{O}_{i}$ in the $i$-th
			row. Let us say $\mathcal{O}_{i}=\mathcal{(}z_{i,\sigma
				^{-1}(i)},z_{2,\sigma ^{-1}(i)+1})$. Then, we choose an integration by parts variable as follows. 
			\begin{itemize}
				\item \textsl{Case 1:}	if $z_{i,\sigma ^{-1}(i)}$ is not
				in one of the columns of $(B_{l_{1},1},\ldots,B_{l_{i-1},i-1})$, then this gives
				the path $(B_{l_{1},1},\ldots,B_{l_{i-1},i-1},B_{\sigma ^{-1}(i),i})$ in the $i-$%
				th step.
				\item \textsl{Case 2: }Otherwise, we get the path of non-overlapping factors $%
				(B_{l_{1},1},\ldots,B_{l_{i-1},i-1},B_{\sigma ^{-1}(i),i},B_{\sigma
					^{-1}(i)-k,i})$, where $z_{i,\sigma ^{-1}(i)-k}$ (as the integration by
				parts variable) is the closest variable to $z_{i,\sigma ^{-1}(i)-k}$, which
				is not contained in one of the columns of $(B_{l_{1},1},\ldots,B_{l_{i-1},i-1})$%
				. See Lemma \ref{Shift} in the Appendix \ref{Appen1}, which shows that such a ``left-shift" is always possible.
			\end{itemize}
			\item	Further we select a substitution variable as described below.
			\begin{itemize}
				\item \textsl{Case 1: } If $z_{i,\sigma ^{-1}(i)+1}$ is not in one of the columns of $%
				(B_{l_{1},1},\ldots,B_{l_{i-1},i-1})$, then we select $z_{i,\sigma ^{-1}(i)+1}$
				as the substitution variable. The latter leads to the path (of
				non-overlapping factors) $(B_{l_{1},1},\ldots,B_{l_{i-1},i-1},B_{\sigma
					^{-1}(i)+1,i})$.
				\item \textsl{Case 2: }On \ the other hand, if $z_{i,\sigma ^{-1}(i)+1}$ belongs to
				one of the columns of $(B_{l_{1},1},\ldots,B_{l_{i-1},i-1})$, then we take a new
				substitution variable $z_{i,\sigma ^{-1}(i)+1+k}$, which is closest to $%
				z_{i,\sigma ^{-1}(i)+1}$ and which is not in one of the columns of $%
				(B_{l_{1},1},\ldots,B_{l_{i-1},i-1})$. Note that such a substitution variable $%
				z_{i,\sigma ^{-1}(i)+1+k}$ indeed exists (see Lemma \ref%
				{SubstitutionVariable} and Lemma \ref{Shift} in the Appendix \ref{Appen1}). Hence, we get in this case the path
				(of non-overlapping factors) $(B_{l_{1},1},\ldots,B_{l_{i-1},i-1},B_{\sigma
					^{-1}(i)+1+k,i})$.
			\end{itemize}
		\end{enumerate}
		
		Using this procedure until the $n-$th step, we find (as in the case $n=3)$
		that%
		\begin{equation*}
			\mathbb{E}\Big[ \prod\limits_{i=1}^{n}f_{i}^{\prime
			}(W_{s_{i},t_{\sigma (i)}})\Big] 
		\end{equation*}%
		can be written as a sum of at most $2^{n}$ summands, which are integrals of
		products of factors, where exactly $n$ non-overlapping factors occur. 		
		Then, by applying the estimate 
		\begin{equation*}
			\int_{\mathbb{R}}\left\vert B_{i,j}(z)\right\vert \diffns z\leq
			C_{0}((s_{i}-s_{i-1})(t_{j}-t_{j-1})^{-1/2}\text{ }
		\end{equation*}%
		for an absolute constant $C_{0}$, one can show (as we will see) the estimate \eqref{Estimate}.
		
		For $i=3$, we give two examples of application of the above algorithm in Appendix \ref{Appen2} using a binomial tree.
		
		\section{Proof of the main result}\label{mallcalgaus}

		In this section, we give the proof of the main result. More precisely, we consider the following SDE 
		\begin{align}\label{eqmain1}
			\mathrm{X}_{s,t}=b(s,t,X_{s,t})\mathrm{d} {s}\mathrm{d}t+ \mathrm{d}W_{s,t}, \,(s,t)\in \mathcal{T}^2, X_{s,0}=x=X_{0,t} \in \mathbb{R}^d,
		\end{align}
		in which the drift coefficient $b:\mathcal{T}^2\times \mathbb{R}^d\rightarrow \mathbb{R}^d$ is Borel measurable and bounded. 
		\begin{lemma} \label{lemmainres1}	
			Let $b: \mathcal{T}^2 \times \mathbb{R}^d \rightarrow \mathbb{R}^d$ be a smooth function with compact support. Then the corresponding strong solution $X$ of \eqref{eqmain1} satisfies 
			\begin{align}\label{Eqlemmainres1}
				\E \Big[ \Big| D_{r,u} X_{s,t} - D_{\bar r,\bar u} X_{s,t} \Big|^2 \Big] \leq C_d ( \|b\|_\infty  )  (|r -\bar r|+|u-\bar u|)^{\alpha}
			\end{align}
			for $0 \leq (\bar r,\bar u) \preceq (r,u) \preceq(s,t)\preceq (T,T)$, $\alpha = \alpha(s,t) > 0$  and
			\begin{align}\label{Eqlemmainres2}
				\sup_{0 \leq (r,u) \leq (T,T)}\ E \left[ | D_{r,u} X_{s,t} |^2 \right] \leq C_d (\|b\|_\infty),
			\end{align}
			where $C_{d,} : [0, \infty) \rightarrow [0, \infty)$ is an increasing, continuous function, $| \cdot |$ a matrix-norm on $\mathbb{R}^{d \times d}$.	
		\end{lemma}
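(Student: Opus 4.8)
The strategy is to represent the Malliavin derivative $D_{r,u}X_{s,t}$ via its Picard series — as already displayed in Step~1 of the proof of Theorem~\ref{Mainresult} — and then estimate the second moment of the difference term by term using the iterated-integral bounds of Corollary~\ref{corol:DavieVarSheet}. Recall that, since $b$ is smooth with compact support, the strong solution $X$ to \eqref{eqmain1} exists and
\begin{equation*}
D_{r,u}X_{s,t}=\mathcal{I}_{d\times d}+\sum_{k\geq1}\int_{\substack{r<s_k<\cdots<s_1<s\\u<t_k<\cdots<t_1<t}}b'(s_1,t_1,X_{s_1,t_1})\cdots b'(s_k,t_k,X_{s_k,t_k})\,\mathrm{d}t_k\mathrm{d}s_k\cdots\mathrm{d}t_1\mathrm{d}s_1,
\end{equation*}
with an analogous expression for $D_{\bar r,\bar u}X_{s,t}$ over the domain $\{\bar r<s_k<\cdots<s_1<s,\ \bar u<t_k<\cdots<t_1<t\}$. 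The difference $D_{r,u}X_{s,t}-D_{\bar r,\bar u}X_{s,t}$ is then a sum over $k\geq1$ of differences of such iterated integrals, which, by decomposing the rectangle $(\bar r,s)\times(\bar u,t)$ along the lines $\{s_k=r\}$ and $\{t_k=u\}$, splits each $k$-th term into pieces where at least one of the two "innermost" variables lies in the thin strip $[\bar r,r]$ or $[\bar u,u]$.

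\textbf{Key steps.} First I would write $\big|D_{r,u}X_{s,t}-D_{\bar r,\bar u}X_{s,t}\big|^2$ and expand the square, producing a double sum over indices $k,m\geq1$; after taking expectation one must bound $\E$ of products of $2(k+m)$ factors of the form $b'(s_i,t_i,W_{s_i,t_i})$ — but here one first applies Girsanov's theorem for the Wiener sheet (Theorem~\ref{Girsanov}) to replace $X$ by the Wiener sheet $W$ at the price of a Radon--Nikodym density whose $L^2$-norm is controlled by $\exp(C\|b\|_\infty^2)$, exactly as in Step~2 of the proof of the main theorem. Second, on the resulting Wiener-sheet expectation one invokes Corollary~\ref{corol:DavieVarSheet} — specifically the estimates \eqref{eq:MMNPZmD}, \eqref{eq:MMNPZmD2} and \eqref{eq:MMNPZmD3}, which are tailored precisely to the geometry of an integration domain split by an intermediate line $\{s=r\}$ (resp. $\{t=u\}$) into a "thin" part of width $r-\bar r$ (resp. $u-\bar u$) and a "bulk" part. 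This yields, for the portion of the $k$-th term supported on the strip, a bound of order
\begin{equation*}
\frac{C^{k}\|b\|_\infty^{k}\,(r-\bar r)^{1/2}\,(s-\bar r)^{(k-1)/2}\,(t-\bar u)^{k/2}}{\Gamma\!\left(\tfrac{k+1}{2}\right)\Gamma\!\left(\tfrac{k}{2}\right)}
\end{equation*}
(and symmetrically with $(u-\bar u)^{1/2}$ when the split is in the $t$-variable), the extra factor $(r-\bar r)^{1/2}$ or $(u-\bar u)^{1/2}$ being exactly what produces the Hölder gain. Third, after using the Cauchy--Schwarz inequality to handle the density and to separate the $k$- and $m$-sums (or, alternatively, applying the shuffle identities of Lemma~\ref{lemma:Shuffle} and Lemma~\ref{lemme:Shuffle2} to turn a squared iterated integral into a single iterated integral over a larger ordered domain so that Corollary~\ref{corol:DavieVarSheet} applies directly), one sums the series in $k,m$: the double-factorial denominators $\Gamma(\tfrac{k+1}{2})\Gamma(\tfrac{m+1}{2})$ guarantee absolute convergence for every fixed $(s,t)$, producing a finite constant $C_d(\|b\|_\infty)$ that is increasing and continuous in $\|b\|_\infty$. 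The exponent $\alpha=\alpha(s,t)$ comes out as (half of) the smallest surviving power of the strip-width among the summands, which depends on $s,t$ through the ranges of the variables — hence the $(s,t)$-dependence of $\alpha$. The bound \eqref{Eqlemmainres2} is the same computation without the decomposition: one simply estimates $\E[|D_{r,u}X_{s,t}|^2]$ by expanding the Picard series, applying Girsanov and \eqref{eq:MMNPZmD}, and summing.

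\textbf{Main obstacle.} The principal difficulty is the bookkeeping required to convert the square of the Picard-series difference into a form to which Corollary~\ref{corol:DavieVarSheet} literally applies: squaring destroys the simplex ordering, and one must either invoke the shuffle lemmas (Lemmas~\ref{lemma:Shuffle}--\ref{lemme:Shuffle2}) to restore an ordered integration domain — keeping careful track of which of the $2(k+m)$ time-coordinates is pinned to lie in the thin strip $[\bar r,r]$ or $[\bar u,u]$ — or argue by Cauchy--Schwarz and reduce to the single-sum estimates at the cost of a worse but still summable constant. A secondary technical point is verifying that the Hölder exponent extracted this way is strictly positive and that the constant depends on $\|b\|_\infty$ only through the factors $\|b\|_\infty^{k+m}$ and the Girsanov density $\exp(C\|b\|_\infty^2)$, so that $C_d(\cdot)$ is genuinely an increasing continuous function of a single real variable, uniformly over $(\bar r,\bar u)\preceq(r,u)\preceq(s,t)$; this uniformity is immediate from the estimates of Corollary~\ref{corol:DavieVarSheet} since all the $(s-\bar r)$, $(t-\bar u)$ factors are bounded by $T$.
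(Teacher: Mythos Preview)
Your approach is essentially the paper's: Picard-series expansion of $D_{r,u}X_{s,t}$, Girsanov to pass from $X$ to the Wiener sheet (which forces an $L^4$ estimate via Cauchy--Schwarz on the density), the shuffle Lemmas~\ref{lemma:Shuffle}--\ref{lemme:Shuffle2} to re-order the fourth power into single ordered simplices, and then Corollary~\ref{corol:DavieVarSheet} term by term with summation over the Gamma-function denominators. The only organisational difference is that the paper, instead of subtracting the two Picard series directly and splitting the rectangle as you describe, first derives an integral equation for the \emph{difference} $D_{r,u}X_{s,t}-D_{\bar r,\bar u}X_{s,t}$ (with forcing terms $D_{\bar r,\bar u}X_{r,t}-\mathcal{I}_d$ and $D_{r,\bar u}X_{s,u}-\mathcal{I}_d$) and Picard-iterates \emph{that}, producing a clean four-family decomposition over domains $\nabla_k,\widetilde\nabla_k,\Delta_{k+n},\widetilde\Delta_{k+n}$ tailored exactly to the three bounds \eqref{eq:MMNPZmD}--\eqref{eq:MMNPZmD3}; the resulting exponent is $\alpha=1$.
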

		
		\begin{proof}
			Using the the chain-rule for the Malliavin derivatives (see \cite[Page 144]{Nua06}), it follows that
			\begin{align} \label{EqMalDer}
				D_{r,u }X_{s,t} = &  \mathcal{I}_d + \int_{r}^{s} \int_{u}^{t}b'(s_1,t_1,X_{s_1,t_1})D_{r,u}X_{s_1,t_1} \diffns s_1\diffns t_1\notag\\
				=&\mathcal{I}_d + \sum_{n=1}^{\infty} \int_{
					\begin{subarray}{c}
						r < s_n < \ldots < s_1 < s\\
						u<t_n<\ldots<t_1<t
					\end{subarray}
				} b'(s_1,t_1, X_{s_1,t_1})  \dots  b'(s_n,t_n, X_{s_n,t_n}) \diffns s_1 \diffns t_1\dots \diffns s_n \diffns t_n
			\end{align}
			$\mathbb{P}$-a.e. for all $(r,u) \preceq (s,t) $. Here $\mathcal{I}_d$ is the $d \times d$ identity matrix and $b'(s,t,x) = \left( \frac{ \partial}{ \partial x_i} b^{(j)}(s,t,x) \right)_{1 \leq i,j \leq d} $ is the spatial Jacobian derivative of $b$.
			Note that the above representation for the Malliavin derivative can be
			obtained by using Picard iteration.\ Alternatively, one can show that this
			representation solves the equation (\ref{EqMalDer}).\ Then uniqueness can be
			verified by applying a multiparameter version of Gr\"{o}nwall's Lemma (see
			e.g. \cite[Lemma 5.1.1]{Qi16}).    
			
			Further, we have that
			\begin{align*}
				&D_{r,u}X_{s,t}-D_{\bar r,\bar u}X_{s,t}\\=&\int_r^s\int_u^tb'(s_1,t_1,X_{s_1,t_1})(D_{r,u}X_{s_1,t_1}-D_{\bar r,\bar u}X_{s_1,t_1})\mathrm{d}s_1\mathrm{d}t_1\\&+\int_{\bar r}^r\int_{\bar u}^tb'(s_1,t_1,X_{s_1,t_1})D_{\bar r,\bar u}X_{s_1,t_1}\mathrm{d}s_1\mathrm{d}t_1+\int_{\bar r}^s\int_{\bar u}^ub'(s_1,t_1,X_{s_1,t_1})D_{\bar r,\bar u}X_{s_1,t_1}\mathrm{d}s_1\mathrm{d}t_1\\&-\int_{\bar r}^{r}\int_{\bar u}^ub'(s_1,t_1,X_{s_1,t_1})D_{\bar r,\bar u}X_{s_1,t_1}\mathrm{d}s_1\mathrm{d}t_1\\=&\int_r^s\int_u^tb'(s_1,t_1,X_{s_1,t_1})(D_{r,u}X_{s_1,t_1}-D_{\bar r,\bar u}X_{s_1,t_1})\mathrm{d}s_1\mathrm{d}t_1+(D_{\bar r,\bar u}X_{r,t}-\mathcal{I}_d)\\&+(D_{\bar r,\bar u}X_{s,u}-\mathcal{I}_d)-(D_{\bar r,\bar u}X_{r,u}-\mathcal{I}_d).
			\end{align*}
			Thus, by Picard iteration, we obtain 
			\begin{align*}
				&D_{r,u}X_{s,t}-D_{\bar r,\bar u}X_{s,t}\\=&(D_{\bar r,\bar u}X_{r,t}-\mathcal{I}_d)+(D_{\bar r,\bar u}X_{s,u}-\mathcal{I}_d) \\&+\sum\limits_{k=1}^{\infty}\,\int\limits_{
					\begin{subarray}{c}
						r<s_k<\ldots<s_1<s\\
						u<t_k<\ldots<t_1<t
					\end{subarray}
				}b'(s_1,t_1,X_{s_1,t_1})\ldots:b'(s_k,t_k,X_{s_k,t_k})(D_{\bar r,\bar u}X_{r,t_k}-\mathcal{I}_d)\mathrm{d}s_1\mathrm{d}t_1\ldots\mathrm{d}s_k\mathrm{d}t_k\\
				&+\sum\limits_{k=1}^{\infty}\,\int\limits_{
					\begin{subarray}{c}
						r<s_k<\ldots<s_1<s\\
						u<t_k<\ldots<t_1<t
					\end{subarray}
				}b'(s_1,t_1,X_{s_1,t_1})\ldots b'(s_k,t_k,X_{s_k,t_k})(D_{\bar r,\bar u}X_{s_k,u}-\mathcal{I}_d)\mathrm{d}s_1\mathrm{d}t_1\ldots\mathrm{d}s_k\mathrm{d}t_k\\&-(D_{\bar r,\bar u}X_{r,u}-\mathcal{I}_d)\Big(I_d+\sum\limits_{k=1}^{\infty}\,\int\limits_{
					\begin{subarray}{c}
						r<s_k<\ldots<s_1<s\\
						u<t_k<\ldots<t_1<t
					\end{subarray}
				}b'(s_1,t_1,X_{s_1,t_1})\ldots b'(s_k,t_k,X_{s_k,t_k})\mathrm{d}s_1\mathrm{d}t_1\ldots\mathrm{d}s_k\mathrm{d}t_k\Big).
			\end{align*}
			
			For $0\leq \bar r<r<s$, define
			\begin{align*}
				\nabla^{(k)}_{r,s}=&\{(s_1,\ldots,s_k)\in{\mathcal{T}}^{k}:\,r<s_k<\ldots<s_1<s\},
			\end{align*}
			\begin{align*}
				\Xi^{(k+n)}_{\bar r,r,s}=&\Big\{(s_1,\ldots,s_{k+n})\in \mathcal{T}^{k+n}:\,
				\bar r<s_{k+n}<\ldots<s_{k+1}<s_{k},\,r<s_{k}<\dots<s_{1}<s\Big\}
			\end{align*}
			and
			\begin{align*}
				\Delta^{(k+n)}_{\bar r,r,s}=&\Big\{(s_1,t_1,\ldots,s_{k+n},t_{k+n})\in\mathcal{T}^{k+n}:\,\bar r<s_{k+n}<\ldots<s_{k+1}<r<s_{k}<\ldots<s_{1}<s\text{ } \Big\}.
			\end{align*}
			
			Hence
			
			\begin{align*}
				&D_{r,u}X_{s,t}-D_{\bar r,\bar u}X_{s,t}\\=&\sum\limits_{k=1}^{\infty}\,\int_{
					\nabla^{(k)}_{\bar r,r}
				}\int_{
					\nabla^{(k)}_{\bar u,t}
				}\prod\limits_{j=1}^kb'(s_j,t_j,X_{s_j,t_j}) \,\mathrm{d}s_1\mathrm{d}t_1\ldots\mathrm{d}s_k\mathrm{d}t_k\\&+\sum\limits_{k=1}^{\infty}\,\int_{
					\nabla^{(k)}_{\bar r,s}
				}\int_{
					\nabla^{(k)}_{\bar u,u}
				}\prod\limits_{j=1}^kb'(s_j,t_j,X_{s_j,t_j}) \,\mathrm{d}s_1\mathrm{d}t_1\ldots\mathrm{d}s_k\mathrm{d}t_k \\&+\sum\limits_{k=1}^{\infty}\sum\limits_{n=1}^{\infty}\,\int_{\Xi^{(k+n)}_{\bar r,r,s}}\int_{
					\Delta^{(k+n)}_{\bar u,u,t}
				}\prod\limits_{j=1}^{k+n}b'(s_j,t_j,X_{s_j,t_j})\,  \mathrm{d}s_{1}\mathrm{d}t_{1}\ldots\mathrm{d}s_{k+n}\mathrm{d}t_{k+n}\\
				&+\sum\limits_{k=1}^{\infty}\sum\limits_{n=1}^{\infty}\,\int_{\Delta^{(k+n)}_{\bar r,r,s}}\int_{
					\Xi^{(k+n)}_{\bar u,u,t}
				}\prod\limits_{j=1}^{k+n}b'(s_j,t_j,X_{s_j,t_j})\,  \mathrm{d}s_{1}\mathrm{d}t_{1}\ldots\mathrm{d}s_{k+n}\mathrm{d}t_{k+n}\\
				&-\Big(\sum\limits_{k=1}^{\infty}\,\int_{
					\nabla^{(k)}_{\bar r,r}
				}\int_{
					\nabla^{(k)}_{\bar u,u}
				}\prod\limits_{j=1}^kb'(s_j,t_j,X_{s_j,t_j}) \,\mathrm{d}s_1\mathrm{d}t_1\ldots\mathrm{d}s_k\mathrm{d}t_k\Big)\\&\quad\times\Big(I_d+\sum\limits_{k=1}^{\infty}\,\int_{
					\nabla^{(k)}_{r,s}
				}\int_{
					\nabla^{(k)}_{u,t}
				}\prod\limits_{j=1}^kb'(s_j,t_j,X_{s_j,t_j}) \,\mathrm{d}s_1\mathrm{d}t_1\ldots\mathrm{d}s_k\mathrm{d}t_k\Big).
			\end{align*}
			Squaring both sides of the equality above gives
			\begin{align*}
				&|D_{r,u}X_{s,t}-D_{\bar r,\bar u}X_{s,t}|^2\\ \leq&5\Big\{\Big(\sum\limits_{k=1}^{\infty}\,\int_{
					\nabla^{(k)}_{\bar r,r}
				}\int_{
					\nabla^{(k)}_{\bar u,t}
				}\prod\limits_{j=1}^kb'(s_j,t_j,X_{s_j,t_j}) \,\mathrm{d}s_1\mathrm{d}t_1\ldots\mathrm{d}s_k\mathrm{d}t_k\Big)^2\\&+\Big(\sum\limits_{k=1}^{\infty}\,\int_{
					\nabla^{(k)}_{\bar r,s}
				}\int_{
					\nabla^{(k)}_{\bar u,u}
				}\prod\limits_{j=1}^kb'(s_j,t_j,X_{s_j,t_j}) \,\mathrm{d}s_1\mathrm{d}t_1\ldots\mathrm{d}s_k\mathrm{d}t_k\Big)^2 \\&+\Big(\sum\limits_{k=1}^{\infty}\sum\limits_{n=1}^{\infty}\,\int_{\Xi^{(k+n)}_{\bar r,r,s}}\int_{
					\Delta^{(k+n)}_{\bar u,u,t}
				}\prod\limits_{j=1}^{k+n}b'(s_j,t_j,X_{s_j,t_j})\,  \mathrm{d}s_{1}\mathrm{d}t_{1}\ldots\mathrm{d}s_{k+n}\mathrm{d}t_{k+n}\Big)^2\\
				&+\Big(\sum\limits_{k=1}^{\infty}\sum\limits_{n=1}^{\infty}\,\int_{\Delta^{(k+n)}_{\bar r,r,s}}\int_{
					\Xi^{(k+n)}_{\bar u,u,t}
				}\prod\limits_{j=1}^{k+n}b'(s_j,t_j,X_{s_j,t_j})\,  \mathrm{d}s_{1}\mathrm{d}t_{1}\ldots\mathrm{d}s_{k+n}\mathrm{d}t_{k+n}\Big)^2\\
				&+\Big[\Big(\sum\limits_{k=1}^{\infty}\,\int_{
					\nabla^{(k)}_{\bar r,r}
				}\int_{
					\nabla^{(k)}_{\bar u,u}
				}\prod\limits_{j=1}^kb'(s_j,t_j,X_{s_j,t_j}) \,\mathrm{d}s_1\mathrm{d}t_1\ldots\mathrm{d}s_k\mathrm{d}t_k\Big)\\&\quad\times\Big(I_d+\sum\limits_{k=1}^{\infty}\,\int_{
					\nabla^{(k)}_{r,s}
				}\int_{
					\nabla^{(k)}_{u,t}
				}\prod\limits_{j=1}^kb'(s_j,t_j,X_{s_j,t_j}) \,\mathrm{d}s_1\mathrm{d}t_1\ldots\mathrm{d}s_k\mathrm{d}t_k\Big)\Big]^2\Big\}.
			\end{align*}
			Taking the expectation on both sides and using the Cameron-Martin-Girsanov theorem we have
			\begin{align*}
				&\E[|D_{r,u}X_{s,t}-D_{\bar r,\bar u}X_{s,t}|^2]\\ \leq&5\E\Big[\mathcal{E}\Big(\int_{\mathcal{T}} b(s_1,t_1,W_{s_1,t_1})\cdot\mathrm{d}W_{s_1,t_1}\Big)\\&\times\Big\{\Big(\sum\limits_{k=1}^{\infty}\,\int_{
					\nabla^{(k)}_{\bar r,r}
				}\int_{
					\nabla^{(k)}_{\bar u,t}
				}\prod\limits_{j=1}^kb'(s_j,t_j,W_{s_j,t_j}) \,\mathrm{d}s_1\mathrm{d}t_1\ldots\mathrm{d}s_k\mathrm{d}t_k\Big)^2\\&\quad+\Big(\sum\limits_{k=1}^{\infty}\,\int_{
					\nabla^{(k)}_{\bar r,s}
				}\int_{
					\nabla^{(k)}_{\bar u,u}
				}\prod\limits_{j=1}^kb'(s_j,t_j,W_{s_j,t_j}) \,\mathrm{d}s_1\mathrm{d}t_1\ldots\mathrm{d}s_k\mathrm{d}t_k\Big)^2 \\&+\Big(\sum\limits_{k=1}^{\infty}\sum\limits_{n=1}^{\infty}\,\int_{\Xi^{(k+n)}_{\bar r,r,s}}\int_{
					\Delta^{(k+n)}_{\bar u,u,t}
				}\prod\limits_{j=1}^{k+n}b'(s_j,t_j,W_{s_j,t_j})\,  \mathrm{d}s_{1}\mathrm{d}t_{1}\ldots\mathrm{d}s_{k+n}\mathrm{d}t_{k+n}\Big)^2\\
				&+\Big(\sum\limits_{k=1}^{\infty}\sum\limits_{n=1}^{\infty}\,\int_{\Delta^{(k+n)}_{\bar r,r,s}}\int_{
					\Xi^{(k+n)}_{\bar u,u,t}
				}\prod\limits_{j=1}^{k+n}b'(s_j,t_j,W_{s_j,t_j})\,  \mathrm{d}s_{1}\mathrm{d}t_{1}\ldots\mathrm{d}s_{k+n}\mathrm{d}t_{k+n}\Big)^2\\
				&+\Big[\Big(\sum\limits_{k=1}^{\infty}\,\int_{
					\nabla^{(k)}_{\bar r,r}
				}\int_{
					\nabla^{(k)}_{\bar u,u}
				}\prod\limits_{j=1}^kb'(s_j,t_j,W_{s_j,t_j}) \,\mathrm{d}s_1\mathrm{d}t_1\ldots\mathrm{d}s_k\mathrm{d}t_k\Big)\\&\quad\times\Big(I_d+\sum\limits_{k=1}^{\infty}\,\int_{
					\nabla^{(k)}_{r,s}
				}\int_{
					\nabla^{(k)}_{u,t}
				}\prod\limits_{j=1}^kb'(s_j,t_j,W_{s_j,t_j}) \,\mathrm{d}s_1\mathrm{d}t_1\ldots\mathrm{d}s_k\mathrm{d}t_k\Big)\Big]^2\Big\}\Big].
			\end{align*}
			It follows from Cauchy-Schwarz Inequality that
			\begin{align*}
				&\E[|D_{r,u}X_{s,t}-D_{\bar r,\bar u}X_{s,t}|^2]\\ \leq&5\E\Big[\mathcal{E}\Big(\int_{\mathcal{T}} b(s_1,t_1,W_{s_1,t_1})\cdot\mathrm{d}W_{s_1,t_1}\Big)^2\Big]^{1/2}\\&\times\Big\Vert\Big(\sum\limits_{k=1}^{\infty}\,\int_{
					\nabla^{(k)}_{\bar r,r}
				}\int_{
					\nabla^{(k)}_{\bar u,t}
				}\prod\limits_{j=1}^kb'(s_j,t_j,W_{s_j,t_j}) \,\mathrm{d}s_1\mathrm{d}t_1\ldots\mathrm{d}s_k\mathrm{d}t_k\Big)^2\\&\quad+\Big(\sum\limits_{k=1}^{\infty}\,\int_{
					\nabla^{(k)}_{\bar r,s}
				}\int_{
					\nabla^{(k)}_{\bar u,u}
				}\prod\limits_{j=1}^kb'(s_j,t_j,W_{s_j,t_j}) \,\mathrm{d}s_1\mathrm{d}t_1\ldots\mathrm{d}s_k\mathrm{d}t_k\Big)^2 \\&+\Big(\sum\limits_{k=1}^{\infty}\sum\limits_{n=1}^{\infty}\,\int_{\Xi^{(k+n)}_{\bar r,r,s}}\int_{
					\Delta^{(k+n)}_{\bar u,u,t}
				}\prod\limits_{j=1}^{k+n}b'(s_j,t_j,W_{s_j,t_j})\,  \mathrm{d}s_{1}\mathrm{d}t_{1}\ldots\mathrm{d}s_{k+n}\mathrm{d}t_{k+n}\Big)^2\\
				&+\Big(\sum\limits_{k=1}^{\infty}\sum\limits_{n=1}^{\infty}\,\int_{\Delta^{(k+n)}_{\bar r,r,s}}\int_{
					\Xi^{(k+n)}_{\bar u,u,t}
				}\prod\limits_{j=1}^{k+n}b'(s_j,t_j,W_{s_j,t_j})\,  \mathrm{d}s_{1}\mathrm{d}t_{1}\ldots\mathrm{d}s_{k+n}\mathrm{d}t_{k+n}\Big)^2\\
				&+\Big[\Big(\sum\limits_{k=1}^{\infty}\,\int_{
					\nabla^{(k)}_{\bar r,r}
				}\int_{
					\nabla^{(k)}_{\bar u,u}
				}\prod\limits_{j=1}^kb'(s_j,t_j,W_{s_j,t_j}) \,\mathrm{d}s_1\mathrm{d}t_1\ldots\mathrm{d}s_k\mathrm{d}t_k\Big)\\&\quad\times\Big(I_d+\sum\limits_{k=1}^{\infty}\,\int_{
					\nabla^{(k)}_{r,s}
				}\int_{
					\nabla^{(k)}_{u,t}
				}\prod\limits_{j=1}^kb'(s_j,t_j,W_{s_j,t_j}) \,\mathrm{d}s_1\mathrm{d}t_1\ldots\mathrm{d}s_k\mathrm{d}t_k\Big)\Big]^2\Big\Vert_{L^2(\Omega)}.
			\end{align*}
			We deduce from triangular inequality, H\"older inequality and the monotone convergence theorem that
			\begin{align*}
				&\E[|D_{r,u}X_{s,t}-D_{\bar r,\bar u}X_{s,t}|^2]\\ \leq&
				C\Big\{
				\Big(\sum\limits_{k=1}^{\infty}\,\Big\Vert\int_{
					\nabla^{(k)}_{\bar r,r}
				}\int_{
					\nabla^{(k)}_{\bar u,t}
				}\prod\limits_{j=1}^kb'(s_j,t_j,W_{s_j,t_j}) \,\mathrm{d}s_1\mathrm{d}t_1\ldots\mathrm{d}s_k\mathrm{d}t_k\Big\Vert_{L^4(\Omega)}\Big)^2\\&+\Big(\sum\limits_{k=1}^{\infty}\,\Big\Vert\int_{
					\nabla^{(k)}_{\bar r,s}
				}\int_{
					\nabla^{(k)}_{\bar u,u}
				}\prod\limits_{j=1}^kb'(s_j,t_j,W_{s_j,t_j}) \,\mathrm{d}s_1\mathrm{d}t_1\ldots\mathrm{d}s_k\mathrm{d}t_k\Big\Vert_{L^4(\Omega)}\Big)^2 \\&+\Big(\sum\limits_{k,n=1}^{\infty} \Big\Vert\int_{\Xi^{(k+n)}_{\bar r,r,s}}\int_{
					\Delta^{(k+n)}_{\bar u,u,t}
				}\prod\limits_{j=1}^{k+n}b'(s_j,t_j,W_{s_j,t_j})\,  \mathrm{d}s_{1}\mathrm{d}t_{1}\ldots\mathrm{d}s_{k+n}\mathrm{d}t_{k+n}\Big\Vert_{L^4(\Omega)}\Big)^2\\
				&+\Big(\sum\limits_{k,n=1}^{\infty} \Big\Vert\int_{\Delta^{(k+n)}_{\bar r,r,s}}\int_{
					\Xi^{(k+n)}_{\bar u,u,t}
				}\prod\limits_{j=1}^{k+n}b'(s_j,t_j,W_{s_j,t_j})\,  \mathrm{d}s_{1}\mathrm{d}t_{1}\ldots\mathrm{d}s_{k+n}\mathrm{d}t_{k+n}\Big\Vert_{L^4(\Omega)}\Big)^2\\
				&+\Big(\sum\limits_{k=1}^{\infty}\Big\Vert\int_{
					\nabla^{(k)}_{\bar r,r}
				}\int_{
					\nabla^{(k)}_{\bar u,u}
				}\prod\limits_{j=1}^kb'(s_j,t_j,W_{s_j,t_j}) \,\mathrm{d}s_1\mathrm{d}t_1\ldots\mathrm{d}s_k\mathrm{d}t_k\Big\Vert_{L^8(\Omega)}\Big)^2\\&\quad\times\Big(1+\sum\limits_{k=1}^{\infty}\Big\Vert\int_{
					\nabla^{(k)}_{r,s}
				}\int_{
					\nabla^{(k)}_{u,t}
				}\prod\limits_{j=1}^kb'(s_j,t_j,W_{s_j,t_j}) \,\mathrm{d}s_1\mathrm{d}t_1\ldots\mathrm{d}s_k\mathrm{d}t_k\Big\Vert_{L^8(\Omega)}\Big)^2
				\Big\}\\
				=&C\{I_1+I_2+I_3+I_4+I_5\times I_6\},
			\end{align*}
			where
			\begin{align*}
				C=5\E\Big[\mathcal{E}\Big(\int_{\mathcal{T}}b_{s_1,t_1}(W_{s_1,t_1})\cdot\mathrm{d}W_{s_1,t_1}\Big)^2\Big]^{1/2}.
			\end{align*}
			Now observe that
			\begin{align*}
				&\Big(\int_{
					\nabla^{(k)}_{\bar r,r}
				}\int_{
					\nabla^{(k)}_{\bar u,t}
				} \prod\limits_{v=1}^kb'(s_v,t_v,W_{s_v,t_v})\mathrm{d}s_1\mathrm{d}t_1\ldots\mathrm{d}s_{k}\mathrm{d}t_{k}\Big)_{i,j}\\=&
				\sum\limits_{l_1,\ldots,l_{k-1}=1}^d\int_{
					\nabla^{(k)}_{\bar r,r}
				}\int_{
					\nabla^{(k)}_{\bar u,t}
				}\frac{\partial b^{(l_1)}_{s_1,t_1}}{\partial x_{i}}(W_{s_1,t_1})\frac{\partial b^{(l_2)}_{s_2,t_2}}{\partial x_{l_1}}(W_{s_2,t_2})\ldots\frac{\partial b^{(j)}_{s_{k},t_{k}}}{\partial x_{l_{k-1}}}(W_{s_{k},t_{k}})\,\mathrm{d}s_1\mathrm{d}t_1\ldots\mathrm{d}s_{k}\mathrm{d}t_{k}.
			\end{align*}
			As a consequence, 
			\begin{align*}
				I_1\leq&\Big(\sum\limits_{k=1}^{\infty}\sum\limits_{i,j=1}^d\sum\limits_{l_1,\ldots,l_{k-1}=1}^d\Big\|\int_{
					\nabla^{(k)}_{\bar r,r}
				}\int_{
					\nabla^{(k)}_{\bar u,t}
				}\frac{\partial b^{(l_1)}_{s_1,t_1}}{\partial x_{i}}(W_{s_1,t_1})\frac{\partial b^{(l_2)}_{s_2,t_2}}{\partial x_{l_1}}(W_{s_2,t_2})\times\ldots\\&\qquad\qquad\qquad\qquad\qquad\ldots\times\frac{\partial b^{(j)}_{s_{k},t_{k}}}{\partial x_{l_{k-1}}}(W_{s_{k},t_{k}})\,\mathrm{d}s_1\mathrm{d}t_1\ldots\mathrm{d}s_{k}\mathrm{d}t_{k}\Big\|_{L^4(\Omega)}\Big)^2.
			\end{align*}
			Similarly, we have
			\begin{align*}
				I_2\leq&\Big(\sum\limits_{k=1}^{\infty}\sum\limits_{i,j=1}^d\sum\limits_{l_1,\ldots,l_{k-1}=1}^d\Big\|\int_{
					\nabla^{(k)}_{\bar r,s}
				}\int_{
					\nabla^{(k)}_{\bar u,u}
				}\frac{\partial b^{(l_1)}_{s_1,t_1}}{\partial x_{i}}(W_{s_1,t_1})\frac{\partial b^{(l_2)}_{s_2,t_2}}{\partial x_{l_1}}(W_{s_2,t_2})\times\ldots\\&\qquad\qquad\qquad\qquad\qquad\qquad\ldots\times\frac{\partial b^{(j)}_{s_{k},t_{k}}}{\partial x_{l_{k-1}}}(W_{s_{k},t_{k}})\,\mathrm{d}s_1\mathrm{d}t_1\ldots\mathrm{d}s_{k}\mathrm{d}t_{k}\Big\|_{L^4(\Omega)}\Big)^2,
			\end{align*}
			\begin{align*}
				I_3\leq\Big(\sum\limits_{k,n=1}^{\infty}\sum\limits_{i,j=1}^d\sum\limits_{l_1,\ldots,l_{k+n-1}=1}^d&\Big\Vert\int_{\Delta^{(k+n)}_{\bar r,r,s}}\int_{
					\Xi^{(k+n)}_{\bar u,u,t}
				}\frac{\partial b^{(l_1)}_{s_1,t_1}}{\partial x_{i}}(W_{s_1,t_1})\frac{\partial b^{(l_2)}_{s_2,t_2}}{\partial x_{l_1}}(W_{s_2,t_2})\ldots \\&\ldots\frac{\partial b^{(j)}_{s_{k+n},t_{k+n}}}{\partial x_{l_{k+n-1}}}(W_{s_{k+n},t_{k+n}})\,\mathrm{d}s_1\mathrm{d}t_1\ldots\mathrm{d}s_{k+n}\mathrm{d}t_{k+n}\Big\Vert_{L^4(\Omega)}\Big)^2,
			\end{align*}
			\begin{align*}
				I_4\leq\Big(\sum\limits_{k,n=1}^{\infty}\sum\limits_{i,j=1}^d\sum\limits_{l_1,\ldots,l_{k+n-1}=1}^d&\Big\Vert\int_{\Xi^{(k+n)}_{\bar r,r,s}}\int_{
					\Delta^{(k+n)}_{\bar u,u,t}
				}\frac{\partial b^{(l_1)}_{s_1,t_1}}{\partial x_{i}}(W_{s_1,t_1})\frac{\partial b^{(l_2)}_{s_2,t_2}}{\partial x_{l_1}}(W_{s_2,t_2})\ldots \\&\ldots\frac{\partial b^{(j)}_{s_{k+n},t_{k+n}}}{\partial x_{l_{k+n-1}}}(W_{s_{k+n},t_{k+n}})\,\mathrm{d}s_1\mathrm{d}t_1\ldots\mathrm{d}s_{k+n}\mathrm{d}t_{k+n}\Big\Vert_{L^4(\Omega)}\Big)^2,
			\end{align*}
			\begin{align*}
				I_5\leq&\Big(\sum\limits_{k=1}^{\infty}\sum\limits_{i,j=1}^d\sum\limits_{l_1,\ldots,l_{k-1}=1}^d\Big\|\int_{
					\nabla^{(k)}_{\bar r,r}
				}\int_{
					\nabla^{(k)}_{\bar u,u}
				}\frac{\partial b^{(l_1)}_{s_1,t_1}}{\partial x_{i}}(W_{s_1,t_1})\frac{\partial b^{(l_2)}_{s_2,t_2}}{\partial x_{l_1}}(W_{s_2,t_2})\ldots\frac{\partial b^{(j)}_{s_{k},t_{k}}}{\partial x_{l_{k-1}}}(W_{s_{k},t_{k}})\\&\qquad\qquad\qquad\qquad\qquad\qquad\times\mathrm{d}s_1\mathrm{d}t_1\ldots\mathrm{d}s_{k}\mathrm{d}t_{k}\Big\|_{L^8(\Omega)}\Big)^2
			\end{align*}
			and
			\begin{align*}
				I_6\leq&\Big(1+\sum\limits_{k=1}^{\infty}\sum\limits_{i,j=1}^d\sum\limits_{l_1,\ldots,l_{k-1}=1}^d\Big\|\int_{
					\nabla^{(k)}_{r,s}
				}\int_{
					\nabla^{(k)}_{u,t}
				}\frac{\partial b^{(l_1)}_{s_1,t_1}}{\partial x_{i}}(W_{s_1,t_1})\frac{\partial b^{(l_2)}_{s_2,t_2}}{\partial x_{l_1}}(W_{s_2,t_2})\ldots\\&\qquad\qquad\qquad\qquad\qquad\qquad\ldots\frac{\partial b^{(j)}_{s_{k},t_{k}}}{\partial x_{l_{k-1}}}(W_{s_{k},t_{k}})\,\mathrm{d}s_1\mathrm{d}t_1\ldots\mathrm{d}s_{k}\mathrm{d}t_{k}\Big\|_{L^8(\Omega)}\Big)^2.
			\end{align*}
			Moreover, as $\{\nabla^{(4k,\sigma)}_{r,s}\}_{\sigma\in\widehat{\mathcal{P}}_{4k}}$ is a partition of $(\nabla^{(k)}_{r,s})^4$ for any $0\leq r<s$ (see Lemma \ref{lemma:Shuffle}), one has
			\begin{align*}
				&\Big(\int_{
					\nabla^{(k)}_{\bar r,r}
				}\int_{
					\nabla^{(k)}_{\bar u,t}
				}\frac{\partial b^{(l_1)}_{s_1,t_1}}{\partial x_{i}}(W_{s_1,t_1})\frac{\partial b^{(l_2)}_{s_2,t_2}}{\partial x_{l_1}}(W_{s_2,t_2})\ldots\frac{\partial b^{(j)}_{s_{k},t_{k}}}{\partial x_{l_{k-1}}}(W_{s_{k},t_{k}})\,\mathrm{d}s_1\mathrm{d}t_1\ldots\mathrm{d}s_{k}\mathrm{d}t_{k}\Big)^4\\=&\sum\limits_{\sigma,\gamma}\int_{
					\nabla^{(4k,\sigma)}_{\bar r,r}
				}\int_{
					\nabla^{(4k,\gamma)}_{\bar u,t}
				}g_{1}(s_1,t_1)\ldots g_{4k}(s_{4k},t_{4k})\,\mathrm{d}s_1\mathrm{d}t_1\ldots\mathrm{d}s_{4k}\mathrm{d}t_{4k},
			\end{align*}
			where $g_{\ell}\in\Big\{\frac{\partial b^{(j)}_{\cdot,\cdot}}{\partial x_{i}}(W_{\cdot,\cdot}),\,1\leq i,j\leq d\Big\}$ for all $\ell\in\{1,\ldots,4k\}$. Then using \eqref{eq:MMNPZmD} (in Corollary \ref{corol:DavieVarSheet}),
			\begin{align*}
				&\E\Big[\Big(\int_{
					\nabla^{(k)}_{\bar r,r}
				}\int_{
					\nabla^{(k)}_{\bar u,t}
				}\frac{\partial b^{(l_1)}_{s_1,t_1}}{\partial x_{i}}(W_{s_1,t_1})\frac{\partial b^{(l_2)}_{s_2,t_2}}{\partial x_{l_1}}(W_{s_2,t_2})\ldots\frac{\partial b^{(j)}_{s_{k},t_{k}}}{\partial x_{l_{k-1}}}(W_{s_{k},t_{k}})\,\mathrm{d}s_1\mathrm{d}t_1\ldots\mathrm{d}s_{k}\mathrm{d}t_{k}\Big)^4\Big]\\=&\E\Big[\int_{
					(	\nabla^{(k)}_{\bar r,r})^4
				}\int_{
					(\nabla^{(k)}_{\bar u,t})^4
				}g_{1}(s_1,t_1)\ldots g_{4k}(s_{4k},t_{4k})\,\mathrm{d}s_1\mathrm{d}t_1\ldots\mathrm{d}s_{4k}\mathrm{d}t_{4k}\Big] 
				\\ =&\sum\limits_{\sigma,\gamma}\E\Big[\int_{
					\nabla^{(4k,\sigma)}_{\bar r,r}
				}\int_{
					\nabla^{(4k,\gamma)}_{\bar u,t}
				}g_{1}(s_1,t_1)\ldots g_{4k}(s_{4k},t_{4k})\,\mathrm{d}s_1\mathrm{d}t_1\ldots\mathrm{d}s_{4k}\mathrm{d}t_{4k}\Big]
				\\
				\leq&\sum\limits_{\sigma,\gamma}\int_{
					\nabla^{(4k,\sigma)}_{\bar r,r}
				}\int_{
					\nabla^{(4k,\gamma)}_{\bar u,t}
				}\vert\E[g_{1}(s_1,t_1)\ldots g_{4k}(s_{4k},t_{4k})]\vert\,\mathrm{d}s_1\mathrm{d}t_1\ldots\mathrm{d}s_{4k}\mathrm{d}t_{4k} \\ \leq&\frac{4^{9k}C_1^{4k}\|b\|^{4k}_{\infty}(r-\bar r)^{2k}(t-\bar u)^{2k}}{\Gamma\left(\frac{4k+1}{2}\right)^2},
			\end{align*}
			since the number of elements in $\widehat{\mathcal{P}}_{4k}$ does not exceed $2^{9k}$. As a consequence,
			\begin{align*}
				I_1\leq\Big(\sum\limits_{k=1}^{\infty}\frac{2^{5k}d^{2k}C_1^{k}\Vert b\Vert^{k}_{\infty}(r-\bar r)^{(k-1)/2}(t-\bar u)^{k/2}}{\Gamma\left(\frac{4k+1}{2}\right)^{1/2}}\Big)^2|r-\bar r|=C^{(1)}_d(\|b\|_{\infty})|r-\bar r|.
			\end{align*}
			Similarly, we have
			\begin{align*}
				I_2\leq\Big(\sum\limits_{k=1}^{\infty}\frac{2^{5k}d^{2k}C_1^{k}\Vert b\Vert^{k}_{\infty}(s-\bar r)^{k/2}(u-\bar u)^{(k-1)/2}}{\Gamma\left(\frac{4k+1}{2}\right)^{1/2}}\Big)^2|u-\bar u|= C^{(2)}_d(\Vert b\Vert_{\infty})|u-\bar u|,
			\end{align*}
			\begin{align*}
				I_5\leq&\Big(\sum\limits_{k=1}^{\infty}\frac{2^{5k}d^{2k}C_1^{k}\Vert b\Vert^{k}_{\infty}(r-\bar r)^{(k-1)/2}(u-\bar u)^{(k-1)/2}}{\Gamma\left(\frac{8k+1}{2}\right)^{1/4}}\Big)^2\vert r-\bar r\vert\times\vert u-\bar u\vert\\=& C^{(5)}_d(\Vert b\Vert_{\infty})\vert r-\bar r\vert\times\vert u-\bar u\vert
			\end{align*}
			and
			\begin{align*}
				I_6\leq&\Big(1+\sum\limits_{k=1}^{\infty}\frac{2^{5k}d^{2k}C_1^{k}\Vert b\Vert^{k}_{\infty}(s-r)^{k/2}(t-u)^{k/2}}{\Gamma\left(\frac{8k+1}{2}\right)^{1/4}}\Big)^2= C^{(6)}_d(\Vert b\Vert_{\infty})
			\end{align*}
			On the other hand $\Xi^{(k+n)}_{\bar u,u,t}
			\subset\nabla_{\bar u,t}^{(k+n)}$ gives
			\begin{align*}
				&\E\Big[\Big(\int_{\Delta^{(k+n)}_{\bar r,r,s}}\int_{
					\Xi^{(k+n)}_{\bar u,u,t}
				}\frac{\partial b^{(l_1)}_{s_1,t_1}}{\partial x_{i}}(W_{s_1,t_1})\frac{\partial b^{(l_2)}_{s_2,t_2}}{\partial x_{l_1}}(W_{s_2,t_2})\ldots  \frac{\partial b^{(j)}_{s_{k+n},t_{k+n}}}{\partial x_{l_{k+n-1}}}(W_{s_{k+n},t_{k+n}})\\&\qquad\qquad\qquad\qquad\qquad\times\mathrm{d}s_1\mathrm{d}t_1\ldots\mathrm{d}s_{k+n}\mathrm{d}t_{k+n}\Big)^4\Big]\\ =&
				\E\Big[\int_{(\Delta^{(k+n)}_{\bar r,r,s})^4}\int_{
					(\Xi^{(k+n)}_{\bar u,u,t}
					)^4}g_{1}(s_1,t_1)\ldots g_{4(k+n)}(s_{4(k+n)},t_{4(k+n)})\,\mathrm{d}s_1\mathrm{d}t_1\ldots\mathrm{d}s_{4(k+n)}\mathrm{d}t_{4(k+n)}\Big]\\ \leq&
				\int_{(\Delta^{(k+n)}_{\bar r,r,s})^4}\int_{
					(\Xi^{(k+n)}_{\bar u,u,t}
					)^4}|\E[g_{1}(s_1,t_1)\ldots g_{4(k+n)}(s_{4(k+n)},t_{4(k+n)})]|\,\mathrm{d}s_1\mathrm{d}t_1\ldots\mathrm{d}s_{4(k+n)}\mathrm{d}t_{4(k+n)}\\
				\leq&\int_{(\Delta^{(k+n)}_{\bar r,r,s})^4}\int_{
					(\nabla^{(k+n)}_{\bar u,t}
					)^4}|\E[g_{1}(s_1,t_1)\ldots g_{4(k+n)}(s_{4(k+n)},t_{4(k+n)})]|\mathrm{d}s_1\mathrm{d}t_1\ldots\mathrm{d}s_{4(k+n)}\mathrm{d}t_{4(k+n)},	
			\end{align*}
			where $g_{\ell}\in\Big\{\frac{\partial b^{(j)}_{\cdot,\cdot}}{\partial x_{i}}(W_{\cdot,\cdot}),\,1\leq i,j\leq d\Big\}$ for all $\ell\in\{1,\ldots,4(k+n)\}$. Hence, as
			$\{\Delta^{(4(k+n),\pi,\rho)}_{\bar r,r,s}\}_{(\pi,\rho)\in\widehat{\mathcal{P}}^{\ast}_{4k}\times\widehat{\mathcal{P}}^{\ast\ast}_{4n}}$ is a partition of $(\Delta^{(k+n)}_{\bar r,r,s})^4$ (see Lemma \ref{lemme:Shuffle2}) and $\{\nabla^{(4(k+n),\sigma)}_{\bar u ,t}\}_{\sigma\in\widehat{\mathcal{P}}_{4(k+n)}}$ is a partition of $(\nabla^{(k+n)}_{\bar u,t})^4$ , we deduce from \eqref{eq:MMNPZmD2} (in Corollary \ref{corol:DavieVarSheet}), that
			\begin{align*}
				&\E\Big[\Big(\int_{\Delta^{(k+n)}_{\bar r,r,s}}\int_{
					\Xi^{(k+n)}_{\bar u,u,t}
				}\frac{\partial b^{(l_1)}_{s_1,t_1}}{\partial x_{i}}(W_{s_1,t_1})\frac{\partial b^{(l_2)}_{s_2,t_2}}{\partial x_{l_1}}(W_{s_2,t_2})\ldots  \frac{\partial b^{(j)}_{s_{k+n},t_{k+n}}}{\partial x_{l_{k+n-1}}}(W_{s_{k+n},t_{k+n}})\\&\qquad\qquad\qquad\qquad\qquad\times\mathrm{d}s_1\mathrm{d}t_1\ldots\mathrm{d}s_{k+n}\mathrm{d}t_{k+n}\Big)^4\Big] \\
				\leq&\int_{(\Delta^{(k+n)}_{\bar r,r,s})^4}\int_{
					(\nabla^{(k+n)}_{\bar u,t}
					)^4}|\E[g_{1}(s_1,t_1)\ldots g_{4(k+n)}(s_{4(k+n)},t_{4(k+n)})]|\,\mathrm{d}s_1\mathrm{d}t_1\ldots\mathrm{d}s_{4(k+n)}\mathrm{d}t_{4(k+n)}\\
				=&\sum\limits_{\pi,\rho,\sigma}\int_{\Delta^{(4(k+n),\pi,\rho)}_{\bar r,r,s}}\int_{
					\nabla^{(4(k+n),\sigma)}_{\bar u,t}
				}|\E[g_{1}(s_1,t_1)\ldots g_{4(k+n)}(s_{4(k+n)},t_{4(k+n)})]|\,\mathrm{d}s_1\mathrm{d}t_1\ldots\mathrm{d}s_{4(k+n)}\mathrm{d}t_{4(k+n)}\\
				\leq&\frac{4^{9(k+n)}C_1^{4(k+n)}\|b\|^{4(k+n)}_{\infty}(r-\bar r)^{2n}(s-r)^{2k}(t-\bar u)^{2(k+n)}}{\Gamma\left(\frac{4n+1}{2}\right)\Gamma\left(\frac{4k+1}{2}\right)\Gamma\left(\frac{4(k+n)+1}{2}\right)},
			\end{align*}
			since the number of elements in $\widehat{\mathcal{P}}^{\ast}_{4k}$ (respectively $\widehat{\mathcal{P}}^{\ast}_{4n}$ and $\widehat{\mathcal{P}}_{4(k+n)}$) does not exceed $2^{9k}$ (respectively $2^{9n}$ and $2^{9(k+n)}$). Thus,
			\begin{align*}
				I_3\leq&\Big(\sum\limits_{k,n=1}^{\infty}\frac{(2d)^{5(k+n)}C_1^{k+n}\|b\|^{k+n}_{\infty}(r-\bar r)^{(n-1)/2}(s-r)^{k/2}(t-\bar u)^{(k+n)/2}}{\Gamma\Big(\frac{4n+1}{2}\Big)^{1/4}\Gamma\Big(\frac{4k+1}{2}\Big)^{1/4}\Gamma\Big(\frac{4(k+n)+1}{2}\Big)^{1/4}}\Big)^2|r-\bar r|\\=&C^{(3)}_d(\|b\|_{\infty})|r-\bar r|.
			\end{align*}
			Similarly, one may show that
			\begin{align*}
				I_4\leq&\Big(\sum\limits_{k,n=1}^{\infty}\frac{(2d)^{5(k+n)}C_1^{k+n}\|b\|^{k+n}_{\infty}(s-\bar r)^{(k+n)/2}(t-u)^{k/2}(u-\bar u)^{(n-1)/2}}{\Gamma\Big(\frac{4n+1}{2}\Big)^{1/4}\Gamma\Big(\frac{4k+1}{2}\Big)^{1/4}\Gamma\Big(\frac{4(k+n)+1}{2}\Big)^{1/4}}\Big)^2|u-\bar u|\\=&C^{(4)}_d(\Vert b\Vert_{\infty})|u-\bar u|.
			\end{align*}

			This ends the proof of \eqref{Eqlemmainres1}. The proof of \eqref{Eqlemmainres2} follows the same lines.

		\end{proof}
		
		\bigskip 
		
		In order to prove (local) Sobolev differentiability of the solution with
		respect to the initial value in Theorem \ref{Mainresult}, we also need the
		following estimate:
		
		\begin{lemma}\label{FlowEstimate}
			Assume that the vector field $b$ in the SDE \eqref{SDEWienerSheet} belongs
			to $C_{c}^{\infty }(\mathcal{T}^2\times \mathbb{R}^{d};\mathbb{R}%
			^{d})$. Then for all $p\geq 2$ there exists a, increasing continuous
			function $C_{d,p}:\left[ 0,\infty \right) \longrightarrow \left[ 0,\infty
			\right) $ such that%
			\begin{equation*}
				\sup_{\substack{ x\in \mathbb{R}^{d} \\ 0\leq s,t\leq T}}\E\Big[ \Big\| 
				\frac{\partial }{\partial x}X_{s,t}^{x}\Big\| ^{p}\Big] \leq
				C_{d,p}\left( \left\Vert b\right\Vert _{\infty }\right) \text{.}
			\end{equation*}
		\end{lemma}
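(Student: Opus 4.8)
The plan is to follow the proof of Lemma~\ref{lemmainres1} almost verbatim, replacing its $L^{4}(\Omega)$-estimates by $L^{2m}(\Omega)$-estimates for a large even integer $m$, and using only the ``full simplex'' case of the shuffle identity (Lemma~\ref{lemma:Shuffle}) together with the bound \eqref{eq:MMNPZmD} of Corollary~\ref{corol:DavieVarSheet}. First I would observe that for $b\in C_{c}^{\infty}(\mathcal{T}\times\mathbb{R}^{d};\mathbb{R}^{d})$ the Jacobian $Y_{s,t}^{x}:=\frac{\partial}{\partial x}X_{s,t}^{x}$ exists and solves the same linear HSPDE \eqref{EqMalDer} as $D_{0,0}X_{s,t}^{x}$, namely
\begin{equation*}
Y_{s,t}^{x}=\mathcal{I}_{d}+\int_{0}^{s}\int_{0}^{t}b'(s_{1},t_{1},X_{s_{1},t_{1}}^{x})\,Y_{s_{1},t_{1}}^{x}\,\mathrm{d}s_{1}\mathrm{d}t_{1};
\end{equation*}
by the uniqueness argument there (multiparameter Gr\"onwall) one has $Y_{s,t}^{x}=D_{0,0}X_{s,t}^{x}$ $\Pb$-a.s., together with the Picard representation
\begin{equation*}
Y_{s,t}^{x}=\mathcal{I}_{d}+\sum_{n=1}^{\infty}\int_{\nabla_{n}}b'(s_{1},t_{1},X_{s_{1},t_{1}}^{x}):\cdots:b'(s_{n},t_{n},X_{s_{n},t_{n}}^{x})\,\mathrm{d}s_{1}\mathrm{d}t_{1}\cdots\mathrm{d}s_{n}\mathrm{d}t_{n},
\end{equation*}
where $\nabla_{n}=\{0<s_{n}<\cdots<s_{1}<s,\ 0<t_{n}<\cdots<t_{1}<t\}$ is the simplex $\nabla_{n}$ of Lemma~\ref{lemma:Shuffle} taken with $\bar r=\bar u=0$ and $r=s$. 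In particular the case $p=2$ is already contained in \eqref{Eqlemmainres2}. For general $p\geq2$ I fix an even integer $m\geq p$; since $(\Omega,\Pb)$ is a probability space one has $\E[\|Y_{s,t}^{x}\|^{p}]\leq\E[\|Y_{s,t}^{x}\|^{m}]^{p/m}$, so it suffices to bound $\E[\|Y_{s,t}^{x}\|^{m}]$ by an increasing continuous function of $\|b\|_{\infty}$, uniformly in $x\in\mathbb{R}^{d}$ and $0\leq s,t\leq T$.

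Next I would apply the Cameron--Martin--Girsanov theorem for the Wiener sheet (Theorem~\ref{Girsanov}) to replace $X^{x}$ by $x+W$ at the price of the stochastic exponential $\mathcal{E}\big(\int_{\mathcal{T}}b(r,l,x+W_{r,l})\cdot\mathrm{d}W_{r,l}\big)$, and then Cauchy--Schwarz, obtaining
\begin{align*}
\E[\|Y_{s,t}^{x}\|^{m}]\leq{}&\E\Big[\mathcal{E}\Big(\int_{\mathcal{T}}b(r,l,x+W_{r,l})\cdot\mathrm{d}W_{r,l}\Big)^{2}\Big]^{1/2}\\
&\times\E\Big[\Big\|\mathcal{I}_{d}+\sum_{n\geq1}\int_{\nabla_{n}}b'(s_{1},t_{1},x+W_{s_{1},t_{1}}):\cdots:b'(s_{n},t_{n},x+W_{s_{n},t_{n}})\Big\|^{2m}\Big]^{1/2}.
\end{align*}
Since the quadratic variation of the inner integral is $\int_{\mathcal{T}}|b(r,l,x+W_{r,l})|^{2}\,\mathrm{d}r\mathrm{d}l\leq\|b\|_{\infty}^{2}T^{2}$, the first factor is bounded by $\exp(\|b\|_{\infty}^{2}T^{2})$, an increasing continuous function of $\|b\|_{\infty}$ not depending on $x$. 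For the second factor I use the triangle inequality in $L^{2m}(\Omega,\Pb)$ and, for each fixed $n$, expand the matrix product $b':\cdots:b'$ into its at most $d^{\,n+1}$ scalar terms $\prod_{i=1}^{n}\frac{\partial b^{(l_{i-1})}}{\partial x_{l_{i}}}$, raise the corresponding iterated integral over $\nabla_{n}$ to the power $2m$, decompose this power into a sum over shuffles of iterated integrals over sub-simplices of dimension $2mn$ by Lemma~\ref{lemma:Shuffle}, and estimate each summand through \eqref{eq:MMNPZmD} --- exactly as the term $I_{1}$ is handled in the proof of Lemma~\ref{lemmainres1}. Using $s,t\leq T$ this produces a constant $A=A(d,m,T)$ with
\begin{equation*}
\Big\|\int_{\nabla_{n}}b'(s_{1},t_{1},x+W_{s_{1},t_{1}}):\cdots:b'(s_{n},t_{n},x+W_{s_{n},t_{n}})\Big\|_{L^{2m}(\Omega,\Pb)}\leq\frac{A^{n}\,\|b\|_{\infty}^{n}}{\Gamma\big(\tfrac{2mn+1}{2}\big)^{1/m}}\qquad(n\geq1).
\end{equation*}
By Stirling's formula $\Gamma\big(\tfrac{2mn+1}{2}\big)^{1/m}$ grows faster than any geometric progression in $n$, so $\sum_{n\geq1}A^{n}\|b\|_{\infty}^{n}\,\Gamma\big(\tfrac{2mn+1}{2}\big)^{-1/m}$ converges and its sum is an increasing continuous function of $\|b\|_{\infty}$. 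Combining the two factors and raising to the power $p/m$ bounds $\E[\|Y_{s,t}^{x}\|^{p}]$ by an increasing continuous $C_{d,p}(\|b\|_{\infty})$ independent of $x$ and $(s,t)$; taking the supremum over $x$ and $(s,t)$ finishes the proof.

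The computations are essentially those of Lemma~\ref{lemmainres1} with the exponent $4$ replaced by $2m$, and only the single family of full simplices $\nabla_{n}$ occurs --- so the split-simplex sets $\Delta_{k+n}$, Lemma~\ref{lemme:Shuffle2}, and the estimates \eqref{eq:MMNPZmD2}--\eqref{eq:MMNPZmD3} play no role here. The decisive analytic input, the super-factorial $\Gamma$-decay, is already furnished by Corollary~\ref{corol:DavieVarSheet}. The point that needs a little care is the uniformity of the bound in $x$, but this is immediate: $x\mapsto b(\cdot,\cdot,x+\,\cdot\,)$ is a spatial translation and leaves $\|b\|_{\infty}$ invariant, so both $A$ and the Girsanov factor are genuinely $x$-independent. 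The main --- but only mildly laborious --- obstacle is thus the combinatorial bookkeeping in the shuffle expansion of the $2m$-th power, which is routine once Lemma~\ref{lemma:Shuffle} is available.
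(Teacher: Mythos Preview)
Your proposal is correct and follows essentially the same approach as the paper's (brief) proof: derive the Picard series for $\frac{\partial}{\partial x}X_{s,t}^{x}$, apply Girsanov and Cauchy--Schwarz, then control the $L^{2m}$-norm of each term via the shuffle identity and Corollary~\ref{corol:DavieVarSheet}. You are in fact slightly sharper than the paper, which cites both Lemmas~\ref{lemma:Shuffle} and~\ref{lemme:Shuffle2}; as you observe, only the full-simplex shuffle (Lemma~\ref{lemma:Shuffle}) together with \eqref{eq:MMNPZmD} is needed here, since no split domains arise in the Picard representation of $\frac{\partial}{\partial x}X^{x}$.
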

		
		\begin{proof}
			The proof is very similar to that of Lemma \ref{lemmainres1}. More precisely, using
			the chain rule, we find that%
			\begin{equation*}
				\frac{\partial }{\partial x}X_{s,t}^{x}=\mathcal{I}_{d\times
					d}+\int_{0}^{s}\int_{0}^{t}b'(r,l,X_{r,l}^{x})\frac{\partial 
				}{\partial x}X_{r,l}^{x}\mathrm{d}l\mathrm{d}r.
			\end{equation*}%
			Then, using Picard iteration, we find the representation%
			\begin{equation*}
				\frac{\partial }{\partial x}X_{s,t}^{x}=\mathcal{I}_{d\times d}+\sum_{n\geq
					1}\int_{
					\begin{subarray}{}
						0<r_n<\cdots<r_1<s\\
						0<l_n<\cdots<l_1<t
				\end{subarray} }b'(r_{n},l_{n},X_{r_{n},l_{n}}^{x})\ldots b'(r_{1},l_{1},X_{r_{1},l_{1}}^{x})\mathrm{d}l_{n}\mathrm{d}r_{n}\cdots\mathrm{d}l_{1}\mathrm{d}r_{1}
			\end{equation*}%
			a.e. for all $s,t$. If we replace the Malliavin derivative $%
			D_{u,v}X_{s,t}^{x,n}$ by the derivative $\frac{\partial }{\partial x}%
			X_{s,t}^{x}$ in the proof of Lemma \ref{lemmainres1}, then we obtain the result for $%
			p=2$. The proof for $p>2$ rests on the repeated application of Lemmas  \ref{lemma:Shuffle} and \ref{lemme:Shuffle2}.   
		\end{proof}

		\begin{lemma}\label{Weak}
			Let $b\in L^{\infty }(\mathcal{T}^2\times \mathbb{R}^{d};\mathbb{R}%
			^{d})$. Assume a sequence of functions $\left\{ b_{n}\right\} _{n\geq
				1}\subset C_{c}^{\infty }(\mathcal{T}^2\times \mathbb{R}^{d};%
			\mathbb{R}^{d})$ such that%
			\[
			b_{n}(t,x)\underset{n\longrightarrow \infty }{\longrightarrow }b(t,x)\text{ }%
			(t,x)\text{-a.e.}
			\]%
			and%
			\[
			\sup_{n\geq 1}\left\Vert b_{n}\right\Vert _{\infty }<\infty \text{.}
			\]%
			Let $X_{\cdot }^{x,n}$ be the strong solution of the SDE associated with the
			vector field $b_{n}$ for $n\geq 1$. Suppose that $X_{\cdot }^{x}$ is a weak
			solution of the SDE on the same probability space. Then for all bounded
			continuous functions $\varphi :\mathbb{R}^{d}\longrightarrow \mathbb{R}$ we
			have that%
			\[
			\varphi (X_{t_{1},t_{2}}^{x,n})\underset{n\longrightarrow \infty }{%
				\longrightarrow }\E\left[ \varphi (X_{t_{1},t_{2}}^{x})\right. \left\vert 
			\mathcal{F}_{t_{1},t_{2}}\right] 
			\]%
			weakly in $L^{2}(\Omega ;\mathcal{F}_{t_{1},t_{2}})$.
		\end{lemma}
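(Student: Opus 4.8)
The plan is to identify the weak limit by combining a density argument with Girsanov's theorem for the Wiener sheet, which turns every expectation into one involving only the ``free'' sheet $x+W$. Since $\varphi$ is bounded, $\{\varphi(X^{x,n}_{t_1,t_2})\}_{n\geq1}$ is bounded in $L^{2}(\Omega,\mathcal{F}_{t_1,t_2})$ (the variable is $\mathcal{F}_{t_1,t_2}$-measurable because $X^{x,n}$ is $(\mathcal{F}_{s,t})$-adapted), so every subsequence has a further subsequence converging weakly to some $h\in L^{2}(\Omega,\mathcal{F}_{t_1,t_2})$. As the linear span of the variables $g(W_{z_1},\dots,W_{z_m})$ with $m\geq1$, $z_i\preceq(t_1,t_2)$ and $g\in C_b(\mathbb{R}^{dm})$ is dense in $L^{2}(\Omega,\mathcal{F}_{t_1,t_2})$, it suffices to prove that for every such test variable $Y$,
\[
\E\big[\varphi(X^{x,n}_{t_1,t_2})\,Y\big]\xrightarrow[n\to\infty]{}\E\big[\varphi(X^{x}_{t_1,t_2})\,Y\big];
\]
this forces every weak limit point $h$ to coincide with $\E[\varphi(X^{x}_{t_1,t_2})\mid\mathcal{F}_{t_1,t_2}]$, whence the full sequence converges weakly to that common value.

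Fix $Y=g(W_{z_1},\dots,W_{z_m})$ and write $B^{n}_{z}:=\int_{R_{z}}b_n(r,l,x+W_{r,l})\,\mathrm{d}r\,\mathrm{d}l$. Reading the noise off the SDE, $W_z=X^{x,n}_z-x-\int_{R_z}b_n(r,l,X^{x,n}_{r,l})\,\mathrm{d}r\,\mathrm{d}l$, so $Y=\Psi_n(X^{x,n})$ for the bounded measurable path functional $\Psi_n(\chi):=g\big((\chi_{z_i}-x-\int_{R_{z_i}}b_n(r,l,\chi_{r,l})\,\mathrm{d}r\,\mathrm{d}l)_{i=1}^{m}\big)$. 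Since $b_n$ is bounded, Girsanov's theorem for the Wiener sheet (Theorem~\ref{Girsanov}) yields, for every bounded measurable functional $\Phi$ on $C(\mathcal{T};\mathbb{R}^{d})$,
\[
\E\big[\Phi(X^{x,n})\big]=\E\Big[\Phi(x+W)\,\mathcal{E}\Big(\int_{\mathcal{T}}b_n(r,l,x+W_{r,l})\cdot\mathrm{d}W_{r,l}\Big)\Big];
\]
applying it with $\Phi(\chi)=\varphi(\chi_{t_1,t_2})\,\Psi_n(\chi)$ and using $\Psi_n(x+W)=g((W_{z_i}-B^{n}_{z_i})_{i=1}^{m})$ gives
\[
\E\big[\varphi(X^{x,n}_{t_1,t_2})\,Y\big]=\E\Big[\varphi(x+W_{t_1,t_2})\,g\big((W_{z_i}-B^{n}_{z_i})_{i=1}^{m}\big)\,\mathcal{E}\Big(\int_{\mathcal{T}}b_n(r,l,x+W_{r,l})\cdot\mathrm{d}W_{r,l}\Big)\Big].
\]

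Now I would let $n\to\infty$ on the right-hand side. Because $b_n\to b$ Lebesgue-a.e. and $\sup_n\|b_n\|_\infty<\infty$, and $W_{r,l}$ has a nondegenerate Gaussian law for a.e. $(r,l)$, dominated convergence gives $B^{n}_{z_i}\to B_{z_i}:=\int_{R_{z_i}}b(r,l,x+W_{r,l})\,\mathrm{d}r\,\mathrm{d}l$ $\Pb$-a.s.; the It\^o isometry for the sheet together with dominated convergence gives $\int_{\mathcal{T}}b_n(r,l,x+W_{r,l})\cdot\mathrm{d}W_{r,l}\to\int_{\mathcal{T}}b(r,l,x+W_{r,l})\cdot\mathrm{d}W_{r,l}$ in $L^{2}(\Pb)$, hence the Girsanov exponentials converge in probability; and the uniform bound $\sup_n\E[\mathcal{E}(\int_{\mathcal{T}}b_n(r,l,x+W_{r,l})\cdot\mathrm{d}W_{r,l})^{p}]<\infty$ for every $p\geq1$ (a consequence of $\sup_n\|b_n\|_\infty<\infty$) makes them uniformly integrable, so the convergence holds in $L^{q}(\Pb)$ for all $q$. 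Since $\varphi$ and $g$ are bounded and continuous, the integrand above is bounded in $L^{2}(\Pb)$ and converges in probability, so by Vitali's theorem its expectation converges to $\E[\varphi(x+W_{t_1,t_2})\,g((W_{z_i}-B_{z_i})_{i=1}^{m})\,\mathcal{E}(\int_{\mathcal{T}}b(r,l,x+W_{r,l})\cdot\mathrm{d}W_{r,l})]$. Finally, applying the same Girsanov identity to the given weak solution $X^{x}$ of the SDE with drift $b$ (existence being guaranteed by Proposition~\ref{UniqueWeakSolution}), with $\Phi(\chi)=\varphi(\chi_{t_1,t_2})\Psi(\chi)$, $\Psi(\chi):=g\big((\chi_{z_i}-x-\int_{R_{z_i}}b(r,l,\chi_{r,l})\,\mathrm{d}r\,\mathrm{d}l)_{i=1}^{m}\big)$ and noting $\Psi(X^{x})=Y$, one recognizes the last expression as $\E[\varphi(X^{x}_{t_1,t_2})\,Y]$. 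This is the desired convergence along the dense family, and the lemma follows.

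The main obstacle is the careful use of Girsanov's theorem on the plane: one must check that $\mathcal{E}\big(\int_{\mathcal{T}}b_n(r,l,x+W_{r,l})\cdot\mathrm{d}W_{r,l}\big)$ is a genuine probability density (the two-parameter exponential-martingale property, available precisely because $b_n$ is bounded), and that under the resulting change of measure the solution path may be traded for $x+W$ while the test variable $Y$ is simultaneously expressed as a path functional of the solution --- this is exactly what the representation in the second step encodes. Once this is settled, the limit passage is routine, the uniform $L^{p}$-control of the Girsanov exponentials (coming from $\sup_n\|b_n\|_\infty<\infty$) supplying the required uniform integrability.
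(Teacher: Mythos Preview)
Your proof is correct and follows essentially the same route as the paper: test against a total family in $L^{2}(\Omega,\mathcal{F}_{t_1,t_2})$, use Girsanov for the Wiener sheet to replace $X^{x,n}$ by the free sheet $x+W$, and pass to the limit via uniform $L^{p}$-bounds on the Girsanov densities (available since $\sup_n\|b_n\|_\infty<\infty$). The only difference is cosmetic: the paper tests against stochastic exponentials $\exp\big(\sum_j\int f_j\,\mathrm{d}W^{j}\big)$ and therefore needs the elementary inequality $|e^{x}-e^{y}|\leq e^{x+y}|x-y|$ to control the difference, whereas you test against cylinder functionals $g(W_{z_1},\dots,W_{z_m})$ with $g\in C_b$, which lets you conclude directly from convergence in probability plus uniform integrability.
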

		
		\begin{proof}
			Recall that the Doleans-Dade exponential for square integrable adapted
			processes $Y_{\cdot }$ on the plane is defined as 
			\begin{align*}
				&\mathcal{E}(\int_{\left[ 0,T\right] \times \left[ 0,u\right] }\left\langle
				Y_{s_{1},s_{2}},\mathrm{d}W_{s_{1},s_{2}}\right\rangle ) \\
				=&\exp (\int_{\left[ 0,T\right] \times \left[ 0,u\right] }\left\langle
				Y_{s_{1},s_{2}},\mathrm{d}W_{s_{1},s_{2}}\right\rangle -\frac{1}{2}%
				\int_{0}^{1}\int_{0}^{u}\left\Vert Y_{s_{1},s_{2}}\right\Vert
				^{2}\mathrm{d}s_{1}\mathrm{d}s_{2}).
			\end{align*}%
			See Appendix. Since 
			\begin{equation*}
				\sup_{n\geq 1}\E\Big[ \Big\|\int_{\mathcal{T}^2 }b_{n}(s_{1},s_{2},x+W_{s_{1},s_{2}})\mathrm{d}W_{s_{1},s_{2}}\Big\| ^{p}%
				\Big] <\infty 
			\end{equation*}%
			for all $p\geq 1$, it follows from the It\^{o} isometry with respect to the
			multiparameter Wiener processes and by assumption that%
			\begin{equation*}
				\int_{\left[ 0,T\right] \times \left[ 0,u\right]
				}b_{n}(s_{1},s_{2},x+W_{s_{1},s_{2}})\mathrm{d}W_{s_{1},s_{2}}\underset{%
					n\longrightarrow \infty }{\longrightarrow }\int_{\left[ 0,T\right] \times %
					\left[ 0,u\right] }b(s_{1},s_{2},x+W_{s_{1},s_{2}})\mathrm{d}W_{s_{1},s_{2}}
			\end{equation*}%
			in $L^{p}(\Omega )$ for all $p\geq 1$.
			
			We mention that the set%
			\begin{align*}
				\Pi _{t_{1},t_{2}} :=&\Big\{ \exp \Big(\sum_{j=1}^{d}\int_{\left[ 0,u_{1}\right] \times \left[
					0,u_{2}\right] }f_{i}(s_{1},s_{2})\mathrm{d}W_{s_{1},s_{2}}^{j}\Big):0\leq u_{i}\leq
				t_{i},i=1,2,\\&\qquad\qquad\qquad\qquad\qquad\qquad f_{j}\in L^{\infty }(\left[ 0,t_{1}\right] \times \left[ 0,t_{2}%
				\right] ),j=1,\ldots,d\Big\} 
			\end{align*}%
			is a total subspace of $L^{p}(\Omega ;\mathcal{F}_{t_{1},t_{2}})$. So it is
			sufficient to show that%
			\begin{equation*}
				\E\left[ \varphi (X_{t_{1},t_{2}}^{x,n})\xi \right] \underset{%
					n\longrightarrow \infty }{\longrightarrow }\E\left[ \E\left[ \varphi
				(X_{t_{1},t_{2}}^{x})\right. \left\vert \mathcal{F}_{t_{1},t_{2}}\right] \xi %
				\right] 
			\end{equation*}%
			for all $\xi \in \Pi _{t_{1},t_{2}}$. Define the Girsanov change of measures 
			\begin{equation*}
				\mathrm{d}\Q_{n}=\mathcal{E}(\int_{\mathcal{T}^2
				}\left\langle
				b_{n}(s_{1},s_{2},X_{s_{1},s_{2}}^{x,n}),\mathrm{d}W_{s_{1},s_{2}}\right\rangle
				)\mathrm{d}\Pb,\,\,\,n\geq 1
			\end{equation*}%
			See Appendix. Then Girsanov`s theorem for multiparameter Wiener processes
			implies that%
			\begin{equation*}
				W_{\cdot }^{\ast ,n}:=W_{\cdot }-\int_{0}^{\cdot }\int_{0}^{\cdot
				}b_{n}(s_{1},s_{2},X_{s_{1},s_{2}}^{x,n})\mathrm{d}s_{1}\mathrm{d}s_{2}
			\end{equation*}%
			is a $\mathbb{Q}_{n}$-Wiener process on the plane and that 
			\begin{align*}
				&\E\Big[ \varphi (X_{t_{1},t_{2}}^{x,n})\exp \Big\{\sum_{j=1}^{d}\int_{\left[
					0,u_{1}\right] \times \left[ 0,u_{2}\right]
				}f_{j}(s_{1},s_{2})\mathrm{d}W_{s_{1},s_{2}}^{j}\Big\}\Big]  \\
				=&\E_{\Q_{n}}\Big[ \varphi (x+W_{t_{1},t_{2}}^{\ast ,n})\exp
				\Big\{\sum_{j=1}^{d}\int_{\left[ 0,u_{1}\right] \times \left[ 0,u_{2}\right]
				}f_{j}(s_{1},s_{2})\mathrm{d}W_{s_{1},s_{2}}^{\ast
					,n,j}\\&+\sum_{j=1}^{d}\int_{0}^{u_{1}}%
				\int_{0}^{u_{2}}f_{j}(s_{1},s_{2})b_{n}^{j}(s_{1},s_{2},x+W_{s_{1},s_{2}}^{%
					\ast ,n})\mathrm{d}s_{1}\mathrm{d}s_{2}\Big\}  \mathcal{E}\Big(\int_{\mathcal{T}^2
				}\left\langle b_{n}(s_{1},s_{2},x+W_{s_{1},s_{2}}^{\ast
					,n}),\mathrm{d}W_{s_{1},s_{2}}^{\ast ,n}\right\rangle \Big)\Big]  \\
				=&\E\Big[ \varphi (x+W_{t_{1},t_{2}})\exp \Big\{\sum_{j=1}^{d}\int_{\left[
					0,u_{1}\right] \times \left[ 0,u_{2}\right]
				}f_{j}(s_{1},s_{2})\mathrm{d}W_{s_{1},s_{2}}^{,j}\\&+\sum_{j=1}^{d}\int_{0}^{u_{1}}%
				\int_{0}^{u_{2}}f_{j}(s_{1},s_{2})b_{n}^{j}(s_{1},s_{2},x+W_{s_{1},s_{2}})\mathrm{d}s_{1}\mathrm{d}s_{2}\Big\} 
				\mathcal{E}\Big(\int_{\mathcal{T}^2
				}\left\langle b_{n}(s_{1},s_{2},x+W_{s_{1},s_{2}}^{
				}),\mathrm{d}W_{s_{1},s_{2}}^{}\right\rangle \Big)\Big] .
			\end{align*}%
			We have that%
			\begin{align*}
				&\E\Big[ \E\Big[ \varphi (X_{t_{1},t_{2}}^{x})\Big\vert \mathcal{F}%
				_{t_{1},t_{2}}\Big] \exp (\sum_{j=1}^{d}\int_{\left[ 0,u_{1}\right] \times %
					\left[ 0,u_{2}\right] }f_{j}(s_{1},s_{2})\mathrm{d}W_{s_{1},s_{2}}^{j})\Big]  \\
				=&E\Big[ \E\Big[ \varphi (X_{t_{1},t_{2}}^{x})\exp (\sum_{j=1}^{d}\int_{%
					\left[ 0,u_{1}\right] \times \left[ 0,u_{2}\right]
				}f_{j}(s_{1},s_{2})\mathrm{d}W_{s_{1},s_{2}}^{j})\Big\vert \mathcal{F}%
				_{t_{1},t_{2}}\Big] \Big]  \\
				=&\E\Big[ \varphi (X_{t_{1},t_{2}}^{x})\exp (\sum_{j=1}^{d}\int_{\left[
					0,u_{1}\right] \times \left[ 0,u_{2}\right]
				}f_{j}(s_{1},s_{2})\mathrm{d}W_{s_{1},s_{2}}^{j})\Big] .
			\end{align*}%
			Since $\left\vert e^{x}-e^{y}\right\vert \leq e^{x+y}\left\vert
			x-y\right\vert $ for all $x,y$, our assumptions on $\varphi ,f,b_{n},n\geq 1$
			and H\"{o}lder's inequality entail that%
			\begin{align*}
				&\Big|\E\Big[ \varphi (X_{t_{1},t_{2}}^{x,n})\exp
				(\sum_{j=1}^{d}\int_{\left[ 0,u_{1}\right] \times \left[ 0,u_{2}\right]
				}f_{j}(s_{1},s_{2})\mathrm{d}W_{s_{1},s_{2}}^{j})\Big] \\&\qquad\qquad-\E\Big[ \varphi
				(X_{t_{1},t_{2}}^{x})\exp \Big(\sum_{j=1}^{d}\int_{\left[ 0,u_{1}\right] \times %
					\left[ 0,u_{2}\right] }f_{j}(s_{1},s_{2})\mathrm{d}W_{s_{1},s_{2}}^{j}\Big)\Big]
				\Big|  \\
				\leq &\Big| \E\Big[ \varphi (x+W_{t_{1},t_{2}})\exp
				\Big(\sum_{j=1}^{d}\int_{\left[ 0,u_{1}\right] \times \left[ 0,u_{2}\right]
				}f_{j}(s_{1},s_{2})\mathrm{d}W_{s_{1},s_{2}}^{,j}\\&\quad+\sum_{j=1}^{d}\int_{0}^{u_{1}}%
				\int_{0}^{u_{2}}f_{j}(s_{1},s_{2})b_{n}^{j}(s_{1},s_{2},x+W_{s_{1},s_{2}})\mathrm{d}s_{1}\mathrm{d}s_{2}\Big)
				\mathcal{E}(\int_{\mathcal{T}^2
				}\left\langle
				b_{n}(s_{1},s_{2},x+W_{s_{1},s_{2}}),\mathrm{d}W_{s_{1},s_{2}}\right\rangle )\Big] 
				\\
				&-\E\Big[ \varphi (x+W_{t_{1},t_{2}})\exp \Big(\sum_{j=1}^{d}\int_{\left[
					0,u_{1}\right] \times \left[ 0,u_{2}\right]
				}f_{j}(s_{1},s_{2})\mathrm{d}W_{s_{1},s_{2}}^{,j}\\&\quad+\sum_{j=1}^{d}\int_{0}^{u_{1}}%
				\int_{0}^{u_{2}}f_{j}(s_{1},s_{2})b^{j}(s_{1},s_{2},x+W_{s_{1},s_{2}})\mathrm{d}s_{1}\mathrm{d}s_{2}\Big)
				\mathcal{E}(\int_{\mathcal{T}^2 }\left\langle
				b(s_{1},s_{2},x+W_{s_{1},s_{2}}),\mathrm{d}W_{s_{1},s_{2}}\right\rangle )\Big]
				\Big|  \\
				\leq &KI_{1}^{n}I_{2}^{n},
			\end{align*}%
			where 
			\begin{align*}
				K:=\E\Big[ \varphi (x+W_{t_{1},t_{2}})^2\exp \Big(2\sum_{j=1}^{d}\int_{\left[
					0,u_{1}\right] \times \left[ 0,u_{2}\right]
				}f_{j}(s_{1},s_{2})\mathrm{d}W_{s_{1},s_{2}}^{,j}\Big)\Big]^{1/2}
			\end{align*}
			
			\begin{align*}
				I_{1}^{n} :=&\E\Big[ \exp \Big(4\sum_{j=1}^{d}\int_{\left[ 0,u_{1}\right] \times \left[
					0,u_{2}\right]
				}f_{j}(s_{1},s_{2})(b_{n}^{j}(s_{1},s_{2},x+W_{s_{1},s_{2}})+b^{j}(s_{1},s_{2},x+W_{s_{1},s_{2}}))\mathrm{d}W_{s_{1},s_{2}}^{j}\Big)%
				\\&\times\exp\Big(4\int_{\mathcal{T}^2 }\left\langle
				b_n(s_{1},s_{2},x+W_{s_{1},s_{2}})+b(s_{1},s_{2},x+W_{s_{1},s_{2}}),\mathrm{d}W_{s_{1},s_{2}}\right\rangle\Big)\\
				&\times\exp\Big(-2\int_{\mathcal{T}^2}\left\{\left\Vert
				b_n(s_{1},s_{2},x+W_{s_{1},s_{2}})\right\Vert ^{2}+\left\Vert
				b(s_{1},s_{2},x+W_{s_{1},s_{2}})\right\Vert ^{2}\right\}\mathrm{d}s_{1}\mathrm{d}s_{2}\Big)\Big] ^{1/4},
			\end{align*}%
			\begin{align*}
				I_{2}^{n} :=&\E\Big[ \Big|
				\sum_{j=1}^{d}\int_{0}^{u_{1}}%
				\int_{0}^{u_{2}}f_{j}(s_{1},s_{2})b^{j}(s_{1},s_{2},x+W_{s_{1},s_{2}})\mathrm{d}s_{1}\mathrm{d}s_{2}
				\\
				&+\sum_{j=1}^{d}\int_{\mathcal{T}^2}
				b^{j}(s_{1},s_{2},x+W_{s_{1},s_{2}})\mathrm{d}W_{s_{1},s_{2}}^{j}-\frac{1}{2}%
				\int_{\mathcal{T}^2}\left\Vert
				b(s_{1},s_{2},x+W_{s_{1},s_{2}})\right\Vert ^{2}\mathrm{d}s_{1}\mathrm{d}s_{2} \\
				&-\sum_{j=1}^{d}\int_{0}^{u_{1}}%
				\int_{0}^{u_{2}}f_{j}(s_{1},s_{2})b_{n}^{j}(s_{1},s_{2},x+W_{s_{1},s_{2}})\mathrm{d}s_{1}\mathrm{d}s_{2}-\sum_{j=1}^{d}\int_{
					\mathcal{T}^2
				}b_{n}^{j}(s_{1},s_{2},x+W_{s_{1},s_{2}})\mathrm{d}W_{s_{1},s_{2}}^{j} \\
				& +\frac{1}{2}\int_{\mathcal{T}^2}\left\Vert
				b_{n}(s_{1},s_{2},x+W_{s_{1},s_{2}})\right\Vert ^{2}\mathrm{d}s_{1}\mathrm{d}s_{2}\Big|^{4}%
				\Big] ^{1/4}
			\end{align*}%
			and where $C<\infty $ is a constant. It follows from the martingale property
			of the Doleans-Dade exponential and the bounded assumption of the functions
			(see Appendix) that $\ \sup_{n\geq 1}I_{1}^{n}<\infty $. Further, using also
			dominated convergence, we have that $I_{2}^{n}\underset{n\longrightarrow
				\infty }{\longrightarrow }0$, which gives the result.

		\end{proof}

		Finally, we give the proof of our main result:
		
		\begin{proof}[Proof of Theorem \protect\ref{Mainresult}] We use the previous results to prove \textbf{Step 2} and \textbf{Step 3}.

			\textbf{Step 2:} Let $b_{n}:\mathcal{T}^2 \times 
			\mathbb{R}^{d}\longrightarrow \mathbb{R}^{d},n\geq 1$ a sequence of
			compactly supported smooth vector fields, which approximates $b\in L^{\infty
			}(\mathcal{T}^2\times \mathbb{R}^{d};\mathbb{R}^{d})$ in the SDE (%
			\ref{SDEWienerSheet}) in the sense of Lemma \ref{Weak}. Denote by $X_{\cdot,\cdot
			}^{x,n},n\geq 1$ the unique global strong solutions to the SDE \eqref{SDEWienerSheet} associated with the vector fields $b_{n},n\geq 1$. Let $%
			(s,t)\in \mathcal{T}^2$. Then, it follows from Lemma \ref{lemmainres1} that
			there exists an increasing continuous function $C_{d}:\left[ 0,\infty
			\right) \longrightarrow \left[ 0,\infty \right) $ such that for all $n$ 
			\begin{equation*}
				\sup_{\substack{ 0\leq r\leq s \\ 0\leq u\leq t}}\E\Big[ \Big\|
				D_{r,u}X_{s,t}^{x,n}\Big\| ^{2}\Big] \leq C_{d}\Big( \sup_{n\geq
					1}\left\Vert b_{n}\right\Vert _{\infty }\Big) 
			\end{equation*}%
			as well as%
			\begin{equation*}
				\E\left[ \left\Vert D_{r,u}X_{s,t}^{x,n}-D_{\bar{r},\bar{u}%
				}X_{s,t}^{x,n}\right\Vert ^{2}\right] \leq C_{d}\Big( \sup_{n\geq
					1}\left\Vert b_{n}\right\Vert _{\infty }\Big) (\left\vert r-\bar{r}%
				\right\vert +\left\vert u-\bar{u}\right\vert )^{\alpha }
			\end{equation*}%
			for all $0\leq r,\bar{r}\leq s,0\leq u,\bar{u}\leq t$ and for some $\alpha >0
			$. The latter in connection with the compactness criterion of Theorem \ref%
			{CompactnessCriterion} entails that there exists a subsequence $%
			n_{k}=n_{k}(s,t),k\geq 1$ and some $Y_{s,t}^{x}\in L^{2}(\Omega ;\mathbb{R}%
			^{d})$ such that%
			\begin{equation*}
				X_{s,t}^{x,n_{k}}\underset{k\longrightarrow \infty }{\longrightarrow }%
				Y_{s,t}^{x}\text{ in }L^{2}(\Omega ;\mathbb{R}^{d})\text{.}
			\end{equation*}%
			\textbf{Step 3:}	Let $\varphi :\mathbb{R}^{d}\longrightarrow \mathbb{R}$ be a bounded
			continuous function. Then, by selecting an appropriate subsequence with
			respect to $n_{k},k\geq 1$,  we obtain from Lemma \ref{Weak} that%
			\begin{equation*}
				\varphi (Y_{s,t}^{x})=\E\left[ \varphi (X_{s,t}^{x})\right. \left\vert 
				\mathcal{F}_{s,t}\right] \text{ }a.e.,
			\end{equation*}%
			where $X_{\cdot }^{x}$ is a unique weak solution to the SDE (\ref%
			{SDEWienerSheet}) and $\mathcal{F}=\left\{ \mathcal{F}_{s,t}\right\} _{0\leq
				s,t\leq }$ is the (completed) filtration generated by the driving noise.
			Using an approximation argument (with respect to $\varphi $) combined with
			dominated convergence, we find that%
			\begin{equation*}
				Y_{s,t}^{x}=\E\left[ X_{s,t}^{x}\right. \left\vert \mathcal{F}_{s,t}\right] 
				\text{ }a.e.
			\end{equation*}%
			So we see (for the whole sequence) that%
			\begin{equation*}
				X_{s,t}^{x,n}\underset{n\longrightarrow \infty }{\longrightarrow }\text{ }\E%
				\left[ X_{s,t}^{x}\right. \left\vert \mathcal{F}_{s,t}\right] \text{ in }%
				L^{2}(\Omega ;\mathbb{R}^{d})\text{.}
			\end{equation*}%
			Hence, for an arbitrary bounded continuous function $\varphi :\mathbb{R}%
			^{d}\longrightarrow \mathbb{R}$ we get the following "transformation
			property":%
			\begin{equation*}
				\varphi (\E\left[ X_{s,t}^{x}\right. \left\vert \mathcal{F}_{s,t}\right] )=\E%
				\left[ \varphi (X_{s,t}^{x})\right. \left\vert \mathcal{F}_{s,t}\right] 
				\text{ }a.e.
			\end{equation*}%
			Using once again an approximation argument, we find that%
			\begin{equation*}
				\left( \E\left[ \pi _{i}(X_{s,t}^{x})\right. \left\vert \mathcal{F}_{s,t}%
				\right] \right) ^{2}=\E\left[ \left( \pi _{i}(X_{s,t}^{x})\right) ^{2}\right.
				\left\vert \mathcal{F}_{s,t}\right] ,\text{ }a.e.,i=1,\ldots,d
			\end{equation*}%
			for the projections $\pi _{i}:\mathbb{R}^{d}\longrightarrow \mathbb{R}$, $%
			i=1,\ldots,d$. So%
			\begin{equation*}
				\E\left[ X_{s,t}^{x}\right. \left\vert \mathcal{F}_{s,t}\right] =X_{s,t}^{x}%
				\text{ }a.e.
			\end{equation*}
		Indeed, using the above relation, and the tower property, repeatedly, it holds
				\begin{align}
					&	\E	[\big(\E\left[ \pi _{i}( X_{s,t}^{x})\right. \left\vert \mathcal{F}_{s,t}\right] -\pi _{i}(X_{s,t}^{x})\big)^2]\notag\\
					= &	\E	[(\E\left[ \pi _{i}(X_{s,t}^{x})\right. \left\vert \mathcal{F}_{s,t}\right])^2]+\E	[(\pi _{i}((X_{s,t}^{x}))^2]-2\E	[\E\left[\pi _{i}( X_{s,t}^{x})\right. \left\vert \mathcal{F}_{s,t}\right]\pi _{i}(X_{s,t}^{x})]\notag \\
					=&	\E	[\E\left[(\pi _{i}( X_{s,t}^{x}))^2\right. \left\vert \mathcal{F}_{s,t}\right]]+\E	[(\pi _{i}(X_{s,t}^{x}))^2]-2\E	[\E\left[ \pi _{i}(X_{s,t}^{x})\right. \left\vert \mathcal{F}_{s,t}\right]\pi _{i}(X_{s,t}^{x})] \notag \\
					=&2\E	[(\pi _{i}(X_{s,t}^{x}))^2]-2\E	\big[\E	\big[\big\{\E\left[ \pi _{i}(X_{s,t}^{x})\right. \left\vert \mathcal{F}_{s,t}\right]\pi _{i}(X_{s,t}^{x})\big\}\big\vert\mathcal{F}_{s,t}\big]\big] \notag\\
					=&2\E	[(\pi _{i}(X_{s,t}^{x}))^2]-2\E	\big[\E\left[ \pi _{i}(X_{s,t}^{x})\right. \left\vert \mathcal{F}_{s,t}\right]\E	\big[\pi _{i}(X_{s,t}^{x})\big\vert\mathcal{F}_{s,t}\big]\big]=0.\notag 
			\end{align}	

			Therefore, the weak solution $X_{\cdot }^{x}$ must be a strong one. Further, we obtain strong uniqueness of solutions by using Girsanov`s theorem in connection with the $\mathcal{S}-$ transform (or alternatively the Wiener transform) in the case of a Wiener sheet. See for example a very similar argument in \cite{MMNPZ13} in the Wiener noise case. It also follows from \cite[Lemma 1.2.3]{Nua06} that for all $(s,t)\in \left[ 0,1%
			\right] ^{2}$ $X_{s,t}^{x}$ is Malliavin differentiable.
			
			The proof of (local) Sobolev differentiability of the solution with respect
			to the initial value follows from the estimate in Lemma \ref{FlowEstimate}.
			See e.g. \cite{MMNPZ13}.       
		\end{proof}
		\begin{remark}
			In the proof of Theorem \ref{Mainresult} we constructed a unique strong solution $X_{\cdot }$
			with respect to a certain probability space $(\Omega ,\mathcal{F},\mathbb{P})
			$. However, strong solutions can be obtained on any other probability space
			by using the following argument: We know that $X_{\cdot }=F(\cdot ,W_{\cdot
			})$ a.e. for an adapted measurable functional $F$, where $W_{\cdot }$ is the
			Wiener sheet on $(\Omega ,\mathcal{F},\mathbb{P})$. Consider now another
			probability space $(\Omega ^{\ast },\mathcal{F}^{\ast },\mathbb{P}^{\ast })$
			with Wiener sheet $W_{\cdot }^{\ast }$. We then see that $X_{\cdot }^{\ast
			}:=F(\cdot ,W_{\cdot }^{\ast })$ is a (unique) strong solution with respect
			to $(\Omega ^{\ast },\mathcal{F}^{\ast },\mathbb{P}^{\ast })$.   
		\end{remark}
		
		\section{Proof of preliminary results}\label{proofprere}
		In the general case the ideas developed in Subsection \ref{algo1} can be more formalized, as the
		proof of Proposition \ref{prop:DavieVarSheet} (based on a slightly different algorithm)
		shows:
		
		\begin{proof}[Proof of Proposition \ref{prop:DavieVarSheet}]
			We suppose without loss of generality that $s^{}_{1}<\ldots<s^{}_{n}$,    $t^{}_{1}<\ldots<t^{}_{n}$ and we show that for any permutation $\sigma$ on $\{1,\ldots,n\}$,
			\begin{align}\label{EqDavieVarSheetdD2}
				\Big|\E\Big[\prod\limits_{i=1}^n\frac{\partial b^{(l_{i-1})}}{\partial x_{l_i}}(s^{}_i,t^{}_{\sigma(i)},W_{s^{}_i,t^{}_{\sigma(i)}})\Big]\Big|\leq C_0^n\Vert b\Vert_{\infty}^n\prod\limits_{i=1}^n(s_{i}-s_{i-1})^{-1/2}(t_{i}-t_{i-1})^{-1/2},
			\end{align}
			Indeed, if we define $\hat{b}^{(l_{i-1})}:=b^{(l_{\gamma(i)-1})}_{}$, $\hat{x}_{l_i}:=x_{l_{\gamma(i)}}$, $\hat{s}_i:=s_{\gamma(i)}$ and $\check{t}_i:=t_{\theta(i)}$,  then \eqref{EqDavieVarSheetdD} yields
			\begin{align*}
				&\Big|\E\Big[\prod\limits_{i=1}^n\frac{\partial \hat{b}^{(l_{i-1})}}{\partial \hat{x}_{l_i}}(\hat{s}^{}_i,\check{t}^{}_{\theta^{-1}\circ\gamma(i)},W_{\hat{s}^{}_i,\check{t}^{}_{\theta^{-1}\circ\gamma(i)}})\Big]\Big|\\=&\Big|\E\Big[\prod\limits_{i=1}^n\frac{\partial {b}^{(l_{\gamma(i)-1})}}{\partial {x}_{l_{\gamma(i)}}}({s}^{}_{\gamma(i)},{t}^{}_{\gamma(i)},W_{{s}^{}_{\gamma(i)},{t}^{}_{\gamma(i)}})\Big]\Big|	=\Big|\E\Big[\prod\limits_{i=1}^n\frac{\partial {b}^{(l_{i-1})}}{\partial {x}_{l_{i}}}({s}^{}_{i},{t}^{}_{i},W_{{s}^{}_{i},{t}^{}_{i}})\Big]\Big|\\
				\leq& C_0^n\Vert b\Vert^n_{\infty}\prod\limits_{i=1}^n(s_{\gamma(i)}-s_{\gamma(i-1)})^{-1/2}(t_{\theta(i)}-t_{\theta(i-1)})^{-1/2}=C_0^n\Vert b\Vert^n_{\infty}\prod\limits_{i=1}^n(\hat{s}_{i}-\hat{s}_{i-1})^{-1/2}(\check{t}_{i}-\check{t}_{i-1})^{-1/2}.
			\end{align*}
			Let  $Z_{i,j}=(Z_{i,j}^{(1)},\,\ldots,\,Z^{(d)}_{i,j})$, $(i,j)\in\{1,\cdots,n\}^2$ be the independent Gaussian random vectors given by
			\begin{align*}
				Z_{i,j}=W^{}_{s_i,t_j}-W^{}_{s_{i-1},t_j}-W^{}_{s_i,t_{j-1}}+W^{}_{s_{i-1},t_{j-1}},
			\end{align*}	 
			where $s_0=0=t_0$. For any permutation $\sigma$ on $I_n=\{1,\ldots,n\}$, we have
			\begin{align*}
				&\Big|\E\Big[\prod\limits_{i=1}^n\frac{\partial b^{(l_{i-1})}}{\partial x_{l_i}}(s^{}_i,t^{}_{\sigma(i)},W_{s^{}_i,t^{}_{\sigma(i)}})\Big]\Big|\\=&\Big|\E\Big[\prod\limits_{i=1}^n\frac{\partial \tilde{b}_i}{\partial x_{l_i}}\Big(\sum_{k=1}^i\sum_{\ell=1}^{\sigma(i)}Z_{k,\ell}\Big)\Big]\Big|= \int_{\R^{dn^2}}\prod\limits_{i=1}^n\frac{\partial \tilde{b}_i}{\partial x_{l_i}}\Big(\sum_{k=1}^i\sum_{\ell=1}^{\sigma(i)}z_{k,\ell}\Big) \prod\limits_{j=1}^{n}E_{i,j}(z_{i,j})\,\mathrm{d}z_{i,j},
			\end{align*}
			where, for every $(i,j)$, $\tilde{b}_i(z)=b^{(l_{i-1})}(s_i,t_{\sigma(i)},z)$ and
			\begin{align*}
				E_{i,j}(z)=\frac{1}{(2\pi(s_i-s)(t_j-t_{j-1}))^{d/2}}\exp\Big(-\frac{\Vert z\Vert^2}{2(s_i-s_{i-1})(t_j-t_{j-1})}\Big).
			\end{align*}
			Define also $J_{\sigma}=\{i\in I_n:\,\text{there is }k\in I_n\text{ s.t. }(i,\sigma(i))\prec(k,\sigma(k))\}$. We distinguish cases $J_{\sigma}=\emptyset$ and $J_{\sigma}\neq\emptyset$. If $J_{\sigma}=\emptyset$  (i.e. $\sigma$ is non-increasing), then each of the  variables $ z_{i,\sigma(i)}$, $i=1,\,\ldots,\,n$ appears only in one factor $\frac{\partial \tilde{b}}{\partial x_{l_i}}\Big(\sum_{k=1}^i\sum_{\ell=1}^{\sigma(i)}z_{k,\ell}\Big)$ and there is no need to eliminate this variable in other factors. As a consequence, the orientation points are $\mathcal{O}_i=z_{i,\sigma(i)}$, $i=1,\,\ldots,\,n$. Hence, we apply integration by parts (with respect to $z^{(l_i)}_{i,\sigma(i)},\,i=1,\ldots,n$) to shift the derivatives onto Gaussian densities and we obtain
			\begin{align*}
				&\Big|\E\Big[\prod\limits_{i=1}^n\frac{\partial \tilde{b}_i}{\partial x_{l_i}}\Big(\sum_{k=1}^i\sum_{\ell=1}^{\sigma(i)}Z_{k,\ell}\Big)\Big]\Big|\\=&\Big|\int_{\R^{dn^2-n}}\prod\limits_{i=1}^n\Big[\int_{\R}\frac{\partial \tilde{b}_i}{\partial x_{l_i}}\Big(\sum_{k=1}^i\sum_{\ell=1}^{\sigma(i)}z_{k,\ell}\Big) E_{i,\sigma(i)}(z_{i,\sigma(i)})\,\mathrm{d}z^{(l_i)}_{i,\sigma(i)}\Big]\prod\limits_{l\neq l_i}\mathrm{d}z^{(l)}_{i,\sigma(i)}\\&\qquad\qquad\qquad\qquad\qquad\qquad\qquad\qquad\qquad\times\prod\limits_{j\in I_n\setminus\{\sigma(i)\}}^{}E_{i,j}(z_{i,j})\,\mathrm{d}z_{i,j}\Big|\\
				=&\Big|(-1)^n\int_{\R^{dn^2}}\prod\limits_{i=1}^n\tilde{b}_i^{}\Big(\sum_{k=1}^i\sum_{\ell=1}^{\sigma(i)}z_{k,\ell}\Big) B^{(l_i)}_{i,\sigma(i)}(z_{i,\sigma(i)})\,\mathrm{d}z_{i,\sigma(i)}\prod\limits_{j\in I_n\setminus\{\sigma(i)\}}^{}E_{i,j}(z_{i,j})\,\mathrm{d}z_{i,j}\Big|\\
				\leq&\prod\limits_{i=1}^{n}\Vert \tilde{b}_i\Vert_{\infty}\int_{\R^{dn^2-n}}\prod\limits_{i=1}^n\Big(\int_{\R}|B^{(l_i)}_{i,\sigma(i)}(z_{i,\sigma(i)})|\,\mathrm{d}z_{i,\sigma(i)}\Big)\prod\limits_{j\in I_n\setminus\{\sigma(i)\}}E_{i,j}(z_{i,j})\,\mathrm{d}z_{i,j}\\
				\leq&2^{n/2}\Vert b\Vert^n_{\infty}\prod\limits_{i=1}^n(s_i-s_{i-1})^{-1/2}(t_{\sigma(i)}-t_{\sigma(i)-1})^{-1/2},
			\end{align*}
			where, for any $i\in I_n$, $B^{(l_i)}_{i,\sigma(i)}(z)=\frac{\partial E_{i,\sigma(i)}}{\partial z^{(l_i)}}(z)$.	
			Suppose now that $J_{\sigma}\neq\emptyset$. We need some additional notations.
			Let   $\Lambda_{i,\sigma}=\{1,\ldots,i\}\times\{1,\ldots,\sigma(i)\}$,  $J_{\sigma}=\{i_1,\cdots,i_q\}$, $q\leq n-1$, with $i_1<\cdots<i_q$,    $J_{r,\sigma}=\{i_1,\cdots,i_r\}$, $J_{0,\sigma}=\{i_1\}=J_{1,\sigma}$, $J^{-}_{r,\sigma}=\{k\in J_{r,\sigma}:\,\sigma(k)<\sigma(i_{r+1})\}$, $J^{+}_{r,\sigma}=\{k\in J_{r,\sigma}:\,\sigma(k)>\sigma(i_{r+1})\}$, $\sigma(J^{-}_{r,\sigma}\setminus K)=\{\sigma(k):\,k\in J^{-}_{r,\sigma}\setminus K\}$ and $\sigma^{+}(K\cap J^{+}_{r,\sigma})=\{\sigma(k)+1:\,k\in K\cap J^{+}_{r,\sigma}\}$. Observe that $J^{-}_{r,\sigma}\cap J^{+}_{r,\sigma}=\emptyset$, $J^{-}_{r,\sigma}\cup J^{+}_{r,\sigma}=J_{r,\sigma}$ and $J_{q,\sigma}=J_{\sigma}$. Define $(\gamma_{i_r})_{1\leq r\leq q}$, $(\tau_{i_r})_{1\leq r\leq q}$, $(D_{r,\sigma})_{1\leq r\leq q}$ and $(D^{(\tau)}_{r,\sigma})_{1\leq r\leq q}$ as follows. $\gamma_{i_1}=\gamma_{i_1}(\emptyset)=\sigma(i_1)$, $\tau_{i_1}=\tau_{i_1}(\emptyset)=\sigma(i_1)+1$ and, for any $r\in\{1,\ldots,q-1\}$ and any $K\subset J_{r,\sigma}$, 
			\begin{align*}
				&\gamma_{i_{r+1}}=\gamma_{i_{r+1}}(K)\\:=&\max\Big\{k\in \sigma(J^{-}_{r,\sigma}\setminus K)\cup\{\sigma(i_{r+1})\}:\,k\neq\tau_{i_\ell}(K\cap J_{\ell-1,\sigma}),\,\forall\,i_{\ell}\in J^{-}_{r,\sigma}\setminus K\Big\},
			\end{align*}
			\begin{align*}
				&\tau_{i_{r+1}}=\tau_{i_{r+1}}(K)\\:=&\min\Big\{k\in \sigma^{+}(K\cap J^{+}_{r,\sigma})\cup\{\sigma(i_{r+1})+1\}:\,k\neq \gamma_{i_m}(K\cap J_{m-1,\sigma}),\,\forall\,i_m\in K\cap J^{+}_{r,\sigma}\Big\},
			\end{align*}
			\begin{align*}
				D_{r,\sigma}=\{(i_1,\gamma_{i_1}),\ldots,(i_r,\gamma_{i_r})\}\text{ and }	D^{(\tau)}_{r,\sigma}=\{(i_1,\gamma_{i_1}),\ldots,(i_r,\gamma_{i_r}),(i_1,\tau_{i_1}),\ldots,(i_r,\tau_{i_r})\}.
			\end{align*}
			The orientation points are $\mathcal{O}_i=(z_{i,\sigma(i)},z_{i,\sigma(i)+1})$ for $i\in J_{\sigma}$ and $\mathcal{O}_i=z_{i,\sigma(i)}$ for $i\in I_n\setminus J_{\sigma}$. When $i\in J_{\sigma}$, the variable $z_{i,\sigma(i)}$ appears at least in two factors $\frac{\partial \tilde{b}}{\partial x_{l_j}}\Big(\sum_{k=1}^j\sum_{\ell=1}^{\sigma(j)}z_{k,\ell}\Big)$. Hence, we need a substitution variable (that is $z_{i,\tau_i}$ for $i\in J_{\sigma}$) to eliminate (via linear transformations) the  variable $z_{i,\gamma(i)}$ in the other factors before applying integration by parts with respect to the variable $z^{(l_i)}_{i,\gamma(i)}$. 
			This allows us to show by induction that for every $r\in\{1,\ldots,q\}$,
			\begin{align}
				\label{EqDavieVarSheet1}
				\mathcal{J}:=\E\Big[\prod\limits_{i=1}^n\frac{\partial \tilde{b}_i}{\partial x_{l_i}}\Big(\sum_{k=1}^i\sum_{\ell=1}^{\sigma(i)}Z_{k,\ell}\Big)\Big]=\sum\limits_{K\subset J_{r,\sigma}}(-1)^{\# K}\mathcal{J}_{K},
			\end{align}
			where $\# K$ is the number of elements in $K$, and
			\begin{align}
				\label{EqDavieVarSheet2}
				\mathcal{J}_{K}=&\int_{\R^{dn^2-2dr}}\Big(\int_{\R^{2dr}}\prod\limits_{i\in J_{r,\sigma}}\tilde{b}_i\Big(z_{i,\gamma_i}+ \sum\limits_{(k,\ell)\in\Lambda_{i,\sigma}\setminus D^{}_{r,\sigma}}z_{k,\ell}\Big)\prod\limits_{i\in K}B^{(l_i)}_{i,\gamma_i}(z_{i,\gamma_i})E_{i,\tau_i}(z_{i,\tau_i}-z_{i,\gamma_i})\\&\quad\times\prod\limits_{i\in J_{r,\sigma}\setminus K}E_{i,\gamma_i}(z_{i,\gamma_i})B^{(l_i)}_{i,\tau_i}(z_{i,\tau_i}-z_{i,\gamma_i})\prod\limits_{i\in J_{r,\sigma}}\mathrm{d}z^{}_{i,\gamma_i}\mathrm{d}z^{}_{i,\tau_i} \Big)\nonumber\\&\quad\times 
				\prod\limits_{i\in I_n\setminus J_{r,\sigma}}\frac{\partial \tilde{b}_i}{\partial x_{l_i}}\Big(\sum\limits_{(k,\ell)\in\Lambda_{i,\sigma}\setminus D_{r,\sigma}}z_{k,\ell}\Big)\prod\limits_{(k,\ell)\in I_n^2\setminus D^{(\tau)}_{r,\sigma}}E_{k,\ell}(z_{k,\ell})\mathrm{d}z_{k,\ell}.\nonumber
			\end{align}		
			Suppose first that $r=1$. Using the linear transformation $y_{i_1,\tau_{i_1}}=z_{i_1,\tau_{i_1}}+z_{i_1,\gamma_{i_1}}$, $y_{i,j}=z_{i,j}$ for all $(i,j)\in I_n^2\setminus\{(i_1,\tau_{i_1})\}$, we obtain
			\begin{align*}
				\mathcal{J}&=\E\Big[\prod\limits_{i=1}^n\frac{\partial \tilde{b}_i}{\partial x_{l_i}}\Big(\sum_{k=1}^i\sum_{\ell=1}^{\sigma(i)}Z_{k,\ell}\Big)\Big] =\int_{\R^{dn^2}}\prod\limits_{i\in I_n}\frac{\partial \tilde{b}_i}{\partial x_{l_i}}\Big(\sum_{(k,\ell)\in\Lambda_{i,\sigma}}z_{k,\ell}\Big) \prod\limits_{(k,\ell)\in I^2_n}E_{k,\ell}(z_{k,\ell})\,\mathrm{d}z_{k,\ell} \\
				&=\int_{\R^{dn^2-2}}\Big(\int_{\R^2}\frac{\partial \tilde{b}_{i_1}}{\partial x_{l_{i_1}}}\Big(\sum_{(k,\ell)\in\Lambda_{i_1,\sigma}}y_{k,\ell}\Big) E_{i_1,\gamma_{i_1}}(y_{i_1,\gamma_{i_1}}) E_{i_1,\tau_{i_1}}(y_{i_1,\tau_{i_1}}-y_{i_1,\gamma_{i_1}})\mathrm{d}y^{(l_{i_1})}_{i_1,\gamma_{i_1}}\mathrm{d}y^{(l_{i_1})}_{i_1,\tau_{i_1}}\Big)\\
				&\quad\times\prod\limits_{l\neq l_{i_1}}\mathrm{d}y^{(l)}_{i_1,\gamma_{i_1}}\mathrm{d}y^{(l)}_{i_1,\tau_{i_1}}\prod\limits_{i\in I_n\setminus\{i_1\}} \frac{\partial \tilde{b}_i}{\partial x_{l_i}}\Big(\sum_{(k,\ell)\in\Lambda_{i,\sigma}\setminus D_{1,\sigma}}
				y_{k,\ell}\Big) \prod\limits_{(k,\ell)\in I^2_n\setminus D^{(\tau)}_{1,\sigma}}
				E_{k,\ell}(y_{k,\ell})\,\mathrm{d}y_{k,\ell}.
			\end{align*}
			It follows from integration by parts with respect to $y^{(l_{i_1})}_{i_1,\gamma_{i_1}}$ that $\mathcal{J}=\mathcal{J}_{\{i_1\}}+\mathcal{J}_{\emptyset}$, where
			\begin{align*}
				\mathcal{J}_{\{i_1\}}&=-\int_{\R^{dn^2-2d}}\Big(\int_{\R^{2d}}\tilde{b}_{{i_1}}^{}\Big(\sum_{(k,\ell)\in\Lambda_{i_1,\sigma}}y_{k,\ell}\Big) B^{(l_{i_1})}_{i_1,\gamma_{i_1}}(y_{i_1,\gamma_{i_1}}) E_{i_1,\tau_{i_1}}(y_{i_1,\tau_{i_1}}-y_{i_1,\gamma_{i_1}})\mathrm{d}y_{i_1,\gamma_{i_1}}\mathrm{d}y_{i_1,\tau_{i_1}}\Big)\\
				&\quad\times\prod\limits_{i\in I_n\setminus\{i_1\}} \frac{\partial \tilde{b}_i}{\partial x_{l_i}}\Big(\sum_{(k,\ell)\in\Lambda_{i,\sigma}\setminus D_{1,\sigma}}
				y_{k,\ell}\Big) \prod\limits_{(k,\ell)\in I^2_n\setminus D^{(\tau)}_{1,\sigma}}
				E_{k,\ell}(y_{k,\ell})\,\mathrm{d}y_{k,\ell}
			\end{align*}
			and
			\begin{align*}
				\mathcal{J}_{\emptyset}&=\int_{\R^{dn^2-2d}}\Big(\int_{\R^{2d}}\tilde{b}_{{i_1}}^{}\Big(\sum_{(k,\ell)\in\Lambda_{i_1,\sigma}}y_{k,\ell}\Big) E_{i_1,\gamma_{i_1}}(y_{i_1,\gamma_{i_1}}) B^{(l_{i_1})}_{i_1,\tau_{i_1}}(y_{i_1,\tau_{i_1}}-y_{i_1,\gamma_{i_1}})\mathrm{d}y_{i_1,\gamma_{i_1}}\mathrm{d}y_{i_1,\tau_{i_1}}\Big)\\
				&\quad\times\prod\limits_{i\in I_n\setminus\{i_1\}} \frac{\partial \tilde{b}_i}{\partial x_{l_i}}\Big(\sum_{(k,\ell)\in\Lambda_{i,\sigma}\setminus D_{1,\sigma}}
				y_{k,\ell}\Big) \prod\limits_{(k,\ell)\in I^2_n\setminus D^{(\tau)}_{1,\sigma}}
				E_{k,\ell}(y_{k,\ell})\,\mathrm{d}y_{k,\ell},
			\end{align*}
			and therefore \eqref{EqDavieVarSheet1}-\eqref{EqDavieVarSheet2} hold for $r=1$ with $J_{1,\sigma}=\{i_1\}$.
			Suppose now that \eqref{EqDavieVarSheet1}-\eqref{EqDavieVarSheet2} hold for some $r\in\{1,\ldots,q-1\}$ and fix $K\in J_{r,\sigma}$. Then, using the linear transformation $y_{i_{r+1},\tau_{i_{r+1}}}=z_{i_{r+1}, \tau_{i_{r+1}}}+z_{i_{r+1},\gamma_{i_{r+1}}}$ and $y_{i,j}=z_{i,j}$ for $(i,j)\in I_n^2\setminus\{(i_{r+1},\tau_{i_{r+1}})\}$, we have
			\begin{align*}
				&\mathcal{J}_{K}\\=&\int_{\R^{dn^2-2d(r+1)}}\Big(\int_{\R^2}\frac{\partial \tilde{b}_{{i_{r+1}}}^{}}{\partial x_{l_{i_{r+1}}}}\Big(y_{i_{r+1},\gamma_{i_{r+1}}}+\sum\limits_{(k,\ell)\in\Lambda_{i,\sigma}\setminus D_{r+1,\sigma}}y_{k,\ell}\Big)E_{i_{r+1},\gamma_{i_{r+1}}}(y_{i_{r+1},\gamma_{i_{r+1}}})\\&
				\quad\times E_{i_{r+1},\tau_{i_{r+1}}}(y_{i_{r+1},\tau_{i_{r+1}}}-y_{i_{r+1},\gamma_{i_{r+1}}})\,\mathrm{d}y^{(l_{i_{r+1}})}_{i_{r+1},\gamma_{i_{r+1}}}\mathrm{d}y^{(l_{i_{r+1}})}_{i_{r+1},\tau_{i_{r+1}}}\Big)\prod\limits_{\quad l\neq l_{i_{r+1}}}\mathrm{d}y^{(l)}_{i_{r+1},\gamma_{i_{r+1}}}\mathrm{d}y^{(l)}_{i_{r+1},\tau_{i_{r+1}}}\\
				&\quad\times\Big(\int_{\R^{2dr}}\prod\limits_{i\in J_{r,\sigma}}\tilde{b}_i\Big(y_{i,\gamma_i}+ \sum\limits_{(k,\ell)\in\Lambda_{i,\sigma}\setminus D^{}_{r+1,\sigma}}y_{k,\ell}\Big)\prod\limits_{i\in K}B^{(l_i)}_{i,\gamma_i}(y_{i,\gamma_i})E_{i,\tau_i}(y_{i,\tau_i}-y_{i,\gamma_i})\\&\quad\times\prod\limits_{i\in J_{r,\sigma}\setminus K}E_{i,\gamma_i}(y_{i,\gamma_i})B^{(l_i)}_{i,\tau_i}(y_{i,\tau_i}-y_{i,\gamma_i})\prod\limits_{i\in J_{r,\sigma}}\mathrm{d}y_{i,\gamma_i}\mathrm{d}y_{i, \tau_i}\Big)\nonumber\\
				&\quad\times 
				\prod\limits_{i\in I_n\setminus J_{r+1,\sigma}}\frac{\partial \tilde{b}_i^{}}{\partial x_{l_i}}\Big(\sum\limits_{(k,\ell)\in\Lambda_{i,\sigma}\setminus D_{r+1,\sigma}}y_{k,\ell}\Big)\prod\limits_{(k,\ell)\in I_n^2\setminus D^{(\tau)}_{r+1,\sigma}}E_{k,\ell}(y_{k,\ell})\mathrm{d}y_{k,\ell}.
			\end{align*}
			Applying integration by parts with respect to $y^{(l_{i_{r+1}})}_{i_{r+1},\sigma(i_{r+1})}$, we obtain $\mathcal{J}_K=\mathcal{J}_{K\cup\{i_{r+1}\}}+\mathcal{J}_K$ with
			\begin{align*}
				\mathcal{J}_{K\cup\{i_{r+1}\}}=&-\int_{\R^{dn^2-2d(r+1)}}\Big(\int_{\R^{2d}}\tilde{b}_{{i_{r+1}}}^{}\Big(y_{i_{r+1},\gamma_{i_{r+1}}}+\sum\limits_{(k,\ell)\in\Lambda_{i,\sigma}\setminus D_{r+1,\sigma}}y_{k,\ell}\Big)B^{(l_{i_{r+1}})}_{i_{r+1},\gamma_{i_{r+1}}}(y_{i_{r+1},\gamma_{i_{r+1}}})\\&\qquad\times E_{i_{r+1}, \tau_{i_{r+1}}}(y_{i_{r+1},\tau_{i_{r+1}}}-y_{i_{r+1},\gamma_{i_{r+1}}})\,dy_{i_{r+1},\gamma_{i_{r+1}}}dy_{i_{r+1},\tau_{i_{r+1}}}\Big)\\
				&\quad\times\Big(\int_{\R^{2dr}}\prod\limits_{i\in J_{r,\sigma}}\tilde{b}_i\Big(y_{i,\gamma_i}+ \sum\limits_{(k,\ell)\in\Lambda_{i,\sigma}\setminus D^{}_{r+1,\sigma}}y_{k,\ell}\Big)\prod\limits_{i\in K}B^{(l_i)}_{i,\gamma_i}(y_{i,\gamma_i})E_{i,\tau_i}(y_{i,\tau_i}-y_{i,\gamma_i})\\&\quad\times\prod\limits_{i\in J_{r,\sigma}\setminus K}E_{i,\gamma_i}(y_{i,\gamma_i})B^{(l_i)}_{i,\tau_i}(y_{i,\tau_i}-y_{i,\gamma_i})\prod\limits_{i\in J_{r,\sigma}}\mathrm{d}y_{i,\gamma_i}\mathrm{d}y_{i,\tau_i}\Big)\nonumber\\
				&\quad\times 
				\prod\limits_{i\in I_n\setminus J_{r+1,\sigma}}\frac{\partial \tilde{b}_i^{}}{\partial x_{l_i}}\Big(\sum\limits_{(k,\ell)\in\Lambda_{i,\sigma}\setminus D_{r+1,\sigma}}y_{k,\ell}\Big)\prod\limits_{(k,\ell)\in I_n^2\setminus D^{(\tau)}_{r+1,\sigma}}E_{k,\ell}(y_{k,\ell})\mathrm{d}y_{k,\ell}
			\end{align*}
			and
			\begin{align*}
				\mathcal{J}_{K}=&\int_{\R^{dn^2-2d(r+1)}}\Big(\int_{\R^{2d}}\tilde{b}_{{i_{r+1}}}^{}\Big(y_{i_{r+1},\gamma_{i_{r+1}}}+\sum\limits_{(k,\ell)\in\Lambda_{i,\sigma}\setminus D_{r+1,\sigma}}y_{k,\ell}\Big)E_{i_{r+1},\gamma_{i_{r+1}}}(y_{i_{r+1},\gamma_{i_{r+1}}})\\&\qquad\times B^{(l_{i_{r+1}})}_{i_{r+1},\tau_{i_{r+1}}}(y_{i_{r+1},\tau_{i_{r+1}}}-y_{i_{r+1},\gamma_{i_{r+1}}})\,dy_{i_{r+1},\gamma_{i_{r+1}}}dy_{i_{r+1},\tau_{i_{r+1}}}\Big)\\&\quad\times\Big(\int_{\R^{2dr}}\prod\limits_{i\in J_{r,\sigma}}\tilde{b}_i\Big(y_{i,\gamma_i}+ \sum\limits_{(k,\ell)\in\Lambda_{i,\sigma}\setminus D^{}_{r+1,\sigma}}y_{k,\ell}\Big)\prod\limits_{i\in K}B^{(l_i)}_{i,\sigma(i)}(y_{i,\gamma_i})E_{i,\tau_i}(y_{i,\tau_i}-y_{i,\gamma_i})\\&\quad\times\prod\limits_{i\in J_{r,\sigma}\setminus K}E_{i,\gamma_i}(y_{i,\gamma_i})B^{(l_i)}_{i,\tau_i}(y_{i,\tau_i}-y_{i,\gamma_i})\prod\limits_{i\in J_{r,\sigma}}\mathrm{d}y_{i,\gamma_i}\mathrm{d}y_{i,\tau_i}\Big)\nonumber\\
				&
				\quad\times 
				\prod\limits_{i\in I_n\setminus J_{r+1,\sigma}}\frac{\partial \tilde{b}_i^{}}{\partial x_{l_i}}\Big(\sum\limits_{(k,\ell)\in\Lambda_{i,\sigma}\setminus D_{r+1,\sigma}}y_{k,\ell}\Big)\prod\limits_{(k,\ell)\in I_n^2\setminus D^{(\tau)}_{r+1,\sigma}}E_{k,\ell}(y_{k,\ell})\mathrm{d}y_{k,\ell}.
			\end{align*}
			the above can be rewriten as
			\begin{align*}
				&\mathcal{J}_{K\cup\{i_{r+1}\}}\\
				=&-\int_{\R^{dn^2-2d(r+1)}}\Big(\int_{\R^{2d(r+1)}}\prod\limits_{i\in J_{r+1,\sigma}}\tilde{b}_i\Big(y_{i,\gamma_i}+ \sum\limits_{(k,\ell)\in\Lambda_{i,\sigma}\setminus D^{}_{r+1,\sigma}}y_{k,\ell}\Big)\\&\quad\times\prod\limits_{i\in K\cup\{i_{r+1}\}}B^{(l_i)}_{i,\gamma_i}(y_{i,\gamma_i})E_{i,\tau_i}(y_{i,\tau_i}-y_{i,\gamma_i})\\
				&\quad\times\prod\limits_{i\in J_{r+1,\sigma}\setminus (K\cup\{i_{r+1}\})}E_{i,\gamma_i}(y_{i,\gamma_i})B^{(l_i)}_{i,\tau_i}(y_{i,\tau_i}-y_{i,\gamma_i})\prod\limits_{i\in J_{r+1,\sigma}}\mathrm{d}y_{i,\gamma_i}\mathrm{d}y_{i,\tau_i}\Big)\nonumber\\
				&\quad\times 
				\prod\limits_{i\in I_n\setminus J_{r+1,\sigma}}\frac{\partial \tilde{b}_i^{}}{\partial x_{l_i}}\Big(\sum\limits_{(k,\ell)\in\Lambda_{i,\sigma}\setminus D_{r+1,\sigma}}y_{k,\ell}\Big)\prod\limits_{(k,\ell)\in I_n^2\setminus D^{(\tau)}_{r+1,\sigma}}E_{k,\ell}(y_{k,\ell})\mathrm{d}y_{k,\ell}
			\end{align*}
			and
			\begin{align*}
				\mathcal{J}_{K}=&\int_{\R^{dn^2-2d(r+1)}} \Big(\int_{\R^{2d(r+1)}}\prod\limits_{i\in J_{r+1,\sigma}}\tilde{b}_i\Big(y_{i,\gamma_i}+ \sum\limits_{(k,\ell)\in\Lambda_{i,\sigma}\setminus D^{}_{r+1,\sigma}}y_{k,\ell}\Big)\\&\quad\times\prod\limits_{i\in K}B^{(l_i)}_{i,\gamma_i}(y_{i,\gamma_i})E_{i,\tau_i}(y_{i,\tau_i}-y_{i,\gamma_i})\\&\quad\times\prod\limits_{i\in J_{r+1,\sigma}\setminus K}E_{i,\gamma_i}(y_{i,\gamma_i})B^{(l_i)}_{i,\tau_i}(y_{i,\tau_i}-y_{i,\gamma_i})\prod\limits_{i\in J_{r+1,\sigma}}\mathrm{d}y_{i,\gamma_i}\mathrm{d}y_{i,\tau_i}\Big)\nonumber\\
				&\quad\times 
				\prod\limits_{i\in I_n\setminus J_{r+1,\sigma}}\frac{\partial \tilde{b}_i^{}}{\partial x_{l_i}}\Big(\sum\limits_{(k,\ell)\in\Lambda_{i,\sigma}\setminus D_{r+1,\sigma}}y_{k,\ell}\Big)\prod\limits_{(k,\ell)\in I_n^2\setminus D^{(\tau)}_{r+1,\sigma}}E_{k,\ell}(y_{k,\ell})\mathrm{d}y_{k,\ell}.
			\end{align*}
			This completes the proof of \eqref{EqDavieVarSheet1} and \eqref{EqDavieVarSheet2}. Thus for $r=q$, we may write
			\begin{align}
				\label{EqDavieVarShFin1}
				\mathcal{J}:=\E\Big[\prod\limits_{i=1}^n\frac{\partial \tilde{b}_i}{\partial x_{l_i}}\Big(\sum_{k=1}^i\sum_{\ell=1}^{\sigma(i)}Z_{k,\ell}\Big)\Big]=\sum\limits_{K\subset J_{\sigma}}(-1)^{\# K}\mathcal{J}_{K},
			\end{align}
			where $\# K$ is the number of elements in $K$, and
			\begin{align}
				\label{EqDavieVarShFin2}
				\mathcal{J}_{K}=&\int_{\R^{dn^2-2dq}}\Big(\int_{\R^{2dq}}\prod\limits_{i\in J_{\sigma}}\tilde{b}_i\Big(z_{i,\gamma_i}+ \sum\limits_{(k,\ell)\in\Lambda_{i,\sigma}\setminus D^{}_{q,\sigma}}z_{k,\ell}\Big)\prod\limits_{i\in K}B^{(l_i)}_{i,\gamma_i}(z_{i,\gamma_i})E_{i,\tau_i}(z_{i,\tau_i}-z_{i,\gamma_i})\\&\quad\times\prod\limits_{i\in J_{\sigma}\setminus K}E_{i,\gamma_i}(z_{i,\gamma_i})B^{(l_i)}_{i,\tau_i}(z_{i,\tau_i}-z_{i,\gamma_i})\prod\limits_{i\in J_{\sigma}}\mathrm{d}z_{i,\gamma_i}\mathrm{d}z_{i,\tau_i}\Big)\nonumber\\
				&\quad\times 
				\prod\limits_{i\in I_n\setminus J_{\sigma}}\frac{\partial \tilde{b}_i^{}}{\partial x_{l_i}}\Big(\sum\limits_{(k,\ell)\in\Lambda_{i,\sigma}\setminus D_{q,\sigma}}z_{k,\ell}\Big)\prod\limits_{(k,\ell)\in I_n^2\setminus D^{(\tau)}_{r,\sigma}}E_{k,\ell}(z_{k,\ell})\mathrm{d}z_{k,\ell}.\nonumber
			\end{align}	
			Now for every $i\in I_n\setminus J_{\sigma}$ and $K\subset J_{\sigma}$, the variable $z_{i,\sigma(i)}$ appears in only one factor $\frac{\partial \tilde{b}}{\partial x_{l_i}}\Big(\sum_{k=1}^i\sum_{\ell=1}^{\sigma(i)}z_{k,\ell}\Big)$  and there is no need to eliminate this variable in other factors. As a consequence, there is no substitution variable. The integration by parts variable is $z^{(l_i)}_{i,\gamma_i}$, where  
			
			\begin{align*}
				\gamma_i=\gamma_i(K):=&\max\Big\{k\in \sigma(J^{-}_{i,\sigma}\setminus K)\cup\{\sigma(i)\}:\, k\neq\tau_{i_{\ell}}(K\cap J_{\ell-1}),\,\forall\,i_{\ell}\in J^{-}_{i,\sigma}\setminus K\Big\},
			\end{align*}
			where $	J^{-}_{i,\sigma}=\{\ell\in J_{\sigma}:\,\sigma(\ell)<\sigma(i)\}$. We deduce from \eqref{EqDavieVarShFin2} that
			\begin{align*}
				\mathcal{J}_{K}=&\int_{\R^{dn^2-dn-dq}}\Big(\int_{\R^{2dq}}\prod\limits_{i\in J_{\sigma}}\tilde{b}_i\Big(z_{i,\gamma_i}+ \sum\limits_{(k,\ell)\in\Lambda_{i,\sigma}\setminus D^{}_{q,\sigma}}z_{k,\ell}\Big)\prod\limits_{i\in K}B^{(l_i)}_{i,\gamma_i}(z_{i,\gamma_i})E_{i,\tau_i}(z_{i,\tau_i}-z_{i,\gamma_i})\\&\quad\times\prod\limits_{i\in J_{\sigma}\setminus K}E_{i,\gamma_i}(z_{i,\gamma_i})B^{(l_i)}_{i,\tau_i}(z_{i,\tau_i}-z_{i,\gamma_i})\prod\limits_{i\in J_{\sigma}}\mathrm{d}z_{i,\gamma_i}\mathrm{d}z_{i,\tau_i}\Big)\nonumber\\
				&\quad\times \prod\limits_{i\in I_n\setminus J_{\sigma}}
				\Big(\int_{\R}\frac{\partial \tilde{b}_i^{}}{\partial x_{l_i}}\Big(z_{i,\gamma_i}+\sum\limits_{(k,\ell)\in\Lambda_{i,\sigma}\setminus (D_{q,\sigma}\cup\{(i,\gamma_i)\})}z_{k,\ell}\Big)E_{i,\gamma_i}(z_{i,\gamma_i})\,\mathrm{d}z^{(l_i)}_{i,\gamma_i}\Big)\prod\limits_{l\neq l_i}\mathrm{d}z^{(l)}_{i,\gamma_i}\\
				&\quad\times\prod\limits_{(k,\ell)\in I_n^2\setminus (D^{(\tau)}_{r,\sigma}\cup\{(i,\gamma_i):\,i\in I_n\setminus J_{\sigma}\})}E_{k,\ell}(z_{k,\ell})\mathrm{d}z_{k,\ell}.	 
			\end{align*}
			Applying integration by parts with respect to the variables $z^{(l_i)}_{i,\gamma_i}$, $i\in I_n\setminus J_{\sigma}$ we get
			\begin{align*}
				\mathcal{J}_{K}=&(-1)^{n-q}\int_{\R^{dn^2-dn-dq}}\Big(\int_{\R^{2dq}}\prod\limits_{i\in J_{\sigma}}\tilde{b}_i\Big(z_{i,\gamma_i}+ \sum\limits_{(k,\ell)\in\Lambda_{i,\sigma}\setminus D^{}_{q,\sigma}}z_{k,\ell}\Big)\prod\limits_{i\in K}B^{(l_i)}_{i,\gamma_i}(z_{i,\gamma_i})E_{i,\tau_i}(z_{i,\tau_i}-z_{i,\gamma_i})\\&\quad\times\prod\limits_{i\in J_{\sigma}\setminus K}E_{i,\gamma_i}(z_{i,\gamma_i})B^{(l_i)}_{i,\tau_i}(z_{i,\tau_i}-z_{i,\gamma_i})\prod\limits_{i\in J_{\sigma}}\mathrm{d}z_{i,\gamma_i}\mathrm{d}z_{i,\tau_i}\Big)\nonumber\\
				&\quad\times \prod\limits_{i\in I_n\setminus J_{\sigma}}
				\Big(\int_{\R^d}\tilde{b}_i^{}\Big(z_{i,\gamma_i}+\sum\limits_{(k,\ell)\in\Lambda_{i,\sigma}\setminus (D_{q,\sigma}\cup\{(i,\gamma_i)\})}z_{k,\ell}\Big)B^{(l_i)}_{i,\gamma_i}(z_{i,\gamma_i})\,\mathrm{d}z_{i,\gamma_i}\Big)\\
				&\quad\times\prod\limits_{(k,\ell)\in I_n^2\setminus (D^{(\tau)}_{r,\sigma}\cup\{(i,\gamma_i):\,i\in I_n\setminus J_{\sigma}\})}E_{k,\ell}(z_{k,\ell})\mathrm{d}z_{k,\ell}.
			\end{align*}
			Then for any $K\subset J_{\sigma}$,
			\begin{align*}
				|\mathcal{J}_{K}|\leq& \prod\limits_{i=1}^n\Vert \tilde{b}_i\Vert_{\infty}\int_{\R^{dn^2-dn-dq}}\Big(\int_{\R^{2dq}} \prod\limits_{i\in K}|B^{(l_i)}_{i,\gamma_i}(z_{i,\gamma_i})|E_{i,\tau_i}(z_{i,\tau_i}-z_{i,\gamma_i})\\
				&\qquad\quad\times\prod\limits_{i\in J_{\sigma}\setminus K}E_{i,\gamma_i}(z_{i,\gamma_i})|B^{(l_i)}_{i,\tau_i}(z_{i,\tau_i}-z_{i,\gamma_i})|\prod\limits_{i\in J_{\sigma}}\mathrm{d}z_{i,\gamma_i}\,\mathrm{d}z_{i,\tau_i}\Big)\nonumber\\
				&\qquad\quad\times \prod\limits_{i\in I_n\setminus J_{\sigma}}
				\Big(\int_{\R^d} |B^{(l_i)}_{i,\gamma_i}(z_{i,\gamma_i})|\,\mathrm{d}z_{i,\gamma_i}\Big) \prod\limits_{(k,\ell)\in I_n^2\setminus (D^{(\tau)}_{r,\sigma}\cup\{(i,\gamma_i):\,i\in I_n\setminus J_{\sigma}\})}E_{k,\ell}(z_{k,\ell})\mathrm{d}z_{k,\ell}\\
				\leq &2^{n/2}\prod\limits_{i=1}^n\Vert \tilde{b}_i\Vert_{\infty}\prod\limits_{i\in K}(s_i-s_{i-1})^{-1/2}(t_{\gamma_i}-t_{\gamma_i-1})^{-1/2}\prod\limits_{i\in J_{\sigma}\setminus K}(s_i-s_{i-1})^{-1/2}(t_{\tau_i}-t_{\tau_i-1})^{-1/2}\\&\times\prod\limits_{i\in I_n\setminus J_{\sigma}}(s_i-s_{i-1})^{-1/2}(t_{\gamma_i}-t_{\gamma_i-1})^{-1/2}\\=&2^{n/2}\Vert b\Vert^n_{\infty}\prod\limits_{i\in I_n} (s_i-s_{i-1})^{-1/2}(t_i-t_{i-1})^{-1/2}.
			\end{align*}
			Thus,
			\begin{align*}
				|\mathcal{J}|\leq\sum\limits_{K\subset J_{\sigma}}|\mathcal{J}_{K}|\leq(2\sqrt{2})^n\Vert b\Vert^n_{\infty}\prod\limits_{i\in I_n}(s_i-s_{i-1})^{-1/2}(t_i-t_{i-1})^{-1/2}.
			\end{align*}
			The proof is completed.		 		
		\end{proof}

		\begin{proof}[Proof of Corollary \ref{corol:DavieVarSheet}]
			This follows immediately from Proposition \ref{prop:DavieVarSheet} and from the relationship between multivariate Beta function and Gamma function (see e.g. \cite[Lemma 4.3]{Re14}). Precisely, since $0\leq\frac{(s-s_{\gamma(1)})(t-t_{\theta(1)})}{(s-r)(t-u)}\leq1$,  one has
			\begin{align*}
				&\int_{\begin{subarray}{c}
						r<s_{\gamma(n)}<\cdots<s_{\gamma(1)}<s\\
						u<t_{\theta(n)}<\cdots<t_{\theta(1)}<t	
					\end{subarray}
				}\,\Big|\E\Big[\prod\limits_{i=1}^n \frac{\partial b^{(l_{i-1})}}{\partial x_{l_i}}\left(s_i,t_{i},W_{s_i,t_{i}}\right)\Big]\Big|\mathrm{d}s_1\mathrm{d}t_1\ldots \mathrm{d}s_n\mathrm{d}t_n\\
				\leq& C_0^n\|b\|^n_{\infty}\int_{\begin{subarray}{c}
						r=s_{\gamma(n+1)}<s_{\gamma(n)}<\cdots<s_{\gamma(1)}<s\\
						u=t_{\theta(n+1)}<t_{\theta(n)}<\cdots<t_{\theta(1)}<t	
					\end{subarray}
				}\,\prod\limits_{i=1}^n(s_{\gamma(i)}-s_{\gamma(i+1)})^{-1/2}(t_{\theta(i)}-t_{\theta(i+1)})^{-1/2}\\&\qquad\qquad\qquad\qquad\qquad\qquad\qquad\qquad\times\mathrm{d}s_{\gamma(1)}\mathrm{d}t_{\theta(1)}\ldots \mathrm{d}s_{\gamma(n)}\mathrm{d}t_{\theta(n)}\\ \leq& C_0^n\|b\|^n_{\infty}(s-r)^{1/2}\Big(\int_{r=s_{\gamma(n+1)}<s_{\gamma(n)}<\cdots<s_{\gamma(1)}<s}(s-s_{\gamma(1)})^{-1/2}\prod\limits_{i=1}^n(s_{\gamma(i)}-s_{\gamma(i+1)})^{-1/2}\\&\qquad\qquad\qquad\qquad\qquad\qquad\qquad\qquad\times\mathrm{d}s_{\gamma(1)}\ldots \mathrm{d}s_{\gamma(n)}\Big)(t-u)^{1/2}\\&\qquad\times\Big(\int_{u=t_{\theta(n+1)}<t_{\theta(n)}<\cdots<t_{\theta(1)}<t}(t-t_{\theta(1)})^{-1/2}\prod\limits_{i=1}^n(t_{\theta(i)}-t_{\theta(i+1)})^{-1/2}\,\mathrm{d}t_{\theta(1)}\ldots \mathrm{d}t_{\theta(n)}\Big)\\=&\frac{C_0^n\|b\|^n_{\infty}\Gamma(1/2)^{2n+2}(s-r)^{n/2}(t-u)^{n/2}}{\Gamma\Big(\frac{n+1}{2}\Big)^2}.
			\end{align*}
			The proof of \eqref{eq:MMNPZmD} is completed. The proof of \eqref{eq:MMNPZmD2} (and \eqref{eq:MMNPZmD3}) follows similarly. Indeed, we have
			\begin{align*}
				&\int_{\begin{subarray}{c}
						\bar{r}<s_{\sigma(k+n)}<\cdots<s_{\sigma(k+1)}<r<s_{\sigma(k)}<\cdots<s_{\sigma(1)}<s\\
						\bar{u}<t_{\pi(k+n)}<\cdots<t_{\pi(k+1)}<t_{\pi(k)}<\cdots<t_{\pi(1)}<t	
					\end{subarray}
				}\,\Big|\E\Big[\prod\limits_{i=1}^{k+n} \frac{\partial b^{(l_{i-1})}}{\partial x_{l_i}}\left(s_i,t_{i},W_{s_i,t_{i}}\right)\Big]\Big|\mathrm{d}s_1\mathrm{d}t_1\ldots \mathrm{d}s_{k+n}\mathrm{d}t_{k+n}\\ \leq&C_0^{k+n}\|b\|^{k+n}_{\infty}\int_{\begin{subarray}{c}
						\bar{r}=s_{\sigma(k+n+1)}<s_{\sigma(k+n)}<\cdots<s_{\sigma(k+1)}<r<s_{\sigma(k)}<\cdots<s_{\sigma(1)}<s\\
						\bar{u}=t_{\pi(k+n+1)}<t_{\pi(k+n)}<\cdots<t_{\pi(k+1)}<t_{\pi(k)}<\cdots<t_{\pi(1)}<t	
					\end{subarray}
				}\,\prod\limits_{i=1}^{k+n}(s_{\sigma(i)}-s_{\sigma(i+1)})^{-1/2}\\&\qquad\qquad\qquad\qquad\times\prod\limits_{i=1}^{k+n}(t_{\pi(i)}-t_{\pi(i+1)})^{-1/2}\mathrm{d}s_{\sigma(1)}\mathrm{d}t_{\pi(1)}\ldots\mathrm{d}s_{\sigma(k+n)}\mathrm{d}t_{\pi(k+n)}\\=&C_0^{k+n}\|b\|^{k+n}_{\infty}\Big(\int_{\bar{r}=s_{\sigma(k+n+1)}<s_{\sigma(k+n)}<\cdots<s_{\sigma(k+1)}<r<s_{\sigma(k)}<\cdots<s_{\sigma(1)}<s}\prod\limits_{i=1}^{k+n}(s_{\sigma(i)}-s_{\sigma(i+1)})^{-1/2}\\&\qquad\qquad\qquad\qquad\qquad\qquad\times\mathrm{d}s_{\sigma(1)}\ldots\mathrm{d}s_{\sigma(k+n)}\Big)\\&\qquad\times\Big(\int_{\bar{u}=t_{\pi(k+n+1)}<t_{\pi(k+n)}<\cdots<t_{\pi(k+1)}<t_{\pi(k)}<\cdots<t_{\pi(1)}<t}\prod\limits_{i=1}^{k+n}(t_{\pi(i)}-t_{\pi(i+1)})^{-1/2}\,\mathrm{d}t_{\pi(1)}\ldots\mathrm{d}t_{\pi(k+n)}\Big).
			\end{align*}
			Since $0\leq\frac{s-s_{\sigma(1)}}{s-r}<1$,   $0\leq\frac{(r-s_{\sigma(k+1)})(s_{\sigma(k)}-r)}{(r-\bar{r})(s_{\sigma(k)}-s_{\sigma(k+1)})}\leq1$ and $0\leq\frac{t-t_{\pi(1)}}{t-\bar{u}}\leq1$, we have
			\begin{align*}
				&\int_{\bar{r}=s_{\sigma(k+n+1)}<s_{\sigma(k+n)}<\cdots<s_{\sigma(k+1)}<r<s_{\sigma(k)}<\cdots<s_{\sigma(1)}<s}\prod\limits_{i=1}^{k+n}(s_{\sigma(i)}-s_{\sigma(i+1)})^{-1/2}\,\mathrm{d}s_{\sigma(1)}\ldots\mathrm{d}s_{\sigma(k+n)}\\ \leq&(r-\bar{r})^{1/2}\Big(\int_{\bar{r}=s_{\sigma(k+n+1)}<s_{\sigma(k+n)}<\ldots<s_{\sigma(k+1)}<r}(r-s_{\sigma(k+1)})^{-1/2}\prod\limits_{i=k+1}^{k+n}(s_{\sigma(i)}-s_{\sigma(i+1)})^{-1/2}\\&\qquad\qquad\qquad\qquad\qquad\qquad\qquad\qquad\qquad\qquad\qquad\qquad\qquad\times\mathrm{d}s_{\sigma(k+1)}\ldots\mathrm{d}s_{\sigma(k+n)}\Big)\\
				&\qquad\times(s-r)^{1/2}\Big(\int_{r=s_{\sigma(k+1)}<s_{\sigma(k)}<\ldots<s_{\sigma(1)}<s}(s-s_{\sigma(1)})^{-1/2}\prod\limits_{i=1}^k(s_{\sigma(i)}-s_{\sigma(i+1)})^{-1/2}\,\mathrm{d}s_{\sigma(1)}\ldots\mathrm{d}s_{\sigma(k)}\Big)
			\end{align*}
			and
			\begin{align*}
				&\int_{\bar{u}=t_{\pi(k+n+1)}<t_{\pi(k+n)}<\cdots<t_{\pi(k+1)}<t_{\pi(k)}<\cdots<t_{\pi(1)}<t}\prod\limits_{i=1}^{k+n}(t_{\pi(i)}-t_{\pi(i+1)})^{-1/2}\,\mathrm{d}t_{\pi(1)}\ldots\mathrm{d}t_{\pi(k+n)}
				\\ \leq&(t-\bar{u})^{1/2}\int_{\bar{u}=t_{\pi(k+n+1)}<t_{\pi(k+n)}<\cdots<t_{\pi(k+1)}<t_{\pi(k)}<\cdots<t_{\pi(1)}<t}(t-t_{\pi(1)})^{-1/2}\prod\limits_{i=1}^{k+n}(t_{\pi(i)}-t_{\pi(i+1)})^{-1/2}\\&\qquad\qquad\qquad\qquad\qquad\qquad\qquad\qquad\qquad\qquad\qquad\qquad\qquad\qquad\times\mathrm{d}t_{\pi(1)}\ldots\mathrm{d}t_{\pi(k+n)}.
			\end{align*}
			Then, using \cite[Lemma 4.3]{Re14}, one has
			\begin{align*}
				&\int_{\bar{r}=s_{\sigma(k+n+1)}<s_{\sigma(k+n)}<\cdots<s_{\sigma(k+1)}<r<s_{\sigma(k)}<\cdots<s_{\sigma(1)}<s}\prod\limits_{i=1}^{k+n}(s_{\sigma(i)}-s_{\sigma(i+1)})^{-1/2}\,\mathrm{d}s_{\sigma(1)}\ldots\mathrm{d}s_{\sigma(k+n)}\\\leq&\frac{\Gamma(1/2)^{k+n+2}(r-\bar{r})^{n/2}(s-r)^{k/2}}{\Gamma\left(\frac{n+1}{2}\right)\Gamma\left(\frac{k+1}{2}\right)}
			\end{align*}
			and
			\begin{align*}
				&	\int_{\bar{u}=t_{\pi(k+n+1)}<t_{\pi(k+n)}<\cdots<t_{\pi(k+1)}<t_{\pi(k)}<\cdots<t_{\pi(1)}<t}\prod\limits_{i=1}^{k+n}(t_{\pi(i)}-t_{\pi(i+1)})^{-1/2}\,\mathrm{d}t_{\pi(1)}\ldots\mathrm{d}t_{\pi(k+n)}
				\\	\leq&\frac{\Gamma(1/2)^{k+n+1}(t-\bar{u})^{(k+n)/2}}{\Gamma\left(\frac{k+n+1}{2}\right)}.
			\end{align*}
			This completes the proof of \eqref{eq:MMNPZmD2}. The proof of \eqref{eq:MMNPZmD3} follows the same lines as that of \eqref{eq:MMNPZmD2}.
		\end{proof}

		\begin{proof}[Proof of Lemma \ref{lemma:Shuffle}]
			It suffices to prove that  $\{\nabla_{r,s}^{(mk,\sigma)}\}_{\sigma\in\widehat{\mathcal{P}}_{km}}$  is a partition of $(\nabla^{(k)}_{r,s})^k$. We first show that $\nabla_{r,s}^{(mk,\sigma)}\subset(\nabla_{r,s}^{(k)})^m$. Let $\sigma\in\widehat{\mathcal{P}}_{km}$ and  $(s_1,\ldots,s_{mk})\in\nabla_{r,s}^{(mk,\sigma)}$. Since $\sigma((i+1)k)<\ldots<\sigma(1+ik)$, then, by definition of $\nabla_{r,s}^{(mk,\sigma)}$, we have 
			\begin{align*}
				r<s_{\sigma^{-1}\circ\sigma((i+1)k)}<\ldots<s_{\sigma^{-1}\circ\sigma(1+ik)}<s,\,\forall\,i\in\{0,1,\ldots,m-1\},
			\end{align*}
			which can be rewritten as
			\begin{align*}
				r<s_{ (i+1)k}<\ldots<s_{ 1+ik}<s,\,\forall\,i\in\{0,1,\ldots,m-1\}.
			\end{align*}
			This means that $(s_{1+ik},\ldots,s_{ (1+i)k})\in\nabla^{(k)}_{r,s}$ for every $i$ and then $(s_1,\ldots,s_{mk})\in(\nabla^{(k)}_{r,s})^m$. Moreover, $\{\nabla_{r,s}^{(mk,\sigma)}\}_{\sigma}$ and $\{ \nabla_{u,t}^{(mk,\gamma)}\}_{\gamma}$ are clearly families of disjoint nonempty sets. Hence, it remains to show that
			\begin{align}\label{Partition1}
				(\nabla_{r,s}^{(k)})^m\subset\bigcup\limits_{\sigma\in\widehat{\mathcal{P}}_{km}}\nabla_{r,s}^{(mk,\sigma)}.	
			\end{align}	
			Let $(s_1,\ldots,s_{mk})\in(\nabla_{r,s}^{(k)})^m$.   Then
			\begin{align}\label{Partition11}
				s_{(i+1)k}<\ldots<s_{1+ik}\,\text{ for all }i\in\{0,1,\ldots,m-1\}
			\end{align}
			and there exists $\sigma\in\mathcal{P}_{mk}$ such that
			\begin{align}\label{Partition12}
				s_{\sigma^{-1}(mk)}<\ldots<s_{\sigma^{-1}(1)}.
			\end{align}
			Since \eqref{Partition11} is equivalent to
			\begin{align*}
				s_{\sigma^{-1}\circ\sigma((i+1)k)}<\ldots<s_{\sigma^{-1}\circ\sigma(1+ik)}\quad\text{for all }i\in\{0,1,\ldots,m-1\},
			\end{align*}
			then it follows from \eqref{Partition12} that
			\begin{align*}
				\sigma((i+1)k)<\ldots<\sigma(1+ik)\, \text{ for all }i\in\{0,1,\ldots,m-1\},
			\end{align*}
			and therefore $\sigma\in\widehat{\mathcal{P}}_{km}$.
			This ends the proof.
		\end{proof}
		\begin{proof}[Proof of Lemma \ref{lemme:Shuffle2}]
			It suffices to show that $\{\Delta_{\bar r,r,s}^{(m,k,\ell,\pi,\rho)}\}_{\pi,\rho}$ is a partition of $(\Delta^{(k+\ell)}_{\bar r,r,s})^m$.   We first observe that 
			$\{\Delta_{\bar r,r,s}^{(m,k,\ell,\pi,\rho)}\}_{\pi,\rho}$ is a partition of $(\Delta^{(k+\ell)}_{\bar r,r,s})^m$ is a family of disjoint nonempty sets. Let us show that for every $(\pi,\rho)$, $\Delta_{\bar r,r,s}^{(m,k,\ell,\pi,\rho)}\subset(\Delta^{(k+\ell)}_{\bar r,r,s})^m$. \\Let $\pi\in\widehat{\mathcal{P}}^{\ast}_{mk}$, $\rho\in\widehat{\mathcal{P}}^{\ast\ast}_{m\ell}$ and $(s_1,\ldots,s_{m(k+\ell)})\in\Delta_{\bar r,r,s}^{(m,k,\ell,\pi,\rho)}$. Since
			\begin{align*}
				\rho(\zeta_{i,\ell})<\ldots<\rho(\zeta_{i,1})\text{ and  }\pi(\xi_{i,k})<\ldots<\pi(\xi_{i,1})\text{ for all  }i=1,\ldots,m,
			\end{align*}
			then, by definition of $\Delta_{\bar r,r,s}^{(m,k,\ell,\pi,\rho)}$,
			\begin{align*}
				&s_{\rho^{-1}\circ\rho(\zeta_{i,\ell})}<\ldots<s_{\rho^{-1}\circ\rho(\zeta_{i,1})}\text{ and  }s_{\pi^{-1}\circ\pi(\xi_{i,k})}<\ldots<s_{\pi^{-1}\circ\pi(\xi_{i,1})}  \text{ for all }i,
			\end{align*}
			which rewrites
			\begin{align*}
				s_{ \zeta_{i,\ell}}<\ldots<s_{\zeta_{i,1}},\text{ and }s_{ \xi_{i,k}}<\ldots<s_{\xi_{i,1}} \,\text{ for all }i.
			\end{align*}
			This means that $(s_{\xi_{i,1}},\ldots,s_{\xi_{i,k}},s_{\zeta_{i,1}},\ldots,s_{\zeta_{i,\ell}})\in\Delta^{(k+\ell)}_{\bar r,r,s}$ for all $i$, which means that $(s_1,\ldots,s_{m(k+\ell)})\in(\Delta^{(k+\ell)}_{\bar r,r,s})^m$. It remains to show that
			\begin{align}\label{Partition2a}
				(\Delta^{(k+\ell)}_{\bar r,r,s})^m\subset\bigcup\limits_{(\pi,\rho)\in\widehat{\mathcal{P}}^{\ast}_{mk}\times\widehat{\mathcal{P}}^{\ast\ast}_{m\ell} }\Delta_{\bar r,r,s}^{(m,k,\ell,\pi,\rho)}.
			\end{align}
			Let $(s_1,\ldots,s_{m(k+\ell)})\in(\Delta^{(k+\ell)}_{\bar r,r,s})^m$. Then 
			\begin{align}\label{Partition2a1}
				s_{ \zeta_{i,\ell}}<\ldots<s_{\zeta_{i,1}}\text{ and }s_{ \xi_{i,k}}<\ldots<s_{\xi_{i,1}} \,\text{ for all }i
			\end{align}
			and
			there exist $\pi\in\mathcal{P}^{\ast}_{mk}$ and $\rho\in\mathcal{P}^{\ast}_{m\ell}$  
			such that
			\begin{align}\label{Partition2a2}
				&s_{\rho^{-1}(\zeta_{i,\ell})}<\ldots<s_{\rho^{-1}(\zeta_{i,1})},\text{ and }s_{\pi^{-1}(\xi_{i,k})}<\ldots<s_{\pi^{-1}(\xi_{i,1})} \,\text{ for all }i
			\end{align}
			Notice that \eqref{Partition2a1} can be rewritten as
			\begin{align*}
				&s_{\rho^{-1}\circ\rho(\zeta_{i,\ell})}<\ldots<s_{\rho^{-1}\circ\rho(\zeta_{i,1})},\text{ and  }s_{\pi^{-1}\circ\pi(\xi_{i,k})}<\ldots<s_{\pi^{-1}\circ\pi(\xi_{i,1})}  \,\text{ for all }i,
			\end{align*}
			and thus \eqref{Partition2a2} yields
			\begin{align*}
				\rho(\zeta_{i,\ell})<\ldots<\rho(\zeta_{i,1})\text{ and  }\pi(\xi_{i,m})<\ldots<\pi(\xi_{i,1})\,\text{ for all }i,
			\end{align*}
			which means that $(\pi,\rho)\in\widehat{\mathcal{P}}^{\ast}_{mk}\times\widehat{\mathcal{P}}^{\ast\ast}_{m\ell}$. This completes the proof.
		\end{proof}

		\section*{Declarations}
		%
		%
		\begin{itemize}
			\item \textbf{Funding:} The project on which this publication is based has been carried out with funding provided by the Alexander von Humboldt Foundation, under the programme financed by the German Federal Ministry of Education and Research entitled German Research Chair No 01DG15010.
			\item \textbf{Conflict of interest/Competing interests:} The authors declare no conflict of interest.
			\item \textbf{Code availability:}  This manuscript has no associated code. 
		\end{itemize}
		%
		%
		%
		%
		%
		%

			\appendix
			
			\section{Some auxiliary results}\label{Appen}
			

			\begin{theorem}\label{CompactnessCriterion}
				Denote by $\mathcal{F}_{t_{1},\ldots,t_{d}}$ the $\sigma $-algebra generated by
				a $\mathbb{R}^{m}$-valued multi-parameter Wiener process $%
				W_{s_{1},\ldots,s_{d}},0\leq s_{i}\leq t_{i},i=1,\ldots,d$ for $%
				(t_{1},\ldots,t_{d})\in \left[ 0,\infty \right) ^{d}$. Let $X_{n},n\geq 1$ be a
				sequence of $\mathcal{F}_{t_{1},\ldots,t_{d}},\mathcal{B}(\mathbb{R}^{m})$-%
				measurable random variables and let $D_{z_{1},\ldots,z_{d}}$ be the Malliavin
				derivative associated with $W_{s_{1},\ldots,s_{d}},s_{i}\in \left[ 0,\infty
				\right) ,i=1,\ldots,d$. Assume that%
				\begin{equation}
					\sup_{n\geq 1}\left\Vert D_{\cdot }X_{n}\right\Vert _{L^{2}(\Omega \times 
						\left[ 0,t_{1}\right] \times \ldots\times \left[ 0,t_{d}\right] ;\mathbb{R}%
						^{m\times d})}<\infty \text{.}  \label{(i)}
				\end{equation}%
				as well as%
				\begin{equation}
					\sup_{n\geq 1}\int_{\left[ 0,t_{1}\right] \times \ldots\times \left[ 0,t_{d}%
						\right] }\int_{\left[ 0,t_{1}\right] \times \ldots\times \left[ 0,t_{d}\right] }%
					\frac{\left\Vert D_{x}X_{n}-D_{y}X_{n}\right\Vert _{L^{2}(\Omega ;\mathbb{R}%
							^{m\times d})}^{2}}{\left\Vert x-y\right\Vert ^{d+2\beta }}\mathrm{d}x\mathrm{d}y<\infty 
					\label{(ii)}
				\end{equation}%
				for some $\beta \in (0,\frac{1}{2})$ ($\left\Vert \cdot \right\Vert $ norm
				on $\mathbb{R}^{d}$). Then $X_{n},n\geq 1$ is relatively compact in $%
				L^{2}(\Omega ;\mathbb{R}^{m})$.
			\end{theorem}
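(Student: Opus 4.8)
The plan is to prove relative compactness via the Wiener--It\^o chaos decomposition, reducing the statement to a compact fractional Sobolev embedding applied separately at each chaos level; the multi-parameter structure of $W$ will be absorbed entirely into the choice of underlying Hilbert space, so the argument parallels the one-parameter case in \cite{MBP10,MMNPZ13}. Write $Q:=[0,t_1]\times\cdots\times[0,t_d]\subset\R^d$ for the time box and $L^2(\Omega,\mathcal{F}_{t_1,\ldots,t_d},\Pb;\R^m)=\bigoplus_{k\geq0}\mathcal{H}_k$ for the decomposition into componentwise Wiener chaoses $\mathcal{H}_k$ of the $d$-parameter process $W$; let $J_k$ be the orthogonal projection onto $\mathcal{H}_k$, so $X_n=\sum_{k\geq0}J_kX_n$ with $J_kX_n=I_k(f_{n,k})$ for a symmetric $f_{n,k}\in L^2(Q^{k};\R^m)$ and $\E[|J_kX_n|^2]=k!\,\|f_{n,k}\|_{L^2(Q^{k})}^2$. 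I would use the standard facts $D_zI_k(f_{n,k})=k\,I_{k-1}(f_{n,k}(z,\cdot))$ and the resulting identity $\E[\|D_\cdot X_n\|_{L^2(Q)}^2]=\sum_{k\geq1}k\,\E[|J_kX_n|^2]$ (see \cite{Nua06}), together with the standing bound $\sup_n\E[|X_n|^2]<\infty$ --- immediate in all our applications from Girsanov's theorem, and in any case equivalent here to boundedness of the means $\E[X_n]=J_0X_n$, since the higher chaoses are already controlled by \eqref{(i)}.

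\textbf{Step 1 (uniform tail estimate).} First I would use \eqref{(i)} to truncate uniformly in $n$: for every $N\geq1$,
\[
\sup_n\sum_{k\geq N}\E[|J_kX_n|^2]\;\leq\;\frac1N\,\sup_n\sum_{k\geq1}k\,\E[|J_kX_n|^2]\;=\;\frac1N\,\sup_n\E\big[\|D_\cdot X_n\|_{L^2(Q)}^2\big]\;\leq\;\frac{C}{N},
\]
so that $\big\|X_n-\sum_{k=0}^{N}J_kX_n\big\|_{L^2(\Omega;\R^m)}\leq\sqrt{C/N}$ for all $n$.

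\textbf{Step 2 (compactness at a fixed chaos level).} Next I would fix $k\geq1$ and show that $\{J_kX_n\}_n=\{I_k(f_{n,k})\}_n$ is relatively compact in $L^2(\Omega;\R^m)$; by the It\^o isometry this is equivalent to relative compactness of $\{f_{n,k}\}_n$ in $L^2(Q^{k};\R^m)$. Viewing $f_{n,k}$ as a map $z\mapsto f_{n,k}(z,\cdot)$ from $Q$ into the separable Hilbert space $\mathcal{K}:=L^2(Q^{k-1};\R^m)$, and using $\E[|D_zI_k(f_{n,k})|^2]=k\cdot k!\,\|f_{n,k}(z,\cdot)\|_{\mathcal{K}}^2$, hypothesis \eqref{(ii)} becomes
\[
\sup_n\int_Q\int_Q\frac{\|f_{n,k}(x,\cdot)-f_{n,k}(y,\cdot)\|_{\mathcal{K}}^2}{\|x-y\|^{d+2\beta}}\,\mathrm{d}x\,\mathrm{d}y\;\leq\;\frac{C}{k\cdot k!}\;<\;\infty,
\]
i.e. a uniform bound on the Gagliardo $W^{\beta,2}(Q;\mathcal{K})$-seminorms; with $\sup_n\|f_{n,k}\|_{L^2(Q;\mathcal{K})}<\infty$ from the standing $L^2$-bound this bounds the full $W^{\beta,2}(Q;\mathcal{K})$-norms uniformly. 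Since $\beta\in(0,\tfrac12)$ and $Q$ is a bounded Lipschitz domain, the embedding $W^{\beta,2}(Q;\mathcal{K})\hookrightarrow L^2(Q;\mathcal{K})$ is compact (Rellich--Kondrachov for fractional Sobolev spaces, valid verbatim for Hilbert-space valued maps), so $\{f_{n,k}\}_n$ is relatively compact in $L^2(Q;\mathcal{K})\cong L^2(Q^{k};\R^m)$. The case $k=0$ is trivial, $\{J_0X_n\}_n$ being bounded in $\R^m$.

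\textbf{Step 3 (conclusion and the hard point).} Finally, given $\varepsilon>0$ I would pick $N$ with $\sqrt{C/N}<\varepsilon/2$; the family $\{\sum_{k=0}^{N}J_kX_n:n\geq1\}$, a continuous image of a product of the relatively compact families from Step 2, is relatively compact, hence totally bounded, so it admits a finite $\varepsilon/2$-net, which by Step 1 is an $\varepsilon$-net for $\{X_n\}_n$. Thus $\{X_n\}_n$ is totally bounded in the complete space $L^2(\Omega;\R^m)$, hence relatively compact. I expect the main obstacle to be Step 2: one must carefully transport \eqref{(ii)} through the isometry and the commutation relation $D_zI_k(f)=k\,I_{k-1}(f(z,\cdot))$ into a genuine fractional-Sobolev-seminorm bound on the kernels $f_{n,k}$, and then invoke the compact embedding in the Hilbert-space valued setting on the bounded box $Q$ --- everything specific to the $d$-parameter driving noise being hidden in the base space $L^2(Q^{k};\R^m)$, so that no new analytic phenomenon beyond the one-parameter theory arises, only heavier bookkeeping.
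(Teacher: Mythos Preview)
Your proposal is correct and follows precisely the approach the paper has in mind: the paper's own ``proof'' is simply a reference to \cite{DPMN92} (Da Prato--Malliavin--Nualart), whose argument is exactly the chaos-decomposition strategy you outline --- uniform tail control from \eqref{(i)}, compact fractional Sobolev embedding on each chaos level from \eqref{(ii)}, and a diagonal/total-boundedness conclusion. Your remark that a bound on $\sup_n\E[|X_n|^2]$ (equivalently on the zeroth-chaos projections) must be assumed or imported from the application is also well taken; this is implicit in the paper's use of the criterion and in \cite{DPMN92}.
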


			\begin{proof}
				The proof is analogous to that of \cite[Proposition 2]{DPMN92} (see also \cite{BMP22}).
			\end{proof}
			
			\bigskip 
			
			In what follows, we also need a version of Girsanov's theorem, which can be
			e.g. found in \cite{DM15} and which requires for its statement some
			notation and definitions: Denote by $(\mathcal{G}_{u}^{\ast })_{0\leq u\leq
				T}$ the filtration given by%
			\begin{equation*}
				\mathcal{G}_{u}^{\ast }=\sigma (W_{s,t}:0\leq s\leq T,0\leq t\leq u),
			\end{equation*}%
			where $(W_{s,t},0\leq s,t\leq T)$ is a $\mathbb{R}^{d}$-valued Wiener sheet.
			Let $\mathcal{G}:=(\mathcal{G}_{u})_{0\leq u\leq T}$ be the completed and
			right-continuous filtration with respect to $\mathcal{G}_{u}^{\ast },0\leq
			u\leq T$. Consider a measurable random field $Z:\Omega \times \mathcal{T}^2 \longrightarrow \mathbb{R}^{d}$, which is
			adapted to $\mathcal{G}$, that is for all $0\leq s,t\leq 1$ $Z_{s,t}$ is $%
			\mathcal{G}_{t}$-measurable. Assume that%
			\begin{equation}
				\E\Big[ \int_{\mathcal{T}^2 }\left\Vert
				Z_{r}\right\Vert ^{2}\mathrm{d}r\Big] <\infty   \label{SquareInt}
			\end{equation}%
			and define for $0\leq u\leq 1$ the Doleans-Dade type of exponential 
			\begin{equation*}
				M_{u}=\mathcal{E}(\int_{\left[ 0,T\right] \times \left[ 0,u\right]
				}\left\langle Z_{r},\mathrm{d}W_{r}\right\rangle ):=\exp (\int_{\left[ 0,T\right]
					\times \left[ 0,u\right] }\left\langle Z_{r},\mathrm{d}W_{r}\right\rangle -\frac{1}{2}%
				\int_{\left[ 0,T\right] \times \left[ 0,u\right] }\left\Vert
				Z_{r}\right\Vert ^{2}\mathrm{d}r)\text{,}
			\end{equation*}%
			where $\left\langle Z_{r},\mathrm{d}W_{r}\right\rangle
			:=\sum_{i=1}^{d}Z_{r}^{(i)}\mathrm{d}W_{r}^{(i)}$.
			
			\begin{theorem}[Cameron-Martin-Girsanov]\label{Girsanov}
				Let $Z:\Omega \times \mathcal{T}^2
				\longrightarrow \mathbb{R}^{d}$ be a measurable $\mathcal{G}-$adapted random
				field satisfying \eqref{SquareInt} and assume that $M_{u},0\leq u\leq T$ is
				a $\mathcal{G}-$adapted on the probability space $(\Omega ,\mathcal{F},P)$.
				Then the random field $W_{s,t}^{\ast },0\leq s,t\leq T$ defined by%
				\begin{equation*}
					W_{s,t}^{\ast }:=W_{s,t}-\int_{\left[ 0,s\right] \times \left[ 0,t\right]
					}Z_{r}\mathrm{d}r
				\end{equation*}%
				is a $\Q$-Wiener sheet with $\frac{\mathrm{d}\Q}{\mathrm{d}\Pb}:=M_{T}$.   
			\end{theorem}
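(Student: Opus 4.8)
The plan is to reduce the two–parameter change of measure to a classical one–parameter Girsanov theorem along the second coordinate $u$, by viewing the Wiener sheet as an $H:=L^{2}([0,T];\R^{d})$–valued (cylindrical) Wiener process in the time variable $u$ with respect to the filtration $(\mathcal{G}_{u})_{0\le u\le T}$. First I would observe that, by \eqref{SquareInt} and the $L^{2}$–theory of stochastic integration against the sheet, the process $N_{u}:=\int_{[0,T]\times[0,u]}\langle Z_{r},\mathrm{d}W_{r}\rangle$ is a continuous $(\mathcal{G}_{u})$–martingale under $\Pb$ with bracket $\langle N\rangle_{u}=\int_{[0,T]\times[0,u]}\|Z_{r}\|^{2}\,\mathrm{d}r$, so that $M_{u}=\mathcal{E}(N)_{u}$; the standing hypothesis that $(M_{u})$ is a $(\mathcal{G}_{u})$–martingale then gives $\E[M_{T}]=1$, whence $\Q$ with $\tfrac{\mathrm{d}\Q}{\mathrm{d}\Pb}=M_{T}$ is a genuine probability measure on $\mathcal{G}_{T}$.

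Next, for each $\phi\in H$ I would introduce $W_{u}(\phi):=\int_{[0,T]\times[0,u]}\phi(s)\cdot\mathrm{d}W_{s,r}$, again a continuous $(\mathcal{G}_{u})$–martingale under $\Pb$, with $\langle W(\phi),W(\psi)\rangle_{u}=u\,\langle\phi,\psi\rangle_{H}$ and cross–bracket $\langle W(\phi),N\rangle_{u}=\int_{[0,T]\times[0,u]}\phi(s)\cdot Z_{s,r}\,\mathrm{d}s\,\mathrm{d}r$. Applying the classical one–parameter Girsanov theorem in the filtration $(\mathcal{G}_{u})$ then shows that, under $\Q$, the process
$$
W^{*}_{u}(\phi):=W_{u}(\phi)-\langle W(\phi),N\rangle_{u}=\int_{[0,T]\times[0,u]}\phi(s)\cdot\mathrm{d}W^{*}_{s,r}
$$
is a continuous $\Q$–local martingale whose bracket is unchanged by the equivalent change of measure, i.e. $\langle W^{*}(\phi),W^{*}(\psi)\rangle_{u}=u\langle\phi,\psi\rangle_{H}$. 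Since these brackets are deterministic and bounded, the $W^{*}_{u}(\phi)$ are true $L^{2}(\Q)$–martingales, and by a Hilbert–space version of Lévy's characterisation $(W^{*}_{u}(\phi))_{\phi\in H}$ is a cylindrical Wiener process on $H$ under $\Q$ in the time variable $u$.

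Finally I would identify $W^{*}$ itself. Taking $\phi=\mathbf{1}_{[0,s]}e_{i}$ gives $W^{*,(i)}_{s,t}=W^{*}_{t}(\mathbf{1}_{[0,s]}e_{i})$; since continuous martingales with deterministic brackets are centred Gaussian, $(W^{*}_{s,t})_{(s,t)\in\mathcal{T}}$ is a centred $\R^{d}$–valued Gaussian field with $\E_{\Q}\big[W^{*,(i)}_{s_{1},t_{1}}W^{*,(j)}_{s_{2},t_{2}}\big]=\delta_{ij}(s_{1}\wedge s_{2})(t_{1}\wedge t_{2})$, which is exactly the covariance of a $d$–dimensional Wiener sheet, and a centred Gaussian field with this covariance automatically has the independent rectangular increments required in Definition \ref{DefBSheet}. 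Continuity of $(s,t)\mapsto W^{*}_{s,t}$ is inherited from the continuity of $W$ together with that of $(s,t)\mapsto\int_{[0,s]\times[0,t]}Z_{r}\,\mathrm{d}r$ (dominated convergence via \eqref{SquareInt}), and $\partial W^{*}=0$ follows from $\partial W=0$; hence $W^{*}$ is a $\Q$–Wiener sheet. The main obstacle, and the only non–routine point, is the careful set–up of the reduction: one must check that $N$ and the $W(\phi)$ are honest continuous $L^{2}$–martingales for the one–parameter filtration $(\mathcal{G}_{u})$ and that the stochastic–integral and quadratic–variation calculus — including invariance of brackets under an equivalent change of measure — may legitimately be invoked in this two–parameter framework; once this is granted the remaining computations are standard.
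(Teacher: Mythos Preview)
The paper does not actually prove this theorem: it is stated in the Appendix as a known result and referenced to Dalang--Mueller \cite{DM15}, so there is no ``paper's own proof'' to compare against. Your proposal therefore has to be judged on its own merits.

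Your strategy---viewing the sheet as an $L^{2}([0,T];\R^{d})$--valued (cylindrical) Wiener process in the second coordinate $u$ with respect to the one--parameter filtration $(\mathcal{G}_{u})$, and then invoking the classical Girsanov theorem together with L\'evy's characterisation---is exactly the natural route, and is consistent with how the paper sets things up (the filtration $\mathcal{G}_{u}^{\ast}=\sigma(W_{s,t}:0\le s\le T,\,0\le t\le u)$ is introduced precisely so that this reduction works). The computation of the brackets $\langle W(\phi),N\rangle_{u}$ and the invariance of the quadratic variation under equivalent change of measure are routine, and the passage from ``continuous $\Q$--local martingales with deterministic cross--brackets $u\langle\phi,\psi\rangle_{H}$'' to ``centred Gaussian field with covariance $(s_{1}\wedge s_{2})(t_{1}\wedge t_{2})\delta_{ij}$'' via finite--dimensional L\'evy characterisation is correct. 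One small point worth making explicit: Definition~\ref{DefBSheet} is phrased relative to a two--parameter filtration $(\mathcal{F}_{s,t})$ and includes an independence requirement for rectangular increments from $\mathcal{F}_{s,T}\vee\mathcal{F}_{T,t}$; your argument directly yields the \emph{law} of a Wiener sheet under $\Q$, which is what the paper actually uses (see the proofs of Lemma~\ref{Weak} and Proposition~\ref{UniqueWeakSolution}), and the filtration conditions then hold automatically for the natural filtration of $W^{\ast}$ because the process is Gaussian with the correct covariance. With that caveat noted, your outline is sound.
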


			\bigskip 
			Using Girsanov`s theorem for random fields we can construct just as in the
			oneparameter case \emph{weak solutions} to SDEs driven by a Wiener sheet. In
			order to introduce this concept of solution we need the following property
			with respect to filtrations $\mathcal{F}=\left\{ \mathcal{F}_{s,t}\right\}
			_{0\leq s,t\leq T}$: For all $0\leq s,t\leq T$ the $\sigma -$algebras   
			\begin{equation}
				\vee _{0\leq u\leq T}\mathcal{F}_{u,t}\text{ and }\vee _{0\leq v\leq T}%
				\mathcal{F}_{s,v}  \label{F}
			\end{equation}%
			are conditionally independent given $\mathcal{F}_{s,t}$. The following definition of weak solution is consistent with Definition \ref{DefWeakSol}.
			\begin{defi}[Weak solution]\label{WeakSolution}
				Let $b:\mathcal{T}^2\times\R^d
				\longrightarrow \mathbb{R}^{d}$ be a Borel measurable function of linear
				growth. Suppose that there exists a probability space $\left( \Omega ,%
				\mathcal{A},P\right) $, a completed system $\mathcal{F}=\left\{ \mathcal{F}%
				_{s,t}\right\} _{0\leq s,t\leq T}$ of non-decreasing and right-continuous
				sub-$\sigma $-algebras of $\mathcal{A}$, satisfying the condition (\ref{F}),
				a Wiener sheet with respect to $\mathcal{F}$ and a continuous $\mathcal{F}-$%
				adapted random field $X_{\cdot }$ on $\mathcal{T}^2$ such that $%
				X_{\cdot }$ solves the SDE%
				\begin{equation}
					X_{s,t}=x+\int_{0}^{s}\int_{0}^{t}b(r,X_{r_{1},r_{2}})dr_{1}dr_{2}+W_{s,t},x%
					\in \mathbb{R}^{d}  \label{SDEWeak}
				\end{equation}%
				$P-$a.e. for all $s,t\in \left[ 0,T\right] $. Then the triple $(X_{\cdot
				},W_{\cdot }),\left( \Omega ,\mathcal{A},P\right) ,\mathcal{F}$ is called
				weak solution to \eqref{SDEWeak}. We also say that the SDE \eqref{SDEWeak}
				has a unique weak solution if all solutions have the same law on $C(\mathcal{T}^2;\mathbb{R}^{d})$.
				
			\end{defi}
			\begin{remark}
				The natural filtration $\left\{ \mathcal{F}_{s,t}^{W}\right\} _{0\leq
					s,t\leq T}$, which is generated by a Wiener sheet $W_{s,t}, 0\leq s,t\leq T$ and
				the sets of probability measure zero, satisfies the conditions of Definition %
				\ref{WeakSolution}. 
			\end{remark}

			\begin{prop}[Existence of a unique weak solution]\label{UniqueWeakSolution}
				Let $b\in L^{\infty }(\mathcal{T}^2\times \mathbb{R}^{d};\mathbb{R}%
				^{d})$. Then there exists a unique weak solution $X_{\cdot }$ on $\mathcal{T}^2$ to \eqref{SDEWeak}.
			\end{prop}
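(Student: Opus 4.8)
The plan is to follow the classical Girsanov route for producing weak solutions, now carried out in the two-parameter setting via Theorem~\ref{Girsanov}; since $b$ is bounded it is in particular of linear growth, so Definition~\ref{WeakSolution} applies. For \textbf{existence}, I would work on the canonical space $(\Omega,\mathcal{F},\mathbb{P})$ carrying a $d$-dimensional Wiener sheet $W$ with $\partial W=0$, equipped with the completed right-continuous natural filtration $\mathcal{F}=\{\mathcal{F}_{s,t}\}$, which by the Remark after Definition~\ref{WeakSolution} satisfies the conditional-independence condition \eqref{F}. Put $X_{s,t}:=x+W_{s,t}$ and $Z_{s,t}:=b(s,t,x+W_{s,t})$; this is a measurable $\mathcal{G}$-adapted random field with $\|Z\|_{\infty}\le\|b\|_{\infty}$, so $\mathbb{E}[\int_{\mathcal{T}}\|Z_r\|^2\,\mathrm{d}r]\le\|b\|_{\infty}^2T^2<\infty$ and \eqref{SquareInt} holds. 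Boundedness of $b$ also forces the Dol\'eans--Dade exponential $M_u=\mathcal{E}(\int_{[0,T]\times[0,u]}\langle Z_r,\mathrm{d}W_r\rangle)$ to be a uniformly integrable $\mathcal{G}$-martingale in the single parameter $u$ (a Novikov argument in $u$ alone), so $\mathrm{d}\mathbb{Q}:=M_T\,\mathrm{d}\mathbb{P}$ is a probability measure. Theorem~\ref{Girsanov} then yields that
\begin{equation*}
W^{\ast}_{s,t}:=W_{s,t}-\int_0^s\int_0^t b(r_1,r_2,x+W_{r_1,r_2})\,\mathrm{d}r_1\,\mathrm{d}r_2
\end{equation*}
is a $\mathbb{Q}$-Wiener sheet (with $\partial W^{\ast}=0$), and rearranging gives
\begin{equation*}
X_{s,t}=x+\int_0^s\int_0^t b(r_1,r_2,X_{r_1,r_2})\,\mathrm{d}r_1\,\mathrm{d}r_2+W^{\ast}_{s,t}.
\end{equation*}
Since $X$ is continuous and $\mathcal{F}$-adapted, $W^{\ast}$ is an $\mathcal{F}$-Wiener sheet under $\mathbb{Q}$, and $\mathcal{F}$ satisfies \eqref{F}, the triple $(X,W^{\ast})$, $(\Omega,\mathcal{F},\mathbb{Q})$, $\mathcal{F}$ is a weak solution to \eqref{SDEWeak} with $\partial X=x$.

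For \textbf{uniqueness in law}, let $(X,W)$, $(\Omega,\mathcal{A},P)$, $\mathcal{F}$ be any weak solution with $\partial X=x$. Because $X$ has continuous paths, $r\mapsto b(r,X_r)$ is progressively measurable and bounded, so Theorem~\ref{Girsanov} applies with $Z_r:=-b(r,X_r)$ and again the relevant exponential is a genuine density by boundedness. Under the probability measure $\mathrm{d}\tilde{P}:=\mathcal{E}\bigl(-\int_{\mathcal{T}}\langle b(r,X_r),\mathrm{d}W_r\rangle\bigr)\,\mathrm{d}P$ the field
\begin{equation*}
\tilde{W}_{s,t}:=W_{s,t}+\int_0^s\int_0^t b(r_1,r_2,X_{r_1,r_2})\,\mathrm{d}r_1\,\mathrm{d}r_2=X_{s,t}-x
\end{equation*}
is a $\tilde{P}$-Wiener sheet, so under $\tilde{P}$ the process $X$ is $x$ plus a standard Wiener sheet. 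Rewriting the stochastic integral against $W$ in terms of $\tilde{W}$ gives $\mathrm{d}P/\mathrm{d}\tilde{P}=\mathcal{E}\bigl(\int_{\mathcal{T}}\langle b(r,X_r),\mathrm{d}\tilde{W}_r\rangle\bigr)$, whence for every bounded measurable $F:C(\mathcal{T};\mathbb{R}^d)\to\mathbb{R}$,
\begin{equation*}
\mathbb{E}_P[F(X)]=\mathbb{E}_{\tilde{P}}\Bigl[F(x+\tilde{W})\,\mathcal{E}\Bigl(\int_{\mathcal{T}}\langle b(r,x+\tilde{W}_r),\mathrm{d}\tilde{W}_r\rangle\Bigr)\Bigr].
\end{equation*}
The right-hand side depends only on the law of a standard $d$-dimensional Wiener sheet, hence is the same for every weak solution; this proves that the law of $X$ on $C(\mathcal{T};\mathbb{R}^d)$ is unique, completing the argument together with the existence part.

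I expect the main technical obstacle to be the justification that the Dol\'eans--Dade exponentials used above are true probability densities, i.e. that $u\mapsto M_u$ is a uniformly integrable martingale rather than merely a supermartingale: unlike in the one-parameter case there is no automatic martingale property for two-parameter exponentials, so one must exploit precisely the ``one-parameter in the $u$-variable'' martingale structure built into the setup of Theorem~\ref{Girsanov}, combined with the uniform bound $\|b\|_{\infty}<\infty$ which makes the corresponding Novikov condition trivial. A secondary point is the bookkeeping needed to check that the conditional-independence condition \eqref{F} is respected throughout --- it suffices to keep the natural filtration of $W$ and to note that $W^{\ast}$ (resp. $\tilde{W}$) is a Wiener sheet with respect to it under the new measure --- and the change-of-variables identity for stochastic integrals with respect to a Wiener sheet used to rewrite $\mathrm{d}P/\mathrm{d}\tilde{P}$, which follows from the It\^o isometry and stochastic calculus on the plane as in the references of Section~\ref{defcon}. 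These are all standard once the martingale property is in hand, so the proof is essentially the one-parameter argument transported to the plane.
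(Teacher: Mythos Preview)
Your proposal is correct and follows essentially the same Girsanov route as the paper: for existence, set $X=x+W$ and use boundedness of $b$ (Novikov) to make the Dol\'eans--Dade exponential a true density so that $W^{\ast}=W-\int b$ is a Wiener sheet under the new measure; for uniqueness, apply the reverse change of measure to any weak solution so that its law is expressed purely in terms of Wiener measure and $b$. The paper phrases the uniqueness computation via finite-dimensional distributions rather than a generic bounded functional $F$, but this is only cosmetic.
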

			
			\begin{proof}
				\bigskip Let $(W_{\cdot },\mathcal{F})$ on some probability space $\left(
				\Omega ,\mathcal{A},\Pb\right) $. Then, using Novikov `s theorem, we see that
				the Doleans-Dade exponential in Theorem \ref{Girsanov} is a $\mathcal{F}$-%
				martingale for $\mathcal{F}=\mathcal{G}$. Then it follows that $W_{\cdot
				}^{x}:=x+W_{\cdot }$ satisfies the SDE%
				\begin{equation*}
					W_{s,t
					}^{x}=x+\int_{0}^{s}%
					\int_{0}^{t}b(r,W_{r_{1},r_{2}}^{x})\mathrm{d}r_{1}\mathrm{d}r_{2}+W_{s,t}^{\ast },
				\end{equation*}%
				where $W_{\cdot }^{\ast }$ given by $W_{s,t}^{\ast }=$ $W_{s,t}-\int_{0}^{s}%
				\int_{0}^{t}b(r,W_{r_{1},r_{2}}^{x})\mathrm{d}r_{1}\mathrm{d}r_{2}$ is a $\Q$-Wiener sheet. So
				we obtain a weak solution.
				
				Assume we have two weak solutions $X_{\cdot },Y_{\cdot }$ with respect $\Pb$
				and $\Pb^{\ast }$, respectively. Then for pairwise different $%
				(s_{i},t_{i}),i=1,\ldots,n$ in $\mathcal{T}^2$ we find for all $A\in 
				\mathcal{B}(\mathbb{R}^{2dn})$ by means of Girsanov`s theorem that 
				\begin{align*}
					&\Pb(((X_{s_{1},t_{1}}),\ldots,(X_{s_{n},t_{n}})) \in A) \\
					=&\E_{\Q}\Big[ \chi _{\left\{ ((x+W_{s_{1},t_{1}}^{\ast
						}),\ldots,(x+W_{s_{n},t_{n}}^{\ast }))\in A\right\} }\mathcal{E}(\int_{\left[
						0,T\right] \times \left[ 0,u\right] }\left\langle
					b(r_{1},r_{2},x+W_{r_{1},r_{2}}^{\ast }),\mathrm{d}W_{r}^{\ast }\right\rangle )\Big]  \\
					=&\E_{\Pb}\Big[ \chi _{\left\{
						((x+W_{s_{1},t_{1}}),\ldots,(x+W_{s_{n},t_{n}}))\in A\right\} }\mathcal{E}%
					(\int_{\left[ 0,T\right] \times \left[ 0,u\right] }\left\langle
					b(r_{1},r_{2},x+W_{r_{1},r_{2}}),\mathrm{d}W_{r}\right\rangle )\Big] .
				\end{align*}%
				In the same way, we get that%
				\begin{align*}
					&\Pb^{\ast }(((Y_{s_{1},t_{1}}),\ldots,(Y_{s_{n},t_{n}})) \in A) \\
					=&\E_{\Pb}\Big[ \chi _{\left\{
						((x+W_{s_{1},t_{1}}),\ldots,(x+W_{s_{n},t_{n}}))\in A\right\} }\mathcal{E}%
					(\int_{\left[ 0,T\right] \times \left[ 0,u\right] }\left\langle
					b(r_{1},r_{2},x+W_{r_{1},r_{2}}),\mathrm{d}W_{r}\right\rangle )\Big] .
				\end{align*}%
				So $X_{\cdot }$ and $Y_{\cdot }$ coincide in law.  
			\end{proof}

			\section{Further results}\label{Appen1}
			\bigskip
			Finally, we also need the following auxiliary result with respect to our algorithm:
			\begin{lemma}
				\label{SubstitutionVariable}Assume we already determined for $i\in \left\{
				1,\ldots,n\right\} $ the $(i-1)$ integration by parts variables with the
				corresponding substitutions variables (if existing) with respect to a
				certain path of the algorithm as described at the beginning of Section 4. Suppose in the $i-$th step of the algorithm, where we move to the
				orientation point $(z_{i,\sigma ^{-1}(i)},z_{i,\sigma ^{-1}(i)+1})$, that $%
				z_{i,\sigma ^{-1}(i)+1}$ is a substitution variable for $z_{i,\sigma
					^{-1}(i)}$. Let $z_{i,\sigma ^{-1}(i)+1}$ be in the column of a factor $%
				B_{\sigma ^{-1}(i)+1,r}$ for $r<i$ and let $k\geq 1$ be the smallest number
				such $z_{i,\sigma ^{-1}(i)+1+k}$ is not in the same column as another
				factor. Then $z_{i,\sigma ^{-1}(i)+1+k}$ is also substitution variable for $%
				z_{i,\sigma ^{-1}(i)}$.
			\end{lemma}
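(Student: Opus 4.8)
The plan is to translate the statement into a purely combinatorial assertion about the permutation $\sigma$ and then argue by induction on the row index $i$. First I would record a reformulation of the notion of substitution variable: since $\mathrm{span}\big((s_{\sigma^{-1}(j)},t_j)\big)=\{z_{l,k}:k\le\sigma^{-1}(j),\ l\le j\}$, for $l\ge1$ the variable $z_{i,\sigma^{-1}(i)+l}$ is a substitution variable for $z_{i,\sigma^{-1}(i)}$ if and only if $\sigma^{-1}(i)+l\le\sigma^{-1}(j)$ for every $j\in\mathcal{L}^{n,i}=\{j>i:\sigma^{-1}(j)\ge\sigma^{-1}(i)\}$; equivalently, writing $m_i:=\min_{j\in\mathcal{L}^{n,i}}\sigma^{-1}(j)$, this holds iff $\sigma^{-1}(i)+l\le m_i$. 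Thus the hypothesis that $z_{i,\sigma^{-1}(i)+1}$ is a substitution variable reads $m_i\ge\sigma^{-1}(i)+1$, and the conclusion to be proved is $m_i\ge\sigma^{-1}(i)+1+k$. Since $m_i\ge\sigma^{-1}(i)+1$ is already known, it suffices to exclude $m_i\in\{\sigma^{-1}(i)+1,\dots,\sigma^{-1}(i)+k\}$; writing $m_i=\sigma^{-1}(j_0)$ for a minimiser $j_0\in\mathcal{L}^{n,i}$ (so $j_0>i$), the task reduces to showing that for each $m\in\{1,\dots,k\}$ the column index $\sigma^{-1}(i)+m$ is \emph{not} of the form $\sigma^{-1}(j)$ with $j>i$, i.e.\ that $\sigma\big(\sigma^{-1}(i)+m\big)<i$: the grid point lying in column $\sigma^{-1}(i)+m$ sits in a row strictly below $i$.

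Next I would use the hypothesis together with the minimality of $k$: the columns $\sigma^{-1}(i)+1,\dots,\sigma^{-1}(i)+k$ are precisely $k$ of the columns occupied by the $i-1$ factors already produced along the path, so each equals the column of a factor $B_{\sigma^{-1}(i)+m,r_m}$ with $r_m<i$. The proof then runs by induction on $i$, assuming the present lemma and Lemma \ref{Shift} for all smaller row indices, and for fixed $m$ I would distinguish according to how $B_{\sigma^{-1}(i)+m,r_m}$ was created in step $r_m$. If it came from an un-shifted integration-by-parts variable then $\sigma^{-1}(r_m)=\sigma^{-1}(i)+m$, hence $\sigma\big(\sigma^{-1}(i)+m\big)=r_m<i$. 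If it came from a left-shifted integration-by-parts variable, Lemma \ref{Shift} identifies the column the shift lands on, and together with the injectivity of $\sigma$ and the induction hypothesis this again yields $\sigma\big(\sigma^{-1}(i)+m\big)<i$. Finally, if it came from a (possibly right-shifted) substitution variable of row $r_m$, then the inductive form of the present lemma applied at $r_m$ guarantees that $z_{r_m,\sigma^{-1}(i)+m}$ satisfies the span condition for row $r_m$; confronting this with the standing hypothesis that $z_{i,\sigma^{-1}(i)+1}$ is a substitution variable for $z_{i,\sigma^{-1}(i)}$, and using again that $\sigma$ is a bijection so that a row $j>i$ with $\sigma^{-1}(j)\in\{\sigma^{-1}(i)+1,\dots,\sigma^{-1}(i)+k\}$ would necessarily lie in $\mathcal{L}^{n,i}$, excludes $\sigma\big(\sigma^{-1}(i)+m\big)>i$. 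Since $\sigma\big(\sigma^{-1}(i)+m\big)\neq i$ trivially, this forces $\sigma\big(\sigma^{-1}(i)+m\big)<i$ and closes the induction.

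I expect the delicate point to be exactly the last case: showing that a column occupied in an earlier step by a substitution variable cannot be the grid column of a row $\ge i$. Here the bookkeeping of the algorithm is essential — the distinctness of the on-path columns $l_1,\dots,l_{i-1}$, the fact that the right-shift performed in step $r_m$ is itself governed by this lemma at the smaller index $r_m$, and the left-shift statement of Lemma \ref{Shift} — and assembling these ingredients into a clean exclusion of the bad configuration is the heart of the argument. By contrast the un-shifted case is immediate, and the left-shift case is comparatively routine once Lemma \ref{Shift} is in hand.
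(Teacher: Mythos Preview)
Your reformulation of the substitution-variable condition as $m_i\ge\sigma^{-1}(i)+1+k$ and the reduction to showing $\sigma\big(\sigma^{-1}(i)+m\big)<i$ for each $m\in\{1,\dots,k\}$ are both correct. The case analysis, however, does not close. The structural problem is that the factor sitting in column $c:=\sigma^{-1}(i)+m$ was produced by some row $r_m<i$, but this says nothing about the row $\sigma(c)$ whose \emph{orientation point} lies in column $c$; these are in general different rows. In your Case~3 (right-shifted substitution variable), the inductive span condition at $r_m$ yields only $c\le m_{r_m}$; assuming $\sigma(c)=j>i$ one gets $j\in\mathcal{L}^{n,r_m}$ and hence $m_{r_m}\le\sigma^{-1}(j)=c$, so $c=m_{r_m}$, which is no contradiction. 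Your Case~2 is similarly inconclusive: a left shift from $\sigma^{-1}(r_m)>c$ tells you that columns $c+1,\dots,\sigma^{-1}(r_m)$ were blocked at step $r_m$, but this does not locate $\sigma(c)$, and Lemma~\ref{Shift} only asserts that a left shift is possible, not where it lands relative to later orientation points.

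The paper avoids this trap by arguing about all $k$ blocking rows simultaneously rather than column by column. The decisive observation is that for each such row $r$, its orientation-point column $q_r:=\sigma^{-1}(r)$ must satisfy $q_r<\sigma^{-1}(i)+1+k$: otherwise the (necessarily left-) shift that put a factor into $[\sigma^{-1}(i)+1,\sigma^{-1}(i)+k]$ would have passed over column $\sigma^{-1}(i)+1+k$, forcing that column to be occupied already at step $r<i$, contradicting its vacancy at step $i$. Once all $q_r$ are confined below $\sigma^{-1}(i)+1+k$, a pigeonhole argument on the injective map $r\mapsto q_r$ (extended to further columns on the left in the paper's second case) shows that the orientation points of rows $\le i$ fill a contiguous block containing $\{\sigma^{-1}(i),\dots,\sigma^{-1}(i)+k\}$, so no row $j>i$ can have $\sigma^{-1}(j)$ there. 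It is this global confinement-plus-counting step, not any property of individual factors, that delivers the conclusion; your column-by-column induction does not have access to it.
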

			
			\begin{proof}
				By construction there must be $k$ non-overlapping factors $B_{l_{r},r}$, $%
				r=i-k,\ldots,i-1$ in the previous rows with $\sigma ^{-1}(i)+1\leq $ $%
				l_{j}<\sigma ^{-1}(i)+1+k$ for all $j$ and $l:\left\{ i-k,\ldots,i-1\right\}
				\longrightarrow \left\{ 1,\ldots,n\right\} $ an injection, which arose from
				(new) integration by parts or subtitution variables (through integration by
				parts) in the $(i-1)-$th step of the algorithm. Further, let $%
				(z_{r,q_{r}},z_{r,q_{r}+1})$ be the orientation point in row $r$ associated
				with $B_{l_{r},r}$, where $q:\left\{ i-k,\ldots,i-1\right\} \longrightarrow
				\left\{ 1,\ldots,n\right\} $ must be injective. Suppose that $q_{r}\geq \sigma
				^{-1}(i)+1+k$ for some $r$. Then, according to our algorithm $B_{l_{r},r}$
				must correspond to an integration by parts variable, which arises from
				shifts to the left (in the same row). This actually implies that there must
				be a factor $B_{\sigma ^{-1}(i)+1+k,r}$ in the column of $z_{i,\sigma
					^{-1}(i)+1+k}$, which is not possible. So $q_{r}<\sigma ^{-1}(i)+1+k$ for
				all $r=i-k,\ldots,i-1$. Define $p=q_{r^{\ast }}=\min_{r=i-k,\ldots,i-1}q_{r}$.
				Consider now the case, when $\sigma ^{-1}(i)+1\leq p$. Then $p=\sigma
				^{-1}(i)+1$, since $q$ is injective. The latter, however, implies that (the first coordinate) of an orientation
				point $z_{l,\sigma ^{-1}(l)},l>i$, whose span contains $z_{i,\sigma ^{-1}(i)}
				$, must come after $z_{l,\sigma ^{-1}(i)+k}$ in row $l$, that is $\sigma
				^{-1}(l)\geq \sigma ^{-1}(i)+k+1$. So $z_{i,\sigma ^{-1}(i)+1+k}$ is a
				substitution variable for $z_{i,\sigma ^{-1}(i)}$. Assume now that $p<\sigma
				^{-1}(i)+1$. So $p\leq $ $\sigma ^{-1}(i)$. But $p\neq \sigma ^{-1}(i)$.
				Hence, $p<$ $\sigma ^{-1}(i)$. So there exists a minimal number $m\leq
				p+1\leq \sigma ^{-1}(i)$ for which there are non-overlapping factors $%
				B_{v_{r},r}$, $r=i-\sigma ^{-1}(i)-1+m-k,\ldots,i-1$ with $m\leq v_{j}<\sigma
				^{-1}(i)+1+k$ for all $j$, where $v:\left\{ i-\sigma
				^{-1}(i)-1+m-k,\ldots,i-1\right\} \longrightarrow \left\{ 1,\ldots,n\right\} $ is
				injective and where the restriction of $v$ to $\left\{ i-k,\ldots,i-1\right\} $
				is $l$. Further, let $(z_{r,q_{r}^{\ast }},z_{r,q_{r}^{\ast }+1})$ be the
				orientation point in row $r$ associated with $B_{v_{r},r}$, where $q^{\ast
				}:\left\{ i-\sigma ^{-1}(i)-1+m-k,\ldots,i-1\right\} \longrightarrow \left\{
				1,\ldots,n\right\} $ is injective with the restriction of $q^{\ast }$ to $%
				\left\{ i-k,\ldots,i-1\right\} $ given by $q$. Using the same argument as in
				the previous case, we find that $q_{r}^{\ast }<\sigma ^{-1}(i)+1+k$ for all $%
				r$. On the other hand, $q_{r}^{\ast }\geq m-1$ for all $r$. Otherwise there
				is some $r_{0}$ such that $q_{r_{0}}^{\ast }<m-1$. But this means that there
				is another factor $B_{m-1,w}$ in a column $m-1$, which contradicts the
				minimality of $m$. So $m-1\leq q_{r}^{\ast }<\sigma ^{-1}(i)+1+k$ for all $%
				r=i-\sigma ^{-1}(i)-1+m-k,\ldots,i-1$. But the latter entails that the (first
				components of the) orientation points $z_{i,\sigma ^{-1}(i)}$ and $%
				z_{r,q_{r}^{\ast }},r=i-\sigma ^{-1}(i)-1+m-k,\ldots,i-1$ fill each of the
				columns from $m-1$ to $\sigma ^{-1}(i)+k$ (so $\sigma ^{-1}(i)-m+2+k$
				columns). This means an orientation point $z_{l,\sigma ^{-1}(l)},l>i$, whose
				span contains $z_{i,\sigma ^{-1}(i)}$, must come after $z_{l,\sigma
					^{-1}(i)+k}$ in row $l$, that is $\sigma ^{-1}(l)\geq \sigma ^{-1}(i)+k+1$.
				
				Altogether, we conclude that $z_{i,\sigma ^{-1}(i)+1+k}$ is a substitution
				variable for $z_{i,\sigma ^{-1}(i)}$.
				
			\end{proof}
			
			\bigskip
			
			The next auxiliary result shows that "shifts" of orientation points to the
			left or right in row $i$ , which are in the column of a factor with row $r<i$%
			, are always possible:
			
			\begin{lemma}
				\label{Shift}Assume as in Lemma \ref{SubstitutionVariable} a certain path of 
				$(i-1)$ factors in the $(i-1)-$th step of our algorithm. Let $(z_{i,\sigma
					^{-1}(i)},z_{i,\sigma ^{-1}(i)+1})$ be the orientation point. If $%
				z_{i,\sigma ^{-1}(i)}$ is in the column of a factor $B_{\sigma ^{-1}(i),r}$
				for $r<i$, then a "shift" of $z_{i,\sigma ^{-1}(i)}$ to the left in row $i$
				is possible, that is there exists a $k\geq 1$ such that $z_{i,\sigma
					^{-1}(i)-k}$ is not in the column of a factor $B_{\sigma ^{-1}(i)-k,r}$, $%
				r<i $. Similarly, if $z_{i,\sigma ^{-1}(i)+1}$ is in the column of a factor $%
				B_{\sigma ^{-1}(i)+1,r}$ for $r<i$, then a "shift" of $z_{i,\sigma
					^{-1}(i)+1}$ to the right in row $i$ is possible, that is there exists a $%
				k\geq 1$ such that $z_{i,\sigma ^{-1}(i)+1+k}$ is not in the column of a
				factor $B_{\sigma ^{-1}(i)+1+k,r}$, $r<i$.
			\end{lemma}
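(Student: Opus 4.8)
The plan is to prove both assertions by contradiction, drawing on the same combinatorial ingredients that drive the proof of Lemma \ref{SubstitutionVariable}: first, the fact that once a column carries a factor $B_{\cdot,\cdot}$ it keeps carrying one at every later step of the algorithm; second, the fact that a left-shift (resp.\ right-shift) landing in a column $c$ forces all the columns strictly between the original position and $c$ to be occupied at that stage and hence to remain occupied; and third, the injectivity of $r\mapsto\sigma^{-1}(r)$, i.e.\ the grid points $(s_{\sigma^{-1}(r)},t_r)$ occupy distinct columns.

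For the left-shift, I would assume that no admissible $k\geq1$ exists; together with the hypothesis that $z_{i,\sigma^{-1}(i)}$ sits in an occupied column this means that the columns $1,2,\ldots,\sigma^{-1}(i)$ are all occupied by factors coming from rows $<i$. Let $W\geq\sigma^{-1}(i)$ be the largest index with $\{1,\ldots,W\}$ fully occupied. The key claim is that the orientation-point base column $\sigma^{-1}(r)$ of the factor sitting in any column $c\in\{1,\ldots,W\}$ satisfies $\sigma^{-1}(r)\leq W$: this is immediate if the factor arose from the integration-by-parts variable at the base (then $\sigma^{-1}(r)=c\leq W$) or from a substitution variable (then $\sigma^{-1}(r)<c\leq W$), while if it arose from an integration-by-parts variable left-shifted from its base, the columns between $c$ and $\sigma^{-1}(r)$ were occupied when row $r$ was treated and remain so, so $\sigma^{-1}(r)\leq W$ by maximality of $W$. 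Then the $W$ rows $r<i$ carrying these factors have $W$ distinct base columns inside $\{1,\ldots,W\}$, exhausting it, yet $\sigma^{-1}(i)\leq W$ is another base column distinct from all of them — a contradiction. Hence a free column to the left exists.

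For the right-shift the structure is mirrored but needs one extra input. Assuming the shift is impossible, the columns $\sigma^{-1}(i)+1,\ldots,n$ are all occupied by factors from rows $<i$; let $W'\leq\sigma^{-1}(i)+1$ be the smallest index with $\{W',\ldots,n\}$ fully occupied. As before one checks that the base column of any factor in $\{W',\ldots,n\}$ lies in $\{W'-1,W',\ldots,n\}$, the value $W'-1$ arising only for a right-shifted substitution-variable factor. This makes the naive count tight rather than contradictory, so I would additionally use that a right-shift in row $i$ can occur only when $\mathcal{L}^{n,i}\neq\emptyset$, i.e.\ there is at least one row $j>i$ with $\sigma^{-1}(j)>\sigma^{-1}(i)$; the grid-point column $\sigma^{-1}(j)$ of each such $j$ lies in $\{W',\ldots,n\}$, and these columns are base columns distinct (by injectivity) from all the ones already counted. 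Adding them in produces strictly more distinct base columns than the $n-W'+2$ available slots in $\{W'-1,\ldots,n\}$, which is the desired contradiction.

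The step I expect to be the main obstacle is exactly this right-shift case: the pigeonhole is off by one and cannot be closed without identifying the "hidden" columns — the grid-point columns of the later rows $j>i$ that turned $z_{i,\sigma^{-1}(i)+1}$ into a substitution variable in the first place — and checking that all of them fall inside the occupied block $\{W',\ldots,n\}$. One must also be careful about which orientation points of earlier rows can contribute a factor to a given column (base versus integration-by-parts versus substitution variable, with or without a shift) and about translating each shift rule into a contiguous run of occupied columns that survives to stage $i$; once this bookkeeping is in place, both contradictions are pure counting.
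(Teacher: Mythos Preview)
Your proposal is correct and follows essentially the same route as the paper: a contradiction by pigeonhole on the base columns $\sigma^{-1}(r)$ via injectivity of $\sigma^{-1}$, with the right-shift case closed by invoking the existence of a later row $j>i$ with $\sigma^{-1}(j)>\sigma^{-1}(i)$ (guaranteed by $z_{i,\sigma^{-1}(i)+1}$ being a substitution variable). The paper phrases the right-shift endgame as ``the columns $m-1,\ldots,n$ are exactly filled, leaving no room for such a $j$'' whereas you add $j$ to the count to force a strict overflow, but these are the same argument.
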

			
			\begin{proof}
				Let us first consider "shifts" of $z_{i,\sigma ^{-1}(i)}$ to the left.\
				Assume that such a shift is not possible. Then there is a maximal number $%
				m\geq \sigma ^{-1}(i)$ of factors $B_{l(r),r}$, $r=i-m,\ldots,i-1$, which fill
				each of the columns from $1$ to $m$. So $l:\left\{ i-m,\ldots,i-1\right\}
				\longrightarrow \left\{ 1,\ldots,m\right\} $ is a bijection. Let $%
				(z_{r,q(r)},z_{r,q(r)+1})$ be the orientation point in row $r$ associated
				with $B_{l(r),r}$, where $q:\left\{ i-m,\ldots,i-1\right\} \longrightarrow
				\left\{ 1,\ldots,n\right\} $ is an injection. Then we can argue as in the proof
				of Lemma \ref{SubstitutionVariable} and conclude that $q(r)\leq m$ for all $%
				r=i-m,\ldots,i-1$. This means that the (first components of the) orientation
				points $z_{r,q(r)}$, $r=i-m,\ldots,i-1$ fill all the columns from $1$ to $m$.
				However, $z_{i,\sigma ^{-1}(i)}$ is in column $\sigma ^{-1}(i)\in \left\{
				1,\ldots,m\right\} $, which leads to a contradiction. So a shift to the left
				must be possible.
				
				Let us now look at "shifts" to the right. Suppose such a shift (in the same
				row) is not possible. Then there is a minimal number $m\leq \sigma ^{-1}(i)+1
				$ of factors $B_{l(r),r}$, $r=i-n+m-1,\ldots,i-1$, which fill each of the
				columns from $m$ to $n$. Hence, $l:\left\{ i-n+m-1,\ldots,i-1\right\}
				\longrightarrow \left\{ m,\ldots,n\right\} $ is a bijection. Let $%
				(z_{r,q(r)},z_{r,q(r)+1})$ be the orientation point in row $r$ associated
				with $B_{l(r),r}$, where $q:\left\{ i-n+m-1,\ldots,i-1\right\} \longrightarrow
				\left\{ 1,\ldots,n\right\} $ is an injection. Then we can use the same
				arguments as in the proof of Lemma \ref{SubstitutionVariable} and show that $%
				q(r)\geq m-1$. But the latter entails that $z_{i,\sigma ^{-1}(i)}$ and $%
				z_{r,q(r)}$, $r=i-n+m-1,\ldots,i-1$ fill all the columns from $m-1$ to $n$. The
				latter, however, implies that we cannot find (the first coordinate) of an
				orientation point $z_{l,\sigma ^{-1}(l)},l>i$, whose span contains $%
				z_{i,\sigma ^{-1}(i)}$, which contradicts the assumption that $z_{i,\sigma
					^{-1}(i)+1}$ is a substitution variable. Hence, a shift to right must be
				possible.
			\end{proof}

			\section{Complement to the proof of Proposition \ref{prop:DavieVarSheet}}\label{Appen3}				
			
			\begin{lemma}\label{lemd0}
				Let   $n\in\N$, $\sigma$ a permutation on $I_n=\{1,\ldots,n\}$, $J_{\sigma}=\{i\in I_n:\text{ there is }k\in I_n\text{ s.t. }(i,\sigma(i)\prec(k,\sigma(k)))\}$   $($we suppose $J_{\sigma}=J_{q,\sigma}=\{i_1,\cdots,i_q\}$, $q\leq n-1$, with $i_1<\cdots<i_q)$. For any $r\in\{1,\ldots,r\}$, let   $J_{r,\sigma}=\{i_1,\cdots,i_r\}$ with  $J_{0,\sigma}=\{i_1\}=J_{1,\sigma}$, $J^{-}_{r,\sigma}=\{k\in J_{r,\sigma}:\,\sigma(k)<\sigma(i_{r+1})\}$, $J^{+}_{r,\sigma}=\{k\in J_{r,\sigma}:\,\sigma(k)>\sigma(i_{r+1})\}$, $\sigma(J^{-}_{r,\sigma}\setminus K)=\{\sigma(k):\,k\in J^{-}_{r,\sigma}\setminus K\}$ and $\sigma^{+}(K\cap J^{+}_{r,\sigma})=\{\sigma(k)+1:\,k\in K\cap J^{+}_{r,\sigma}\}$.   Define   $\gamma_{i_1}(\emptyset)=\sigma(i_1)$, $\tau_{i_1}(\emptyset)=\sigma(i_1)+1$ and for any $r\in\{1,\ldots,q-1\}$ and any $K\subset J_{q,\sigma}$, 
				\begin{align*}
					\gamma_{i_{r+1}}(K\cap J_{r,\sigma}):=\max \mathcal{J}_{1,r} \text{ and }
					\tau_{i_{r+1}}(K\cap J_{r,\sigma}):=\min \mathcal{J}_{2,r},
				\end{align*}
				where
				\begin{align*}
					\mathcal{J}_{1,r}=&	\Big\{k\in \sigma(J^{-}_{r,\sigma}\setminus K)\cup\{\sigma(i_{r+1})\}:\, k\neq\tau_{i_\ell}(K\cap J_{\ell-1,\sigma}),\,\forall\,i_{\ell}\in J^{-}_{r,\sigma}\setminus K\Big\},\\
					\mathcal{J}_{2,r}:=&\Big\{k\in \sigma^{+}(K\cap J^{+}_{r,\sigma})\cup\{\sigma(i_{r+1})+1\}:\,k\neq \gamma_{i_m}(K\cap J_{m-1,\sigma}),\,\forall\,i_m\in K\cap J^{+}_{r,\sigma} \Big\}.
				\end{align*}
				For any $i\in I_n\setminus J_{q,\sigma}$, define also
				\begin{align*}
					\gamma_i(K):=&\max\Big\{k\in \sigma(J^{-}_{i,\sigma}\setminus K)\cup\{\sigma(i)\}:\,  k\neq\tau_{i_{\ell}}(K\cap J_{\ell-1}),\,\forall\,i_{\ell}\in J^{-}_{i,\sigma}\setminus K\Big\},
				\end{align*}
				where $	J^{-}_{i,\sigma}=\{\ell\in J_{\sigma}:\,\sigma(\ell)<\sigma(i)\}$.
				Let $(i,\ell)\in (I_n\setminus J_{q,\sigma})^2$ and $(i_k,i_m)\in J_{q,\sigma}^2$ such that $i_k<i_m$. 
				\begin{enumerate}
					\item[(A1)]If $i_k\in J^{-}_{m-1,\sigma}\backslash K$ (resp. $i_k\in J^{+}_{m-1,\sigma}\cap K$), then  $\tau_{i_k}(K\cap J_{k-1,\sigma})<\tau_{i_m}(K\cap J_{m-1,\sigma})$ (resp. $\gamma_{i_m}(K\cap J_{m-1,\sigma})<\gamma_{i_k}(K\cap J_{k-1,\sigma})$).
					\item[(A2)] Suppose $i_m \in K$ and $i_k\in J^{-}_{m-1,\sigma}\cap K$ (resp. $i_m \in J_{q,\sigma}\setminus K$ and $i_k\in J^{+}_{m-1,\sigma}\setminus K $), then  $\gamma_{i_k}(K\cap J_{k-1,\sigma})\neq\gamma_{i_m}(K\cap J_{m-1,\sigma})$ (resp. $\tau_{i_m}(K\cap J_{m-1,\sigma})\neq\tau_{i_k}(K\cap J_{k-1,\sigma})$).
					\item[(A3)] Suppose that $i_k<i$ and $i_k\in K$. If $i_k\in J_{q,\sigma}\setminus J^{-}_{i,\sigma}$, then $\gamma_{i_k}(K\cap J_{k-1,\sigma})>\gamma_i(K)$. Otherwise,
					if $i_k\in J^{-}_{i,\sigma}$, then $\gamma_{i_k}(K\cap J_{k-1,\sigma})\neq\gamma_i(K)$. 
					\item[(A4)] If $i<\ell$ $($that is $\sigma(i)>\sigma(\ell))$, then $\gamma_{\ell}(K)<\gamma_{i}(K)$. 
				\end{enumerate}
			\end{lemma}
			\begin{proof}We only prove (A1) and (A2) since the proofs of (A3)-(A4) are similar.\\
			
					Let $K\subset J_{{m-1},\sigma}=\{i_1,\,\ldots,\,i_{m-1}\}$. We prove the following claim by induction, from which assertion (A1) will follow.\\
					\textbf{Claim. }For every $k\in\{1,\ldots,m-1\}$, $\tau_{i_k}(K\cap J_{k-1,\sigma})\leq\sigma(i_m)$ if $i_k\in J^{-}_{m-1,\sigma}\setminus K$ and $\gamma_{i_k}(K\cap J_{k-1,\sigma})>\sigma(i_m)$ if $i_k\in J^{+}_{m-1,\sigma}\cap K$.
					\begin{proof}[Proof of the Claim]
						For $k=1$ we have $\tau_{i_1}(\emptyset)=\sigma(i_1)+1\leq\sigma(i_m)$ if $i_1\in J^{-}_{m-1,\sigma}\setminus K$ and $\gamma_{i_1}(\emptyset)=\sigma(i_1)>\sigma(i_m)$ if $i_1\in J^{+}_{m-1,\sigma}\cap K$. Suppose now that for any $q\in\{1,\ldots,k-1\}$ the Claim holds and let show that the Claim remains true for $k$. For $i_k\in J^{-}_{m-1,\sigma}\setminus K$, we have
						\begin{align*}
							&	\tau_{i_k}(K\cap J_{k-1,\sigma})\\=&\min\{p\in\sigma^{+}(J^{+}_{k-1,\sigma}\cap J^{-}_{m-1,\sigma}\cap K):\,p\neq\gamma_{i_{\ell}}(K\cap J_{\ell-1,\sigma}),\,\forall\,i_{\ell}\in J^{+}_{k-1,\sigma}\cap J^{-}_{m-1,\sigma}\cap K\}.
						\end{align*}
						Indeed, one has  $J^{+}_{k-1,\sigma}=(J^{+}_{k-1,\sigma}\cap J^{-}_{m-1,\sigma})\cup (J^{+}_{m-1,\sigma}\cap J_{k-1,\sigma})$ and, by induction hypothesis, $\gamma_{i_{\ell}}(K\cap J_{\ell-1,\sigma})>\sigma(i_m)\geq p$ for any $i_{\ell}\in J^{+}_{m-1,\sigma}\cap J_{k-1,\sigma}\cap K$ and any  $p\in\sigma^{+}(J^{+}_{k-1,\sigma}\cap J^{-}_{m-1,\sigma}\cap K)$.
						As a consequence, $\tau_{i_k}(K\cap J_{k-1,\sigma})\leq\sigma(i_m)$. For $i_k\in J^{+}_{m-1,\sigma}\cap K$, one also has
						\begin{align*}
							&\gamma_{i_k}(K\cap J_{k-1,\sigma})\\=&\max\{p\in\sigma^{}(J^{-}_{k-1,\sigma}\cap J^{+}_{m-1,\sigma}\setminus K):\,p\neq\tau_{i_{\ell}}(K\cap J_{\ell-1,\sigma}),\,\forall\,i_{\ell}\in J^{-}_{k-1,\sigma}\cap J^{+}_{m-1,\sigma}\setminus K\}
						\end{align*}
						since by induction hypothesis, $\tau_{i_{\ell}}(K\cap J_{\ell-1,\sigma})\leq\sigma(i_m)<p$ for any $i_{\ell}\in J^{-}_{m-1,\sigma}\setminus K$ and any $p\in\sigma^{}(J^{-}_{k-1,\sigma}\cap J^{+}_{m-1,\sigma}\setminus K)$. Then $\gamma_{i_k}(K\cap J_{k-1,\sigma})>\sigma(i_m)$. This ends the proof of the Claim.
					\end{proof}
					We deduce from the preceding Claim that $\tau_{i_k}(K\cap J_{k-1,\sigma})\leq\sigma(i_m)<\tau_{i_m}(K)$ if $i_k\in J^{-}_{m-1,\sigma}\setminus K$ and $ \gamma_{i_k}(K\cap J_{k-1,\sigma})\geq\sigma(i_m)>\gamma_{i_m}(K)$ if $i_k\in J^{+}_{m-1,\sigma}\cap K$. This completes the proof of (A1).
				
				We only prove the assertion (A2) for $i_m\in K$ and $i_k\in J^{-}_{m-1,\sigma}\cap K$. The proof for $i_m\in J_{q,\sigma}\setminus K$ and $i_k\in J^{+}_{m-1}\setminus K$ is similar. First, observe that if $i_m\in K$ and $\gamma_{i_m}(J_{m-1,\sigma}\cap K)=\sigma(i_m)$, then assertion (A2) holds by the definition of $\gamma_{i_k}$. Thus, it suffices to show that $\gamma_{i_k}(K\cap J_{k-1,\sigma})\neq\gamma_{i_m}(K\cap J_{m-1,\sigma})$ for any $i_m\in K$ such that $ \gamma_{i_m}(J_{m-1,\sigma}\cap K)<\sigma(i_m)$. To this end, we use the following claim: \\ 
					\textbf{Claim. } Let $K\subset J_{\sigma}$ be fixed. Suppose $i_m\in K$ satisfies $ \gamma_{i_m}(J_{m-1,\sigma}\cap K)<\sigma(i_m)$. Then, there exists an integer $r\in\N$ and a finite sequence $\{j_1,\,\ldots,\,j_r\}\subset J^{-}_{m-1,\sigma}\setminus K$ such that
					\begin{itemize}
						\item  $ \gamma_{i_m}(J_{m-1,\sigma}\cap K)=\sigma(j_1)$,
						\item $\tau_{j_{\ell}}(J_{m-1,\sigma}\cap K)=\sigma(j_{\ell+1})$, for $j\in\{1,\,\ldots,\,\ell-1\}$,  and
						\item $\tau_{j_r}(J_{m-1,\sigma}\cap K)=\sigma(i_m)$.
					\end{itemize}  
					\begin{proof}[Proof of the Claim.]
						We proceed by induction on the number $\#(J^{-}_{m-1,\sigma}\setminus K)$ of elements in $J^{-}_{m-1,\sigma}\setminus K$. Suppose first that $\#(J^{-}_{m-1,\sigma}\setminus K)=1$ with $J^{-}_{m-1,\sigma}\setminus K=\{i_p\}$. Then $\gamma_{i_m}(J_{m-1,\sigma}\cap K)=\sigma(i_p)$ since $\gamma_{i_m}(J_{m-1,\sigma}\cap K)<\sigma(i_m)$ and $\gamma_{i_m}(K)\in\sigma(J^{-}_{m-1,\sigma}\setminus K)=\{\sigma(i_p)\}$. Moreover, $\tau_{i_p}(J_{p-1,\sigma}\cap K)=\sigma(i_m)$. Indeed, if $\tau_{i_p}(J_{p-1,\sigma}\cap K)\neq\sigma(i_m)$, then, by definition of $\gamma_{i_m}(J_{m-1,\sigma}\cap K)$, $\gamma_{i_m}(J_{m-1,\sigma}\cap K)=\sigma(i_m)$ which is a contradiction.\\ Suppose that the claim holds for any $i_m\in K$ and for $\#(J^{-}_{m-1,\sigma}\setminus K)\in\{1,\,\ldots,\,r\}$. Let us show that the claim still holds for $\#(J^{-}_{m-1,\sigma}\setminus K)=r+1$. Since $\gamma_{i_m}(J_{m-1,\sigma}\cap K)<\sigma(i_m)$, there exists $i_p,\,i_s\in J^{-}_{m-1,\sigma}\setminus K$ and $i_q\in J_{m,\sigma}$ such that $\tau_{i_s}(J_{s-1,\sigma}\cap K)=\sigma(i_m)$, $\gamma_{i_m}(J_{m-1,\sigma}\cap K)=\sigma(i_p)$ and $\tau_{i_p}(J_{p-1,\sigma}\cap K)=\sigma(i_q)$. Then either $i_q=i_m$ or $i_q\in J^{-}_{m-1,\sigma}\setminus K$. Indeed, if $i_q<i_m$ and $i_q\in K$, then, as $\sigma(i_q)<\sigma(i_m)$, $i_s\notin J^{-}_{q-1,\sigma}$. Indeed if $i_s\in J^{-}_{q-1,\sigma}$, then, by (A1), $\sigma(i_m)=\tau_{i_s}(J_{s-1,\sigma}\cap K)\leq\sigma(i_q)$ which is a contradiction. As a consequence, $J^{-}_{q-1,\sigma}\setminus K$ has at most $r$ elements. By induction hypothesis, there exist $\alpha\in\N$ and $j_1,\,\ldots,\,j_{\alpha}\in J^{-}_{q-1,\sigma}\setminus K$ such that $\gamma_{i_q}(J_{q-1,\sigma}\cap K)=\sigma(j_1)$, $\tau_{j_{\ell}}(J_{q-1,\sigma}\cap K)=\sigma(j_{\ell+1})$ for all $\ell\in\{1,\,\ldots,\,\alpha-1\}$ and $\tau_{j_{\alpha}}(J_{q-1,\sigma}\cap K)=\sigma(i_q)$. In particular, $j_{\alpha}=i_p$ and $\tau_{j_{\alpha-1}}(J_{q-1,\sigma}\cap K)=\sigma(i_p)$, which contradicts $\gamma_{i_m}(J_{m-1,\sigma}\cap K)=\sigma(i_p)$ since $j_{\alpha-1}\in J^{-}_{q-1,\sigma}\setminus K\subset J^{-}_{m-1,\sigma}\setminus K$. The  claim then follows by repeating the latter argument several times.  
					\end{proof}
				
				The proof is completed.
			\end{proof}		
			\begin{lemma}\label{lemd1}
				Let $n$, $I_n$, $J_{r,\sigma}$, $J^{-}_{r,\sigma}$, $J^{+}_{r,\sigma}$,$\sigma(J^{-}_{r,\sigma})$, $\sigma^+(J^+_{r,\sigma})$, $(\gamma_{i_r})_{1\leq r\leq 1}$, $(\tau_{i_r})_{1\leq r\leq 1}$ be given as in Lemma \ref{lemd0}.		
				Define the maps $\eta_{r,K}:\,J_{r,\sigma}\to I_n$, $r\in\{1,\ldots,q\}$, $K\subset J_{r,\sigma}$ by: $\eta_{1,\{i_1\}}(i_1)=\gamma_{i_1}(\{i_1\})$, $\eta_{1,\emptyset}(i_1)=\tau_{i_1}(\emptyset)$,  
				\begin{equation}\label{DefMapeta1}
					\eta_{r,K}(i_r)=\left\{	
					\begin{array}{ll}
						\gamma_{i_{r}}(K\cap J_{r-1,\sigma})&\text{if }i_r\in K\\
						\tau_{i_r}(K\cap J_{r-1,\sigma})&\text{if }i_r\in J_{r,\sigma}\setminus K,
					\end{array}	
					\right.			
				\end{equation}
				and
				\begin{equation}\label{DefMapeta2}
					\eta_{r,K}(i_m)=\eta_{r-1,K\cap J_{r-1,\sigma}}(i_m),\text{ }\forall\,m<r.
				\end{equation}
				Then for any $r\in\{1,\ldots,q\}$ and any $K\subset J_{r,\sigma}$, $\eta_{r,K}$ is an injection.
			\end{lemma}
			\begin{proof}
				We proceed by induction on $r\in\{1,\ldots,q\}$.
				The maps $\eta_{1,\emptyset}$ and $\eta_{1,\{i_1\}}$ are obviously injections. Now fix $r\in\{1,\ldots,q-1\}$ and suppose that for every $L\subset J_{r,\sigma}$, $\eta_{r,L}$ is an injection. For $K\subset J_{r+1,\sigma}$, we show that $\eta_{r+1,K}$ is an injection. By induction hypothesis, it suffices to show that for any $i_m\in J_{r,\sigma}$, $\eta_{r+1,K}(i_{r+1})\neq\eta_{r+1,K}(i_m)$.  We distinguish two cases $i_{r+1}\in K$ and $i_{r+1}\notin K$. We only provide the proof in the first case since the proof in the second case is similar.\\ 
				Suppose $i_{r+1}\in K$. 
				We distinguish four subcases   $i_m\in J^{-}_{r,\sigma}\setminus K$,  $i_m\in J^{+}_{r,\sigma}\setminus K$,  $i_m\in J^{-}_{r,\sigma}\cap K$ and $i_m\in J^{+}_{r,\sigma}\cap K$.\\
				\textbf{(1)} Suppose first that $i_m\in J^{-}_{r,\sigma}\setminus K$. 	By definition of $\gamma_{i_{r+1}}$,
				\begin{align*}
					\eta_{r+1,K}(i_{r+1})=\gamma_{i_{r+1}}(K\cap J_{r,\sigma})\neq\tau_{i_m}(K\cap J_{m-1,\sigma})=\eta_{m,K\cap J_{m,\sigma}}(i_m),\text{ }\forall\,i_m\in J^{-}_{r,\sigma}\setminus K.
				\end{align*}
				Moreover, it follows from \eqref{DefMapeta2} that \begin{align*}
					\eta_{m,K\cap J_{m,\sigma}}(i_m)=\eta_{m+1,K\cap J_{m+1,\sigma}}(i_{m})=\ldots=\eta_{r,K\cap J_{r,\sigma}}(i_m)=\eta_{r+1,K}(i_m),\text{ }\forall\,i_m\in J_{r,\sigma}.
				\end{align*}
				Then $\eta_{r+1,K}(i_{r+1})\neq\eta_{r+1,K}(i_m)$.\\
				\textbf{(2)}	Now, let $i_m\in J^{+}_{r,\sigma}\setminus K$. Then   $$\eta_{r+1,K}(i_m)=\tau_{i_m}(K\cap J_{m-1,\sigma})>\sigma(i_{r+1})\geq\gamma_{i_{r+1}}(K\cap J_{r,\sigma})=\eta_{r+1,K}(i_{r+1}).$$   
				As a consequence, $\eta_{r+1,K}(i_m)\neq\gamma_{i_{r+1}}(K\cap J_{r,\sigma})=\eta_{r+1,K}(i_{r+1})$.	\\
				\textbf{(3)} If $i_m\in J^{-}_{r,\sigma}\cap K$, then by Lemma \ref{lemd0} (Point (A1)), $$\eta_{r+1,K}(i_{r+1})=\gamma_{i_{r+1}}(K\cap J_{r,\sigma})\neq\gamma_{i_m}(K\cap J_{m-1,\sigma})=\eta_{r+1,K}(i_m).$$\\
				\textbf{(4)} If $i_m\in J^{+}_{r,\sigma}\cap K$, then it follows from Lemma \ref{lemd0} (Point (A2)) that  $$\eta_{r+1,K}(i_{r+1})=\gamma_{i_{r+1}}(K\cap J_{r,\sigma})>\gamma_{i_m}(K\cap J_{m-1,\sigma})=\eta_{r+1,K}(i_m).$$ This ends the proof. 
			\end{proof} 
			\begin{lemma}
				Let $n$, $I_n$, $J_{r,\sigma}$, $\eta_{r,K}$, $r\in\{1,\ldots,r\}$, $(\gamma_{i_r})_{1\leq r\leq q}$, $(\tau_{i_r})_{1\leq r\leq q}$ and $(\gamma_{\ell})_{\ell\in I_n\setminus J_{q,\sigma}}$  be given as in Lemmas \ref{lemd0} and \ref{lemd1}. For any $K\subset J_{q,\sigma}$, let
				$\eta_{K}:\,I_n\to I_n$ be the map defined as
				\begin{equation}
					\eta_{K}(\ell)=\left\{
					\begin{array}{ll}
						\eta_{q,K}(\ell)=\gamma_{i_m}(K\cap J_{m-1,\sigma})&\text{if }\ell=i_m\in K,\\
						\eta_{q,K}(\ell)=\tau_{i_m}(K\cap J_{m-1,\sigma})&\text{if }\ell=i_m\in  J_{\sigma}\setminus K,\\
						\gamma_{\ell}(K)&\text{if }\ell\in I_n\setminus J_{\sigma}.
					\end{array}
					\right.
				\end{equation}
				Then for any $K\subset J_{\sigma}$, $\eta_K$ is a permutation.
			\end{lemma}
			\begin{proof}
				It suffices to show that $\eta_{K}$ is an injection. Let $(k,\ell)\in I_n\times I_n$ with $k<\ell$. We distinguish three cases, namely $(k,\ell)\in J_{q,\sigma}\times J_{q,\sigma}$, $(k,\ell)\in J_{q,\sigma}\times (I_n\setminus J_{q,\sigma})$ and $(k,\ell)\in (I_n\setminus J_{q,\sigma})\times(I_n\setminus J_{q,\sigma})$.\\
				\textbf{Case 1.} If $(k,\ell)\in J_{q,\sigma}\times J_{q,\sigma}$ then, as $\eta_{q,K}$ is an injection (by Lemma \ref{lemd1}), we have 
				\begin{align*}
					\eta_K(k)=\eta_{q,K}(k)\neq\eta_{q,K}(\ell)=\eta_K(\ell).
				\end{align*}
				\textbf{Case 2.} Suppose $(k,\ell)\in J_{q,\sigma}\times (I_n\setminus J_{q,\sigma})$ and $k=i_m$ for some $m\leq q$. We distinguish three subcases: $i_m\in J^{-}_{\ell,\sigma}\setminus K$, $i_m\in J_{q,\sigma}\setminus(K\cup J^{-}_{\ell,\sigma})$ and $i_m\in K$. \\
				\textbf{(1)} If $i_m\in J^{-}_{\ell,\sigma}\setminus K$, then, by the definition of $\gamma_{\ell}(K)$,  $\eta_{\ell}(K)=\gamma_{\ell}(K)\neq \tau_{i_m}(K\cap J_{m-1,\sigma})=\eta_{q,K}(i_m)$.\\ 
				\textbf{(2)} If $i_m\in J_{q,\sigma}\setminus(K\cup J^{-}_{\ell,\sigma})$, then $$\eta_K(\ell)=\gamma_{\ell}(K)\leq\sigma(\ell)<\sigma(i_m)+1\leq\tau_{i_m}(K\cap J_{m-1,\sigma})=\eta_{q,K}(i_m).$$
				\textbf{(3)} If $i_m\in K$, the we deduce from Lemma \ref{lemd0} (Point (A3)) that $\eta_K(\ell)=\gamma_{\ell}(K)\neq\gamma_{i_m}(K\cap J_{m-1,\sigma})=\eta_{K}(i_m)$.
				\\
				\textbf{Case 3.} 
				Suppose $(k,\ell)\in (I_n\setminus J_{q,\sigma})\times(I_n\setminus J_{q,\sigma})$. Then we have $\sigma(k)>\sigma(\ell)$ and, by Lemma \ref{lemd0} (Point (A4)), $\eta_K(k)=\gamma_k(K)\neq\gamma_{\ell}(K)=\eta_{K}(\ell)$. The proof is completed.
			\end{proof}

			\section{Illustration of the algorithm in Section \ref{algo1} with binary trees}\label{Appen2}
			In this section, we use the binomial tree to illustrate the algorithm proposed in section \ref{algo1}. 
			\begin{example}
				Take $n=3$, $\sigma:\,I_3\to I_3$ such that $\sigma(1)=2$, $\sigma(2)=1$, $\sigma(3)=3$. Let $W=(W_{s,t},0\leq s,t\leq T)$ be a real valued Brownian sheet and $b:\,\R\to\R$ be a compactly supported differentiable function.  Let the variable $z_{ij}$ stand for the rectangle with corners $\left\{
				(s_{i},t_{j}),(s_{i},t_{j-1}),(s_{i-1},t_{j}),(s_{i-1},t_{j-1})\right\} $. 
				\begin{center}
					\begin{tikzpicture}
						\draw (0,0) grid (3,3);
						\draw (1,3) node [above] {$t_1$};
						\draw (2,3) node [above] {$t_2$};
						\draw (3,3) node [above] {$t_3$};
						\draw (0,2) node [left] {$s_1$};
						\draw (0,1) node [left] {$s_2$};
						\draw (0,0) node [left] {$s_3$};
						\draw (0,3) rectangle (1,2) [black,fill=blue!0];
						\draw (1,2) rectangle (2,1) [black,fill=red!30];
						\draw (2,1) rectangle (3,0) [black,fill=blue!30];
						\draw (0,1) rectangle (1,2) [black,fill=blue!30];
						\draw (0,0) rectangle (1,1) [black,fill=blue!30];
						\draw (1,0) rectangle (2,1) [black,fill=blue!30];
						\draw (1,3) rectangle (2,2) [black,fill=blue!30];
						\draw (2,2) rectangle (3,1) [black,fill=red!30];
						\draw (2,3) rectangle (3,2) [black,fill=red!30];
						\draw (1,1) node {\textcolor{black}{$\bullet$}};
						\draw (2,2) node {\textcolor{black}{$\bullet$}};
						\draw (3,0) node {\textcolor{black}{$\bullet$}};
					\end{tikzpicture}
				\end{center}
				\begin{center}
					\begin{tikzpicture}
						\node  {} [sibling distance=6cm]
						child { node  {$\textcolor{blue}{z_{1,2}}$}
							[sibling distance=4cm]
							child { node  {$\textcolor{blue}{z_{2,1}}$} 
								[sibling distance=2cm]
								child { node {$\textcolor{blue}{z_{3,3}}$}
								}
							}
							child { node  {$\textcolor{red}{z_{2,3}}$}
								[sibling distance=2cm] 
								child { node {$\textcolor{blue}{z_{3,1}}$}
								}
							}
						}
						child { node  {$\textcolor{red}{z_{1,3}}$}
							[sibling distance=4cm]
							child { node  {$\textcolor{blue}{z_{2,1}}$}
								[sibling distance=2cm] 
								child { node {$\textcolor{blue}{z_{3,2}}$}
								}
							}
							child { node  {$\textcolor{red}{z_{2,2}}$} 
								[sibling distance=2cm]
								child { node {$\textcolor{blue}{z_{3,1}}$}
								}
							}
						};
					\end{tikzpicture}
				\end{center}
				Let $E_{i,j}$ the heat kernel be given by 
				\begin{equation*}
					E_{i,j}(z)=\frac{1}{\sqrt{2\pi (s_{i}-s_{i-1})(t_{j}-t_{j-1})}}\exp (-\frac{%
						z^{2}}{2(s_{i}-s_{i-1})(t_{j}-t_{j-1})})
				\end{equation*}%
				and let $B_{i,j}$ be its derivative.
				Define
				\begin{align*}
					\mathcal{J}:=&\E\Big[\prod\limits_{i=1}^3b^{\prime}(W_{s_i,t_{\sigma(i)}})\Big]\\=&\int_{\R^{9}} b^{\prime}(z_{11}+z_{12})b^{\prime}(z_{11}+z_{21})  b^{\prime}(z_{11}+z_{12}+z_{13}+z_{21}+z_{22}+z_{23}+z_{31}+z_{32}+z_{33}) \\
					&\times E_{1,1}(z_{11})E_{1,2}(z_{12})E_{1,3}(z_{13})E_{2,1}(z_{21})E_{2,2}(z_{22})E_{2,3}(z_{23})\\&\times E_{3,1}(z_{31})E_{3,2}(z_{32})E_{3,3}(z_{33})\prod\limits_{(i,j)\in A_9}\mathrm{d}z_{ij},
				\end{align*}
				where $A_9=I_3\times I_3$. Define $J_{\sigma}=\{i_1,i_2\}$ with $i_1=1$ and $i_2=2$.\\
				{\bf Step 1: }The first orientation is $\mathcal{O}_1=(\textcolor{blue}{z_{12}},\textcolor{red}{z_{13}})$, where $\textcolor{blue}{z_{12}}$ is the integration by parts variable and $\textcolor{red}{z_{13}}$ is the substitution variable. The substitution variable $\textcolor{red}{z_{13}}$ is used to eliminate the integration by parts variable \textcolor{blue}{$%
					z_{1,2}$} in the other factors. Precisely,
				by making the change of variables $\textcolor{red}{y_{13}}=\textcolor{red}{z_{13}}+\textcolor{blue}{z_{12}}$ and $y_{ij}=z_{ij}$ for all $(i,j)\neq(1,3)$, we have
				\begin{align*}
					\mathcal{J}=&\int_{\R^{9}} b^{\prime}(y_{11}+\textcolor{blue}{y_{12}})b^{\prime}(y_{11}+y_{21})  b^{\prime}(y_{11}+\textcolor{red}{y_{13}}+y_{21}+y_{22}+y_{23}+y_{31}+y_{32}+y_{33}) \\
					&\times E_{1,1}(y_{11})E_{1,2}(\textcolor{blue}{y_{12}})E_{1,3}(\textcolor{red}{y_{13}-y_{12}})E_{2,1}(y_{21})E_{2,2}(y_{22})E_{2,3}(y_{23})\\&\times E_{3,1}(y_{31})E_{3,2}(y_{32})E_{3,3}(y_{33})\prod\limits_{(i,j)\in A_9}\mathrm{d}y_{ij}.
				\end{align*}
				Hence, integrating by parts with respect to $\textcolor{blue}{y_{12}}$, we have $\mathcal{J}=-\mathcal{J}^{(1)}_{\{1\}}+\mathcal{J}^{(1)}_{\emptyset}$, where
				\begin{align}\label{3AJ11}
					\mathcal{J}^{(1)}_{\{1\}}=&\int_{\R^{9}} b^{}(y_{11}+\textcolor{blue}{y_{12}})b^{\prime}(y_{11}+y_{21})  b^{\prime}(y_{11}+\textcolor{red}{y_{13}}+y_{21}+y_{22}+y_{23}+y_{31}+y_{32}+y_{33}) \\
					&\times E_{1,1}(y_{11})\textcolor{blue}{B_{1,2}(y_{12})}E_{1,3}(\textcolor{red}{y_{13}-y_{12}})E_{2,1}(y_{21})E_{2,2}(y_{22})E_{2,3}(y_{23})\notag\\&\times E_{3,1}(y_{31})E_{3,2}(y_{32})E_{3,3}(y_{33})\prod\limits_{(i,j)\in A_9}\mathrm{d}y_{ij}\notag
				\end{align}
				and
				\begin{align}\label{3AJ10}
					\mathcal{J}^{(1)}_{\emptyset}=&\int_{\R^{9}} b^{}(y_{11}+\textcolor{blue}{y_{12}})b^{\prime}(y_{11}+y_{21})  b^{\prime}(y_{11}+\textcolor{red}{y_{13}}+y_{21}+y_{22}+y_{23}+y_{31}+y_{32}+y_{33}) \\
					&\times E_{1,1}(y_{11})E_{1,2}(\textcolor{blue}{y_{12}})\textcolor{red}{B_{1,3}(y_{13}-y_{12})}E_{2,1}(y_{21})E_{2,2}(y_{22})E_{2,3}(y_{23})\notag\\&\times E_{3,1}(y_{31})E_{3,2}(y_{32})E_{3,3}(y_{33})\prod\limits_{(i,j)\in A_9}\mathrm{d}y_{ij}.\notag
				\end{align}
				{\bf Step 2:} The next orientation point is $\mathcal{O}_2=(y_{21},y_{22})$ \textup{(}by {\bf Step 2:} \textup{(}2\textup{)} in Section \ref{algo1}\textup{)}. Consider the first term $\mathcal{J}^{(1)}_{\{1\}}$. We can choose $\textcolor{blue}{y_{21}}$ as the integration by parts variable. Since $y_{22}$ is in the same row as $y_{12}$, then  we choose $\textcolor{red}{y_{23}}$ as the substitution variable. Concerning the term $\mathcal{J}^{(1)}_{\emptyset}$ we choose $\textcolor{blue}{y_{21}}$ as the integration by parts variable and $\textcolor{red}{y_{22}}$ as the substitution variable.  Then, 
				by making the change of variables $\textcolor{red}{z_{23}}=\textcolor{red}{y_{23}}+\textcolor{blue}{y_{21}}$, $y_{ij}=z_{ij}$ for all $(i,j)\neq(2,3)$ in \eqref{3AJ11} and $\textcolor{red}{z_{22}}=\textcolor{red}{y_{22}}+\textcolor{blue}{y_{21}}$, $y_{ij}=z_{ij}$ for all $(i,j)\neq(2,3)$ in \eqref{3AJ10}, we obtain 
				\begin{align}\label{3AJ11a}
					\mathcal{J}^{(1)}_{\{1\}}=&\int_{\R^{9}} b^{}(z_{11}+\textcolor{blue}{z_{12}})b^{\prime}(z_{11}+\textcolor{blue}{z_{21}})  b^{\prime}(z_{11}+\textcolor{red}{z_{13}}+z_{22}+\textcolor{red}{z_{23}}+z_{31}+z_{32}+z_{33}) \\
					&\times E_{1,1}(z_{11})\textcolor{blue}{B_{1,2}(z_{12})}E_{1,3}(\textcolor{red}{z_{13}-z_{12}})E_{2,1}(\textcolor{blue}{z_{21}})E_{2,2}(z_{22})E_{2,3}(\textcolor{red}{z_{23}-z_{21}})\notag\\&\times E_{3,1}(z_{31})E_{3,2}(z_{32})E_{3,3}(z_{33})\prod\limits_{(i,j)\in A_9}\mathrm{d}z_{ij}\notag
				\end{align}
				and
				\begin{align}\label{3AJ10a}
					\mathcal{J}^{(1)}_{\emptyset}=&\int_{\R^{9}} b^{}(z_{11}+\textcolor{blue}{z_{12}})b^{\prime}(z_{11}+\textcolor{blue}{z_{21}})  b^{\prime}(z_{11}+\textcolor{red}{z_{13}}+\textcolor{red}{z_{22}}+z_{23}+z_{31}+z_{32}+z_{33}) \\
					&\times E_{1,1}(z_{11})E_{1,2}(\textcolor{blue}{z_{12}})\textcolor{red}{B_{1,3}(z_{13}-z_{12})}E_{2,1}(\textcolor{blue}{z_{21}})E_{2,2}(\textcolor{red}{z_{22}-z_{21}})E_{2,3}(z_{23})\notag\\&\times E_{3,1}(z_{31})E_{3,2}(z_{32})E_{3,3}(z_{33})\prod\limits_{(i,j)\in A_9}\mathrm{d}z_{ij}.\notag
				\end{align}
				Integrating by parts with respect to $\textcolor{blue}{z_{21}}$ in \eqref{3AJ11a} and \eqref{3AJ10a}, we have $-\mathcal{J}^{(1)}_{\{1\}}=\mathcal{J}^{(2)}_{\{1,2\}}-\mathcal{J}^{(2)}_{\{1\}}$ and $\mathcal{J}^{(1)}_{\emptyset}=-\mathcal{J}^{(2)}_{\{2\}}+\mathcal{J}^{(2)}_{\emptyset}$, where
				\begin{align}\label{3AJ212}
					\mathcal{J}^{(2)}_{\{1,2\}}=&\int_{\R^{9}} b^{}(z_{11}+\textcolor{blue}{z_{12}})b^{}(z_{11}+\textcolor{blue}{z_{21}})  b^{\prime}(z_{11}+\textcolor{red}{z_{13}}+z_{22}+\textcolor{red}{z_{23}}+z_{31}+z_{32}+\textcolor{blue}{z_{33}}) \\
					&\times E_{1,1}(z_{11})\textcolor{blue}{B_{1,2}(z_{12})}E_{1,3}(\textcolor{red}{z_{13}-z_{12}})\textcolor{blue}{B_{2,1}(z_{21})}E_{2,2}(z_{22})E_{2,3}(\textcolor{red}{z_{23}-z_{21}})\notag\\&\times E_{3,1}(z_{31})E_{3,2}(z_{32})E_{3,3}(z_{33})\prod\limits_{(i,j)\in A_9}\mathrm{d}z_{ij},\notag
				\end{align}
				\begin{align}\label{3AJ201}
					\mathcal{J}^{(2)}_{\{1\}}=&\int_{\R^{9}} b^{}(z_{11}+\textcolor{blue}{z_{12}})b^{}(z_{11}+\textcolor{blue}{z_{21}})  b^{\prime}(z_{11}+\textcolor{red}{z_{13}}+z_{22}+\textcolor{red}{z_{23}}+\textcolor{blue}{z_{31}}+z_{32}+z_{33}) \\
					&\times E_{1,1}(z_{11})\textcolor{blue}{B_{1,2}(z_{12})}E_{1,3}(\textcolor{red}{z_{13}-z_{12}})E_{2,1}(\textcolor{blue}{z_{21}})E_{2,2}(z_{22})\textcolor{red}{B_{2,3}(z_{23}-z_{21})}\notag\\&\times E_{3,1}(z_{31})E_{3,2}(z_{32})E_{3,3}(z_{33})\prod\limits_{(i,j)\in A_9}\mathrm{d}z_{ij},\notag
				\end{align}
				\begin{align}\label{3AJ202}
					\mathcal{J}^{(2)}_{\{2\}}=&\int_{\R^{9}} b^{}(z_{11}+\textcolor{blue}{z_{12}})b^{}(z_{11}+\textcolor{blue}{z_{21}})  b^{\prime}(z_{11}+\textcolor{red}{z_{13}}+\textcolor{red}{z_{22}}+z_{23}+z_{31}+\textcolor{blue}{z_{32}}+z_{33}) \\
					&\times E_{1,1}(z_{11})E_{1,2}(\textcolor{blue}{z_{12}})\textcolor{red}{B_{1,3}(z_{13}-z_{12})}\textcolor{blue}{B_{2,1}(z_{21})}E_{2,2}(\textcolor{red}{z_{22}-z_{21}})E_{2,3}(z_{23})\notag\\&\times E_{3,1}(z_{31})E_{3,2}(z_{32})E_{3,3}(z_{33})\prod\limits_{(i,j)\in A_9}\mathrm{d}z_{ij}\notag
				\end{align}
				and
				\begin{align}\label{3AJ200}
					\mathcal{J}^{(2)}_{\emptyset}=&\int_{\R^{9}} b^{}(z_{11}+\textcolor{blue}{z_{12}})b^{}(z_{11}+\textcolor{blue}{z_{21}})  b^{\prime}(z_{11}+\textcolor{red}{z_{13}}+\textcolor{red}{z_{22}}+z_{23}+\textcolor{blue}{z_{31}}+z_{32}+z_{33}) \\
					&\times E_{1,1}(z_{11})E_{1,2}(\textcolor{blue}{z_{12}})\textcolor{red}{B_{1,3}(z_{13}-z_{12})}E_{2,1}(\textcolor{blue}{z_{21}})\textcolor{red}{B_{2,2}(z_{22}-z_{21})}E_{2,3}(z_{23})\notag\\&\times E_{3,1}(z_{31})E_{3,2}(z_{32})E_{3,3}(z_{33})\prod\limits_{(i,j)\in A_9}\mathrm{d}z_{ij}.\notag
				\end{align}
				{\bf Step 3: }The third orientation point is $\mathcal{O}_3=z_{33}$. For the term $\mathcal{J}^{(2)}_{\{1,2\}}$ \textup{(}respectively $\mathcal{J}^{(2)}_{\{1\}}$, $\mathcal{J}^{(2)}_{\{2\}}$ and $\mathcal{J}^{(2)}_{\emptyset}$\textup{)}, we choose $\textcolor{blue}{z_{3j}}$, $j\in\{1,2,3\}$ as integration by parts variable in such a way that $B_{1,2}B_{2,1}B_{3,j}$ (respectively $B_{1,2}B_{2,3}B_{3,j}$, $B_{1,3}B_{2,1}B_{3,j}$ and $B_{1,3}B_{2,2}B_{3,j}$) is a sequence of three non overlapping factors.  
				More precisely, we integrate by parts with respect to $\textcolor{blue}{z_{33}}$ in \eqref{3AJ212} \textup{(}respectively $\textcolor{blue}{z_{31}}$ in \eqref{3AJ201}, $\textcolor{blue}{z_{32}}$ in \eqref{3AJ202} and $\textcolor{blue}{z_{31}}$ in \eqref{3AJ200}\textup{)} and we obtain $\mathcal{J}=-\mathcal{J}^{(3)}_{\{1,2\}}+\mathcal{J}^{(3)}_{\{1\}}+\mathcal{J}^{(3)}_{\{2\}}-\mathcal{J}^{(3)}_{\emptyset}$, where
				\begin{align*} 
					\mathcal{J}^{(3)}_{\{1,2\}}=&\int_{\R^{9}} b^{}(z_{11}+\textcolor{blue}{z_{12}})b^{}(z_{11}+\textcolor{blue}{z_{21}})  b^{}(z_{11}+\textcolor{red}{z_{13}}+z_{22}+\textcolor{red}{z_{23}}+z_{31}+z_{32}+\textcolor{blue}{z_{33}}) \\
					&\times E_{1,1}(z_{11})\textcolor{blue}{B_{1,2}(z_{12})}E_{1,3}(\textcolor{red}{z_{13}-z_{12}})\textcolor{blue}{B_{2,1}(z_{21})}E_{2,2}(z_{22})E_{2,3}(\textcolor{red}{z_{23}-z_{21}})\notag\\&\times E_{3,1}(z_{31})E_{3,2}(z_{32})\textcolor{blue}{B_{3,3}(z_{33})}\prod\limits_{(i,j)\in A_9}\mathrm{d}z_{ij},\notag\\
					\mathcal{J}^{(3)}_{\{1\}}=&\int_{\R^{9}} b^{}(z_{11}+\textcolor{blue}{z_{12}})b^{}(z_{11}+\textcolor{blue}{z_{21}})  b^{}(z_{11}+\textcolor{red}{z_{13}}+z_{22}+\textcolor{red}{z_{23}}+\textcolor{blue}{z_{31}}+z_{32}+z_{33}) \\
					&\times E_{1,1}(z_{11})\textcolor{blue}{B_{1,2}(z_{12})}E_{1,3}(\textcolor{red}{z_{13}-z_{12}})E_{2,1}(\textcolor{blue}{z_{21}})E_{2,2}(z_{22})\textcolor{red}{B_{2,3}(z_{23}-z_{21})}\notag\\&\times \textcolor{blue}{B_{3,1}(z_{31})}E_{3,2}(z_{32})E_{3,3}(z_{33})\prod\limits_{(i,j)\in A_9}\mathrm{d}z_{ij},\notag\\ 
					\mathcal{J}^{(3)}_{\{2\}}=&\int_{\R^{9}} b^{}(z_{11}+\textcolor{blue}{z_{12}})b^{}(z_{11}+\textcolor{blue}{z_{21}})  b^{}(z_{11}+\textcolor{red}{z_{13}}+\textcolor{red}{z_{22}}+z_{23}+z_{31}+\textcolor{blue}{z_{32}}+z_{33}) \\
					&\times E_{1,1}(z_{11})E_{1,2}(\textcolor{blue}{z_{12}})\textcolor{red}{B_{1,3}(z_{13}-z_{12})}\textcolor{blue}{B_{2,1}(z_{21})}E_{2,2}(\textcolor{red}{z_{22}-z_{21}})E_{2,3}(z_{23})\notag\\&\times E_{3,1}(z_{31})\textcolor{blue}{B_{3,2}(z_{32})}E_{3,3}(z_{33})\prod\limits_{(i,j)\in A_9}\mathrm{d}z_{ij}\notag
				\end{align*}
				and
				\begin{align*} 
					\mathcal{J}^{(3)}_{\emptyset}=&\int_{\R^{9}} b^{}(z_{11}+\textcolor{blue}{z_{12}})b^{}(z_{11}+\textcolor{blue}{z_{21}})  b^{}(z_{11}+\textcolor{red}{z_{13}}+\textcolor{red}{z_{22}}+z_{23}+\textcolor{blue}{z_{31}}+z_{32}+z_{33}) \\
					&\times E_{1,1}(z_{11})E_{1,2}(\textcolor{blue}{z_{12}})\textcolor{red}{B_{1,3}(z_{13}-z_{12})}E_{2,1}(\textcolor{blue}{z_{21}})\textcolor{red}{B_{2,2}(z_{22}-z_{21})}E_{2,3}(z_{23})\notag\\&\times \textcolor{blue}{B_{3,1}(z_{31})}E_{3,2}(z_{32})E_{3,3}(z_{33})\prod\limits_{(i,j)\in A_9}\mathrm{d}z_{ij}.\notag
				\end{align*}
			\end{example}

			\begin{example}
				Take $n=3$, $\sigma:\,I_3\to I_3$ such that $\sigma(1)=3$, $\sigma(2)=1$, $\sigma(3)=2$. Let $W$, $b$, $z_{ij}$ and $E_{i,j}$ be given as in the previous example.
				\begin{center}
					\begin{tikzpicture}
						\draw (0,0) grid (3,3);
						\draw (1,3) node [above] {$t_1$};
						\draw (2,3) node [above] {$t_2$};
						\draw (3,3) node [above] {$t_3$};
						\draw (0,2) node [left] {$s_1$};
						\draw (0,1) node [left] {$s_2$};
						\draw (0,0) node [left] {$s_3$};
						\draw (0,3) rectangle (1,2) [black,fill=blue!0];
						\draw (1,2) rectangle (2,1) [black,fill=red!30];
						\draw (2,1) rectangle (3,0) [black,fill=blue!0];
						\draw (0,1) rectangle (1,2) [black,fill=blue!30];
						\draw (0,0) rectangle (1,1) [black,fill=blue!30];
						\draw (1,0) rectangle (2,1) [black,fill=blue!30];
						\draw (1,3) rectangle (2,2) [black,fill=blue!0];
						\draw (2,2) rectangle (3,1) [black,fill=red!0];
						\draw (2,3) rectangle (3,2) [black,fill=blue!30];
						\draw (1,1) node {\textcolor{black}{$\bullet$}};
						\draw (3,2) node {\textcolor{black}{$\bullet$}};
						\draw (2,0) node {\textcolor{black}{$\bullet$}};
					\end{tikzpicture}
				\end{center}
				\begin{center}
					\begin{tikzpicture}
						\node  {} [sibling distance=6cm]
						child { node  {$\textcolor{blue}{z_{1,3}}$}
							[sibling distance=8cm]
							child { node  {$\textcolor{blue}{z_{2,1}}$} 
								[sibling distance=2cm]
								child { node {$\textcolor{blue}{z_{3,2}}$}
								}
							}
							child { node  {$\textcolor{red}{z_{2,2}}$}
								[sibling distance=2cm] 
								child { node {$\textcolor{blue}{z_{3,1}}$}
								}
							}
						};
					\end{tikzpicture}
				\end{center}
				Define
				\begin{align*}
					\mathcal{J}:=\E\Big[\prod\limits_{i=1}^3b^{\prime}(W_{s_i,t_{\sigma(i)}})\Big]=&\int_{\R^{7}} b^{\prime}(z_{11}+z_{12}+z_{13})b^{\prime}(z_{11}+z_{21})  b^{\prime}(z_{11}+z_{12}+z_{21}+z_{22}+z_{31}+z_{32}) \\
					&\times E_{1,1}(z_{11})E_{1,2}(z_{12})E_{1,3}(z_{13})E_{2,1}(z_{21})E_{2,2}(z_{22})  E_{3,1}(z_{31})E_{3,2}(z_{32})\prod\limits_{(i,j)\in A_7}\mathrm{d}z_{ij},
				\end{align*}
				where $A_7=(\{1\}\times I_3)\cup\{2,3\}\times \{2,3\}$. Define $J_{\sigma}=\{i\}$ with $i=2$.\\
				{\bf Step 1: }We choose the first orientation point  $\mathcal{O}_1=\textcolor{blue}{z_{13}}$ as integration by parts variable. Then,
				integrating by parts with respect to $\textcolor{blue}{z_{13}}$, we have
				\begin{align}\label{4AJ1}
					\mathcal{J}=&-\int_{\R^{7}} b^{}(z_{11}+z_{12}+\textcolor{blue}{z_{13}})b^{\prime}(z_{11}+z_{21})  b^{\prime}(z_{11}+z_{12}+z_{21}+z_{22}+z_{31}+z_{32}) \\
					&\times E_{1,1}(z_{11})E_{1,2}(z_{12})\textcolor{blue}{B_{1,3}(z_{13})}E_{2,1}(z_{21})E_{2,2}(z_{22})  E_{3,1}(z_{31})E_{3,2}(z_{32})\prod\limits_{(i,j)\in A_7}\mathrm{d}z_{ij}.\notag
				\end{align}
				{\bf Step 2: }The next orientation point is $\mathcal{O}_2=(\textcolor{blue}{z_{21}},\textcolor{red}{z_{22}})$ \textup{(}{by \bf Step 1:} \textup{(}2\textup{)} in Section \ref{algo1}\textup{)}.
				By making the change of variables $\textcolor{red}{y_{22}}=\textcolor{red}{z_{22}}+\textcolor{blue}{z_{21}}$, $y_{ij}=z_{ij}$ for all $(i,j)\neq(2,2)$ in \eqref{4AJ1}, we obtain
				\begin{align*} 
					\mathcal{J}=&-\int_{\R^{7}} b^{}(y_{11}+y_{12}+\textcolor{blue}{y_{13}})b^{\prime}(y_{11}+\textcolor{blue}{y_{21}})  b^{\prime}(y_{11}+y_{12}+\textcolor{red}{y_{22}}+y_{31}+y_{32}) \\
					&\times E_{1,1}(y_{11})E_{1,2}(y_{12})\textcolor{blue}{B_{1,3}(y_{13})}E_{2,1}(\textcolor{blue}{y_{21}})E_{2,2}(\textcolor{red}{y_{22}-y_{21}})  E_{3,1}(y_{31})E_{3,2}(y_{32})\prod\limits_{(i,j)\in A_7}\mathrm{d}y_{ij}.\notag
				\end{align*}
				If we apply integration by parts with respect to $\textcolor{blue}{y_{21}}$, then $\mathcal{J}=\mathcal{J}^{(1)}_{\{2\}}-\mathcal{J}^{(1)}_{\emptyset}$, where
				\begin{align}\label{4AJ12}
					\mathcal{J}^{(1)}_{\{2\}}=&\int_{\R^{7}} b^{}(y_{11}+y_{12}+\textcolor{blue}{y_{13}})b^{}(y_{11}+\textcolor{blue}{y_{21}})  b^{\prime}(y_{11}+y_{12}+\textcolor{red}{y_{22}}+y_{31}+\textcolor{blue}{y_{32}}) \\
					&\times E_{1,1}(y_{11})E_{1,2}(y_{12})\textcolor{blue}{B_{1,3}(y_{13})}\textcolor{blue}{B_{2,1}(y_{21})}E_{2,2}(\textcolor{red}{y_{22}-y_{21}})  E_{3,1}(y_{31})E_{3,2}(y_{32})\prod\limits_{(i,j)\in A_7}\mathrm{d}y_{ij}\notag
				\end{align}
				and
				\begin{align}\label{4AJ10}
					\mathcal{J}^{(1)}_{\emptyset}=&\int_{\R^{7}} b^{}(y_{11}+y_{12}+\textcolor{blue}{y_{13}})b^{}(y_{11}+\textcolor{blue}{y_{21}})  b^{\prime}(y_{11}+y_{12}+\textcolor{red}{y_{22}}+\textcolor{blue}{y_{31}}+y_{32}) \\
					&\times E_{1,1}(y_{11})E_{1,2}(y_{12})\textcolor{blue}{B_{1,3}(y_{13})}E_{2,1}(\textcolor{blue}{y_{21}})\textcolor{red}{B_{2,2}(y_{22}-y_{21})}  E_{3,1}(y_{31})E_{3,2}(y_{32})\prod\limits_{(i,j)\in A_7}\mathrm{d}y_{ij}.\notag
				\end{align}
				{\bf Step 3: }The last orientation point is $\mathcal{O}_3=z_{32}$. In order to obtain a sequence of non overlapping factors, we choose either $\textcolor{blue}{z_{32}}$ or $\textcolor{blue}{z_{31}}$ as integration by parts variable. Hence if we apply integration by parts 
				with respect to $\textcolor{blue}{y_{32}}$ in \eqref{4AJ12} and with respect to $\textcolor{blue}{y_{31}}$ in \eqref{4AJ10}, we obtain $\mathcal{J}=\mathcal{J}^{(2)}_{\{2\}}-\mathcal{J}^{(2)}_{\emptyset}$, where
				\begin{align*}
					\mathcal{J}^{(1)}_{\{2\}}=&\int_{\R^{7}} b^{}(y_{11}+y_{12}+\textcolor{blue}{y_{13}})b^{}(y_{11}+\textcolor{blue}{y_{21}})  b^{}(y_{11}+y_{12}+\textcolor{red}{y_{22}}+y_{31}+\textcolor{blue}{y_{32}}) \\
					&\times E_{1,1}(y_{11})E_{1,2}(y_{12})\textcolor{blue}{B_{1,3}(y_{13})}\textcolor{blue}{B_{2,1}(y_{21})}E_{2,2}(\textcolor{red}{y_{22}-y_{21}})  E_{3,1}(y_{31})\textcolor{blue}{B_{3,2}(y_{32})}\prod\limits_{(i,j)\in A_7}\mathrm{d}y_{ij}\notag
				\end{align*}
				and
				\begin{align*}
					\mathcal{J}^{(1)}_{\emptyset}=&\int_{\R^{7}} b^{}(y_{11}+y_{12}+\textcolor{blue}{y_{13}})b^{}(y_{11}+\textcolor{blue}{y_{21}})  b^{}(y_{11}+y_{12}+\textcolor{red}{y_{22}}+\textcolor{blue}{y_{31}}+y_{32}) \\
					&\times E_{1,1}(y_{11})E_{1,2}(y_{12})\textcolor{blue}{B_{1,3}(y_{13})}E_{2,1}(\textcolor{blue}{y_{21}})\textcolor{red}{B_{2,2}(y_{22}-y_{21})}  \textcolor{blue}{B_{3,1}(y_{31})}E_{3,2}(y_{32})\prod\limits_{(i,j)\in A_7}\mathrm{d}y_{ij}.\notag
				\end{align*}
			\end{example}

	\end{document}